\numberwithin{equation}{section}
\newcommand{\be}{\begin{equation}}
\newcommand{\ee}{\end{equation}}
\newcommand{\ben}{\begin{eqnarray*}}
\newcommand{\enn}{\end{eqnarray*}}
\newtheorem{proposition}{Proposition}[section]
\newtheorem{theorem}{\textbf Theorem}[section]
\newtheorem{lemma}{\textbf Lemma}[section]
\newtheorem{remark}{\textbf Remark}[section]
\newtheorem{corollary}{\textbf Corollary}[section]
 \numberwithin{equation}{section}
\begin{document}

\title{\textbf{Stationary Ring and Concentric-Ring Solutions of the Keller-Segel Model with Quadratic Diffusion}}
\author{Lin Chen \thanks{Department of Mathematics, Southwestern University of Finance and Economics, 555 Liutai Ave, Wenjiang, Chengdu, Sichuan 611130, China; lchen@smail.swufe.edu.cn },
Fanze Kong \thanks{Department of Mathematics, Southwestern University of Finance and Economics, 555 Liutai Ave, Wenjiang, Chengdu, Sichuan 611130, China; fzkong@smail.swufe.edu.cn}
and Qi Wang \thanks{Department of Mathematics, Southwestern University of Finance and Economics, 555 Liutai Ave, Wenjiang, Chengdu, Sichuan 611130, China; qwang@swufe.edu.cn}
        }

\maketitle
\vspace{-8mm}
\begin{abstract}

This paper investigates the Keller-Segel model with quadratic cellular diffusion over a disk in $\mathbb R^2$ with a focus on the formation of its nontrivial patterns.  We obtain explicit formulas of radially symmetric stationary solutions and such configurations give rise to the ring patterns and concentric airy patterns.  These explicit formulas empower us to study the global bifurcation and asymptotic behaviors of these solutions, within which the cell population density has $\delta$-type spiky structures when the chemotaxis rate is large.  The explicit formulas are also used to study the uniqueness and quantitative properties of nontrivial stationary radial patterns ruled by several threshold phenomena determined by the chemotaxis rate.  We find that all nonconstant radial stationary solutions must have the cellular density compactly supported unless for a discrete sequence of bifurcation values at which there exist strictly positive small-amplitude solutions.  The hierarchy of free energy shows that in the radial class the inner ring solution has the least energy while the constant solution has the largest energy, and all these theoretical results are illustrated through bifurcation diagrams.  A natural extension of our results to $\mathbb R^2$ yields the existence, uniqueness and closed-form solution of the problem in this whole space.  Our results are complemented by numerical simulations that demonstrate the existence of non-radial stationary solutions in the disk.
\end{abstract}

%
%
%
{\sc Keywords}: Keller-Segel, Quadratic Diffusion, Ring Solution, Airy Pattern

\maketitle

\section{Introduction and Main Results}
In this paper, we investigate the following system for functions $(u,v)$ of space-time variable $(\textbf{x},t)$
\begin{equation}\label{01}
\left\{\begin{array}{ll}
u_t=\nabla\cdot(u \nabla u-\chi u \nabla v),&\textbf{x} \in B_0(R),t>0,\\
v_t=\Delta v -v+u, & \textbf{x} \in B_0(R),t>0,\\
u(\textbf{x},0),v(\textbf{x},0)\geq 0, \not\equiv 0,& \textbf{x} \in B_0(R),\\
\partial_\nu u(\textbf{x},t)=\partial_\nu v  (\textbf{x},t)=0,& \textbf{x}\in \partial B_0(R), t>0,
\end{array}
\right.
\end{equation}
where $B_0(R)\subset \mathbb R^2$ is the disk centered at the origin and with a radius $R$, and $\nu$ is the unit outer normal on the boundary $\partial B_0(R)$.  We want to study the formation of nontrivial patterns within (\ref{01}) by looking at its nonconstant steady states.  In particular, we restrict our interest to the nonnegative radially symmetric steady states $(u,v)=(u(r),v(r))$ with $r=|\textbf{x}|$ such that
\begin{equation}\label{ss}
\left\{\begin{array}{ll}
0=(ru(u-\chi v)_r)_r,&r\in (0,R),\\
0= v_{rr}+\frac{1}{r}v_r-v+u, &r\in (0,R),\\
u\in C^0([0,R]), v\in C^2([0,R]), u(r)\geq 0, v(r)>0,&r\in (0,R),\\
\int_{B_0(R)} u(r)d\textbf{x}=M; u_r(r)=v_r(r)=0 \text{~for~} r=0,R.
\end{array}
\right.
\end{equation}
Then we will obtain explicit solutions of the stationary system (\ref{ss}) and classify all of them in terms of the value of parameter $\chi$.  These explicit formulas will be used to analyze the uniqueness and qualitative behaviors of the steady states presented in various bifurcation diagrams as we shall see later.

(\ref{01}) arises as a model of chemotaxis that describes the evolution of cellular distribution at population level due to random noise and stimulating chemical in environment.  This biological phenomenon was described through PDE systems proposed by Evelyn Keller and Lee Segel in the 1970s \cite{KS,KS1,KS2}.  A general Keller--Segel model consists of two strongly coupled parabolic equations of $(u(\textbf{x},t),v(\textbf{x},t))$, $u$ the cellular population density and $v$ the chemical concentration at space--time location $(\textbf{x},t)$
 \begin{equation}\label{11}
\left\{
\begin{array}{ll}
u_t=\nabla \cdot (\overbrace{\mu \nabla u}^{\text{random (flux)}}-\overbrace{ \phi(u,v) \nabla v}^{\text{chemotactic (flux)}}),&\textbf{x} \in \Omega,t>0, \\
v_t=\overbrace{d\Delta v}^{\text{chemical diffusion}}+\overbrace{k(u,v)}^{\text{chemical creation/consumption}},&\textbf{x} \in \Omega,t>0,
\end{array}
\right.
\end{equation}
given non-negative initial data $u(\textbf{x},0),v(\textbf{x},0)\geq,\not \equiv 0$ over the spatial region $\Omega$, which is usually taken to be the whole space $\mathbb R^N$, $N\geq1$, or its bound domain with additional non-flux boundary conditions imposed on $u$ and $v$ in an enclosed environment.  Here $\mu\geq0$ is the cellular motility and $d>0$ is the chemical diffusion rate; $\phi$ is the so-called sensitivity function and it measures intensity of chemotactic movement due to the variation of cell population density and chemical concentration; $k$ describes the chemical creation and degradation rate and it can depend on the cellular density and chemical concentration in general.

\subsection{Keller-Segel Chemotaxis Models}
Though bacteria may behave independently, their distribution exhibits regularities through collective behaviors at population level.  One of the most impressive experimental findings in bacterial chemotaxis is the self--organized cellular aggregation that initially evenly distributed cells can sense and move along chemical distribution, and group with others into one or several spatial aggregates eventually.  The Keller--Segel type models can capture such aggregation behavior through the blow-up of cell density in finite or infinite time \cite{Nanjundiah1973,CP1981,HV1996,HW2005}, or the concentrating profiles of stationary solutions \cite{W,CKWW,WXu,NT1,NT2,Schaaf}, and they have achieved great academic success over the past few decades.

The studies of stationary solutions with large amplitude were initiated by the seminal works of C.-S Lin, W.-M. Ni and I. Takagi \cite{LNT,NT1,NT2}.  They considered the stationary system of (\ref{11}) with logarithmic sensitivity and linear chemical creation and degradation rate over a general multi--dimensional bounded domain $\Omega \subset\mathbb R^N$, $N\geq1$ of the following form
\begin{equation}\label{12}
\left\{\begin{array}{ll}
\nabla\cdot(\mu \nabla u-\chi u\nabla\ln v)=0, &\textbf{x} \in\Omega, \\
\varepsilon^2 \Delta v-v+u=0,&\textbf{x} \in\Omega,\\
\partial_\nu u=\partial_\nu v=0, & \textbf{x} \in \partial \Omega,\\
\int_\Omega ud \textbf{x}=\int_\Omega vd \textbf{x} =\bar u|\Omega|,
\end{array}\right.
\end{equation}
where $\bar u$ is the fixed average population density from the conserved total population $\int_\Omega u(\textbf{x},t)d \textbf{x}=\int_\Omega u_0(\textbf{x})d \textbf{x}=\bar u|\Omega|$ in the time--dependent system.  System (\ref{12}) can be solved in light of the following Neumann boundary value problem with $p:=\frac{\chi}{\mu}$
\begin{equation}\label{nbvp}
\left\{\begin{array}{ll}
\varepsilon^2 \Delta w-w+w^p=0,&\textbf{x} \in\Omega,\\
\partial_\nu w=0, & \textbf{x} \in \partial \Omega,
\end{array}\right.
\end{equation}
because if $w$ is a positive solution of (\ref{nbvp}), the pair $(u,v)$ given by
\[u:=\Bigg(\frac{1}{\bar u|\Omega|}\int_\Omega w(\textbf{x})^p d \textbf{x}\Bigg)^{-1}w^p, v:=\Bigg(\frac{1}{\bar u|\Omega|}\int_\Omega w(\textbf{x}) d \textbf{x}\Bigg)^{-1}w\]
is a solution of (\ref{12}).  Assuming that $p\in(1,\infty)$ for $N=1,2$ and $p\in(1,(N+2)/(N-2))$ for $N\geq3$, they proved in \cite{LNT} that (\ref{nbvp}) has only constant solution if $\varepsilon$ is large, and it admits nontrivial solutions if $\varepsilon$ is small, which are critical points of an energy functional in certain Sobolev space.  Moreover, they proceeded to prove in \cite{NT1,NT2} that if $\varepsilon$ is sufficiently small, the least energy solution $w_\varepsilon$ must achieve its unique local (hence global) maximum at a single boundary point $\textbf{x}_\varepsilon\in\partial \Omega$; furthermore, as $\varepsilon\rightarrow 0^+$, $\textbf{x}_\varepsilon\rightarrow \textbf{x}_0\in\partial \Omega$, where the mean curvature of the boundary achieves its maximum.  Since then (\ref{nbvp}) has been extensively studied by various authors, and the readers can find its further development in \cite{delpinoJEMS2014,GuiDuke1996,GWJDE1999,LNWCPAM2007,WangProca1995,WeiJDE1997} and the references therein.  It seems necessary to mention that this approach heavily depends on the smallness of chemical diffusion rate $\varepsilon$, hence requires a primitive understanding about the ``ground-state" of the whole space counterpart of (\ref{nbvp}); moreover, this method is not applicable in general when cellular growth is considered since (\ref{12}) with cellular growth can not be converted into a single equation any more.

In \cite{W}, X. Wang initiated a completely different approach to tackle this model directly without converting it into a single equation.  With the aid of the global bifurcation theories \cite{R,PR,SW}, this technique is further developed and successfully applied to a wide class of Keller--Segel models in \cite{CKWW,W,WXu,Lhc}.  They take $\chi$ as a bifurcation parameter and show that the first bifurcation branch must extend to right infinity without intersecting with the $\chi$--axis, which implies the existence of nonconstant steady states whenever $\chi$ surpasses a critical threshold value, given explicitly in terms of system parameters.  Moreover, by Helly's compactness theorem, they obtained the spiky and layer structures of the steady states when the chemotaxis rate is sufficiently large (compared to the cell motility rate).  The stability and dynamics of these spiky solutions are investigated in \cite{CHWWZ,ZCHLQ}.  Though this approach is currently restricted to 1D settings, one of the advantages over the methodology in \cite{LNT} is its applicability in problems concerning cellular growth \cite{ZAMP,WYZ,KRM}.

One very important extension of (\ref{11}) is to include density-dependent diffusion, assuming that cellular dispersal is anti--crowding and recedes as the population thins.  For example, K. Painter and T. Hillen \cite{PH} proposed and studied the following volume--filling model
 \begin{equation}\label{17}
\left\{
\begin{array}{ll}
u_t=\nabla \cdot (D(u) \nabla u-S(u) \nabla v),&\textbf{x} \in \Omega,t>0, \\
v_t=\Delta v-v+u,&\textbf{x} \in \Omega,t>0,
\end{array}
\right.
\end{equation}
with $D(u)=Q(u)-uQ'(u)$ and $S(u)=uQ(u)$, where $Q(u)$ denotes the density--dependent probability that the cell reaches its neighboring sites.  Systems with other general nonlinear diffusion and sensitivity functions $D(u)$ and $S(u)$ have been extensively studied by various authors \cite{WinklerM2AS,WinklerJDE2010,TW,ISY,IY2,IY,SK}.  In simple words, there exists a criticality of the global existence and blow--up which can be formally stated as follows: assume that $\Omega=\mathbb R^N$, $N\geq2$, or a bounded domain with Neumann boundary conditions on $u$ and $v$.  Denote $\frac{S(u)}{D(u)}\simeq u^\alpha$ for $u$ large, then the index $\frac{2}{N}$ is critical to (\ref{17}) in the sense that its solution exists globally and remains bounded in time if $\alpha<\frac{2}{N}$, while there may exist solutions which blow up in finite time if $\alpha\geq\frac{2}{N}$, in particular when initial cell population is large.  Other extensions such as to include different sensitivity functions, kinetics and cellular growth have been studied by many authors.  It seems necessary to mention that there are many works \cite{Bian-LiuCMP2013,CCPARMA2015,CCVSIAM2015,CHVYInvent,CSIndiana2018,CKYSIAM2013,CKYARMA,DYY,KaibSIAM2017,KYSIAM2012} on (\ref{17}) in the whole space $\mathbb R^N$, the parabolic-elliptic of which becomes a nonlinear aggregation-diffusion equation that serves as a paradigm in studying collective animal behaviors \cite{BTSIAM2011,Mogilner,TopazBertozzi2}.  There are also some works that study the dynamics of (\ref{17}) in bounded domain \cite{CGPSARMA,JiangZAMP,XJMYJDE}.  For a survey of the Keller-Segel models, see \cite{HP} and the references therein.  One recognizes (\ref{01}) in the form of (\ref{17}) with $D(u)=S(u)=u$.  According to \cite{ISY,TW}, system (\ref{01}) has a global weak solution and this global solution is uniformly bounded in time hence blow-up does not occur.

\subsection{Main Results}
In this paper, we study the existence, uniqueness and qualitative properties of radially symmetric stationary solutions of (\ref{01}).  We shall achieve these goals by obtaining the explicit formulas of the radial solutions.  Of our concern is the stationary solutions of (\ref{ss}) that capture the cell aggregation phenomenon within porous medium diffusion, and in particular, we will scrutinize the effects of chemotaxis rate $\chi$ on the dynamics of (\ref{01}) by setting the rest parameters to one.

An immediate consequence of the zero-flux boundary condition is the conservation of cell population
\[M=\int_{B_0(R)}  u(\textbf{x},t)\,d \textbf{x}=\int_{B_0(R)}  u(\textbf{x},0)\,d \textbf{x}=2\pi\int_0^R u(r) r\,dr, \mbox{ for all } t>0,\]
thanks to which both system \eqref{01} and its stationary system \eqref{ss} admit the following constant solution
\[(\bar u,\bar v):=\Big(\frac{M}{\pi R^2},\frac{M}{\pi R^2}\Big).\]

In contrast to previous works \cite{LNT,NT1,NT2,WXu} with non-degenerate diffusion where both $u$ and $v$ are strictly positive, the appearance of degenerate diffusion in the $u$-equation of (\ref{01}) and (\ref{ss}) causes the lack of (strong) maximum principle and $u$ does not have to be strictly positive.  Indeed, we shall see that in most cases $u$ is compactly supported, while only for very special case does one collect positive solutions $(u,v)$.  Here and in the sequel by compactly supported and strictly positive we refer it for $u$ since $v$ is always strictly positive by maximum principle.

For simplicity of notation we introduce
\begin{equation}\label{bifvalue}
\chi_{k}:=\Big(\frac{j_{1,k}}{R}\Big)^2+1,k\in \mathbb N^+,
\end{equation}
where $j_{1,k}$ is the $k$-th positive root of the first kind Bessel function $J_1$ with
\[j_{1,1}\approx3.8317, j_{1,2}\approx7.0156, j_{1,3}\approx 10.1735, j_{1,4}\approx13.3237, j_{1,5}\approx16.4706,...\]
to name the first few explicit values.  According to \cite{CCWWZ}, if $\chi<\chi_1$, the constant solution $(\bar u, \bar v)$ is globally asymptotically stable with respect to (\ref{01}) in the radial setting, therefore throughout this paper we assume $\chi\geq \chi_1$ in order to study its nonconstant radial stationary solutions.

Concerning the stationary problem (\ref{ss}), the first set of our results can be summarized as follows:
\begin{theorem}\label{theorem11}
Let $R>0$ and $M>0$ be arbitrary constants.  The following statements hold:

(i) if $\chi<\chi_1$, (\ref{ss}) has only the constant solution $(\bar u,\bar v)$, and if $\chi>\chi_1,\neq\chi_k,k\geq2$, any solution of (\ref{ss}) must have $u$ be compactly supported in $B_0(R)$;

(ii) for each $\chi=\chi_k,k\in\mathbb N^+$, there exists a one-parameter family of solutions given by
\begin{equation}\label{bifurcation}
(u^{(k)}_\varepsilon(r),v^{(k)}_\varepsilon(r))=(\bar u,\bar v)+\varepsilon(\chi_k,1)J_0\Big(\frac{j_{1,k}r}{R}\Big), r\in (0,R); -\frac{\bar u}{\chi_k}\leq \varepsilon \leq \frac{\bar u}{-J_0(j_{1,1})\chi_k}\Big(\approx\frac{2.482\bar u}{\chi_k}\Big);
\end{equation}
where $u^{(k)}_{\varepsilon}$ is strictly positive in $(0,R)$ for each $k$; moreover, any strictly positive solutions of (\ref{ss}) must be of the form (\ref{bifurcation});

(iii) for each $\chi>\chi_1$, (\ref{ss}) admits a unique pair of nonconstant solutions $(u^-(r),v^-(r))$ and $(u^+(r),v^+(r))$ which are explicitly given by (\ref{interring}) and (\ref{outerring}); moreover, if $\chi\in(\chi_1,\chi_2)$, any nonconstant solution of (\ref{ss}) must be given by one of the pair $(u^\pm,v^\pm)$;

(iv) as $\chi\rightarrow \infty$, the solution $u^-(r)$ converges to the $\delta$-function centered at the origin $r=0$, and $u^+(r)$ converges to the $\delta$-function centered at $r=R$, whereas $v^\pm$ converge to corresponding Green's functions.
\end{theorem}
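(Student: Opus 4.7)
The plan is to extract a first integral from the $u$-equation, reduce the $v$-equation to an inhomogeneous Bessel-type equation on each component of $\{u>0\}$, and then carry out a free-boundary classification. Integrating the first equation of (\ref{ss}) and using the Neumann conditions at $r=0,R$ yields $ru(u-\chi v)_r\equiv 0$, hence $(u-\chi v)_r=0$ wherever $u>0$. On each maximal open interval $I\subset\{u>0\}$ we therefore have $u=\chi v-\alpha_I$ for some constant $\alpha_I$; by continuity of $u$, if $I$ has an endpoint interior to $[0,R]$ then $\alpha_I=\chi v$ there. Substituting into the second equation of (\ref{ss}) gives on $I$
\begin{equation*}
v_{rr}+\frac{1}{r}v_r+(\chi-1)v=\alpha_I,
\end{equation*}
with general solution $v(r)=AJ_0(\kappa r)+BY_0(\kappa r)+\alpha_I/(\chi-1)$ for $\kappa=\sqrt{\chi-1}$, while on the complementary zero-region $v$ solves $v_{rr}+v_r/r-v=0$ whose bounded solutions are combinations of the modified Bessel functions $I_0,K_0$.

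For part (ii) I would address the strictly positive case $u>0$ on $[0,R]$: regularity at the origin forces $B=0$, and the Neumann condition at $r=R$ reduces to $J_1(\kappa R)=0$, which selects $\chi=\chi_k$. Writing the free coefficient $A$ as $\varepsilon$ and using $\bar v=\bar u$ (which follows from integrating the $v$-equation against $1$ over $B_0(R)$) one recovers (\ref{bifurcation}); since $\int_0^R J_0(j_{1,k}r/R)\,r\,dr=0$, the mass constraint is satisfied automatically, and the stated range of $\varepsilon$ is exactly that which keeps $u\geq 0$ at the minimum of $J_0(j_{1,k}r/R)$ on $[0,R]$. Conversely, no strictly positive solution can exist outside the discrete set $\{\chi_k\}$, which is the strict-positivity half of (i).

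For part (iii) and the compactly supported half of (i), each connected component of $\{u>0\}$ must be either the inner disk $[0,r^-)$, the outer annulus $(r^+,R]$, or an interior annulus $(r_1,r_2)\subset(0,R)$. Across every free boundary I impose $u=0$ (equivalently $\chi v=\alpha$) together with $C^2$-regularity of $v$, and I add the mass constraint and the Neumann conditions at $r=0,R$. In the inner-disk case this yields a closed algebraic system in the five unknowns $(A,\alpha,r^-,c_2,c_3)$, where $c_2,c_3$ are the coefficients of $I_0,K_0$ on $(r^-,R)$, and the system is solvable in closed form, producing $(u^-,v^-)$; the outer-annulus case is symmetric and gives $(u^+,v^+)$. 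When $\chi\in(\chi_1,\chi_2)$, a zero-counting argument based on the first positive root of $J_1$ shows that no multi-component or interior-annulus geometry can be assembled consistently, so $(u^\pm,v^\pm)$ exhausts all nonconstant solutions; when $\chi<\chi_1$, the same count leaves no admissible component at all, leaving only the constant.

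For (iv) I would pass to the limit $\chi\to\infty$ directly in the closed-form solutions. The inner support radius $r^-(\chi)$ is of order $1/\sqrt{\chi-1}\to 0$, and the mass constraint forces the amplitude of $u^-$ to blow up in such a way that $u^-\rightharpoonup M\delta_0$ in the sense of measures; inserting this into $-\Delta v+v=u$ with zero Neumann data identifies the limit of $v^-$ as the Neumann Green's function with pole at the origin, and the outer case is analogous with pole at $r=R$. The main obstacle I anticipate is the elimination of multi-ring and interior-annulus configurations in the classification step of (iii): this requires a careful inventory of which widths and placements of Bessel arches can be matched consistently with the exterior modified-Bessel solution under the mass and Neumann constraints, and it is exactly this combinatorial-analytical step that sharpens the threshold structure separating parts (i), (ii), and (iii).
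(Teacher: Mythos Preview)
Your proposal is correct and follows essentially the same route as the paper: extract the first integral $u=\chi v-\alpha_I$ on each component of $\{u>0\}$, reduce to Bessel/modified-Bessel ODEs, match $C^2$ data of $v$ across the free boundaries, and read off the asymptotics from the explicit formulas. The paper's resolution of the step you flag as the main obstacle is worth noting: rather than a generic zero-counting argument, it derives a single transcendental equation for the free-boundary location (equation (\ref{23}) for the inner ring, (\ref{210}) for the outer ring, and (\ref{228})/(\ref{231}) for annular pieces), proves each has a unique root in the first oscillation interval of the relevant cylinder function, and shows that any other root forces $u<0$ somewhere; the exclusion of multi-component configurations for $\chi\in(\chi_1,\chi_2)$ then follows because the threshold $\chi_{a,b}$ for a building block on an annulus $(a,b)$ satisfies $\chi_{a,b}\geq\chi_2$ whenever the interval is strictly interior.
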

\emph{(i)}-\emph{(iii)} are presented in the bifurcation diagrams in Figure \ref{branch1}, and the asymptotic behaviors in \emph{(vi)} are illustrated by Figure \ref{innerasymptotic} and Figure \ref{outerasymptotic}.
We would like to point out that the novelty of quadratic diffusion structure is utilized to obtain the explicit formulas for (\ref{ss}) as described above; more importantly, besides the above-mentioned priorities, one can state more about each solution such as its support monotonically shrinks to zero as $\chi$ goes to infinity, the inner ring $u^-$ has a smaller energy than the outer ring $u^+$, while both of them are smaller than that of the constant solution.  Now that our approach is constructive, we can obtain the explicit formula for any radially symmetric solution of (\ref{ss}).  We shall provide details in the coming sections.

Theorem \ref{theorem11} indicates that if $\chi\in(\chi_1,\chi_2)$, radially monotone solutions are uniquely given by the pair $(u^\pm,v^\pm)$ and all nonconstant radial solutions must be one of the pair.  Therefore we can find radially non-monotone solutions only for $\chi\geq\chi_2$.  Indeed, our next main results state that there are (infinitely) many radially non-monotone solutions in this case.
\begin{theorem}\label{theorem12}
Let $R>0$ and $M>0$ be arbitrary constants.  For each $\chi>\chi_2$, the following statements hold:

(i) there exist $\underline R_0(\chi)$ and $\bar R_0(\chi)$ (defined by (\ref{underlineR0})) such that for each $R_0\in[\underline R_0,\bar R_0]$, there exists a non-monotone solution $(u_d(r),v_d(r))$ explicitly given by (\ref{mexicanhat}), where $u_d$ is compactly supported in $[0,R]$, and $v_d$ is monotone decreasing in $(0,R_0)$ and increasing in $(R_0,R)$; moreover, the support of $u_d$ is of the form $[0,r_1)\cup(r_4,R]$ for some $r_1<R_0$ and $r_4<R-R_0$;

(ii) there exists $\chi_2^*>\chi_2$ such that (\ref{ss}) has another solution $(u_i,v_i)$ described as follows

\hspace{4mm} (ii-1) for each $\chi\in(\chi_2,\chi_2^*]$, there exist a unique $\bar R_0$ and non-monotone $(u_i(r),v_i(r))$ explicitly given by (\ref{volcano1}), such that $u_i$ is compactly supported in $[0,R]$, and $v_i$ is monotone increasing in $(0,R_0)$ and decreasing in $(R_0,R)$; moreover, the support of $u_i$ is of the form $(\bar R_0-r_2,R]$ for some $r_2<\bar R_0$;

\hspace{4mm} (ii-2) for each $\chi\in(\chi_2^*,\infty)$, there exist a unique $R_0^*$ and non-monotone $(u_i(r),v_i(r))$ explicitly given by (\ref{volcano2}), such that $u_i$ is compactly supported in $[0,R]$, and $v_i$ is monotone increasing in $(0,R_0^*)$ and decreasing in $(R_0^*,R)$; moreover, the support of $u_i$ is of the form $(R_0^*-r_2,R_0^*+r_3)$ for some $r_2<R_0^*$ and $r_3<R-R_0^*$;

(iii) for each $\chi\in(\chi_2,\chi_3)$, all non-monotone solutions of (\ref{ss}) must be either $(u_d,v_d)$ given in (i) or $(u_i,v_i)$ given in (ii).
\end{theorem}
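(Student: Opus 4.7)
My plan is to begin by integrating $0=(ru(u-\chi v)_r)_r$ once; the regularity $u_r(0)=0$ forces $u(u-\chi v)_r\equiv 0$ throughout $(0,R)$, so on each connected component of $\{u>0\}\subseteq[0,R]$ one has $u=\chi v+c$ for a component-dependent constant $c$. Substituting into the $v$-equation, on such a component $v$ solves $v''+r^{-1}v'+(\chi-1)v=-c$, whose general solution is $-c/(\chi-1)+AJ_0(\sqrt{\chi-1}\,r)+BY_0(\sqrt{\chi-1}\,r)$, while on the complementary dead zone $\{u=0\}$ the profile $v$ solves $v''+r^{-1}v'-v=0$ with solutions spanned by $I_0(r)$ and $K_0(r)$. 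At each free boundary between support and dead zone, $C^1$-matching of $v$ together with the pinning $v=-c/\chi$ (equivalent to $u=0$ there) must hold; combined with the Neumann condition at $r=R$ (the one at $r=0$ being automatic by regularity) and the mass constraint, this reduces \eqref{ss} to an algebraic system in a finite list of unknowns.

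\textbf{Construction of the two families.} A non-monotone $v$ possesses at least one interior critical point $R_0$. On a dead zone one computes $(rv')'=rv>0$, so $rv'$ is strictly increasing and the only possible interior critical points there are \emph{minima}; on a support component interior \emph{minima} of $v$ are forbidden, since the component-boundary condition $v=-c/\chi$ together with $u=\chi v+c\ge 0$ would give the contradiction $u(R_0)<0$. Hence every interior critical point is either a dead-zone minimum (Mexican hat, part (i)) or a support-interior maximum (volcano, part (ii)). For case (i), the support decomposes into a central disk $[0,r_1]$ and an outer ring $[r_4,R]$ separated by the dead zone $[r_1,r_4]$ housing $R_0$; I would parametrize the Bessel/modified-Bessel coefficients on the three pieces, the two constants $c_1,c_2$, and the two free boundaries $r_1,r_4$ (nine unknowns in total) by the eight matching-plus-Neumann-plus-mass equations, which leaves exactly one free parameter, identified with $R_0$, and then define $\underline R_0,\bar R_0$ as the extremal values for which $0\le r_1<r_4\le R$ remains feasible (the endpoints corresponding respectively to the collapse of the inner disk and of the outer ring). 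For case (ii), $R_0$ lies inside the support; if the support adjoins $r=R$ one gets geometry (ii-1), while if it is a genuine interior annulus one gets (ii-2). I would then define $\chi_2^*$ implicitly as the unique $\chi>\chi_2$ at which the right endpoint of the support in (ii-2) first reaches $R$, so that (ii-2) degenerates into (ii-1) across this value; existence and uniqueness of $\bar R_0$ (resp. $R_0^*$) at each fixed $\chi$ then follow from a monotonicity analysis of the Bessel-valued matching function in the remaining free parameter.

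\textbf{Exhaustion for $\chi\in(\chi_2,\chi_3)$ and main obstacle.} For part (iii) any non-monotone radial solution has at least one interior critical point of $v$; the bound $\chi<\chi_3$ (equivalently $\sqrt{\chi-1}\,R<j_{1,3}$) coupled with the dead-zone monotonicity $(rv')'>0$ rules out more than one, since additional critical points would force the $J_0$-branches on support components to sweep oscillation intervals exceeding what this bound permits after accounting for the boundary pinning. Combined with the dichotomy that each critical point is either a dead-zone minimum or a support-interior maximum, this exhausts the non-monotone configurations, yielding either $(u_d,v_d)$ from (i) or $(u_i,v_i)$ from (ii). The main technical obstacle I anticipate is controlling the resulting transcendental matching systems: one must show that explicit combinations of $J_0,J_1,I_0,I_1,K_0,K_1$ evaluated at the free boundaries are strictly monotone in the relevant parameters over the whole stated range. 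This monotonicity delivers simultaneously the existence of a solution for every $R_0\in[\underline R_0,\bar R_0]$ in (i), the uniqueness of $\bar R_0$ and $R_0^*$ in (ii), the well-definedness of $\chi_2^*$, and the clean classification in (iii); careful control of the relevant Bessel-function ratios is therefore the technical heart of the proof.
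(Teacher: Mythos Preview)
Your overall strategy coincides with the paper's: reduce to piecewise Bessel/modified-Bessel profiles, impose $C^1$ (indeed $C^2$) matching at the free boundaries, and push the existence/uniqueness questions onto monotonicity of the resulting Bessel-ratio equations. Two concrete points need repair.

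First, the dichotomy argument has a gap. Your claim that an interior minimum of $v$ on a support component forces $u(R_0)<0$ presumes the minimum lies strictly below the free-boundary value $-c/\chi$; but a \emph{local} minimum need not be global on the component, and a component containing $r=0$ or $r=R$ has a Neumann end rather than a pinned one, so the inequality does not follow. The correct mechanism is the positivity constraint combined with an oscillation count: for the inner disk, $u\ge 0$ forces the free boundary $r_1$ into $(j_{0,1}/\omega,\,j_{1,1}/\omega)$ (the paper's Lemma~2.1 and Remark~2.1), whence $v'=-A\omega J_1(\omega r)$ has no zero in $(0,r_1)$; the annular pieces are handled the same way via the root analysis of Lemmas~2.2--2.3. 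This is exactly the Bessel-ratio monotonicity you correctly flag as the technical heart, but it must be invoked \emph{here}, not only downstream. Second, your description of the endpoints $\underline R_0,\bar R_0$ is inverted. At $R_0=\underline R_0:=j_{1,1}/\omega$ the inner subproblem on $(0,R_0)$ sits exactly at its bifurcation threshold $(j_{1,1}/R_0)^2+1=\chi$, so the inner support \emph{fills} $[0,R_0]$ (one gets $r_1=R_0$); it does not collapse. Likewise $\bar R_0$ is characterized by $\chi=\chi_{R_0,R}$, the annular threshold of Lemma~2.3, at which the \emph{outer} support fills $(R_0,R)$. These thresholds, together with the monotonicity of $\omega_{a,b}$ in $a$ (Lemma~2.4), are precisely what yields part~(iii) for $\chi\in(\chi_2,\chi_3)$; your heuristic that extra critical points ``sweep oscillation intervals exceeding the bound'' must be replaced by this comparison of thresholds.
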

Non-monotone radial solutions obtained in Theorem \ref{theorem12} are illustrated in Figure \ref{branch2} and Figure \ref{mexicanhatasymptotic}.

It is well known that system (\ref{01}) has the following free energy functional
\begin{equation}\label{freeenergy}
\mathcal E(u(\textbf{x},t),v(\textbf{x},t))=\frac{1}{\chi}\int_{B_0(R)} u^2d \textbf{x}+\int_{B_0(R)} (\vert \nabla v\vert^2+v^2-2 uv)d \textbf{x}
\end{equation}
which is non--increasing along the solution trajectory with its dissipation given by
\begin{equation}\label{dissipation}
  \frac{d\mathcal E}{dt}=-\frac{2}{\chi}\int_{B_0(R)} u |\nabla u -\chi \nabla v|^2d \textbf{x}- 2\int_{B_0(R)} |v_t|^2d \textbf{x}:= \mathcal I\leq 0, \quad \mbox{for all } t>0.
\end{equation}
Moreover, this energy is a Lyapunov functional since steady states $(u_s, v_s)$ are characterized by the zero dissipation $\mathcal I(u_s, v_s)=0$.  We would like to remark that to derive (\ref{dissipation}) one starts with an approximation problem of (\ref{01}) that possesses a unique solution, and then collect this inequality by passing to this approximation limit.  After establishing the explicit solutions of (\ref{ss}), we next provide a hierarchy of free energies of these nontrivial patterns.  Our results indicate that in the radial setting the constant solution is the global minimizer of the free energy, and the inner ring solution $(u^-,v^-)$ has the least energy; moreover, the constant solution has a larger energy than any compactly supported solution of (\ref{ss}).  The significance of our results is that they hold for any $\chi>\chi_1$ without further assumptions on other system parameters.  Another set of our results are summarized in the following:
\begin{theorem}\label{theorem13}
Let $R>0$ and $M>0$ be arbitrary.  Then the following statements concerning the explicit solutions obtained in Theorem \ref{theorem11} and Theorem \ref{theorem12} hold:

(i) for each $\chi>\chi_1$, $\mathcal E(u^-,v^-),\mathcal E(u^+,v^+)<\mathcal E(\bar u,\bar v)=\mathcal E(u^{(k)}_\varepsilon, v^{(k)}_\varepsilon)$, given by (\ref{bifurcation});

(ii) for each $\chi>\chi_2$, let $R_0\in[\underline R_0,\bar R_0]$, $R_0^*$ be given, and the solutions $(u_d(r;R_0),v_d(r;R_0))$ and $(u_i(r),v_i(r))$ be obtained in Theorem \ref{theorem12}.  Then we have
$\mathcal E(u_d,v_d), \mathcal E(u_i,v_i)<\mathcal E(\bar u,\bar v)$; moreover, for $\chi\gg 1$, $\mathcal E(u_d,v_d)|_{R_0=\bar R_0}\rightarrow \mathcal E(u^-,v^-)=-\frac{M^2\ln \chi}{4\pi}+O(1)$ and $\mathcal E(u_d,v_d)|_{R_0=\underline R_0}\rightarrow \mathcal E(u^+,v^+)=-\frac{M^2I_0(R)}{2\pi RI_1(R)}+o(1)$;

(iii) let $(u_\text{cpt},v_\text{cpt})$ be any solution with $u$ being compactly supported, then $\mathcal E(u_\text{cpt},v_\text{cpt})<\mathcal E(\bar u,\bar v)$.
\end{theorem}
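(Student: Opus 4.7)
The plan is to reduce the free energy of any steady state to a simple scalar quantity and then to compare the values via an envelope-theorem argument along the bifurcation branches. First I would use the Neumann boundary condition and the $v$-equation $-\Delta v+v=u$ to rewrite $\int|\nabla v|^2+v^2=\int uv$, so that
\[
\mathcal{E}(u,v)=\frac{1}{\chi}\int u^2-\int uv=\int u\Big(\frac{u}{\chi}-v\Big).
\]
Integrating the first equation of (\ref{ss}) radially gives $u(u/\chi-v)_r\equiv 0$, hence $u/\chi-v$ is a constant $c_j$ on each connected component $\Omega_j$ of $\{u>0\}$, leading to the key identity
\[
\mathcal{E}(u,v)=\sum_j c_j M_j,\qquad M_j=\int_{\Omega_j}u.
\]
For the constant solution this gives $\mathcal{E}(\bar u,\bar v)=-\bar u M(\chi-1)/\chi$, while the bifurcation pair $(u^{(k)}_\varepsilon,v^{(k)}_\varepsilon)$ in (\ref{bifurcation}) satisfies $u=\chi_k v-\bar u(\chi_k-1)$ everywhere and produces the same constant at $\chi=\chi_k$, which already proves the equality in \emph{(i)}. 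For a compactly supported solution the identity becomes $\mathcal{E}=-\sum_j v(r_\partial^{(j)})M_j$, where $r_\partial^{(j)}$ denotes the interior boundary of $\Omega_j$.

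To prove the strict inequalities I would apply the envelope theorem along the $\chi$-branch $(u(\chi),v(\chi))$. The $v$-equation kills the $v$-variation, and when the support is connected the $u$-variation contributes $2c\cdot M'(\chi)=0$, leaving
\[
\frac{d}{d\chi}\mathcal{E}(u(\chi),v(\chi);\chi)=-\frac{1}{\chi^2}\int u^2.
\]
By Cauchy--Schwarz, $\int u^2>M^2/|B_0(R)|=\bar u M$ whenever $u\not\equiv\bar u$, so $\mathcal{E}(u;\chi)-\mathcal{E}(\bar u;\chi)$ is strictly decreasing in $\chi$ along every nonconstant single-component branch. Since $u^\pm$ bifurcate from $(\bar u,\chi_1)$ where continuity forces the two energies to coincide, the difference is negative for every $\chi>\chi_1$, which proves the strict part of \emph{(i)}. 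The same argument starting from $\chi_2$ handles the single-component $u_i$ in \emph{(ii)}, and \emph{(iii)} follows because every compactly supported radial steady state lies on a branch emanating from some $(\bar u,\chi_k)$ at which the energies agree.

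The main obstacle is the two-component solution $u_d$, for which the envelope chain rule picks up an extra term $2(c_1-c_2)M_1'(\chi)$ of unclear sign. My plan is to use the free parameter $R_0\in[\underline R_0,\bar R_0]$ to reparametrize the branch at fixed $M_1$, restoring the clean envelope formula; alternatively, noting that $u_d|_{R_0=\bar R_0}=u^-$ and $u_d|_{R_0=\underline R_0}=u^+$ and invoking continuity of $\mathcal{E}$ in $R_0$ propagates the endpoint inequalities into the interior. For the asymptotic statements in \emph{(ii)}, Theorem \ref{theorem11}\emph{(iv)} already yields that $u^\pm$ concentrate at $r=0$ and $r=R$ and that $v^\pm$ tend to the corresponding Neumann Green's functions of $-\Delta+1$ on $B_0(R)$; using their modified-Bessel representation together with the scaling $r^-(\chi)\sim\chi^{-1/2}$ extracted from (\ref{interring}), I would compute $v^-(r^-)\sim (M/4\pi)\ln\chi$ and $v^+(R)=MI_0(R)/(2\pi RI_1(R))$, substitute into $\mathcal{E}=-v_\partial M$, and read off the two expansions; the limits of $\mathcal{E}(u_d)$ at the two $R_0$-endpoints then follow from the same degeneration.
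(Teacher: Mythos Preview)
Your reduction $\mathcal E(u,v)=\sum_j c_j M_j$ and the equality $\mathcal E(u^{(k)}_\varepsilon,v^{(k)}_\varepsilon)=\mathcal E(\bar u,\bar v)$ match the paper exactly, and your envelope computation
\[
\frac{d}{d\chi}\mathcal E(u(\chi),v(\chi);\chi)=-\frac{1}{\chi^2}\int u^2
\]
combined with Jensen is a clean and genuinely different proof of part \emph{(i)} and of the $u_i$-inequality in \emph{(ii)}; the paper instead evaluates $\mathcal E(u^\pm,v^\pm)$ from the explicit formulas and checks Bessel inequalities such as $2r_1T_1(r_1;R)+(R^2-r_1^2)T_0(r_1;R)>0$ directly. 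Your argument is more conceptual; the paper's is more self-contained since it never invokes continuity of the branch at $\chi_k$.

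The gap is in the multi-component cases, and your two proposed repairs both fail. First, $u_d|_{R_0=\bar R_0}\neq u^-$ and $u_d|_{R_0=\underline R_0}\neq u^+$ at any finite $\chi$: at $R_0=\underline R_0$ the inner piece fills $(0,R_0)$ but the outer piece still carries positive mass $m_4$, and Lemma~\ref{lemma32} only gives $m_4\to0$ \emph{as $\chi\to\infty$}. So endpoint continuity in $R_0$ does not connect $\mathcal E(u_d)$ to quantities already known to be below $\mathcal E(\bar u,\bar v)$. Second, ``reparametrize at fixed $M_1$'' would require that for each $M_1$ in a suitable range the curve $\chi\mapsto u_d(\chi)$ exists and limits to a bifurcation solution; since $[\underline R_0(\chi),\bar R_0(\chi)]$ collapses to a point at $\chi_2$, this is not clear and you do not argue it. The same issue sinks your treatment of \emph{(iii)}: you assert that every compactly supported steady state lies on a branch from $(\bar u,\chi_k)$, but even granting that, a generic such state has several support components and the extra $(c_1-c_2)M_1'(\chi)$ terms reappear.

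The paper closes this gap by a different, purely static argument. It first proves the annulus version of Lemma~\ref{lemma61}: on any annulus $(a,b)$ the monotone block $(\mathbb U_i,\mathbb V_i)$ satisfies $\mathcal E(\mathbb U_i,\mathbb V_i)<\mathcal E(\bar u_{ab},\bar v_{ab})=-\tfrac{\omega^2 m^2}{\pi\chi(b^2-a^2)}$. Then for an arbitrary compactly supported $(u,v)$ it partitions $(0,R)$ at the critical points $R_0<R_1<\cdots<R_k$ of $v$, bounds each piece by the corresponding annular constant, and finishes with the convexity inequality
\[
\sum_{i=1}^k\frac{m_i^2}{R_i^2-R_{i-1}^2}\;>\;\frac{M^2}{R^2}\qquad\Big(\sum m_i=M\Big),
\]
proved by induction. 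This inequality is the missing ingredient that replaces your envelope step for multi-component solutions and yields \emph{(ii)} for $u_d$ and all of \emph{(iii)} without tracking branches. Your asymptotic computations in \emph{(ii)} via $\mathcal E=-v(r_\partial)M$ are correct and agree with the paper's.
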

Theorem \ref{theorem13} provides a hierarchy of energies of the explicit solutions obtained in this paper.  These statements are illustrated by the energy diagrams in Figure \ref{energy}.  From the viewpoint of the free energy, Theorem \ref{theorem13} suggests that in the radial class the inner ring solution $(u^-,v^-)$ is the most stable, while the constant solution $(\bar u,\bar v)$ and the bifurcation solutions (\ref{bifurcation}) are the most unstable.  Moreover, now that (\ref{01}) admits multiple stationary concentric rings when $\chi$ is large, the configurations with a large inner spiky structure (spike at the origin $r=0$) tend to be more stable than those with a large outer spiky structure (spike on the boundary $r=R$).  We would like to comment that for solution $(u_d(r;R_0),v_d(r;R_0))$, the variation of $R_0$ or $\chi$ does not necessarily induce monotonicity in the configuration $u$ in general.  For instance, one does not expect a larger $\Vert u_d(r;R_0)\Vert_{L^\infty}$ for a larger $\chi$ if $R_0=\underline R_0$.  See the second column of Figure \ref{branch2} for an illustration.  

\subsection{Global Stability v.s. Chemotaxis--Driven Instability}
A striking difference between (\ref{01}) and its counterpart in the whole space is that the former has the constant pair $(\bar u,\bar v)$ as a solution.  According to \cite{CCWWZ}, this constant solution is globally asymptotically stable if $\chi<\chi_1$ and it becomes unstable as $\chi$ surpasses $\chi_1$.

First of all, we show that if $\chi>\chi_1,\neq \chi_k$, then any solution $u$ must be of compact support in $B_0(R)$.  To see this, we argue by contradiction and assume that $u>0$ in $B_0(R)$ for some $\chi>\chi_1,\neq \chi_k$.  Then the $u$-equation implies that $u-\chi v$ equals some constant in $B_0(R)$ and the $v$--equation becomes
\[\left\{\begin{array}{ll}
\Delta_r v+(\chi-1)(v-\bar v)=0, & r \in (0,R),\\
v\in C^2((0,R))\cap C^1([0,R]), v(r)>0,& r \in (0,R),\\
\partial_r v (r)=0,& r=0,R.
\end{array}
\right.
\]
Therefore $(v-\bar v,\chi-1)$ is an eigen--pair of $-\Delta_r$ in the Neumann radial class, hence we must have that $\chi=\chi_k$, which is a contradiction to our assumption, therefore $u$ must be compactly supported.  In the next, we will obtain explicit formulas of these solutions with compact support for each $\chi>\chi_1$.  However, when $\chi=\chi_k$, (\ref{ss}) also has solutions which are positive in $B_0(R)$.  Indeed, according to our discussions above, when $\chi=\chi_k$, $v-\bar v$ is a multiplier of the Neumann eigen-function $J_0(\frac{j_{1,k}r}{R})$ of Laplacian.  Therefore, we have proved the following results.
\begin{lemma}\label{lemma11}
Let $(u(r),v(r))\in C^0([0,R])\times C^2((0,R))$ be any solution of (\ref{ss}).  Then for each $\chi>\chi_1$, $\neq\chi_k$, $k\in \mathbb N^+$, we must have that $u(\text{x})$ is compactly supported in $B_0(R)$; moreover, when $\chi=\chi_k$, (\ref{ss}) admits a one-parameter family of positive solutions $(u^{(k)}_\varepsilon(r),v^{(k)}_\varepsilon(r))$ explicitly given by (\ref{bifurcation}).
\end{lemma}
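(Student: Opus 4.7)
The plan is to execute the dichotomy already sketched in the paragraph preceding the statement: either $u$ vanishes somewhere in $B_0(R)$ and is therefore compactly supported, or $u>0$ throughout the interior, in which case the system collapses to a linear Neumann eigenvalue problem for $v$ that admits a nontrivial solution only when $\chi=\chi_k$, at which point the explicit family (\ref{bifurcation}) emerges.

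First I would assume $u>0$ in $B_0(R)$ and work with the first equation of (\ref{ss}) in its radial form $(r u (u-\chi v)_r)_r=0$. Integrating once, using continuity at $r=0$ together with $u_r(0)=v_r(0)=0$ to pin down the constant of integration, and then dividing by the positive quantity $ru$, I would obtain $(u-\chi v)_r\equiv 0$, hence $u=\chi v+c_0$ for some constant $c_0$. Substituting into the $v$-equation produces the linear inhomogeneous problem
\[
v_{rr}+\frac{1}{r}v_r+(\chi-1)v+c_0=0,\qquad v_r(0)=v_r(R)=0,
\]
which, after the shift $w:=v+c_0/(\chi-1)$ (legitimate because $\chi>\chi_1>1$), becomes $\Delta_r w+(\chi-1)w=0$ with homogeneous Neumann data. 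Thus $(w,\chi-1)$ is a radial Neumann eigen-pair of $-\Delta$ on $B_0(R)$. The radial Neumann spectrum is $\{0\}\cup\{(j_{1,k}/R)^2\}_{k\geq 1}$ with eigenfunctions $1$ and $J_0(j_{1,k}r/R)$; the zero eigenvalue gives only the constant solution $(\bar u,\bar v)$, while any nonconstant $w$ forces $\chi-1=(j_{1,k}/R)^2$, i.e.\ $\chi=\chi_k$. This is the required contradiction whenever $\chi>\chi_1$ and $\chi\neq \chi_k$, and compact support of $u$ follows.

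For the second assertion, at $\chi=\chi_k$ I would take $v=\bar v+\varepsilon J_0(j_{1,k}r/R)$ and $u=\chi_k v+c_0$, and determine $c_0$ from the mass constraint $\int_{B_0(R)}u\,d\mathbf{x}=M$. The Neumann condition $J_0'(j_{1,k})=-J_1(j_{1,k})=0$ implies $\int_0^R J_0(j_{1,k}r/R)\,r\,dr=0$, so the constraint reduces to $c_0=\bar u(1-\chi_k)$ and yields the explicit formula (\ref{bifurcation}). The admissible range of $\varepsilon$ is then dictated by $u^{(k)}_\varepsilon\geq 0$: since $J_0(j_{1,k}r/R)$ attains its maximum $1$ at $r=0$ and its minimum $J_0(j_{1,1})<0$ at the first interior extremum (the subsequent oscillations have strictly smaller amplitude), the non-negativity of $\bar u+\varepsilon\chi_k J_0(j_{1,k}r/R)$ gives exactly the two-sided bound $-\bar u/\chi_k\leq \varepsilon\leq \bar u/(-J_0(j_{1,1})\chi_k)$.

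I do not anticipate a serious obstacle. The one place worth handling carefully is the passage from $(ru(u-\chi v)_r)_r=0$ to $u=\chi v+c_0$, which uses both the regularity assumption $u\in C^0([0,R])$ and the boundary condition at the origin to eliminate a potentially singular $1/r$ term; once that step is clean, the remainder is a direct appeal to the Bessel expansion of the radial Neumann spectrum and a bookkeeping calculation for the admissible $\varepsilon$.
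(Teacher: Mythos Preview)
Your proposal is correct and follows essentially the same argument as the paper: assume $u>0$ to reduce to $u-\chi v=\text{const}$, substitute into the $v$-equation to obtain a radial Neumann eigenvalue problem for $-\Delta_r$, and conclude $\chi=\chi_k$ for any nonconstant solution, with the explicit family (\ref{bifurcation}) read off from the Bessel eigenfunctions. You are somewhat more explicit than the paper about the integration step at the origin, the determination of $c_0$ via the mass constraint, and the justification of the $\varepsilon$-range through the extrema of $J_0$, but the route is the same.
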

It is perhaps worthwhile mentioning that one can apply bifurcation theory to establish nonconstant solutions of (\ref{ss}) out of $(\bar u,\bar v)$.  To see this, let us rewrite it into the abstract form $\mathcal F(u,v,\chi)=0$ by treating $\chi$ as the bifurcation parameter
\[
\mathcal{F}(u,v,\chi)=
\begin{pmatrix}
(ru(u-\chi v)_r)_r \\
v_{rr}+\frac{1}{r}v_r-v+u\\
\int_{B_0(R)}ud\textbf{x}-M
\end{pmatrix},\mathcal X\times\mathcal X\times\mathbb R\rightarrow \mathcal Y\times \mathcal Y\times\mathbb R,
\]
with $\mathcal{X}:=\{w \in H^2((0,R)) \vert w'(0)=w'(R)=0\}$ and $\mathcal Y:=L^2((0,R))$.  Then it is not difficult to show that its Fr$\acute{\text{e}}$chet derivative $D _{(u,v)} \mathcal F(u_0,v_0,\chi)$ is a Fredholm operator with zero index for any $(u_0,v_0)$ in a small neighbourhood of $(\bar u,\bar v)$, and $D_{(u,v)}\mathcal F(\bar u,\bar v,\chi)$ satisfies the so--called transversality condition.  Therefore by the well-known Crandall-Rabinowtiz theorem on the bifurcation from simple eigenvalue \cite{CR,SW}, bifurcation occurs at $(\bar u,\bar v,\chi_k)$ for each $k\in\mathbb N^+$, and there exist a constant $\delta>0$ and continuous functions $(u_k(r,s),v_k(r,s),\chi_k(s)): s\in(-\delta, \delta) \rightarrow \mathcal X\times\mathcal X\times \mathbb R^+$ such that any solution of (\ref{ss}) around $(\bar u,\bar v,\chi_k)$ must be of the form $(u_k,v_k)=(\bar u,\bar v)+s(\chi_k,1)J_0(j_{1,k}r/R)+\mathcal O(s^2), \chi_k(s)=\chi_k+O(s)$.  Indeed, one can further show by fitting this solution with the equations of (\ref{ss}) that both $\mathcal O(s^2)$ and $O(s)$ are identically zeros, hence this bifurcating expansion is reduced to (\ref{bifurcation}), which presents the unique solution around $(\bar u,\bar v,\chi_k)$.  Note that the bifurcation solution has a small amplitude when $\chi$ is around $\chi_1$, therefore to look for large amplitude solutions one needs to study (\ref{ss}) with large $\chi$, when the bifurcation branches are far way from $(\bar u,\bar v,\chi_1)$.  However, the abstract global bifurcation does not apply anymore due to the curse of diffusion degeneracy.  In this paper, we obtain the explicit formulas for solutions of (\ref{ss}), thanks to which the global bifurcation diagrams naturally emerge; moreover, among others our results indicate that each branch is neither pitch-fork nor transcritical locally, and its continuum connects the bifurcation solutions with the stationary solutions that are compactly supported.

\subsection{Paper Organization}
The rest part of this paper is organized as follows.  In Section \ref{section2}, for each $\chi>\chi_1$ we first obtain explicit solutions of (\ref{ss}) such that $u$ is radially monotone decreasing/increasing within its support.  This gives rise to the so-called inner/outer ring solutions.  Moreover, we show that as $\chi$ tends to infinity, both solutions converge to a Dirac-delta function, the former centered at the origin and the latter centered on the boundary.  In Section \ref{section3}, we study two types of non--monotone solutions, i.e., the so-called Mexican-hat and Volcano solutions.  Our results readily imply that there are infinitely many radial solutions once $\chi>\chi_2$.  Asymptotic behaviors of these solutions are also established in the large limit of chemotaxis rate.  Section \ref{section4} is an extension of the previous sections from simple and lower modes to complex and higher modes for large $\chi$.  In particular, for an arbitrarily given but fixed $\chi$, we classify all solutions of (\ref{ss}) in terms of the size of this parameter.  In simple words, an intense chemotaxis gives rise to airy patterns that have a bright central region or circle in the middle, surrounded by a sequence of concentric rings.  As an immediate consequence of the explicit solutions, section \ref{section5} is devoted to the analysis of (\ref{ss}) over the whole space $\mathbb R^2$.  We show that the problem in the whole space has a unique radial solution which is to be explicitly given; moreover, this solution is radially monotone with $u$ being compactly supported in a disk.  Thanks to the radial symmetry result in \cite{CHVYInvent}, our results imply that this solution is actually the only stationary solution to (\ref{01}) over the whole space $\mathbb R^2$.  In Section \ref{section6}, we calculate the free energies of the stationary solutions obtained above.  Our results indicate that the inner ring has the least energy among all steady states, while the constant solution has the largest energy.  Moreover, we provide some numerical evidence on the existence of non-radial stationary solutions of (\ref{01}), whereas the theoretical analysis is restricted to the radial setting.  Finally, we include in Section \ref{section7} several important facts needed for the previous analysis.

\section{Radially Monotone Solutions}\label{section2}

Lemma \ref{lemma11} indicates that $u$ is compactly supported in $[0,R]$ whenever $\chi>\chi_1$, $\neq \chi_k$.  In this section, we will construct explicit radially decreasing and increasing solutions of (\ref{01}), which we call the (single) inner ring solution and the outer ring solution, respectively.  These radially monotone solutions serve as blocks that we use to construct (all) the non-monotone solutions of (\ref{ss}) other than those obtained in (\ref{bifurcation}).  We then proceed to study their qualitative and quantitative properties with respect to the size of chemotaxis rate.  Before proceeding, we first illustrate some of our main results in this section in Figure \ref{branch1}.
\begin{figure}[h!]
\vspace{-3mm}  \centering
  \includegraphics[width=1\textwidth]{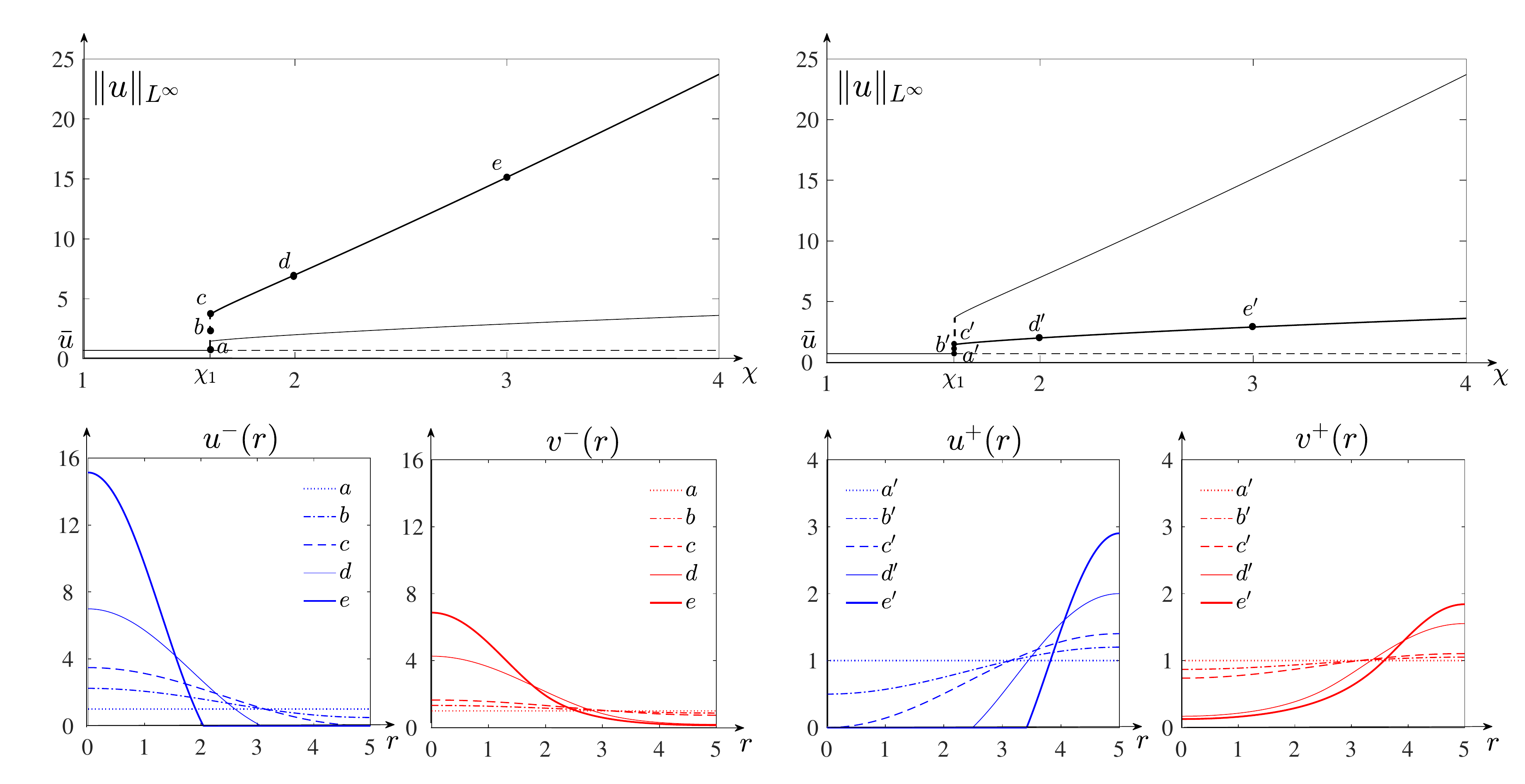}\vspace{-5mm}
  \caption{The local bifurcation branches (solutions) at $\chi=\chi_1$ and their global continuums.   For each $\chi>\chi_1$, (\ref{ss}) has a unique (pair) of monotone solutions and $u$ must be compactly supported.  \textbf{Left Column:}  When $\chi=\chi_1$, (\ref{bifurcation}) for an interval of $\varepsilon$ there gives a family of monotone decreasing solutions for (\ref{ss}).  This is represented by the vertical (dashed) line over $\chi_1$, and three such positive solutions $a,b$ and $c$ are plotted.  As $\chi$ surpasses $\chi_1$, solution $u$ becomes compactly supported and it stays so along the global branch.  \textbf{Right Column}:  Similarly, the positive solutions in (\ref{bifurcation}) are plotted at $a',b'$ and $c'$; moreover, for each $\chi>\chi_1$, outer ring $u$ also stays compactly supported along the global branch.}\label{branch1}
\end{figure}

\subsection{Inner Ring Solution}
We first look for the radially decreasing solution such that $u$ is supported in $[0,r_1)$ with $r_1<R$ to be determined.  We denote this radial solution as $(u^-,v^-)$ and call it the inner ring solution or inner ring for brevity, since $u^-$ is configured as an inner ring supported over the disk $B_0(r_1)$ for the original problem (\ref{01}).  We shall see that this extends the first bifurcation branch given by (\ref{bifurcation}) at $\varepsilon=\bar u/(-J_0(j_{1,1})\chi_1)$.  Now that $u^->0$ in $[0,r_1)$, there exists some constant $\bar{C}$ to be determined such that $u^- -\chi v^-=\bar{C}$ for $r\in[0,r_1)$ and $u\equiv 0$ for $r\in[r_1,R]$, hence the $v$--equation implies
\begin{equation}\label{22}
\left\{\begin{array}{ll}
v^-_{rr}+\frac{1}{r}v^-_r+(\chi-1)v^-+\bar{C} =0,& r\in (0,r_1),\\
v^-_{rr}+\frac{1}{r}v_r-v=0,& r\in[r_1,R],\\
v^-_r(r)=0,&r=0,R.
\end{array}
\right.
\end{equation}
Let us denote $\omega:=\sqrt{\chi-1}$, then solving (\ref{22}) gives in terms of constants $C_i$ and $\bar C$
\begin{equation}\label{ir}
v^-(r)=\left\{\begin{array}{ll}
C_1J_0(\omega r)-\frac{\bar{C}}{\chi-1},&r\in (0,r_1),\\
C_2T_0(r;R),&r\in(r_1,R),
\end{array}
\right.
\end{equation}
where $J_0$ is the Bessel function of the first kind and $T_0$ is the following compound Bessel function
\begin{equation}\label{T0}
 T_0(r; R):=K_1(R)  I_0(r)+I_1(R)  K_0(r),
\end{equation}
with $I_\alpha$ and $K_\alpha$ ($\alpha=0,1$) being modified Bessel functions of the first and second kind.

To determine $C_1,C_2$ and $\bar C$ in \eqref{ir}, we first find $r_1$ by enforcing the continuities of $v'(r)$ and $v''(r)$ at $r=r_1$
\[
\left\{\begin{array}{ll}
-C_1\omega J_1(\omega r_1)=C_2T_1(r_1;R),\\
-C_1\omega^2\Big(J_0(\omega r_1)-\frac{J_1(\omega r_1)}{\omega r_1}\Big)=C_2 \Big(T_0(r_1;R)-\frac{T_1(r_1;R)}{r_1}\Big),
\end{array}
\right.
\]
where $T_1(r; R):=\partial_r T_0(r; R)=K_1(R)  I_1(r)-I_1(R)  K_1(r)$.  Note that we always have $T_0(r;R)>0$ and $T_1(r;R)<0$ in $[0,R]$.  Therefore these identities hold if and only if $r_1$ is a root of the algebraic equation
\begin{equation}\label{23}
f(r_1; \omega,R):=\frac{\omega J_0(\omega r_1)}{J_1(\omega r_1)}-\frac{T_0(r_1; R)}{T_1(r_1; R)}=0.
\end{equation}
We now give the following lemma which promises the solvability of (\ref{23}) for some $r_1\in(\frac{j_{0,1}}{\omega},\frac{j_{1,1}}{\omega})$.
\begin{lemma}\label{lemma21}
For each $\chi>\chi_1$ the function $f(r;\omega,R)$ in (\ref{23}) admits a unique root $r_1$ in $(\frac{j_{0,1}}{\omega},\frac{j_{1,1}}{\omega})$, where $j_{0,1}\approx 2.4048$ and $j_{1,1}\approx 3.8317$ are the first positive roots of the Bessel functions $J_0$ and $J_1$.
\end{lemma}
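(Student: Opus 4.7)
The plan is to prove existence of a root by the Intermediate Value Theorem applied to $f(\,\cdot\,;\omega,R)$ on $(j_{0,1}/\omega,\,j_{1,1}/\omega)$, and to establish uniqueness by showing that at every zero $r_*$ of $f$ in this interval one has $f'(r_*)<0$, so that no second zero can occur.

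For the existence step, I would first record two auxiliary sign facts on $(0,R)$: $T_0(r;R)>0$ and $T_1(r;R)<0$. The first is immediate from positivity of $K_1(R),\,I_1(R),\,I_0,\,K_0$, while the second follows from the monotonicity of $r\mapsto I_1(r)/K_1(r)$, which gives $I_1(R)K_1(r)>K_1(R)I_1(r)$ for $r<R$ (a check at $r\to 0^+$, where $K_1(r)\to+\infty$, confirms the sign). With these in hand, at the left endpoint $r=j_{0,1}/\omega$ the first term of $f$ vanishes (since $J_0(j_{0,1})=0$ and $J_1(j_{0,1})>0$) while $-T_0/T_1>0$, so $f(j_{0,1}/\omega)>0$. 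The hypothesis $\chi>\chi_1$ translates to $\omega>j_{1,1}/R$, which ensures $j_{1,1}/\omega<R$ and hence that $T_0/T_1$ remains finite throughout the interval. As $r\to(j_{1,1}/\omega)^-$, one has $J_1(\omega r)\to 0^+$ while $J_0(\omega r)\to J_0(j_{1,1})<0$ (using $j_{0,1}<j_{1,1}<j_{0,2}$), so $\omega J_0/J_1\to-\infty$ and therefore $f(r)\to-\infty$. IVT then produces a root.

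For uniqueness, I would differentiate $f$ using the Bessel identities $J_0'=-J_1$, $(xJ_1(x))'=xJ_0(x)$ together with the modified-Bessel analogue $T_1'=T_0-T_1/r$. A direct computation yields the clean formula
\[
f'(r)=-(\omega^2+1)-\omega^2\,\frac{J_0(\omega r)^2}{J_1(\omega r)^2}+\frac{T_0(r;R)^2}{T_1(r;R)^2}+\frac{f(r)}{r}.
\]
At any zero $r_*$, squaring the identity $\omega J_0/J_1=T_0/T_1$ gives $\omega^2 J_0^2/J_1^2=T_0^2/T_1^2$, so the two middle terms cancel and the last term vanishes, leaving $f'(r_*)=-(\omega^2+1)<0$. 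Hence $f$ decreases strictly through every zero, and combined with the endpoint signs this rules out a second zero: between two consecutive roots, $f$ would have to return from negative values back to $0$, forcing $f'\ge 0$ at the later root, a contradiction.

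The main obstacle is recognising the derivative identity above; once it is written down, the cancellation at a root is a one-line check and everything else is routine. The only secondary technicality is the sign of $T_1$ on $(0,R)$, which I would dispose of via the monotonicity of $I_1/K_1$ as indicated.
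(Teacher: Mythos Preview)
Your proposal is correct and follows essentially the same route as the paper. Both arguments establish existence via the Intermediate Value Theorem using the same endpoint analysis, and both prove uniqueness by computing $f'$ and observing that $f'(r_*)=-(\omega^2+1)<0$ at every zero $r_*$; your expanded derivative formula is algebraically equivalent to the paper's factored form $f_r=-(\omega^2+1)+f\cdot\big(-\tfrac{\omega J_0}{J_1}-\tfrac{T_0}{T_1}+\tfrac{1}{r}\big)$, and the contradiction closing the uniqueness argument is the same in substance.
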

\begin{proof}
To prove this, we first see that $f(r; \omega,R)>0$ for all $r\in[0,\frac{j_{0,1}}{\omega}]$ hence (\ref{23}) admits no root in this interval.  On the other hand, one can find that $f\Big(\frac{j_{0,1}}{\omega};\omega,R\Big)=-\frac{T_0(\frac{j_{0,1}}{\omega}; R)}{T_1(\frac{j_{0,1}}{\omega}; R)}>0$ and $f\Big(\Big(\frac{j_{1,1}}{\omega}\Big)^-;\omega,R\Big)=-\infty$, therefore there exists at least one root $r_1\in (\frac{j_{0,1}}{\omega},\frac{j_{1,1}}{\omega})$ since $f(r;\omega,R)\in C^\infty((\frac{j_{0,1}}{\omega},\frac{j_{1,1}}{\omega}))$.  To show the uniqueness of $r_1$, we calculate
 \begin{equation}\label{24}
 f_r(r;\omega,R)= -\omega^2-1+f(r;\omega,R)\overbrace{\left(-\frac{\omega J_0(\omega r)}{J_1(\omega r)}-\frac{T_0(r; R)}{T_1(r; R)}+\frac{1}{r} \right)}^{>0 \text{~for any~}r\in (\frac{j_{0,1}}{\omega},\frac{j_{1,1}}{\omega})}.
 \end{equation}
Now we argue by contradiction and assume that $\tilde{r_1}$ and $\tilde{r_2}$ are two ordered adjacent roots of $f(r;\omega, R)$ in $(\frac{j_{0,1}}{\omega},\frac{j_{1,1}}{\omega})$, i.e., $f(r;\omega,R)$ is of one sign in $(\tilde r_1,\tilde r_2)$.  This implies $f_r(\tilde{r_1};\omega,R) f_r(\tilde {r_2};\omega,R)\leq 0$ thanks to the continuity of $f_r$; however, $f_r(\tilde{r_1};\omega,R)  f_r(\tilde{r_2};\omega,R)=(\omega^2+1)^2>1$ according to (\ref{24}), which is a contradiction.  Therefore $f(r;\omega, R)$ admits a unique root $r_1\in(\frac{j_{0,1}}{\omega},\frac{j_{1,1}}{\omega})$ as claimed.
\end{proof}

\begin{remark}\label{remark21}
For $\chi \in (\chi_1,\chi_2)$, Lemma \ref{lemma21} states that there exists a unique $r_1$ such that $f(r_1;\omega, R)=0$ and this root $r_1\in (\frac{j_{0,1}}{\omega},\frac{j_{1,1}}{\omega})$; however, for $\chi \in[\chi_2,\infty)$, the function $f(r;\omega,R)$ admits multiple roots besides $r_1$. For instance, one can always find $\bar{r}_2\in (\frac{j_{0,2}}{\omega},\frac{j_{1,2}}{\omega})$ such that $f(\bar{r}_2;\omega,R)=0$; indeed in each interval $(\frac{j_{0,k}}{\omega},\frac{j_{1,k}}{\omega})$ one can find a unique $\bar r_k$ such that $f(\bar{r}_k;\omega,R)=0$.  However, all these roots are ruled out since we look for non--negative solutions.  If not, say we choose the root $\bar r_2$ as the size of support, then $u(\bar{r}_2)=0$ and the solution takes the form $u^-(r)=\bar{\mathcal A_1}(J_0(\omega r)-J_0(\omega \bar{r}_2))$ for $r\in (0,\bar{r}_2)$, then one can find from the monotonicity of $J_0$ that $u^-(r)<0$ for $r\in(\frac{j_{0,2}}{\omega}, \bar{r}_2)$, and this is not biologically realistic.  Similarly, one can show that the other roots of $f(r;\omega,R)=0$ other than $r_1$ are not applicable, if they exist at all.  Therefore $r_1\in (\frac{j_{0,1}}{\omega},\frac{j_{1,1}}{\omega})$ is always the unique root that we look for in (\ref{ir}).
\end{remark}

We would like to point out that for $\chi<\chi_1$, $f(r;\omega,R)$ is strictly positive in $(0,R)$ hence admits no root.  Indeed, (\ref{ss}) has only constant solution $(\bar u,\bar v)$ in this case according to our discussions above.

With $r_1$ obtained through (\ref{23}) in Lemma \ref{lemma21}, we readily have from the fact $u=\chi v+\bar C$ that $u^-(r)=\mathcal A_1\big(J_0(\omega r)-J_0(\omega r_1)\big)$ in $(0,r_1)$, where $\mathcal A_1$ is a positive constant determined through the conservation of total cell population $2\pi\int_0^{r_1} u^-(r)rdr=2\pi\mathcal A_1\Big(\frac{r}{\omega}J_1(\omega r)-\frac{r^2}{2}J_0 (\omega r_1) \Big)\big|_0^{r_1} =M$ and is explicitly given by
\begin{equation}\label{26}
\mathcal A_1=\frac{M \omega}{\pi \left(2r_1J_1(\omega r_1)-\omega r_1^2J_0(\omega r_1)\right)}\Bigg(=\frac{M}{\pi r_1^2J_2(\omega r_1)}\Bigg).
\end{equation}
Since $\omega r_1 \in (j_{0,1},j_{1,1})$, one has that $\mathcal A_1>0$ is well-defined.

To find $v^-(r)$, we combine (\ref{ir}) with the fact that $u^-=\chi v^-+\bar{C}$ for $r\in[0,r_1]$ to see that $\mathcal A_1\big(J_0(\omega r)-J_0(\omega r_1)\big)=\chi C_1 J_0(\omega r)-\frac{\bar{C} }{\chi-1}$.  Therefore $C_1=\frac{\mathcal A_1}{\chi}$ and $\bar{C} =\mathcal A_1(\chi-1)J_0(\omega r_1)$, i.e., $\bar{C} =\frac{M \omega^2J_0(\omega r_1)}{\pi r_1^2J_2(\omega r_1)}$.  Moreover, by the continuity of $v^-(r)$ at $r=r_1$, we equate $\mathcal A_1 (\frac{1}{\chi}-1) J_0(\omega r_1)$ with $\mathcal B_1 T_0(r_1;R)$ and obtain
\begin{equation}\label{27}
\mathcal B_1=\frac{-M\omega^2J_0(\omega r_1)}{\chi\pi r_1^2J_2(\omega r_1)T_0(r_1;R)}.
\end{equation}
To conclude, we find the solution of (\ref{ss}) described above is explicitly given by
\begin{equation}\label{interring}
u^-(r)=\left\{\begin{array}{ll}
\!\!\mathcal A_1\big(J_0(\omega r)-J_0(\omega r_1)\big),\!\!&\!\!r\in[0,r_1),\\
\!\!0,\!\!&\!\!r\in[r_1,R],
\end{array}
\right.
v^-(r)=\left\{\begin{array}{ll}
\!\!\mathcal A_1\big(\frac{J_0(\omega r)}{\chi}-J_0(\omega r_1)\big),\!\!&\!\!r\in[0,r_1),\\
\!\!\mathcal B_1 T_0(r;R),\!\!&\!\!r\in[r_1,R],
\end{array}
\right.
\end{equation}
where $\mathcal A_1$ and $\mathcal B_1$ are given by (\ref{26}) and (\ref{27}), and $T_0(r;R)$ is (\ref{T0}).  We would like to point out that both $u$ and $v$ achieve the unique maximum at the origin, and are radially decreasing within their support.  We refer to (\ref{interring}) as the inner ring solutions.  One sees that they extend the first (local) bifurcation branch at $\chi=\chi_1$, the global continuum of which extends to infinity and gives rise to a unique inner ring solution for each $\chi>\chi_1$.  This is illustrated in Figure \ref{branch1}.

\subsubsection{Asymptotic behavior of inner ring in the limit of $\chi\rightarrow \infty$}
We now study the effect of large chemotaxis rate $\chi$ on the qualitative behaviors of the steady state $(u^-,v^-)$ given by (\ref{interring}).  First of all, we know from above that the size $r_1$ of support of $u^-$ depends on $\chi$ continuously, then the fact $r_1\in(\frac{j_{0,1}}{\omega},\frac{j_{1,1}}{\omega})$ readily implies that $r_1\rightarrow0^+$ as $\chi\rightarrow \infty$; moreover, one infers from the conservation of total population that $u^-(r)\rightarrow M\delta_0(r)$, where $\delta_0(r)$ is the Dirac delta function centered at the origin, and $v^-(r)\rightarrow \frac{M}{2\pi I_1(R)}T_0(r;R)$ accordingly.  These asymptotic behaviors suggest that intense chemotactic movement contributes the formation of spiky structures of $u$ and $v$; moreover, it gives the global extension of the first bifurcation branch(es) as we shall see in Figure \ref{branch1}.

We can provide some refined asymptotic properties of the profile due to, again, the explicit formula (\ref{interring}).  First of all, we show that the support of $u^-$ shrinks as the chemotaxis intensifies.  With that being said, we will show that $r_1$ is strictly decreasing in $\chi$ as $\frac{\partial r_1}{\partial \omega}<0$ for $\omega>0$.  To this end, one calculates to find
\begin{align}
  \frac{\partial r_1}{\partial \omega} =&-\frac{f_{\omega}(r_1;\omega,R)}{f_r(r_1;\omega,R)}= \frac{f_{\omega}(r_1;\omega,R)}{\omega^2+1} \notag \\
   =&\frac{J_0(\omega r_1)}{(\omega^2+1)J_1(\omega r_1)}+\frac{y_1(r_1;\omega)}{(\omega^2+1)J_1^2(\omega r_1)}, \notag
\end{align}
where $y_1(r;w):=-\omega r J_1^2(\omega r)-\omega r J_0^2(\omega r)+J_0(\omega r)J_1(\omega r)$; by further computations we have $y'_1(r;\omega)=-\frac{J_0(\omega r)J_1(\omega r)}{r}\geq0$ in $(\frac{j_{0,1}}{\omega},\frac{j_{1,1}}{\omega})$, therefore $y_1(r_1;\omega)\leq y_1(\frac{j_{1,1}}{\omega};\omega)<0$ and $\frac{\partial r_1}{\partial \omega}<0$ as expected.

We next show that the maximum of $u^-(r)$ is strictly increasing in $\chi$ with $\frac{\partial \Vert u^-\Vert_{L^\infty}}{\partial \omega}>0$.  Rewrite $\Vert u^- \Vert_{L^\infty}=\frac{M}{\pi}\frac{\omega^2 (1-J_0(z))}{2zJ_1(z)-z^2J_0(z)}$ with $z=\omega r_1$, then we find
\begin{align}
\frac{\pi}{M}\frac{\partial \Vert u^-\Vert_{L^\infty}}{\partial \omega}
=&\frac{2\omega (1-J_0(z))}{2zJ_1(z)-z^2J_0(z)}+\frac{\omega^2J_1(z)(2J_1(z)-z)}{z(2J_1(z)-zJ_0(z))^2}\cdot\frac{\partial z}{\partial\omega}\nonumber
\end{align}
which, in light of the identity $\frac{\partial z}{\partial\omega }=r_1+\omega \frac{\partial r_1}{\partial\omega }$, implies
\begin{align}
\frac{\pi}{M}\frac{\partial \Vert u^- \Vert_{L^\infty}}{\partial \omega}
=&\frac{2\omega(1-J_0(z))(2J_1(z)-zJ_0(z))+\omega zJ_1(z)(2J_1(z)-z)}{z(2J_1(z)-zJ_0(z))^2}  +\frac{\omega^3J_1(z)(2J_1(z)-z)}{z(2J_1(z)-zJ_0(z))^2}\cdot \frac{\partial r_1}{\partial \omega}\nonumber\\
\geq &\frac{2\omega(1-J_0(z))(2J_1(z)-zJ_0(z))+\omega zJ_1(z)(2J_1(z)-z)}{z(2J_1(z)-zJ_0(z))^2},\nonumber
\end{align}
where we have applied the facts $\frac{\partial r_1}{\partial \omega}<0$ and $2J_1(z)\leq z$ for $z\in(j_{0,1},j_{1,1})$ for the inequality.  Let us denote
\[y_2(z):=2(1-J_0(z))(2J_1(z)-zJ_0(z))+z J_1(z)(2J_1(z)-z).\]
Now, in order to prove $\frac{\partial \Vert u\Vert_{L^\infty}}{\partial \omega}>0$, it suffices to show that $y_2(z)>0$ for $z\in(j_{0,1},j_{1,1})$.  By straightforward calculations we find
\begin{align*}
&y_2'(z)=\frac{2zJ_1^2(z)+(z^2-4)J_1(z)+4J_0(z)J_1(z)-2zJ_0^2(z)-z(z^2-2)J_0(z)}{z},\\
&y_2''(z)=\frac{4zJ^2_0(z)-\left(z^3+4z-6z^2J_1(z)+8J_1(z)\right)J_0(z)}{z^2}+\frac{\left(z^4-2z^2+8-8zJ_1(z)+2z^2J_0(z)\right)J_1(z)}{z^2}.
\end{align*}
Thanks to the facts that $|J_0(z)|<1$ and $|J_1(z)|<\frac{1}{\sqrt{2}}$ for $z\in(j_{0,1},j_{1,1})$, a lengthy but straightforward calculation gives $z^3+4z-6z^2J_1(z)+8J_1(z)>0$ and $z^4-2z^2+8-8zJ_1(z)+2z^2J_0(z)>0$, which lead to $y_2''(z)>0$.  This conclusion in conjunction with the results that $y_2(j_{0,1})=(4+2j_{0,1}-j_{0,1}^2) J_1(j_{0,1})>0$ and $y'_2(j_{0,1})=\frac{2j_{0,1}J^2_1(j_{0,1})+(j_{0,1}^2-4)J_1(j_{0,1})}{j_{0,1}}>0$ imply $y'_2(z)>0$ and $y_2(z)>0$ for all $z\in (j_{0,1},j_{1,1})$ as expected.  This finishes the proof.

Before proceeding further, we establish the asymptotic $\Vert u^- \Vert_{L^\infty}=O(\omega^2)$ by first showing that $z:=\omega r_1\rightarrow (j_{0,1})^+$ as $\chi\rightarrow \infty$.  If not, say $\omega r_1\rightarrow$ some $\theta\in (j_{0,1},j_{1,1}]$ as $\chi \to \infty$, then $f(r_1;\omega;R)$ given by (\ref{23}) becomes negative for $\chi$ sufficiently large since $r_1\in (\frac{j_{0,1}}{\omega},\frac{j_{1,1}}{\omega})$.  This is a contradiction hence $z \rightarrow j_{0,1}$ as $\chi\rightarrow \infty$.  Therefore we have that
\[\Vert u^-\Vert_{L^\infty}=\frac{M}{\pi}\frac{\omega^2 (1-J_0(z))}{2zJ_1(z)-z^2J_0(z)}=\frac{M\chi}{2\pi j_{0,1}J_1(j_{0,1})}+O(1) \text{~as~} \chi\rightarrow \infty.\]

Finally, since $r_1\rightarrow 0^+$ and $\omega r_1\rightarrow (j_{0,1})^+$ as $\chi\rightarrow \infty$, we have that $\mathcal A_1\rightarrow \infty$ hence $u^-(r)\rightarrow u_\infty=M \delta_0(r)$ pointwisely in $[0,R]$.  On the other hand, $v^-(r)\rightarrow v_\infty(r)=\mathcal B_\infty T_0(r;R)$ pointwisely in $[0,R]$, where $\mathcal B_\infty=\frac{M}{2\pi I_1(R)}$ thanks to the conservation of mass.

\begin{figure}[h!]\vspace{-5mm}
\centering
\includegraphics[width=1\textwidth]{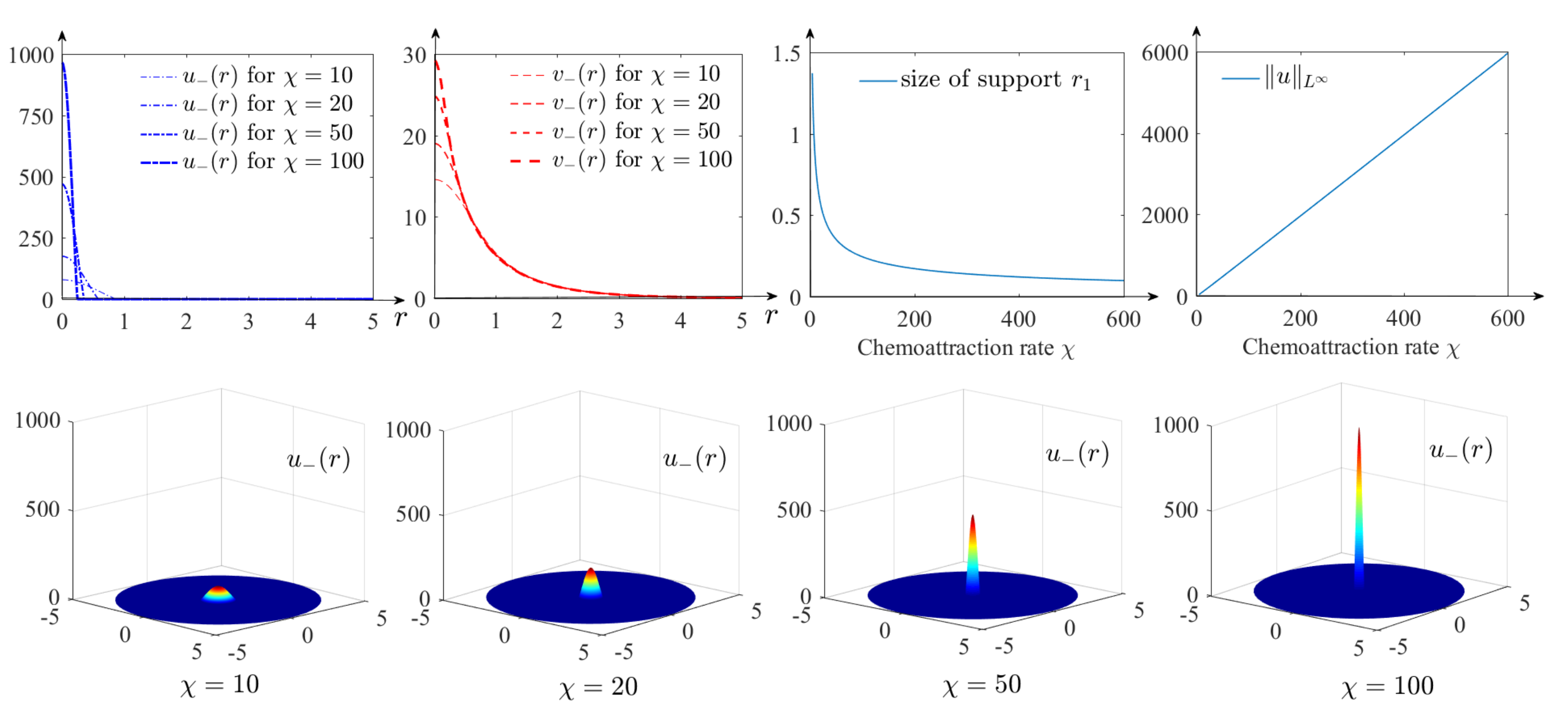}\vspace{-3mm}
\caption{\textbf{Top}: the inner ring solution $(u^-,v^-)$ given by (\ref{interring}) with $M=25\pi$ and $R=5$ for $\chi=10$, 20, 50 and 100.  It is observed that $u^-(r)\rightarrow 25\pi \delta_0(r)$ and $v^-(r)\rightarrow 0.5136 T_0(r;5)$ as $\chi\rightarrow \infty$; moreover, the size $r_1$ of support of $u^-(r)$ shrinks as $O(\frac{1}{\omega})$ or $O(\frac{1}{\sqrt{\chi}})$ and the magnitude of $u^-(r)$ expands as $O(\omega^2)$ or $O(\chi)$.  \textbf{Bottom}: the development of interior spikes in configuration $u$ as $\chi$ expands.  Evidently, a large chemotaxis rate promotes the formation of an interior spike that describes the cellular aggregation.}\label{innerasymptotic}
\end{figure}
Summarizing the facts above, we have proved the following results
\begin{proposition}\label{proposition21}
For each $\chi>\chi_1$, (\ref{ss}) has a solution $(u^-(r),v^-(r))$ explicitly given by (\ref{interring}) in which $u^-(r)$ is supported over a disk $B_0(r_1)$; moreover, we have the following asymptotics of the solutions in the large limit of $\chi$:

(i) $r_1$, the size of support of $u^-$, is monotone decreasing in $\chi$ and $r_1\rightarrow 0^+$ as $\chi\rightarrow \infty$;

(ii) $\max_{\bar B_0(R)}u^-(r)$ is monotone increasing in $\chi$ and $\max_{\bar B_0(R)}u^-(r)=\frac{M \chi}{2\pi j_{0,1}J_1(j_{0,1})}+O(1)$ for $\chi\gg1$;

(iii) $u^-(r)\rightarrow M\delta_0(r)$ and $v^-(r)\rightarrow \frac{M}{2\pi I_1(R)} T_0(r;R)$ pointwisely in $[0,R]$ as $\chi\rightarrow \infty$.
\end{proposition}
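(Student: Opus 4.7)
The plan is to assemble the three claims from the preparatory material preceding the statement, most of which is in fact already carried out in the excerpt; I would organize the proof by part and fill in the remaining gaps.

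For \emph{(i)}, starting from $f(r_1;\omega,R)=0$ and the identity in Lemma \ref{lemma21} that $f_r(r_1;\omega,R)=-(\omega^2+1)$ at any root of $f$ (since the product term in (\ref{24}) vanishes there), implicit differentiation gives $\partial r_1/\partial\omega = f_\omega(r_1;\omega,R)/(\omega^2+1)$. I would then verify $f_\omega<0$ on $(\frac{j_{0,1}}{\omega},\frac{j_{1,1}}{\omega})$: the ratio $J_0(\omega r_1)/J_1(\omega r_1)$ is negative there because $J_0<0$ on $(j_{0,1},j_{0,2})\supset(j_{0,1},j_{1,1})$ while $J_1>0$ on $(0,j_{1,1})$, and the auxiliary quantity $y_1(r_1;\omega)$ introduced above is negative on the same interval since $y_1'\ge 0$ and $y_1(j_{1,1}/\omega;\omega)<0$. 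The sandwich $\frac{j_{0,1}}{\omega}<r_1<\frac{j_{1,1}}{\omega}$ then forces $r_1\to 0^+$ as $\chi\to\infty$.

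For \emph{(ii)}, I set $z=\omega r_1$ and rewrite $\|u^-\|_{L^\infty}=u^-(0)=\mathcal{A}_1(1-J_0(z))=\frac{M\omega^2(1-J_0(z))}{\pi(2zJ_1(z)-z^2 J_0(z))}$. Differentiating in $\omega$ via $\partial z/\partial\omega = r_1+\omega\,\partial r_1/\partial\omega$ and using $\partial r_1/\partial\omega<0$ together with $2J_1(z)<z$ on $(j_{0,1},j_{1,1})$, I reduce monotonicity of $\|u^-\|_{L^\infty}$ to the scalar inequality $y_2(z)>0$ on this interval. I would establish this inequality by checking $y_2(j_{0,1})>0$ and $y_2'(j_{0,1})>0$ directly, and $y_2''(z)>0$ on the interval using the crude bounds $|J_0|<1$ and $|J_1|<1/\sqrt{2}$. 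For the leading-order asymptotic, I argue by contradiction that $z\to j_{0,1}^+$ as $\chi\to\infty$: if $z\to\theta\in(j_{0,1},j_{1,1}]$, then $\omega J_0(z)/J_1(z)\to-\infty$ while $T_0(r_1;R)/T_1(r_1;R)$ remains bounded, contradicting $f(r_1;\omega,R)=0$. Substituting $z\to j_{0,1}$ into the closed form (so $J_0(z)\to 0$) then gives $\|u^-\|_{L^\infty}=\frac{M\chi}{2\pi j_{0,1}J_1(j_{0,1})}+O(1)$.

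For \emph{(iii)}, the convergence $u^-\to M\delta_0$ in the sense of distributions follows immediately from (i) and the conservation of mass: for any fixed $r>0$ eventually $r>r_1$ so $u^-(r)\to 0$ pointwise, while $2\pi\int_0^{r_1}ru^-(r)\,dr=M$ is preserved. For $v^-$, I use the closed form $v^-(r)=\mathcal{B}_1 T_0(r;R)$ valid on $(r_1,R]$ and identify $\mathcal{B}_\infty$ by integrating the $v$-equation $v_{rr}+\frac{1}{r}v_r-v+u=0$ against $r$ over $(0,R)$ with the zero-flux conditions; this yields $2\pi\int_0^R r v^-\,dr=M$, and passing to the limit together with the primitive identity $\int_0^R r T_0(r;R)\,dr=I_1(R)$ (a consequence of $\int r I_0\,dr=rI_1$, $\int r K_0\,dr=-rK_1$, and $\lim_{r\to 0}rK_1(r)=1$) gives $\mathcal{B}_\infty = M/(2\pi I_1(R))$, which is the claimed limit. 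The main obstacle throughout is the positivity of $y_2(z)$ in \emph{(ii)}, which underwrites both the monotonicity of $\|u^-\|_{L^\infty}$ and the identification $z\to j_{0,1}$; everything else reduces to tracking $r_1$ through the characterizing equation $f(r_1;\omega,R)=0$ and to routine Bessel identities.
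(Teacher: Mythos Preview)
Your proposal is correct and follows essentially the same route as the paper: implicit differentiation of $f(r_1;\omega,R)=0$ together with the sign analysis of $y_1$ for part (i); the substitution $z=\omega r_1$, the reduction to $y_2(z)>0$ via $\partial r_1/\partial\omega<0$ and $2J_1(z)<z$, and the convexity argument for $y_2$ in part (ii); and the contradiction showing $z\to j_{0,1}^+$ followed by mass conservation for part (iii). The only minor addition is that you spell out the integral identity $\int_0^R r\,T_0(r;R)\,dr=I_1(R)$ to pin down $\mathcal B_\infty$, which the paper leaves implicit; one small inaccuracy is your closing remark that $y_2>0$ ``underwrites\ldots the identification $z\to j_{0,1}$'' --- that limit is obtained by the separate contradiction argument you already gave, not via $y_2$.
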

Figure \ref{innerasymptotic} presents an illustration on the statements in Proposition \ref{proposition21}.

\subsection{Outer Ring Solution}
Now we look for the other radially monotone solution of (\ref{ss}) such that the support of $u(r)$ is an interval $[R-r_2,R]$ for some $r_2$ to be determined.  This gives rise to a solution supported in an outer ring and we call it the outer ring solution and denote it by $(u^+,v^+)$.  Note that $(u^-(R-r),v^-(R-r))$, the reflection of $(u^-(r),v^-(r))$ about $\frac{R}{2}$, is no longer a solution of the original problem.  This is a strong contrast to the 1D problem \cite{BCR,CCWWZ} and makes the problem intricate as we shall see later.  The idea and procedure of constructing such a solution are the same above, hence we perform necessary calculations for future reference.

Similar as above, in this setting we find that $u^+\equiv 0$ for $r\in[0,R-r_2)$ and $u^+-\chi v^+$ is a constant for $r\in[R-r_2,R]$, where $r_2$ measures the size of support of $u$ and is to be determined.  Solving the $v$-equation in the fashion of (\ref{22}) gives
\begin{equation}\label{29}
v^+(r)=\left\{\begin{array}{ll}
C_3I_0(r),&r\in [0,R-r_2),\\
C_4 S_0(R-r;\omega, R) -\frac{\bar{C}}{\chi-1},&r\in[R-r_2,R],
\end{array}
\right.
\end{equation}
where $S_0$ is the following compound Bessel function
\begin{equation}\label{algeqn}
S_0(r; \omega, R):=Y_1(\omega R)  J_0(\omega(R-r))-J_1(\omega R)  Y_0(\omega(R-r)),
\end{equation}
with $Y_\alpha$ ($\alpha=0,1$) being Bessel function of the second kind.  We match the continuities of $v'(r)$ and $v''(r)$ at $r=R-r_2$ in (\ref{29})
\[
\left\{\begin{array}{ll}
-C_4\omega S_1(r_2;\omega,R)=C_3I_1(R-r_2),\\
-C_4\omega^2\Big(S_0(r_2;\omega,R)-\frac{S_1(r_2;\omega,R)}{\omega (R-r_2)}\Big)=C_3 \Big(I_0(R-r_2)-\frac{I_1(R-r_2)}{R-r_2}\Big),
\end{array}
\right.
\]
and find that $r_2$ must satisfy the following algebraic equation
\begin{equation}\label{210}
f_2(r_2;\omega,R):=\frac{\omega S_0(r_2;\omega,R)}{S_1(r_2;\omega,R)}-\frac{I_0(R-r_2)}{I_1(R-r_2)}=0,
\end{equation}
where $S_1(r; \omega, R):=Y_1(\omega R)  J_1(\omega(R-r))-J_1(\omega R)  Y_1(\omega(R-r))$.  Note that both $S_0$ and $S_1$ are the so--called cylinder functions proposed by Nielsen in \cite{Nielsen}, and both have infinitely many roots that are cross-oscillating with $...<s^{(k)}_0<s^{(k)}_1<s^{(k+1)}_0<s^{(k+1)}_1<...\rightarrow \infty$.  We first give the following result which establishes the existence and uniqueness of $r_2$ for (\ref{210}).
\begin{lemma}\label{lemma22}
Let $\chi_1=\left(\frac{j_{1,1}}{R}\right)^2+1$ be the same as in (\ref{bifvalue}).  Then for each $\chi>\chi_1$, the function $f_2(r;\omega,R)$ in (\ref{210}) has a unique root $r_2$ that lies in $(s_0^{(1)},s_1^{(1)})$, where $s_0^{(k)}$ and $s_1^{(k)}$ are the $k$-th positive root of $S_0(r;\omega,R)$ and $S_1(r;\omega,R)$, respectively.
\end{lemma}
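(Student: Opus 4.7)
The plan is to parallel the strategy used in the proof of Lemma \ref{lemma21}: establish opposite signs of $f_2$ at the two endpoints of $(s_0^{(1)},s_1^{(1)})$ for existence, and derive a differential identity that pins down $f_{2,r}$ at every zero of $f_2$ for uniqueness.

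First I would pin down the signs of $S_0$ and $S_1$ on the working interval. Using the Wronskian relation $J_0(z)Y_1(z)-J_1(z)Y_0(z)=-\frac{2}{\pi z}$, one gets $S_0(0;\omega,R)=-\frac{2}{\pi\omega R}<0$ and $S_1(0;\omega,R)=0$, while a direct computation (see the recurrences below) gives $S_1'(0;\omega,R)=-\omega S_0(0;\omega,R)>0$, so $S_1>0$ for small $r>0$. By the cross-oscillation of the zeros $s_0^{(k)},s_1^{(k)}$, both $S_0$ and $S_1$ keep a definite sign on $(s_0^{(1)},s_1^{(1)})$: $S_0>0$ (having just crossed its first positive zero from below) and $S_1>0$ (its first positive zero has not yet been reached). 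Evaluating $f_2$ at the endpoints then yields
\[
f_2(s_0^{(1)};\omega,R)=-\frac{I_0(R-s_0^{(1)})}{I_1(R-s_0^{(1)})}<0,\qquad \lim_{r\to (s_1^{(1)})^-}f_2(r;\omega,R)=+\infty,
\]
so by the intermediate value theorem and $f_2\in C^\infty((s_0^{(1)},s_1^{(1)}))$, at least one root $r_2$ exists in this interval.

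For uniqueness, I would differentiate $f_2$ using the cylinder-function recurrences $\partial_r S_0=\omega S_1$ and $\partial_r S_1=-\omega S_0+\frac{S_1}{R-r}$ (which follow from $J_0'=-J_1$, $(zJ_1)'=zJ_0$ and the analogous identities for $Y_\alpha$, applied through the chain rule in $z=\omega(R-r)$), together with $I_0'=I_1$ and $(zI_1)'=zI_0$ for the modified-Bessel part. A direct computation yields
\[
f_{2,r}(r;\omega,R)=\omega^2+1 + f_2(r;\omega,R)\cdot\!\left[\frac{\omega S_0(r;\omega,R)}{S_1(r;\omega,R)}+\frac{I_0(R-r)}{I_1(R-r)}-\frac{1}{R-r}\right].
\]
At any zero of $f_2$ this collapses to $f_{2,r}=\omega^2+1=\chi>0$. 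The uniqueness argument then proceeds as in Lemma \ref{lemma21}: if two adjacent zeros $\tilde r_a<\tilde r_b$ existed, $f_2$ would have a fixed sign on $(\tilde r_a,\tilde r_b)$, forcing $f_{2,r}(\tilde r_a)\,f_{2,r}(\tilde r_b)\leq 0$ by continuity, but the identity forces the product to equal $\chi^2>0$, a contradiction.

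The main obstacle is the careful bookkeeping of signs for $S_0$ and $S_1$ on $(s_0^{(1)},s_1^{(1)})$ and the verification that this interval is contained in $(0,R)$, so that $I_0(R-r),I_1(R-r)>0$ throughout; the location of the first nontrivial zero $s_1^{(1)}$ relative to $R$ has to be tied to the assumption $\chi>\chi_1$, i.e., $\omega R>j_{1,1}$. The derivation of the differential identity itself is mechanical once the correct recurrences for $S_0,S_1$ are established, so the sign bookkeeping, together with ruling out extraneous roots outside $(s_0^{(1)},s_1^{(1)})$ by an argument analogous to Remark \ref{remark21} (noting that roots in further intervals would force $u^+$ to change sign and thus violate non-negativity), is where the real work lies.
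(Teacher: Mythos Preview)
Your approach is essentially the same as the paper's: the same endpoint sign analysis on $(s_0^{(1)},s_1^{(1)})$, the same differential identity $\partial_r f_2=\omega^2+1+f_2\cdot[\cdots]$, and the same adjacent-roots contradiction for uniqueness. The step you flag as ``the main obstacle''---showing $s_1^{(1)}\in(0,R)$ precisely when $\omega R>j_{1,1}$---is indeed where the paper invests its effort, handling it via the strict monotonicity of $r\mapsto J_1(r)/Y_1(r)$ on each interval between consecutive zeros of $Y_1$ (a consequence of the Wronskian identity you already cite), together with a short case split on whether $Y_1(\omega R)=0$; the paper also explicitly checks $f_2<0$ on $(0,s_0^{(1)})$, which you should include rather than defer to a Remark~\ref{remark21}-style argument, since that remark addresses positivity of $u$ rather than sign of $f_2$.
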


\begin{proof}It suffices to show that $S_1(r;\omega,R)$ admits one root in $(0,R)$ if and only if $\chi>\chi_1$.  To prove the if part, we recall the following identity from Lommel \cite{Lommel} (page 106)
 \begin{equation}\label{211}
   Y_1(s) J_0(s)-J_1(s) Y_0(s)=-\frac{2}{\pi s},
   \end{equation}
which readily implies that $Y_1(s)$ and $J_1(s)$ have distinct roots.  Our discussion is divided into the following two cases. \emph{Case 1}: if $Y_1(\omega R)=0$, then we have that $\omega R\geq y_{1,2}$ since $\omega R>j_{1,1}>y_{1,1}$, where $y_{1,k}$ is the $k$-th positive root of $Y_1(s)$ with
\[y_{1,1}\approx 2.1971, y_{1,2}\approx 5.4296, y_{1,3}\approx 8.5960, y_{1,4}\approx 11.7491, y_{1,5}\approx 14.8974,... \]
to name the first few explicit values.  Write $\tau_1:=R-\frac{y_{1,1}}{\omega}\in(0,R)$, then one has that $\tau_1$ is the root of $S_1(r;\omega,R)$ since $S_1(\tau_1;\omega,R)=-J_1(\omega R) Y_1(y_{1,1})=0$; \emph{Case 2}: if $Y_1(\omega R)\neq 0$, then we rewrite $S_1(r;\omega,R)$ as
\[S_1(r;\omega,R)=Y_1(\omega R)Y_1(\omega(R-r)) \left(\frac{J_1(\omega(R-r))}{Y_1(\omega(R-r))}-\frac{J_1(\omega R)}{Y_1(\omega R)}\right).\]
Denote $y_3(r):=\frac{J_1(r)}{Y_1(r)}$.  A direct computation using (\ref{211}) gives that $y'_3(r)=-\frac{2}{\pi rY_1^2(r)}<0$ hence $y_3(r)$ is monotone decreasing in $\cup_{k\in\mathbb N} (y_{1,k},y_{1,k+1})$ with $y_{1,0}:=0$. Moreover, since $y_3(0)=y_3(j_{1,k})=0$ and $y_3(y^{\pm}_{1,k})=\pm\infty$, we have that whenever $\omega R>j_{1,1}$ there exists at least one $\tau_2\in(0,R)$ such that
\[\frac{J_1(\omega(R-\tau_2))}{Y_1(\omega(R-\tau_2))}=\frac{J_1(\omega R)}{Y_1(\omega R)}\]
and $Y_1(\omega(R-\tau_2))\neq 0$, therefore $S_1(\tau_2;\omega,R)=0$ as expected.  This verifies the claim in both cases.

Now we prove the only if part.  Suppose that there exists $s_1^{(1)}\in (0,R)$, then the ``only if" naturally holds when $Y_1(\omega R)=0$ since in this case $S_1(s_1^{(1)};\omega,R)=-J_1(\omega R)Y_1(\omega(R-s_1^{(1)}))=0$, which readily implies $Y_1(\omega(R-s_1^{(1)}))=0$ and $\omega R\geq y_{1,2}> j_{1,1}$.  When $Y_1(\omega R)\neq0$, with $Y_1(s)$ and $J_1(s)$ having distinctive roots due to (\ref{211}), we must have $Y_1(\omega(R-s_1^{(1)}))\neq0$ hence $s_1^{(1)}$ satisfies $y_3(\omega(R-s_1^{(1)}))=y_3(\omega R)$, which gives rise to $\omega R >j_{1,1}$ by the same arguments as above.

It remains to show that the existence of $s_1^{(1)}$ and $r_2$ in $(0,R)$ are equivalent.  Since $\partial_rS_0(r;\omega,R)=\omega S_1(r;\omega,R)$ and $S_0(0;\omega,R)<S_1(0;\omega,R)=0$, it is straightforward to see that $s_0^{(k)}<s_1^{(k)}$.  If $s_1^{(1)}\in (0,R)$, we claim that (\ref{210}) admits no positive root in $(0,s_0^{(1)})$.  Recall that
\[f_2(r;\omega,R):=\frac{\omega S_0(r;\omega,R)}{S_1(r;\omega,R)}-\frac{I_0(R-r)}{I_1(R-r)},\]
then $f_2(r;\omega,R)<0$ in $(0,s_0^{(1)})$.  For $r\in(s_0^{(1)},s_1^{(1)})$, straightforward calculations yield
\begin{equation}\label{213}
\partial_r  f_2(r;\omega,R) =\omega^2+1+f_2(r;\omega,R) \overbrace{\left(\frac{\omega S_0(r;\omega,R)}{S_1(r;\omega,R)}+\frac{I_0(R-r)}{I_1(R-r)} -\frac{1}{R-r}\right) }^{>0},
\end{equation}
where ``$>0$" holds since $sI_0(s)>I_1(s)$ for any $s>0$.  Note that $f_2(s_0^{(1)};\omega,R)=-\frac{I_0(R-s_0^{(1)})}{I_1(R-s_0^{(1)})}<0$ and $f_2((s_1^{(1)})^-;\omega,R)=+\infty$, therefore one finds that $f_2(r;\omega,R)$ admits at least one root in $(s_0^{(1)},s_1^{(1)})$, whereas the uniqueness can be verified by the same argument for Lemma \ref{lemma21}.  We further note that (\ref{210}) admits no root if $s_1^{(1)}\notin (0,R)$.  For $r\in(s_0^{(1)},R)$, we can readily see that $f_2(s_0^{(1)};\omega,R)<0$ and $f_2(R^-;\omega,R)=-\infty$, and since (\ref{213}) implies that a critical point $\xi_2$ exists only if $f_2(\xi_2;\omega,R)<0$, therefore $f_2(r;\omega,R)<0$ in $(s_0^{(1)},R)$ as a consequence.  The proof completes.
\end{proof}

With the support size $r_2$ obtained in (\ref{210}), we have that $u^+(r)\equiv 0$ for $r\in[0,R-r_2)$ and $u^+(r)=\mathcal{A}_2 \Big(S_0(R-r;\omega,R)-S_0(r_2;\omega,R)\Big)$ for $r\in[R-r_2,R]$, where $\mathcal A_2$, determined by the conservation of total cell population $2\pi\int_{R-r_2}^{R} u^+(r)rdr=M$, is explicitly given by
\begin{equation}\label{215}
\mathcal A_2=-\frac{M \omega}{\pi\big(2(R-r_2)S_1(r_2;\omega,R)+\omega r_2 (2R-r_2)S_0(r_2;\omega,R)\big)}.
\end{equation}
Note that $r_2 \in (s_{0}^{(1)},s_{1}^{(1)})$, then it follows that $\mathcal A_2<0$ is well-defined.

To find $v^+(r)$, we recall that $u^+=\chi v^++\bar{C}$ for $r\in[R-r_2,R]$ for some constant $\bar C$ to be determined (not the same as in the previous section) and
\[\mathcal{A}_2 \Big(S_0(R-r;\omega,R)-S_0(r_2;\omega,R)\Big)=\chi C_4 S_0(R-r;\omega,R)-\frac{\bar{C} }{\chi-1},\]
therefore $C_4=\frac{\mathcal A_2}{\chi}$ and $\bar{C} =\mathcal A_2(\chi-1)S_0(r_2;\omega,R)$, or $\bar{C} =-\frac{I_0(R-r_2)M \omega^2}{\pi\big(2(R-r_2)I_1(R-r_2)+ r_2 (2R-r_2)I_0(R-r_2)\big)}$ to be specific; moreover, the continuity of $v^+$ at $r=R-r_2$ implies $\mathcal A_2 (\frac{1}{\chi}-1) S_0(r_2;\omega,R)=\mathcal B_2 I_0(R-r_2)$ hence
\begin{equation}\label{217}
\mathcal B_2=\frac{M\omega^2}{\pi \chi \Big(2(R-r_2)I_1(R-r_2)+ r_2(2R-r_2)I_0(R-r_2)\Big)}.
\end{equation}

In summary, we obtain the following explicit formulas of the desired radially decreasing solution $(u^+(r),v^+(r))$
\begin{equation}\label{outerring}
u^+(r)=\left\{\begin{array}{ll}
\!\!\!0,&\!\!\!r\in[0,R-r_2),\\
\!\!\!\mathcal{A}_2 \Big(S_0(R-r)-S_0(r_2)\Big),&\!\!\!r\in[R-r_2,R],
\end{array}
\right.
\!\!\!v^+(r)=\left\{\begin{array}{ll}
\!\!\!\mathcal B_2 I_0(r),&\!\!\!r\in[0,R-r_2),\\
\!\!\!\mathcal A_2\Big(\frac{S_0(R-r)}{\chi}-S_0(r_2)\Big),&\!\!\!r\in[R-r_2,R],
\end{array}
\right.
\end{equation}
where $S_0$ is the compound Bessel function in (\ref{algeqn}), and $\mathcal A_2$ and $\mathcal B_2$ are given by (\ref{215}) and (\ref{217}).  The steady state given by (\ref{outerring}) is called the outer ring solution or outer ring for short.

\begin{remark}\label{remark22}
 Thanks to the fact that the zeros of $J_1(s)$ and $Y_1(s)$ are cross-oscillating such that $...<y_{1,k}<j_{1,k}<y_{1,k+1}<j_{1,k+1}<...$, $k\in\mathbb N^+$, one can show that if $\chi>\chi_k$, $S_1(r;\omega,R)$ admits at least $k$ positive roots in $(0,R)$, while there exists a unique root of (\ref{210}) in each interval $(s_0^{(k)},s_1^{(k)})$.  However, by the same arguments in Remark \ref{remark21}, one must restrict the unique root $r_2\in (s_0^{(1)},s_1^{(1)})$ as expected to guarantee the positivity of $u^+(r)$ in $[R-r_2,R]$.
\end{remark}

\subsubsection{Asymptotic behavior of outer ring solution in the limit of $\chi\rightarrow \infty$}
Now we study the asymptotic behaviors of the outer ring solutions in the limit of large chemotaxis rate.  Before proceeding further, let us introduce the following compound Bessel functions
\begin{align}
  &\mathcal V_0(r; \omega, R):=Y_0(\omega R)J_0(\omega(R-r))-J_0(\omega R)Y_0(\omega(R-r)),\label{V0} \\
 & \mathcal V_1(r; \omega, R):=Y_0(\omega R)J_1(\omega(R-r))-J_0(\omega R)Y_1(\omega(R-r)).\label{V1}
\end{align}
Then one finds by straightforward calculations
\begin{align}
   &\frac{\partial S_0(r;\omega,R)}{\partial \omega}=R\mathcal V_0(r;\omega,R)-\frac{S_0(r;\omega,R)}{\omega}-(R-r)S_1(r;\omega,R), \notag\\
   & \frac{\partial S_1(r;\omega,R)}{\partial \omega}=R\mathcal V_1(r;\omega,R)-\frac{2S_1(r;\omega,R)}{\omega}+(R-r)S_0(r;\omega,R),\notag
\end{align}
and infer from (\ref{211}) that
\begin{equation}\label{219}
  S_0(r;\omega,R) \mathcal V_1(r;\omega,R)-\mathcal V_0(r;\omega,R)  S_1(r;\omega,R)=-\frac{4}{\pi^2\omega^2R(R-r)}.
\end{equation}

Similar as above, we first claim that $r_2$, the size of support of $u^+(r)$, shrinks as $\chi$ increases.  According to our previous discussions, we know from (\ref{210}) that $r_2\in(s_0^{(1)},s_1^{(1)})$ is uniquely determined for each $\chi>\chi_1$ and it depends on $\chi$ continuously.  We claim that $\frac{\partial r_2}{\partial \omega}<0$.  Indeed, since $f_2(r_2;\omega,R)=0$, we have from (\ref{219})
\begin{align}
  \frac{\partial r_2}{\partial \omega} =&-\frac{\partial_{\omega}f_2(r_2;\omega,R)}{\partial_rf_2(r_2;\omega,R)}= -\frac{\partial_{\omega}f_2(r_2;\omega,R)}{\omega^2+1} \notag\\
   =&-\frac{S_0(r_2;\omega,R)S_1(r_2;\omega,R)+\omega S_1(r_2;\omega,R)  \frac{\partial S_0(r_2;\omega,R)}{\partial \omega}-\omega S_0(r_2;\omega,R)  \frac{\partial S_1(r_2;\omega,R)}{\partial \omega}}{(\omega^2+1)S^2_1(r_2;\omega,R)} \notag \\
   =&-\frac{\frac{4}{\pi^2}-\omega^2(R-r_2)^2\left(S_0^2(r_2;\omega,R)+S_1^2(r_2;\omega,R)\right)+2\omega(R-r_2)S_0(r_2;\omega,R)S_1(r_2;\omega,R)}{\omega(\omega^2+1)(R-r_2)S_1^2(r_2;\omega,R)}\notag\\
   =&:-\frac{y_4(r_2;\omega,R)}{\omega(\omega^2+1)(R-r_2)S_1^2(r_2;\omega,R)}.\notag
\end{align}
Some straightforward calculations give $y'_4(r;\omega,R)=2\omega^2(R-r)S_1^2(r;\omega,R)>0$, therefore $y_4(r_2;\omega,R)>y_4(0;\omega,R)=0$ and $\frac{\partial r_2}{\partial \omega}<0$ as claimed.  This fact implies that the size of support shrinks as chemotaxis rate $\chi$ increases.

We next show that $r_2\rightarrow 0^+$ as $\chi\rightarrow \infty$.  To prove this, we need some refined estimates for $s_0^{(1)}$ and $s_1^{(1)}$.  For this purpose let us introduce for $s \in(0,\omega R)$
\begin{equation}\label{220}
  \mathcal {C}_n(s):=J_n(s) \cos\alpha-Y_n(s)\sin\alpha, n=1,2,
\end{equation}
with $\alpha:=\arctan\frac{J_1(\omega R)}{Y_1(\omega R)}\in[0,\pi)$.  Then we find that $\omega(R-s_0^{(k)})$ and $\omega(R-s_1^{(k)})$ are the roots of $\mathcal {C}_0(s)$ and $\mathcal {C}_1(s)$, respectively.; on the other hand, according to \cite{Schafheitlin} and \cite{Watson} (Chap. 15.33), all the positive roots of (\ref{220}) must lie in one of the intervals
\[\Big(m\pi+\frac{\pi}{4}(3-3n)-\alpha,\; m\pi+\frac{\pi}{4}(4-3n)-\alpha\Big), m=0,1,2,\cdots\]
Since $\omega R$ is a root of $\mathcal {C}_1(s)$, there exists some $m_1\in\mathbb{N}^+$ such that
\[\omega R \in(m_1\pi-\alpha,\; m_1\pi+\frac{\pi}{4}-\alpha),\]
and the adjacent root of $\mathcal {C}_1(s)$ lies in $(\omega R-\frac{5\pi}{4},\;\omega R-\frac{3\pi}{4})$, i.e., $s_1^{(1)}\in (\frac{3\pi}{4\omega},\frac{5\pi}{4\omega})$. In light of the relationship between the roots of $\mathcal {C}_n(s)$, it follows that $s_0^{(1)}\in (0,\frac{\pi}{2\omega})$.   Therefore we conclude that $r_2\in(s_0^{(1)}, s_1^{(1)})\subset(0, \frac{5\pi}{4\omega})$, hence $r_2\rightarrow 0^+$ as $\chi\rightarrow \infty$.  Indeed, we are able to provide a finer estimate $r_2\in(\frac{\pi}{2\omega},\frac{5\pi}{4\omega})$ by using the asymptotic expansions of Bessel functions and the uniform bounds of the ratios of modified Bessel functions.  The proof is given in the appendix.

Similar as before, we now show that the maximum of $u^+(r)$ is strictly increasing in $\chi$ with $\frac{\partial \Vert u^+\Vert _{L^{\infty}}}{\partial \omega}>0$ and $\Vert u^+ \Vert_{L^{\infty}}= O(\omega)$ as $\chi\rightarrow \infty$.  Combining (\ref{outerring}) with (\ref{215}) gives
\[\Vert u^+ \Vert_{L^{\infty}}=\frac{M}{\pi^2}\frac{(2+\pi\omega R S_0(r_2;\omega,R)}{2(R-r_2)S_1(r_2;\omega,R)+\omega r_2(2R-r_2)S_0(r_2;\omega,R)}.\]
Direct calculations give
\begin{align}
\frac{\pi^2}{M}  \frac{\partial \Vert u^+ \Vert_{L^{\infty}}}{\partial \omega} =&\frac{2\omega^3 S_1  \left(\pi R(R-r_2)S_1- r_2 (2R-r_2) \right)  \frac{\partial r_2}{\partial \omega}+2 \pi R y_4(r_2;\omega,R)}{\omega \left(2(R-r_2)S_1+\omega r_2 (2R-r_2) S_0 \right)^2 }\notag \\
   &- \frac{  2 \left(2 (R-r_2)   (R \mathcal V_1 + (R-r_2) S_0 -\frac{2S_1}{\omega}) + \omega r_2 (2R-r_2)   (R\mathcal V_0-(R-r_2)S_1)  \right)}{\left(2(R-r_2)S_1+\omega r_2(2R-r_2)S_0 \right)^2},\notag
\end{align}
where variables $(r_2;\omega,R)$ are skipped without confusing the reader.  Thanks to (\ref{71}) and (\ref{72}), one finds that $S_1(r_2;\omega,R)<\frac{2}{\pi \omega \sqrt{R(R-r_2)}}$ for any $\chi>\chi_1$, and this implies $\left(\pi R(R-r_2)S_1- r_2 (2R-r_2) \right)<0$ because $\omega r_2>\frac{\pi}{2}$.  Denote
\[y_5(r;\omega,R):=\frac{R\mathcal V_1(r;\omega,R)+(R-r)S_0(r;\omega,R)}{S_1(r;\omega,R)}, y_6(r;\omega,R):=R\mathcal V_0(r;\omega,R)-(R-r)S_1(r;\omega,R).\]
with $\mathcal V_0$ and $\mathcal V_1$ given by (\ref{V0}) and (\ref{V1}), respectively.  Then one easily finds that $y'_5(r;\omega,R)=-\frac{y_4(r;\omega,R)}{\omega(R-r)S^2_1(r;\omega,R)}<0$ and $y'_6(r;\omega,R)=\omega S_1(r;\omega,R) y_5(r;\omega,R)$, hence $y_5(r;\omega,R)<y_5(0;\omega,R)=0$ and $y_6(r;\omega,R)<y_6(0;\omega,R)=0$ in $(0,s_1^{(1)})$.  Combining the facts that $\frac{\partial r_2}{\partial \omega}<0$ and $y_4(r_2;\omega,R)>0$, we find $\frac{\partial \Vert u \Vert_{L^{\infty}}}{\partial \omega}>0$ as expected.

One can prove that $r_2\rightarrow (\frac{\pi}{2\omega})^+$ in the limit of $\chi\rightarrow \infty$ (see the Appendix).  This fact, together with (\ref{74}) and (\ref{75}), implies that $\Vert u^+ \Vert_{L^{\infty}}=\frac{M}{\pi^2(R-\frac{\pi}{2\omega}) S_1(\frac{\pi}{2\omega};\omega,R)}=\frac{M\omega}{2\pi}+O(1)$; moreover, $u^+(r)\rightarrow u_\infty=M\delta_R(r)$ and $v^+(r)\rightarrow v_\infty=\frac{M}{2\pi R I_1(R)} I_0(r)$ pointwisely.

\begin{figure}[h!]\vspace{-5mm}
    \centering
    \includegraphics[width=1\textwidth]{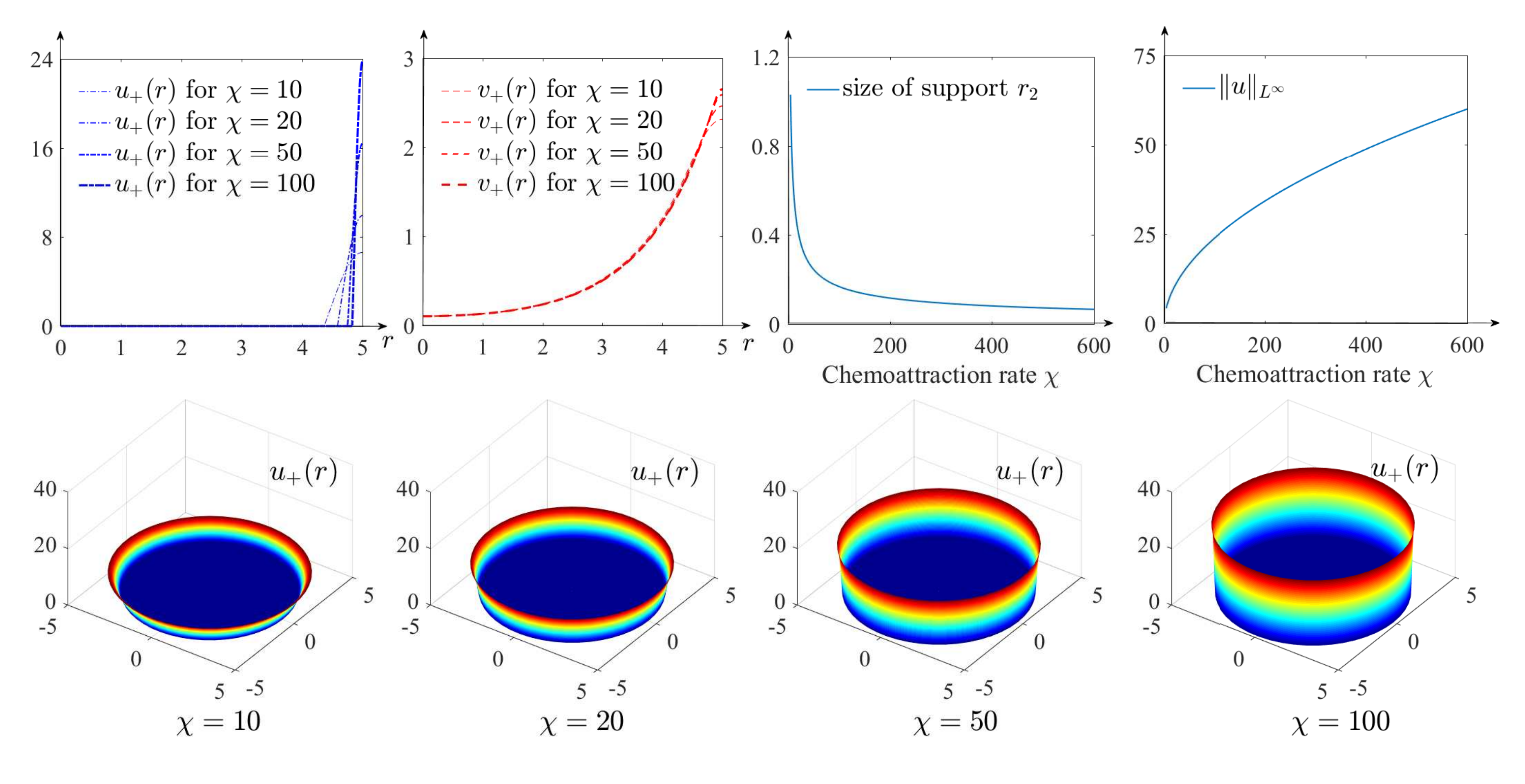}\vspace{-5mm}
    \caption{\textbf{Top}: the outer ring solution $(u^+,v^+)$ with $M=25\pi$ and $R=5$ for $\chi=$10, 20, 50 and 100.  One observes that $u^+(r)\rightarrow 25\pi\delta_R(r)$ and $v^+(r)\rightarrow 0.1027 I_0(r)$ as $\chi\rightarrow \infty$; moreover, the size $r_2$ of support of $u^+(r)$ shrinks as $O(\frac{1}{\omega})$ or $O(\frac{1}{\sqrt{\chi}})$ and the maximum of $u^+(r)$ expands as $O(\omega)$ or $O(\sqrt{\chi})$.  We would like to mention that one always that $\Vert u^+\Vert_{L^\infty}<\Vert u^-\Vert_{L^\infty}$ for each $\chi>\chi_1$.  This is presented in the bifurcation diagram in Figure \ref{branch1}.  \textbf{Bottom}:  Formation and development of outer ring configuration in $u$ as $\chi$ expands from 10 to 100.}\label{outerasymptotic}
\end{figure}

Our results above are summarized in the following proposition:
\begin{proposition}\label{proposition22}
For each $\chi>\chi_1$, (\ref{ss}) has a solution $(u^+(r),v^+(r))$ explicitly given by (\ref{outerring}) in which $u^+(r)$ is supported over the annulus $B_0(R)\backslash B_0(r_2)$.  Moreover, we have the following asymptotics of the solutions in the large limit of $\chi$:

(i) $r_2$, the size of support of $u^+$, is monotone decreasing in $\chi$ and $r_2\rightarrow 0^+$ as $\chi\rightarrow \infty$;

(ii) $\max_{\bar B_0(R)}u^+(r)$ is monotone increasing in $\chi$ and $\max_{\bar B_0(R)}u^+(r)=\frac{M\sqrt{\chi}}{2\pi}+O(1)$ for $\chi\gg 1$;

(iii) $u^+(r)\rightarrow M\delta_R(r)$ and $v^+(r)\rightarrow \frac{M}{2\pi R I_1(R)} I_0(r)$ pointwisely in $[0,R]$ as $\chi\rightarrow \infty$;
\end{proposition}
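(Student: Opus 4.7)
The plan is to exploit the explicit formula (\ref{outerring}) together with the defining algebraic equation (\ref{210}) for $r_2$, and peel off the three claims one after another via implicit differentiation, Wronskian-type identities for the compound Bessel functions, and a localization theorem for zeros of cylinder functions. For claim (i), I would write $f_2(r_2;\omega,R)=0$ and apply implicit differentiation to get $\partial r_2/\partial\omega=-\partial_\omega f_2/\partial_r f_2$. Using (\ref{213}) the denominator is the positive constant $\omega^2+1$. For the numerator I substitute $\partial_\omega S_0$ and $\partial_\omega S_1$, expressed in terms of the auxiliary compound Bessel functions $\mathcal V_0,\mathcal V_1$ from (\ref{V0})--(\ref{V1}), and then use the Wronskian identity (\ref{219}) to collapse everything into a single quantity $y_4(r_2;\omega,R)$. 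An elementary computation gives $y_4'(r;\omega,R)=2\omega^2(R-r)S_1^2(r;\omega,R)\ge 0$, so that $y_4(r_2;\omega,R)>y_4(0;\omega,R)=0$ and $\partial r_2/\partial\omega<0$. To send $r_2\to 0^+$, I invoke the Schafheitlin localization theorem on roots of the cylinder function $\mathcal C_n(s)=J_n(s)\cos\alpha-Y_n(s)\sin\alpha$ with $\alpha=\arctan[J_1(\omega R)/Y_1(\omega R)]$; since $\omega R$ is itself a root of $\mathcal C_1$, this pins the adjacent roots in explicit $\pi$-length intervals and yields $s_1^{(1)}\in(3\pi/(4\omega),5\pi/(4\omega))$ and $s_0^{(1)}\in(0,\pi/(2\omega))$, whence $r_2\in(s_0^{(1)},s_1^{(1)})\subset(0,5\pi/(4\omega))\to 0$.

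For claim (ii), once (i) is in hand I would differentiate the closed-form expression $\|u^+\|_{L^\infty}=\frac{M(2+\pi\omega R S_0(r_2;\omega,R))}{\pi^2[2(R-r_2)S_1(r_2;\omega,R)+\omega r_2(2R-r_2)S_0(r_2;\omega,R)]}$ with respect to $\omega$, substituting $\partial r_2/\partial\omega$ from step (i). The key sign computations are for two auxiliary quantities $y_5,y_6$: a straightforward ODE calculation using (\ref{219}) gives $y_5'=-y_4/[\omega(R-r)S_1^2]<0$, so $y_5<y_5(0)=0$, and then $y_6'=\omega S_1 y_5$ gives $y_6<y_6(0)=0$ on $(0,s_1^{(1)})$. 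Combined with the asymptotic upper bound $S_1(r_2;\omega,R)<\frac{2}{\pi\omega\sqrt{R(R-r_2)}}$ (which is what makes $\pi R(R-r_2)S_1-r_2(2R-r_2)$ negative once $\omega r_2>\pi/2$), this yields $\partial_\omega\|u^+\|_{L^\infty}>0$. For the rate $\|u^+\|_{L^\infty}=M\omega/(2\pi)+O(1)$, I would sharpen the localization to $r_2\in(\pi/(2\omega),5\pi/(4\omega))$ and show $\omega r_2\to(\pi/2)^+$; then feeding the large-argument expansions of $J_n,Y_n$ into $S_0$ and $S_1$ produces the claimed leading order.

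For claim (iii), the pointwise limits follow from mass conservation plus uniqueness of the limiting profile. Since $u^+$ is supported in the annulus $R-r_2\le r\le R$ with $r_2=O(1/\omega)\to 0$, while the mass $2\pi\int_{R-r_2}^R u^+(r)\,r\,dr=M$ is fixed, the measures $u^+(r)\,r\,dr$ concentrate at $\{r=R\}$, giving $u^+\to M\delta_R$ pointwise on $[0,R)$. Passing to the limit in (\ref{29}), the inner branch $\mathcal B_2 I_0(r)$ must tend to the radial Green's function of $-\Delta+1$ on $B_0(R)$ with the Dirac source $M\delta_R$ on the boundary; the asymptotics of $I_0,I_1$ applied to (\ref{217}) (together with $\omega r_2\to\pi/2$) collapse the prefactor to $\mathcal B_2\to M/(2\pi R I_1(R))$, as claimed.

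The main obstacle will be the refined localization $r_2\in(\pi/(2\omega),5\pi/(4\omega))$ together with the sharper statement $\omega r_2\to(\pi/2)^+$: the qualitative Schafheitlin argument delivers only the upper side for free, whereas the lower bound and the limit require matching the large-argument expansions of $J_0,J_1,Y_0,Y_1$ inside the cylinder function $S_1(r_2;\omega,R)$ against the nearly constant ratio $I_0(R-r_2)/I_1(R-r_2)$ appearing on the right-hand side of $f_2=0$. As the excerpt already signals, I would quarantine this purely computational but delicate asymptotic analysis in the appendix and cite it at the two places where it is invoked.
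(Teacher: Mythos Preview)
Your proposal is correct and mirrors the paper's own argument almost step for step: the implicit differentiation of $f_2(r_2;\omega,R)=0$ via (\ref{213}), the introduction of $\mathcal V_0,\mathcal V_1$ and the Wronskian identity (\ref{219}) to produce $y_4$, the Schafheitlin localization of the roots of $\mathcal C_n$ to bound $s_0^{(1)},s_1^{(1)}$, the auxiliary functions $y_5,y_6$ for the monotonicity of $\|u^+\|_{L^\infty}$, and the deferral of the refined estimate $r_2\in(\pi/(2\omega),5\pi/(4\omega))$ and $\omega r_2\to(\pi/2)^+$ to an appendix are exactly what the paper does. The only cosmetic difference is that the paper states the limit of $v^+$ by directly reading off $\mathcal B_2\to M/(2\pi R I_1(R))$ from the explicit formula rather than phrasing it through the Green's function, but the content is the same.
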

See Figure \ref{outerasymptotic} for illustration of Proposition \ref{proposition22}.

\subsection{Monotone Solutions in an Annulus: An Auxiliary and Supporting Problem}
Our previous results imply that for each $\chi>\chi_1$, the stationary problem (\ref{ss}) has a pair of solutions $(u^\pm,v^\pm)$ explicitly given by (\ref{interring}) and (\ref{outerring}), both of which have $u$ compactly supported and radially monotone over its support; moreover, if $\chi\in(\chi_1,\chi_2)$, the solution is unique in the sense that all nonconstant solutions of (\ref{ss}) must be one of the pair; furthermore, when $\chi=\chi_2$, there also exist non-monotone solutions of (\ref{ss}) (uniquely) given by the one-parameter family of functions (\ref{bifurcation}), hence we assume $\chi>\chi_2$ to look for radially non-monotone solutions.  Similar as above, it is easy to see that whenever $\chi>\chi_2,\neq \chi_k$, $u(r)$ must be compactly supported, though the support is not necessarily connected.

In this subsection, we prepare ourselves to study non-monotone solutions by solving (\ref{ss}) for radially monotone solutions over (predetermined) subsets of $B_0(R)$, then we concatenate them (with the inner ring and/or outer ring solutions when applicable) by matching the continuities of $v(r)$ at the interface.  To be precise, let us consider a slightly general case than above such that $u(r)$ is supported over an annulus $(a,b)$ for arbitrarily given $b>a\geq0$.

We first solve (\ref{ss}) in $(a,b)$ for $(\mathbb U_3,\mathbb V_3)$ such that $\mathbb U_3$ is supported in $(a,a+r_3)$, for some $r_3$ to be determined.  Solving (\ref{22}) with $(0,R)$ replaced by $(a,b)$ gives the generic decreasing mode
\begin{equation}\label{227}
\left\{ \begin{array}{ll}
\mathbb U_3(r)=&\left\{ \begin{array}{ll}
\mathcal A_3 \Big(S_0(a-r;\omega,a)-S_0(-r_3;\omega,a)\Big),&r\in(a,a+r_3),\\
0,&r\in(a+r_3,b),
\end{array}
\right.\\
\\
\mathbb V_3(r)=& \left\{\begin{array}{ll}
\mathcal A_3\Big(\frac{1}{\chi}S_0(a-r;\omega,a)-S_0(-r_3;\omega,a)\Big),&r\in(a,a+r_3),\\
\mathcal B_3 T_0(r;b),&r\in(a+r_3,b),
\end{array}
\right.
\end{array}
\right.
\end{equation}
where $S_0$ and $T_0$ are given by (\ref{algeqn}) and (\ref{T0}), and $r_3\in(0,b-a)$ is to be determined by the continuity of $\mathbb V_3(r)$ at $r=a+r_3$.  That being said, $r_3$ is the first positive root of the following algebraic equation
\begin{equation}\label{228}
f_3(r_3;\omega,a,b):=\frac{\omega S_0(-r_3;\omega,a)}{S_1(-r_3;\omega,a)}-\frac{T_0(a+r_3;b)}{T_1(a+r_3;b)}=0.
\end{equation}
With $r_3$ determined by (\ref{228}), $\mathcal A_3$ and $\mathcal B_3$ can be evaluated by the conservation of mass
\begin{align}
   & \mathcal A_3=\frac{M \omega }{\pi\big(2(a+r_3)S_1(-r_3;\omega,a)-\omega r_3 (2a+r_3)S_0(-r_3;\omega,a)\big)}, \notag\\
   & \mathcal B_3=-\frac{M\omega^2}{\pi\chi \big(2(a+r_3)T_1(a+r_3;b)- r_3 (2a+r_3)T_0(a+r_3;b)\big)}. \notag
\end{align}

Similarly, the generic increasing mode that extends the outer ring in $(0,R)$ to $(a,b)$ is
\begin{equation}\label{229}
\left\{ \begin{array}{ll}
\mathbb U_4(r)=& \left\{ \begin{array}{ll}
0,&r\in(a,b-r_4),\\
\mathcal A_4 \Big(S_0(b-r;\omega,b)-S_0(r_4;\omega,b)\Big),&r\in(b-r_4,b),
\end{array}
\right.\\
\\
\mathbb V_4(r)=& \left\{ \begin{array}{ll}
\mathcal B_4 T_0(r;a),&r\in(a,b-r_4),\\
\mathcal A_4\Big(\frac{1}{\chi}S_0(b-r;\omega,b)-S_0(r_4;\omega,b)\Big),&r\in(b-r_4,b),
\end{array}
\right.\\
\end{array}
\right.
\end{equation}
where $r_4\in(0,b-a)$ is the first positive root of the following algebraic equation
\begin{equation}\label{231}
f_4(r_4;\omega,a,b):= \frac{\omega S_0(r_4;\omega,b)}{S_1(r_4;\omega,b)}-\frac{T_0(b-r_4;a)}{T_1(b-r_4;a)}=0;
\end{equation}
moreover, by the conservation of total population
\begin{align}
   & \mathcal A_4=-\frac{ M\omega}{\pi\big(2(b-r_4)S_1(r_4;\omega,b)+\omega r_4 (2b-r_4)S_0(r_4;\omega,b)\big)}, \notag\\
   & \mathcal B_4=\frac{M\omega^2}{\pi\chi \big(2(b-r_4)T_1(b-r_4;a)+ r_4 (2b-r_4)T_0(b-r_4;a)\big)}. \notag
\end{align}

Therefore, we are left to find condition(s) on $\chi$ that guarantee the existence of $r_3$ in (\ref{228}) and $r_4$ in (\ref{231}), respectively.  Our main results can be summarized into the following lemma.
 \begin{lemma} \label{lemma23}
Let $0\leq a<b$ be two arbitrary constants.  Denote $\chi_{a,b}=\omega_{a,b}^2+1$, where $\omega_{a,b}$ is given by
\begin{equation}\label{omegaab}
\omega_{a,b}:=\inf_{\omega>\frac{j_{1,1}}{b}}\Bigg\{\omega\in\mathbb R^+\Big| \frac{J_1(\omega a)}{Y_1(\omega a)}=\frac{J_1(\omega b)}{Y_1(\omega b)} \Bigg\}<\infty.
\end{equation}
Then $\omega_{a,b}\in[\frac{j_{1,1}}{b},\frac{j_{1,1}}{b-a}]$ with $\omega_{0,b}=\frac{j_{1,1}}{b}$; moreover, the following statements hold:
\begin{enumerate}[(i)]
     \item  for each $\chi>\chi_{a,b}$, (\ref{228}) has a unique root $r_3\in (0,s_{1a}^{(1)})\subset(0,b-a)$, where $s_{1a}^{(1)}$ is the first positive root of $S_1(-r;\omega,a)$ in $(0,b-a)$, and (\ref{228}) admits no root in $(0,b-a)$ if $\chi<\chi_{a,b}$;
    \item  for each $\chi>\chi_{a,b}$, (\ref{231}) has a unique root $r_4\in (0,s_{1b}^{(1)})\subset(0,b-a)$, where $s_{1b}^{(1)}$ is the first root of $S_1(r;\omega,b)$ in $(0,b-a)$, and (\ref{231}) admits no root in $(0,b-a)$ if $\chi<\chi_{a,b}$;
    \item both $r_3$ and $r_4$ continuously depend on and monotonically decrease in $\chi$, and $r_i\rightarrow 0^+$ as $\chi\rightarrow\infty$.
   \end{enumerate}
 \end{lemma}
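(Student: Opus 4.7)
The plan is to adapt the proof of Lemma 2.2 to the annular setting $(a,b)$. Equations (2.28) and (2.31) are structurally identical to (2.10): each has the form ``ratio of cylinder functions minus ratio of modified-Bessel-type functions equals zero,'' where the first term blows up precisely at a root of $S_1$ while the second term stays finite and of definite sign. The existence of a root of $f_3$ (resp. $f_4$) in $(0,b-a)$ therefore hinges on $S_1(-r;\omega,a)$ (resp. $S_1(r;\omega,b)$) possessing a zero in $(0,b-a)$, and uniqueness then follows from the same derivative-at-a-root trick used in Lemmas 2.1 and 2.2.

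First I would establish that $\omega_{a,b}$ is well-defined in $[j_{1,1}/b,\,j_{1,1}/(b-a)]$ and that $\omega_{0,b}=j_{1,1}/b$. Recall from Lemma 2.2 that $y_3(s):=J_1(s)/Y_1(s)$ is strictly monotone on each branch $(y_{1,k},y_{1,k+1})$ with $y_{1,0}:=0$, and vanishes at each $j_{1,k}$. For the lower bound, the defining condition $y_3(\omega a)=y_3(\omega b)$ with $\omega a<\omega b$ forces $\omega a$ and $\omega b$ into different branches, and a sign analysis on the lowest branches rules out $\omega b\in(y_{1,1},j_{1,1})$, yielding $\omega b\geq j_{1,1}$. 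The case $a=0$ is immediate since $\lim_{s\to 0^+}y_3(s)=0$ reduces the condition to $y_3(\omega b)=0$, hence $\omega b=j_{1,1}$. For the upper bound, at $\omega=j_{1,1}/(b-a)$ one has $\omega(b-a)=j_{1,1}>y_{1,1}$, so $(\omega a,\omega b)$ straddles at least one zero of $Y_1$, placing the endpoints into adjacent branches across which $y_3$ sweeps all of $\mathbb R$; an intermediate-value argument then produces a solution for some $\omega$ no larger than $j_{1,1}/(b-a)$.

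Next, for parts (i) and (ii), I would show that $S_1(-r;\omega,a)$ admits its first positive root $s_{1a}^{(1)}$ in $(0,b-a)$ (and similarly $s_{1b}^{(1)}$ for $S_1(r;\omega,b)$) if and only if $\chi>\chi_{a,b}$. The two-case argument from Lemma 2.2 carries over: handle $Y_1(\omega a)=0$ directly, and when $Y_1(\omega a)\neq 0$ write $S_1(-r;\omega,a)=Y_1(\omega a)\,Y_1(\omega(a+r))\,[y_3(\omega(a+r))-y_3(\omega a)]$ and exploit the monotonicity and range of $y_3$ on branches. Given $s_{1a}^{(1)}\in(0,b-a)$, I would track signs of $f_3$: it has a definite sign as $r\to 0^+$ and blows up to $-\infty$ as $r\to(s_{1a}^{(1)})^-$, producing a root by the intermediate value theorem. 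Uniqueness follows from the identity $\partial_r f_3=\omega^2+1+f_3\cdot g$ with $g>0$, obtained exactly as in (2.13); this forces $\partial_r f_3=\omega^2+1>0$ at any interior root and rules out two ordered adjacent roots by the same contradiction scheme as in Lemma 2.1. The analysis of $f_4$ is entirely parallel, using $s_{1b}^{(1)}$ instead.

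For part (iii), the implicit function theorem applied at $f_3(r_3;\omega,a,b)=0$ with $\partial_r f_3(r_3;\omega,a,b)=\omega^2+1\neq 0$ gives continuity and differentiability of $r_3(\omega)$. The sign $\partial r_3/\partial\omega<0$ is obtained by mimicking the derivation of $\partial r_2/\partial\omega<0$ in Section 2.2.1: using the expressions for $\partial_\omega S_0,\partial_\omega S_1$ recorded there and the Wronskian-type identity (2.20), the numerator reduces to a function that vanishes at $r=0$ and is strictly increasing, hence positive on $(0,s_{1a}^{(1)})$. The limit $r_3\to 0^+$ as $\chi\to\infty$ follows from $r_3\leq s_{1a}^{(1)}\leq C/\omega$, which in turn comes from the asymptotic location of zeros of the cylinder function $\mathcal C_1$ established in \cite{Schafheitlin,Watson} and used in Section 2.2.1; the same argument handles $r_4$. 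The main obstacle I anticipate is the first step, namely pinpointing $\omega_{a,b}$ and verifying the equivalence ``$S_1$ has a zero in $(0,b-a)$ iff $\chi>\chi_{a,b}$'' cleanly when $a>0$, since the interior functions $T_0(\cdot;b)$ and $T_0(\cdot;a)$ no longer degenerate to the clean modified Bessel functions $I_0,I_1$ used in Lemma 2.2, and one must carefully track how the zeros of $Y_1$ intertwine with the two moving boundary values $\omega a$ and $\omega b$.
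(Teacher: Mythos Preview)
Your plan is essentially the paper's own proof: adapt Lemma~2.2 by first showing $S_1(-r;\omega,a)$ has a zero in $(0,b-a)$ iff $\omega>\omega_{a,b}$ via the factorization $S_1(-r;\omega,a)=Y_1(\omega a)Y_1(\omega(a+r))\big[y_3(\omega(a+r))-y_3(\omega a)\big]$, then run the IVT/uniqueness argument on $f_3$, and finally mimic the $y_4$ computation of Section~2.2.1 (the paper calls the annular analogue $y_7$) to get $\partial r_3/\partial\omega<0$. One correction: the derivative identity for $f_3$ carries a sign flip relative to (2.13) because the cylinder functions are evaluated at $a+r$ rather than $R-r$, so in fact $\partial_r f_3=-(\omega^2+1)+f_3\cdot g$ with $g>0$ on $(s_{0a}^{(1)},s_{1a}^{(1)})$; this does not affect your uniqueness argument since all you use is that $\partial_r f_3$ equals a fixed nonzero constant at every root.
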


\begin{proof}
The proof is the same as that of Lemma \ref{lemma22}, and we will only do it for part \emph{(i)}, while part \emph{(ii)} can be verified similarly.  To show that $f_3(r;\omega,a,b)$ has a solution $r_3\in(0,s_{1a}^{(1)})$ for each $\chi>\chi_{a,b}$, we claim it is sufficient to show that $S_1(-r;\omega,a)$ has at least one solution in $(0,b-a)$.  Rewrite
\[S_1(-r;\omega,a)=Y_1(\omega a)Y_1(\omega(a+r))\Bigg(\frac{J_1(\omega(a+r))}{Y_1(\omega(a+r))}-\frac{J_1(\omega a)}{Y_1(\omega a)}\Bigg), r\in(0,b-a).\]
Thanks to the monotonicity of $\frac{J_1(r)}{Y_1(r)}$, for each $\omega>\omega_{a,b}$ there exists at least one number $s_{1a}^{(1)}$ in $(0,b-a)$ such that $S_1(-s_{1a}^{(1)};\omega,a)=0$.  We are now left to show that the existence of such $r_3$ and $s_{1a}^{(1)}$ in $(0,b-a)$ are equivalent.  To this end, we first find that $\partial_r S_0(-r;\omega,a)=-\omega S_1(-r;\omega,a)$ and $S_0(0;\omega,a)<S_1(0;\omega,a)=0$, therefore $s_{0a}^{(1)}<s_{1a}^{(1)}$, where $s_{0a}^{(1)}$ is the first root of $S_0(-r;\omega,a)$ in $(0,b-a)$.  Note that if $s_{1a}^{(1)}\in(0,b-a)$, then $f_3(r;\omega,a,b)>0$ in $(0,s_{0a}^{(1)})$, which indicates that it has no root in $(0,s_{0a}^{(1)})$.  Now, for each $r\in(s_{0a}^{(1)},s_{1a}^{(1)})$, straightforward calculations yield
\[\partial_r f_3(r;\omega,a,b)=-(\omega^2+1)+f_3(r;\omega,a,b)\overbrace{\Big(-\frac{\omega S_0(-r;\omega,a)}{S_1(-r;\omega,a)}-\frac{T_0(r+a;b)}{T_1(r+a;b)}+\frac{1}{a+r}\Big)}^{>0 \text{~in~}(s_{0a}^{(1)},s_{1a}^{(1)})},\]
where ``$>0$" holds since $T_0(r+a;b)>0$ and $T_1(r+a;b)<0$ for $r\in(0,b-a)$.  Note that $f_3(s_{0a}^{(1)};\omega,a,b)>0$ and $f_3((s_{1a}^{(1)})^-;\omega,a,b)=-\infty$, therefore one finds that $f_3(r;\omega,a,b)$ admits at least one root in $(s_{0a}^{(1)},s_{1a}^{(1)})$ as claimed.  By the same arguments for Lemma \ref{lemma22}, one can show this root is unique.

To show part (iii), we first verify that $r_3$ decreases in $\chi$.  To see this, we differentiate $f_3(r_3;\omega,a,b)=0$ with respect to $\omega$ and collect
\begin{align*}
 \frac{\partial r_3}{\partial \omega}=&-\frac{\partial_\omega f_3(r_3;\omega,a,b)}{\partial_r f_3(r_3;\omega,a,b)}=\frac{\partial_\omega f_3(r_3;\omega,a,b)}{\omega^2+1}\\
    =&\frac{\frac{4}{\pi^2}-\omega^2\Big(S_0^2(-r_3;\omega,a)+S_1^2(-r_3;\omega,a)\Big)+2\omega (a+r_4)S_0(-r_3;\omega,a)S_1(-r_3;\omega,a)  }{\omega(\omega^2+1)(a+r_3)S_1^2(-r_3;\omega,a)}  \\
   =:&\frac{y_7(r_3;\omega,a)}{\omega(\omega^2+1)(a+r_3)S_1^2(-r_3;\omega,a)}.
\end{align*}
We claim that the numerator $y_7(r_3;\omega,a)<0$.  Indeed, we find by straightforward calculations that $\partial_\omega y_7(r_3;\omega,a)=-2\omega^2(r_3+a)S_1^2(-r_3;\omega,a)<0$ and $y_7(0;\omega,a)=0$, this readily verifies the claim hence $\frac{\partial r_3}{\partial \omega}<0$ as expected.  Similarly one can show the monotonicity of $r_4$ in $\omega$.  Moreover, $r_3\rightarrow 0^+$ follows from the fact that $s_{1a}^{(1)}\rightarrow 0^+$ as $\chi\rightarrow \infty$.  The lemma is proved.
\end{proof}

We would like to remark that, (\ref{228}) have other roots in $(0,b-a)$ if $\chi$ is large enough.  However, these roots can not be chosen as the size of support according to the same arguments as in Remark \ref{remark21}.  When $a=0$, we have that $\omega _{0,b}=\frac{j_{1,1}}{b}$ and $\omega_{a,b}\rightarrow \frac{j_{1,1}}{b}$ as $a\rightarrow 0^+$, then Lemma \ref{lemma23} readily recovers the statements about the root $r_1$ of (\ref{23}); moreover, $\omega_{a,b}\rightarrow +\infty$ as $a\rightarrow b^-$.  We can say more about its monotonicity as follows which will be needed for our coming analysis.
\begin{lemma}\label{lemma24}
For each given $b>0$, $\omega_{a,b}$ defined by (\ref{omegaab}) is strictly increasing in $a$, i.e., $\omega_{a_1,b}<\omega_{a_2,b}$ if $0\leq a_1<a_2<b$.
\end{lemma}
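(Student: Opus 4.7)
The plan is to prove strict monotonicity of $\omega_{a,b}$ in $a$ by a comparison argument combined with the intermediate value theorem. The starting point is the characterization from the proof of Lemma~\ref{lemma23}: $\omega_{a,b}$ is the smallest root in $(j_{1,1}/b,\infty)$ of $y_3(\omega a)=y_3(\omega b)$, where $y_3(r):=J_1(r)/Y_1(r)$ satisfies $y_3'(r)=-2/(\pi r Y_1^2(r))<0$ on each maximal interval between consecutive positive zeros of $Y_1$ (as derived in the proof of Lemma~\ref{lemma22}).

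First I would fix $0\leq a_1<a_2<b$, set $\omega_2:=\omega_{a_2,b}$, and introduce the deficit function $\Psi_a(\omega):=y_3(\omega a)-y_3(\omega b)$ so that $\Psi_{a_2}(\omega_2)=0$. The next step is to argue that $\Psi_{a_1}(\omega_2)>0$: since $\omega_2$ is the \emph{smallest} root for $a_2$, the two points $\omega_2 a_1<\omega_2 a_2$ lie in a common interval of strict monotonicity of $y_3$, and strict decrease then yields
\[
\Psi_{a_1}(\omega_2) \;=\; y_3(\omega_2 a_1)-y_3(\omega_2 b) \;>\; y_3(\omega_2 a_2)-y_3(\omega_2 b) \;=\; 0.
\]
After that, I would show that $\Psi_{a_1}(\omega)<0$ just above $j_{1,1}/b$: for $a_1>0$, as $\omega\downarrow j_{1,1}/b$ one has $y_3(\omega b)\to y_3(j_{1,1})=0^{-}$, whereas $y_3(\omega a_1)\to y_3(j_{1,1}a_1/b)<0$ remains bounded away from $0$. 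The intermediate value theorem then produces some $\omega^{\ast}\in (j_{1,1}/b,\omega_2)$ with $\Psi_{a_1}(\omega^{\ast})=0$, and the infimum definition gives $\omega_{a_1,b}\leq\omega^{\ast}<\omega_2=\omega_{a_2,b}$. The boundary case $a_1=0$ is immediate from $\omega_{0,b}=j_{1,1}/b<\omega_{a_2,b}$.

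The hard part will be justifying the \emph{common monotonicity interval} claim uniformly in the ratio $a_2/b$. For $a_2/b$ below the threshold $y_{1,1}/j_{1,1}\approx 0.573$, the upper bound recorded in (\ref{omegaab}) confines $\omega_2 a_2$ to $(0,y_{1,1})$ and the claim is automatic. For larger $a_2/b$, the minimal configuration can shift into higher intervals $(y_{1,k-1},y_{1,k})$, and a more careful case analysis is needed. I expect to handle this by partitioning $[0,b)$ into regimes in which the minimal configuration is fixed, proving strict monotonicity within each regime by the argument above, and invoking the continuity of $a\mapsto\omega_{a,b}$ across the regime boundaries to glue the local monotonicities into the global claim.
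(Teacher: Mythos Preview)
Your comparison/IVT route is quite different from the paper's. The paper differentiates the defining relation $y_3(\omega a)=y_3(\omega b)$ implicitly in $a$: using $y_3'(r)=-2/(\pi r Y_1^2(r))$ one gets
\[
\frac{\partial\omega_{a,b}}{\partial a}
=\frac{\omega\,Y_1^2(\omega b)}{a\big(Y_1^2(\omega a)-Y_1^2(\omega b)\big)}\bigg|_{\omega=\omega_{a,b}},
\]
and positivity of the denominator follows from the classical fact that $r\mapsto (J_1^2+Y_1^2)(r)$ is strictly decreasing together with $J_1(\omega a)/Y_1(\omega a)=J_1(\omega b)/Y_1(\omega b)$. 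The degenerate case $Y_1(\omega_{a,b}b)=0$ is handled separately. No interval-by-interval bookkeeping is required.

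Your proposal has concrete gaps beyond the one you flag. First, the sign claim in step~4 is wrong once $a_1/b>y_{1,1}/j_{1,1}$: then $j_{1,1}a_1/b\in(y_{1,1},j_{1,1})$, where $y_3>0$, so $\Psi_{a_1}(\omega)>0$ (not $<0$) for $\omega$ near $j_{1,1}/b$, and the intermediate-value sign change collapses precisely in the regime you already call ``hard.'' Second, $\Psi_{a_1}$ has poles wherever $Y_1(\omega a_1)=0$ or $Y_1(\omega b)=0$, and such poles certainly lie in $(j_{1,1}/b,\omega_2)$ once $a_2$ is not small relative to $b$; the intermediate value theorem does not carry across poles, so even a genuine sign change at the endpoints would not yield a root. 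Third, the ``common monotonicity interval'' assertion is not a consequence of $\omega_2$ being minimal: nothing prevents $\omega_2 a_2$ from sitting in $(y_{1,1},y_{1,2})$ while $\omega_2 a_1$ lies in $(0,y_{1,1})$, and your regime-partition repair is a plan rather than an argument. The implicit-differentiation approach sidesteps all three issues in two lines.
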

\begin{proof}
Let $y_{1,k}$ be the $k$-th positive root of $Y_1$ as in the proof of Lemma \ref{lemma22}.  We divide our discussion into the following two cases.  Case (i): $Y_1(\omega_{a,b} b)=0$.  Then we must have that $\omega_{a,b} a=y_{1,k_0}$ and $\omega_{a,b} b=y_{1,k_0+1}$ for some $k_0\in\mathbb N^+$.  Therefore $\omega_{a,b}=\frac{y_{1,k_0+1}-y_{1,k_0}}{b-a}$, which readily implies the desired claim.  Case (ii): $Y_1(\omega_{a,b} b)\neq 0$, then we know that $Y_1(\omega_{a,b} a)\neq0$.  Let us differentiate the identity $\frac{J_1(\omega_{a,b} a)}{Y_1(\omega_{a,b} a)}=\frac{J_1(\omega_{a,b} b)}{Y_1(\omega_{a,b} b)}$ with respect to $a$, then one finds that
\[\frac{\partial \omega_{a,b}}{\partial a}=\frac{\frac{2\omega}{\pi \omega aY_1^2(\omega a)}}{-\frac{2a}{\pi \omega aY_1^2(\omega a)}+\frac{2b}{\pi \omega bY_1^2(\omega b)}}
=\frac{\omega Y_1^2(\omega b)}{a\big(Y_1^2(\omega a)-Y_1^2(\omega b)\big)}\Bigg|_{\omega=\omega_{a,b}}.\]
One knows that (e.g., Sec. 13. 71 in \cite{Watson}) the compound Bessels function $(J_1^2+Y_1^2)(r)$ is always decreasing hence $J_1^2(\omega_{a,b}b)+Y_1^2(\omega_{a,b}b)<J_1^2(\omega_{a,b}a)+Y_1^2(\omega_{a,b}a)$.  This inequality, together with the identity $\frac{J_1(\omega_{a,b} a)}{Y_1(\omega_{a,b} a)}=\frac{J_1(\omega_{a,b} b)}{Y_1(\omega_{a,b} b)}$, readily implies that $Y_1^2(\omega_{a,b} a)-Y_1^2(\omega_{a,b} b)>0$, hence $\frac{\partial \omega_{a,b}}{\partial a}>0$ in Case (ii).  The lemma is verified in both cases.
\end{proof}

When $\chi=\chi_{a,b}$, we see that $\frac{J_1(\omega a)}{Y_1(\omega a)}=\frac{J_1(\omega b)}{Y_1(\omega b)}$ and $Y_1(s)$ admits only one zero in $(\omega a, \omega b)$, therefore $u$ is strictly positive in the annulus $B_0(b)\backslash B_0(a)$.  By the same arguments for (\ref{bifurcation}), one sees that $\chi_{a,b}-1$ is the principle eigenvalue of $-\Delta_r$ in the annulus and $v$ is an eigenfunction hence must be a multiplier of $S_0(b-r;\omega,b)$.  Then similar as in (\ref{bifurcation}) one sees that (\ref{ss}) has a one-parameter families of solutions given by
\begin{equation}\label{uv5}
(\mathbb U_\varepsilon(r), \mathbb V_\varepsilon(r))=(\bar u_{ab},\bar v_{ab})+\varepsilon(\chi_{a,b},1)S_0(b-r;\omega,b), r\in (a,b); \frac{-\bar u_{ab}/\chi_{a,b}}{S_0(b-a;\omega,b)}\leq \varepsilon\leq \frac{-\bar u_{ab}/\chi_{a,b}}{S_0(0;\omega,b)},
\end{equation}
where $\bar u_{a,b}=\bar v_{a,b}=\frac{M}{\pi(b^2-a^2)}$.  We see that (\ref{uv5}) extends (\ref{bifurcation}) from $(0,R)$ to $(a,b)$ for any $0\leq a<b$.

It seems necessary to mention that, though any constant pair $(\bar u_i,\bar v_i)$ solves the equation in (\ref{ss}) over interval $(a_i,a_{i+1})$, such profile can not be concatenated into the solution that matches the mass constraint and $v$ can not admit a plateau profile.  We have the following statement.
\begin{corollary}
Let $(u,v)$ be any nonconstant solutions of (\ref{ss}).  Then the $v$ has at most finite many critical points in $(0,R)$, i.e., there does not exist an interval $(a,b)$ such that $u=v\equiv$ some constant in $(a,b)$.
\end{corollary}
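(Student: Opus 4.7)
The plan is to establish the \emph{i.e.}-clause first—no plateau $(a,b)$ with $u=v\equiv c$ exists in any nonconstant solution of \eqref{ss}—and then deduce the finiteness of critical points as a consequence via real-analyticity. Suppose for contradiction that a plateau exists, and pass to the maximal subinterval $(a^*,b^*)\subset(0,R)$ on which $u=v\equiv c$. Since $v>0$ is continuous we have $c>0$, and since $(u,v)$ is assumed nonconstant we must have $(a^*,b^*)\subsetneq(0,R)$ (otherwise the mass constraint would pin $c=\bar u$, giving the constant solution $(\bar u,\bar v)$). So at least one endpoint, say $b^*$, lies strictly inside $(0,R)$; the case $a^*>0$ is symmetric.

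Since $u(b^*)=c>0$ and $u$ is continuous, $b^*$ is interior to some connected component $(\alpha,\beta)$ of $\Omega_+:=\{r\in(0,R):u(r)>0\}$. On this component, as already exploited throughout the paper, the first equation in \eqref{ss} integrates to $u-\chi v\equiv K$ for some constant $K$. Substituting $u=\chi v+K$ into the $v$-equation and using the plateau values at $b^*$ to pin $K=c(1-\chi)$ yields the linear ODE
\begin{equation*}
v_{rr}+\tfrac{1}{r}v_r+(\chi-1)(v-c)=0 \quad \text{on }(\alpha,\beta),
\end{equation*}
with Cauchy data $v(b^*)=c,\ v_r(b^*)=0$ at the interior point $b^*\in(\alpha,\beta)$. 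Standard uniqueness for second-order linear ODEs with coefficients smooth on $(0,R)\supset(\alpha,\beta)$ propagates $v\equiv c$, and hence $u\equiv c$, to all of $(\alpha,\beta)$. This contradicts the maximality of $(a^*,b^*)$, since $b^*$ would now be strictly interior to the plateau set.

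To upgrade this to finiteness of critical points, note that on each connected component of $\Omega_+$ the function $v$ satisfies the linear Bessel-type ODE above, while on each component of $\Omega_0:=\{r\in(0,R):u(r)=0\}$ it satisfies $v_{rr}+r^{-1}v_r-v=0$; in either case $v$ is real-analytic on the component. If $v$ had infinitely many critical points, Bolzano--Weierstrass would furnish an accumulation point $r_0\in[0,R]$, and such an accumulation must come from within at least one analytic piece; the identity theorem then forces $v_r\equiv 0$, hence $v\equiv$ constant, on that piece. On an $\Omega_0$-piece this forces $v\equiv 0$ via the ODE, contradicting $v>0$; on an $\Omega_+$-piece it produces exactly the plateau just ruled out.

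The main obstacle is the step asserting $u-\chi v\equiv K$ on each component of $\Omega_+$, especially when $\alpha=0$ where the first equation in \eqref{ss} is singular. This rests on integrating $(ru(u-\chi v)_r)_r=0$ to $ru(u-\chi v)_r\equiv C$ and then showing $C=0$, which is forced by either the degeneracy $u\to 0^+$ at a free boundary endpoint of $(\alpha,\beta)$, the radial regularity $ru_r\to 0$ as $r\to 0^+$, or the Neumann condition $u_r=0$ at $r=R$; this is the one place where the quadratic-diffusion structure enters essentially, and it is standard for compactly-supported solutions of such porous-medium Keller--Segel systems.
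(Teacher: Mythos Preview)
Your argument for the ``no plateau'' claim (which is what the paper's own proof actually addresses) is correct and takes a genuinely different route. The paper picks the nearest critical point $a_0<a$ of $v$, invokes the explicit annulus formulas \eqref{227}--\eqref{229} on $(a_0,a)$, and reads off $u(a)\neq v(a)$ from those Bessel expressions to contradict $u(a)=v(a)=c$. Your approach---pass to the maximal plateau, note that its endpoint $b^*$ lies in the open set $\{u>0\}$ since $u(b^*)=c>0$, and use Cauchy uniqueness for the linear equation $v_{rr}+r^{-1}v_r+(\chi-1)(v-c)=0$ at the interior point $b^*$ to propagate $v\equiv c$ (hence $u\equiv c$) across the whole component of $\{u>0\}$---avoids those explicit formulas entirely and would transfer to variants of the system where no closed form is available. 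This is a net gain.

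Your upgrade to ``finitely many critical points'' has a gap, however. The step ``such an accumulation must come from within at least one analytic piece'' tacitly assumes that $\Omega_+=\{u>0\}$ and $\Omega_0=\{u=0\}$ decompose $(0,R)$ into only \emph{finitely} many components. If not, an accumulation point $r_0\in\partial\Omega_+$ could have infinitely many short $\Omega_+$- and $\Omega_0$-pieces clustering at it, each contributing a single zero of $v_r$, with no one analytic piece capturing infinitely many of them; the identity theorem then yields nothing. You have not ruled this scenario out. The paper does not prove this finiteness in the corollary's proof either---it is handled later, in the proof of Theorem~\ref{theorem13}\,(iii), via the lower bound $R_i-R_{i-1}>j_{0,1}/\omega$ on the length of each monotone interval of $v$, and that minimum-length argument is the clean way to close yours.
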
\label{corollary21}
\begin{proof}
Argue by contradiction.  Suppose there exists $(a,b)$ such that $u\equiv v\equiv c$  in $[a,b]$.  Then we must have that $a>0$ or $b<R$ since otherwise $u\equiv v\equiv \bar u$ if $(a,b)=(0,R)$ and we are done.  For $a>0$, there exists $a_0\in[0,a)$ such that $v'(a_0)=0$ and either $v$ is monotone decreasing or monotone increasing in $(a_0,a)$.  In the former case, we have from (\ref{227}) that $(u,v)=(\mathbb U_3,\mathbb V_3)$ with $(a,b)=(a_0,a)$, therefore $u(a)=\lim_{r\rightarrow a^-} \mathbb U_3(a)=\mathcal A_3(S_0(a_0-a;\omega,a_0)-S_0(-r_3;\omega,a_0))$ and $v(a)=\lim_{r\rightarrow a^-} \mathbb V_3(a)=\mathcal A_3(\frac{1}{\chi}S_0(a_0-a;\omega,a_0)-S_0(-r_3;\omega,a_0))$, which readily implies that $u(a)<v(a)$, however, this is a contradiction to the assumption that $u(a)=v(a)=c$; in the latter case, we use (\ref{229}) and can find that $u(a)>v(a)$ which is also a contradiction to our assumption.  Therefore, $a>0$ is impossible.  Similarly we can show that $b<R$ is also impossible unless $(a,b)=(0,R)$.
\end{proof}

It might seem redundant, but for the reader's reference we present the
\begin{proof} [Proof\nopunct]  \emph{of Theorem} \ref{theorem11} and \emph{Theorem} \ref{theorem12}.

For Theorem \ref{theorem11}, \emph{(i)} and \emph{(ii)} follow from \cite{CCWWZ} and Lemma \ref{lemma11}; \emph{(iii)} and \emph{(iv)} follow from Proposition \ref{proposition21} and  Proposition \ref{proposition22}, respectively.  For Theorem \ref{theorem12}, \emph{(i)} follows from Lemma \ref{lemma32} and Proposition \ref{proposition31}; \emph{(ii-1)} and \emph{(ii-2)} follow from Proposition \ref{proposition32} and Proposition \ref{proposition33}, respectively.  Finally, \emph{(iii)} holds in light of Lemma \ref{lemma24} (with $a_1=0$ and $a_2=\frac{j_{1,2}}{\omega}$) since $\chi<\chi_3$.
\end{proof}

\section{Radially Non-Monotone Solutions}\label{section3}
In Section 2, for each $\chi\in(\chi_1,\infty)$ we have established the inner ring solution (\ref{interring}) and the outer ring solution (\ref{outerring}), both of which converge to Dirac delta function in the limit of large chemotaxis rate; moreover, we show that if $\chi\in(\chi_1,\chi_2)$, then any nonconstant radial solution of (\ref{ss}) must be either the inner ring or the outer ring solution given above.  In this section, we proceed to look for non-monotone solutions of (\ref{ss}) and for this purpose we shall assume $\chi\geq \chi_2$ from now on.  Recall that for each $\chi=\chi_k$, $k\geq2$, (\ref{ss}) has solutions explicitly given by the one--parameter family bifurcation solutions in (\ref{bifurcation}) such that both $u(r)$ and $v(r)$ are positive in $(0,R)$.  We are interested in solutions that extend the global continuum of each bifurcation branch, which consists of solutions such that $u$ is compactly supported.  Our results in this section can be summarized as follows: for $\chi\in(\chi_2,\infty)$, there exists two classes of non-monotone solutions of (\ref{ss}), explicitly given by (\ref{mexicanhat}) and (\ref{volcano1}); moreover, if $\chi\in(\chi_k,\chi_{k+1})$, $k\geq2$, the sign of $v'(r)$ changes at most $(k-1)$ times in $(0,R)$.

\subsection{Radially Non-Monotone Solution: The Mexican-Hat Solutions}
In order to construct non-monotone solutions, we first choose some $R_0\in(0,R)$ such that $f(r_1;\omega,R_0)$ in (\ref{23}) and $f(r_4;\omega,a,b)$ in (\ref{231}) are solvable with $a=R_0$ and $b=R$.  Then $(u,v)$ takes the form (\ref{227}) with $(a,b)=(0,R_0)$ and (\ref{229}) with $(a,b)=(R_0,R)$, and $v(r)$ achieves its minimum at $r=R_0$.  These radial profiles give rise to an interior in the center and an outer ring on the boundary, which we refer to as the Mexican-hat solutions.  See the left column of Figure \ref{volcanomexican} for illustration of its configuration.
\begin{figure}[h!]\vspace{-3mm}\hspace{15mm}
\begin{minipage}{0.4\columnwidth}
\includegraphics[width=1\columnwidth,height=35mm]{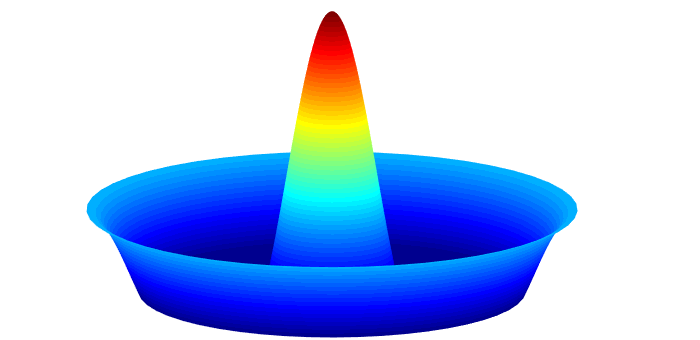}\vspace{-1mm}
\caption*{Mexican-hat}
\end{minipage}\hspace{-5mm}
\begin{minipage}{0.4\columnwidth}
\includegraphics[width=1\columnwidth,height=35mm]{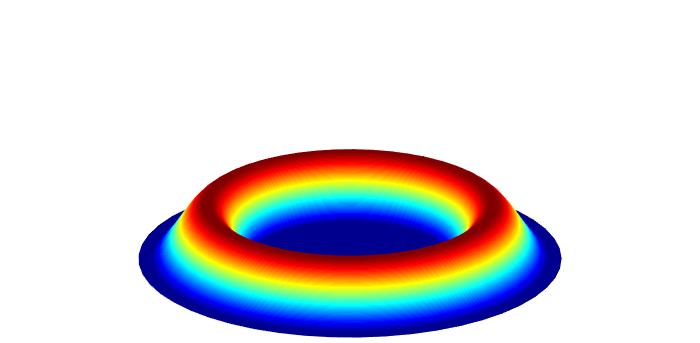}\vspace{-1mm}
\caption*{Volcano}
\end{minipage}
\vspace{-1mm}\caption{Configurations $u$ of Mexican-hat and Volcano solutions of (\ref{ss}) in disk $B_0(5)$, where $R_0\approx 2.7309$ in the Mexican-hat solution (\ref{mexicanhat}), and $\bar R_0^*\approx 3.3831$ in the Volcano solution (\ref{volcano2}).}\label{volcanomexican}
\end{figure}
Then we adjust the mass of each aggregate to match the regularities of $v$ at $r=R_0$.  Before presenting the explicit formula for this non-monotone solution, we first give the necessary and sufficient condition on $R_0$ which guarantees the existence of the roots $r_1$ and $r_4$ for (\ref{23}) and (\ref{231}).  Indeed, there exists one interval of such $R_0$ which gives rise to infinitely many such non-monotone solutions described above.  We give the following lemma.
\begin{lemma}\label{lemma31}
For each $\chi>\chi_2$, let us introduce $\underline R_0$ and $\bar R_0$ as functions of $\chi$ by
\begin{equation}\label{underlineR0}
\underline R_0:=\frac{j_{1,1}}{\omega}
\text{~and~} \bar R_0:=\sup_{\tilde R_0<\frac{j_{1,k}}{\omega}}\Bigg\{\tilde R_0\in(0,R)\Big| \frac{J_1(\omega \tilde R_0)}{Y_1(\omega \tilde R_0)}= \frac{J_1(\omega R)}{Y_1(\omega R)}, \omega R\in(j_{1,k},j_{1,k+1}],k\geq 2\Bigg\}.
\end{equation}
Then $\underline R_0<\bar R_0$; moreover, for each $R_0\in[\underline R_0,\bar R_0]$, $f(r;\omega,R_0)$ in (\ref{23}) admits a unique root $r_1\in(0,\frac{j_{1,1}}{\omega}]$ and $f_4(r;\omega,R_0,R)$ in (\ref{231}) admits a unique root $r_4\in(0,\frac{5\pi}{4\omega})$.
\end{lemma}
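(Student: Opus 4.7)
The plan is to derive the existence and uniqueness of $r_1$ from Lemma~\ref{lemma21} applied with $R$ replaced by $R_0$, and of $r_4$ from Lemma~\ref{lemma23}(ii) applied with $a=R_0$, $b=R$, after first identifying the admissible window $[\underline R_0,\bar R_0]$. The lower bound $R_0\ge\underline R_0$ is precisely the hypothesis $\chi\ge (j_{1,1}/R_0)^2+1$ that Lemma~\ref{lemma21} needs on the sub-disk $B_0(R_0)$, while $R_0\le\bar R_0$ will turn out to be the hypothesis $\chi\ge\chi_{R_0,R}$ that Lemma~\ref{lemma23}(ii) needs on the outer annulus. Thus the lemma reduces to three tasks: verifying that this window is non-empty, then invoking the two existing lemmas, then upgrading the coarse bound $r_4<s_{1b}^{(1)}$ of Lemma~\ref{lemma23} to the sharper $r_4<5\pi/(4\omega)$.

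To show $\underline R_0<\bar R_0$, fix the unique $k\ge 2$ with $\omega R\in(j_{1,k},j_{1,k+1}]$ (available because $\chi>\chi_2$ forces $\omega R>j_{1,2}$), set $c:=y_3(\omega R)$ with $y_3(s):=J_1(s)/Y_1(s)$, and use the observation from the proof of Lemma~\ref{lemma22} that $y_3$ is strictly decreasing on each interval $(y_{1,n},y_{1,n+1})$, tending to $\pm\infty$ at the endpoints and vanishing at $j_{1,n}$. Two sub-cases arise. When $c>0$ (i.e.\ $\omega R\in(y_{1,k+1},j_{1,k+1})$), the unique solution $\omega\tilde R_0\in(y_{1,k},j_{1,k})$ of $y_3(\omega\tilde R_0)=c$ yields $\tilde R_0>y_{1,k}/\omega\ge y_{1,2}/\omega>j_{1,1}/\omega=\underline R_0$, using $y_{1,2}>j_{1,1}$. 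When $c<0$ (i.e.\ $\omega R\in(j_{1,k},y_{1,k+1})$), the unique solution $\omega\tilde R_0\in(j_{1,k-1},y_{1,k})$ gives $\tilde R_0>j_{1,k-1}/\omega\ge\underline R_0$, strictly since $\omega\tilde R_0>j_{1,k-1}$. In either case $\tilde R_0$ lies in the defining set of $\bar R_0$ and exceeds $\underline R_0$.

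For the unique $r_1\in(0,j_{1,1}/\omega]$ of $f(r;\omega,R_0)=0$: Lemma~\ref{lemma21} applies as soon as $R_0>\underline R_0$ and returns a root in $(j_{0,1}/\omega,j_{1,1}/\omega)$; at the borderline $R_0=\underline R_0$ the degenerate root $r_1=R_0=j_{1,1}/\omega$ is admissible, since $J_1(\omega r_1)=0$ automatically matches $v_r=0$ at the interface and the inner ring fills $B_0(R_0)$. For the unique $r_4$ of $f_4(r;\omega,R_0,R)=0$: any $\tilde R_0$ in the defining set of $\bar R_0$ satisfies $y_3(\omega\tilde R_0)=y_3(\omega R)$, so $\omega$ itself lies in the set whose infimum defines $\omega_{\tilde R_0,R}$, giving $\omega_{\bar R_0,R}\le\omega$; combining with the monotonicity of $\omega_{a,b}$ in $a$ from Lemma~\ref{lemma24} then yields $\omega_{R_0,R}\le\omega$ for every $R_0\le\bar R_0$, i.e.\ $\chi\ge\chi_{R_0,R}$, and Lemma~\ref{lemma23}(ii) with $(a,b)=(R_0,R)$ delivers the unique root $r_4\in(0,s_{1b}^{(1)})$.

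The main remaining obstacle is the sharper bound $r_4<5\pi/(4\omega)$, replacing $r_4<s_{1b}^{(1)}$. This is essentially a carbon copy of the Bessel-asymptotic argument in Section~\ref{section2} that placed the outer-ring support size $r_2$ in $(\pi/(2\omega),5\pi/(4\omega))$: introduce the cylinder functions $\mathcal C_n(s)$ from (\ref{220}), invoke the classical root-separation estimate of \cite{Schafheitlin,Watson} to localise the first positive root of $S_1(r;\omega,R)$ in $(3\pi/(4\omega),5\pi/(4\omega))$, and observe that this localisation is independent of the inner boundary $R_0$. Some care is still needed to ensure that the root $r_4$ actually sits below $R-R_0$ as $R_0$ climbs to the endpoint $\bar R_0$; this falls out of the equality $\omega_{\bar R_0,R}=\omega$ established above, together with the continuity and strict monotonicity of $r_4$ in $R_0$ (via implicit differentiation of $f_4=0$, in the spirit of Lemma~\ref{lemma23}(iii)).
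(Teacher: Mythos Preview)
Your proposal is correct and follows essentially the same strategy as the paper: invoke Lemma~\ref{lemma21} on the sub-disk $B_0(R_0)$ for $r_1$, and Lemma~\ref{lemma23}(ii) together with the monotonicity of $\omega_{a,b}$ in $a$ (Lemma~\ref{lemma24}) on the outer annulus for $r_4$. The paper's argument for $\underline R_0<\bar R_0$ is more direct than your case-split on the sign of $y_3(\omega R)$: it simply observes that the defining equation forces $\omega\bar R_0\in(j_{1,k-1},j_{1,k}]$, whence $\bar R_0>j_{1,k-1}/\omega\ge j_{1,1}/\omega=\underline R_0$; this also sidesteps the edge cases $y_3(\omega R)=0$ and $Y_1(\omega R)=0$ that your dichotomy leaves implicit. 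Your explicit treatment of the sharper bound $r_4<5\pi/(4\omega)$ via the Schafheitlin root-localisation for $\mathcal C_1$ is precisely what the paper relies on implicitly from the analysis surrounding (\ref{220}) in Section~\ref{section2}.
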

\begin{proof}
We first show that $\underline R_0<\bar R_0$.  Since $\omega R\in(j_{1,k},j_{1,k+1}]$, we readily have that $\omega \bar R_0\in(j_{1,k-1},j_{1,k}]$, therefore $\bar R_0>\frac{j_{1,k-1}}{\omega}\geq\frac{j_{1,1}}{\omega}:=\underline R_0$.  To prove the existence and uniqueness of $r_1$, we see that $\omega=\frac{j_{1,1}}{\underline R_0}\geq\frac{j_{1,1}}{R_0}$ for each $R_0\in[\underline R_0,\bar R_0]$, then Lemma \ref{lemma21} readily implies that $f(r;\omega,R_0)$ in (\ref{23}) admits a unique root $r_1\in(\frac{j_{0,1}}{\omega},\frac{j_{1,1}}{\omega}]$.  To prove the statement for $r_4$, we claim that $\chi>\chi_{R_0,R}$ for each $R_0<\bar R_0$, and then according to part \emph{(ii)} of Lemma \ref{lemma23}, (\ref{231}) with $a=R_0$ and $b=R$ admits a unique root $r_4\in (0,s^{(1)}_{1})$.  To verify the claim, we observe from the definitions (\ref{omegaab}) and (\ref{underlineR0}) that $\omega>\omega_{\bar R_0,R}$, and then Lemma \ref{lemma24} implies $\omega_{\bar R_0,R}>\omega_{R_0,R}$ since $\bar R_0>R_0$.  This verifies the claim and completes the proof.
\end{proof}
\begin{remark}
The proof above readily implies that $\bar R_0\in(\frac{j_{1,k-1}}{\omega},\frac{j_{1,k}}{\omega}]$ whenever $\omega R\in(j_{1,k},j_{1,k+1}]$, $k\geq 2$.
\end{remark}
See Figure \ref{functionYJ} for illustration of the verification.  According to Lemma \ref{lemma31}, with $r_1$ and $r_4$ obtained there in hand one is able to solve (\ref{ss}) for inner ring solution over $(0,R_0)$ and outer ring solution over $(R_0,R)$, and then concatenate them to form radially non-monotone solution in the whole region $(0,R)$.  This is what we will show later; moreover, $R_0\rightarrow \frac{j_{1,1}}{j_{1,2}}R$ as $\chi \rightarrow \chi_2$.  Note that here $r_1$ and $r_4$ denote the sizes of support of $u_d(r)$ on the left patch and right patch.

There exists one interval of such $R_0$ which therefore gives rise to a (one-parameter) family of the concatenated non-monotone radial solutions whenever $\chi>\chi_2$.  Though there is no uniqueness for this family of solutions, we are able to present their explicit formulas with a pre-determined parameter $R_0$ in this interval.
\begin{remark}

(i) Lemma \ref{lemma31} states that for each $R_0\in[\underline R_0,\bar R_0]$, $v(r)$ decreases in $(0,R_0)$ and increases in $(R_0,R)$, hence $R_0$ is the (unique) interior critical point of $v(r)$ in $(0,R)$.  For each fixed $\chi>\chi_2$, it is easy to see that the valley of $v(r)$ at $R_0$ can neither be too close to the center nor too close to the boundary.  If one would like shift the valley sufficiently close to the center or the boundary, a sufficiently large $\chi$ is required to this end;

(ii) continued from (i), if we set $R_0$ arbitrary, an equivalent statement of Lemma \ref{lemma31} is that for any $\chi>\max\left\{\left(\frac{j_{1,1}}{R_0}\right)^2,\omega^2_{R_0,R}\right\}+1$, with $\omega_{R_0,R}$ defined in Lemma \ref{lemma23}, the conclusion there holds.

(iii) we claim that $\underline R_0\rightarrow 0^+$ and $\bar R_0\rightarrow R^-$ as $\chi\rightarrow \infty$.  The former is obvious; to show that later, we recall that $y_{1,k+1}-y_{1,k}\rightarrow \pi$ as $k\rightarrow \infty$, therefore for sufficiently large $\omega$, $\omega R-\omega \bar R_0\leq 2\pi$, hence  the later holds.  We would like to mention that Lemma \ref{lemma31} holds for $R_0=\underline R_0$ or $\bar R_0$, however, the asymptotic behaviors of $(u,v)$ when $R_0=\underline R_0$ or $\bar R_0$ are different from those when $R_0\in(\underline R_0, \bar R_0)$.
\end{remark}

Now by the same arguments as above, for each $\chi>\chi_2$ we can construct non-monotone solutions $(u_d(r),v_d(r))$ explicitly given by
\begin{equation}\label{mexicanhat}
\left\{\begin{array}{ll}
u_d(r)=&\left\{\begin{array}{ll}
\mathcal A_1\Big(J_0(\omega r)-J_0(\omega r_1)\Big),&r\in[0,r_1],\\
0,& r\in (r_1,R-r_4),\\
\mathcal A_4 \Big(S_0(R-r;\omega,R)-S_0(r_4;\omega,R)\Big),&r\in[R-r_4,R],\\
\end{array}
\right. \\
\\
v_d(r)=&\left\{\begin{array}{ll}
\mathcal A_1 \Big(\frac{J_0(\omega r)}{\chi}-J_0(\omega r_1)\Big),&r\in[0,r_1],\\
\mathcal B T_0(r;R_0),& r\in (r_1,R-r_4),\\
\mathcal A_4\Big(\frac{1}{\chi}S_0(R-r;\omega,R)-S_0(r_4;\omega,R) \Big),&r\in[R-r_4,R],\\
\end{array}
\right. \\
\end{array}
\right.
\end{equation}
where the coefficients, determined by the continuity of $v(r)$ and conservation of total mass, are explicitly given by
\begin{equation}\label{A1A4B}
\mathcal A_1= \frac{\chi \mathcal B}{1-\chi}\frac{T_0(r_1;R_0)}{J_0(\omega r_1)},
\mathcal A_4= \frac{\chi \mathcal B}{1-\chi}\frac{T_0(R-r_4;R_0)}{S_0(r_4;\omega,R)} \text{~~and~~} \mathcal B =\frac{\bar C_1\bar C_2M}{\bar C_1+\bar C_2},
\end{equation}
where
\[\bar C_1:=\frac{-\omega^2J_0(\omega r_1)}{\pi\chi r_1^2J_2(\omega r_1)T_0(r_1;R_0)} \text{~and~} \bar C_2:=\frac{\omega^2}{\pi\chi \Big(2(R-r_4)T_1(R-r_4;R_0)+ r_4 (2R-r_4)T_0(R-r_4;R_0)\Big)}.\]
Moreover, one can find that the cellular population $m_1$ within inner support disk $B_0(r_1)$ and the cell population $m_4$ within outer support annulus $B_0(R)\backslash B_0(R-r_4)$ can be explicitly presented as
\begin{equation}\label{33}
m_1=\frac{\bar C_2M}{\bar C_1+\bar C_2} \text{~and~} m_4=\frac{\bar C_1M}{\bar C_1+\bar C_2}.
\end{equation}

Then our results can be summarized as follows.
\begin{proposition}\label{proposition31}
Assume that $\chi>\chi_2$.  Then for each $R_0\in[\underline R_0,\bar R_0]$, (\ref{ss}) has a non-monotone solution given by (\ref{mexicanhat}).
\end{proposition}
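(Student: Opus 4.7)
The plan is to build $(u_d, v_d)$ by concatenating three explicit radial pieces across two free-boundary interfaces at $r = r_1$ and $r = R - r_4$: an inner-ring germ on $[0, r_1]$ of the type developed in Section \ref{section2} but with $R$ replaced by $R_0$; a pure Bessel buffer $\mathcal{B}\,T_0(r; R_0)$ on the middle annulus $(r_1, R - r_4)$ where $u_d \equiv 0$; and an outer-ring germ on $[R - r_4, R]$ supplied by the annular auxiliary construction (\ref{229}) with $a = R_0$, $b = R$. The buffer is tailored so that $v_d'(R_0) = 0$ holds automatically (since $T_1(R_0; R_0) = 0$), which both places the interior valley of $v_d$ at $R_0$ and makes the matching of the inner and outer germs against the buffer at the two interfaces reduce \emph{exactly} to the algebraic conditions (\ref{23}) and (\ref{231}) already extracted in Section \ref{section2}.

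For a fixed $R_0 \in [\underline{R}_0, \bar R_0]$, Lemma \ref{lemma31} supplies the unique roots $r_1$ of $f(\cdot\,; \omega, R_0) = 0$ and $r_4$ of $f_4(\cdot\,; \omega, R_0, R) = 0$, together with $r_1 < R_0$ and $R - r_4 > R_0$, so the three subintervals tile $[0, R]$. I would then define $(u_d, v_d)$ piecewise by (\ref{mexicanhat}) with free amplitudes $\mathcal{A}_1, \mathcal{B}, \mathcal{A}_4$, check that each piece solves its own linear PDE and satisfies the Neumann conditions at $r = 0, R$, and observe that $u_d(r_1) = u_d(R - r_4) = 0$ from the form of the Bessel combinations, so $u_d \in C^0([0, R])$. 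Continuity of $v_d$ at $r_1$ and at $R - r_4$ yields the two linear relations producing the first two formulas in (\ref{A1A4B}); continuity of $v_d'$ at the same points is then \emph{automatic}, because the algebraic identities (\ref{23}) and (\ref{231}) selected by Lemma \ref{lemma31} are precisely the consistency conditions for simultaneous matching of $v_d$ and $v_d'$. Since the source $u_d$ of the $v$-equation is continuous across each interface, $v_d$ is upgraded to $C^2((0,R))$.

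Next, I would impose the mass constraint $M = 2\pi \int_0^R u_d(r) r \, dr$ by splitting it over the two support components, carrying out the Bessel antiderivatives already recorded in (\ref{26}) and (\ref{215}) (with the shifts $R \to R_0$ on the inner side and $(a,b) = (R_0, R)$ on the outer side), and eliminating $\mathcal{A}_1, \mathcal{A}_4$ via the two relations from the previous paragraph. The result is a single linear equation for $\mathcal{B}$ whose solution is $\mathcal{B} = \bar C_1 \bar C_2 M / (\bar C_1 + \bar C_2)$ as in (\ref{A1A4B}), and the inner/outer mass split (\ref{33}) drops out of the same calculation. For non-negativity of $u_d$ on $[0, r_1]$, the bracket $J_0(\omega r) - J_0(\omega r_1)$ is non-negative by monotonicity of $J_0$ on $[0, j_{1,1}]$ combined with $\omega r_1 \in (j_{0,1}, j_{1,1}]$, and the sign of $\mathcal{A}_1$ follows from the sign ledger $\chi > 1$, $T_0(r_1; R_0) > 0$, $J_0(\omega r_1) < 0$ combined with the sign of $\mathcal{B}$; the parallel sign check on $[R - r_4, R]$ is inherited verbatim from the outer-ring analysis of Section \ref{section2}.

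The main obstacle is the final step: verifying that the \emph{single} global scale $\mathcal{B}$ produces the correct signs of \emph{both} $\mathcal{A}_1$ and $\mathcal{A}_4$ so that $u_d \geq 0$ on the two disjoint support components simultaneously. This reduces to pinning down the signs of $\bar C_1$ and $\bar C_2$ so that $\bar C_1\bar C_2/(\bar C_1+\bar C_2)$ in (\ref{A1A4B}) carries the sign needed to balance the Bessel denominators $J_0(\omega r_1)$ and $S_0(r_4; \omega, R)$ in the two amplitude formulas. All of this is sign bookkeeping driven by Section \ref{section2}, but it is the only piece of the argument that cannot simply be quoted from Lemma \ref{lemma31}.
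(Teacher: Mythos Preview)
Your proposal is correct and follows essentially the same construction the paper uses; indeed, the paper's ``proof'' of Proposition \ref{proposition31} is just the paragraph preceding it, which says ``by the same arguments as above'' and records the formula (\ref{mexicanhat})--(\ref{A1A4B}), so you have in fact written out more than the paper does. Your observation that the algebraic conditions (\ref{23}) (with $R_0$ in place of $R$) and (\ref{231}) are exactly what make the $C^1$-matching of $v_d$ automatic once $C^0$-matching fixes the amplitude ratios is correct (using $\omega^2=\chi-1$), and the sign bookkeeping you flag as the residual obstacle is routine: one checks $\bar C_1,\bar C_2>0$ from $J_0(\omega r_1)<0$, $J_2(\omega r_1)>0$, $T_0>0$, and $T_1(R-r_4;R_0)>0$ for $R-r_4>R_0$, whence $\mathcal B>0$, $\mathcal A_1>0$, $\mathcal A_4<0$, and $u_d\geq 0$ on each support component.
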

For each $\chi>\chi_2$, Proposition \ref{proposition31} implies that there exists a one-parameter family of solution of (\ref{ss}).  These solutions are illustrated by shaded area of Figure \ref{branch2}.  As $R_0$ varies, the configuration of the solutions are maintained while their asymptotics can be dramatically different in the large limit of $\chi$, in particular when $R_0=\underline R_0$ or $\bar R_0$ is chosen.  We shall see details in the next subsection.
\begin{figure}[h!]
\vspace{-5mm}
  \centering
\includegraphics[width=1\textwidth]{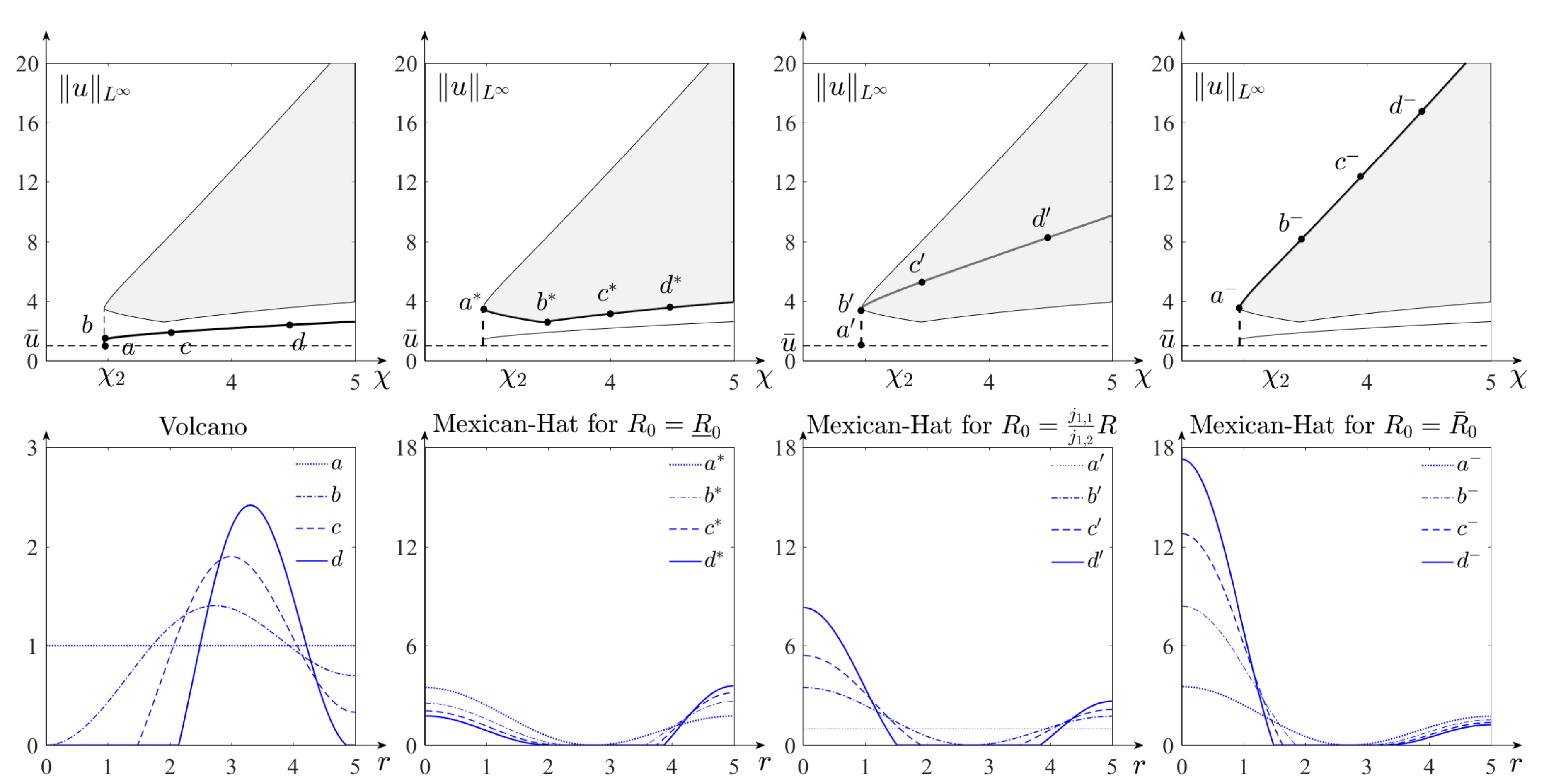}\\
  \caption{Mexican-hat and Volcano solutions on their local and global bifurcation branches out of $\chi\geq\chi_2$.  Each point of the shaded area and the bold curve in the top four graphs gives a solution of (\ref{ss}), whereas for each $\chi>\chi_2$ it has a unique Volcano solution and infinitely many Mexican-hat solutions.  Our results also imply that the magnitude of a Volcano is always smaller than that of the Mexican-hat, while the latter is monotone increasing in $R_0$ in $[\underline R_0,\bar R_0]$.  \textbf{Column 1}: we present  at $a$ and $b$ the local bifurcation solutions given by (\ref{bifurcation}) for $k=2$ with $\varepsilon=0$ and $\varepsilon=-\frac{1}{\chi_2}$, respectively.  When $\chi$ surpasses $\chi_2$, solution $u$ becomes compactly supported, and it stays so and preserves the Volcano profile globally as illustrated by $c$ and $d$.  \textbf{Columns 2-4} present the Mexican-hat solutions obtained in (\ref{mexicanhat}) with $R_0=\underline R_0,\frac{j_{1,1}}{j_{1,2}}R$ and $\bar R_0$ from the second to the fourth graph.  In each class, compactness of support and profile of the solution are preserved along its global branch.}\label{branch2}
\end{figure}

\subsubsection{Asymptotic behaviors of the Mexican-hat solutions}
Next we investigate the asymptotic behaviors of the Mexican-hat solutions in the large limit of $\chi$.  Our result is summarized in the following lemma.
\begin{lemma}\label{lemma32}
Assume that $\chi>\chi_2$ and let $(u_d(r),v_d(r))$ be the solution given by (\ref{mexicanhat}) for arbitrary $R_0\in[\underline R_0,\bar R_0]$.  The followings hold as $\chi\rightarrow \infty$

(i) if $R_0=\underline R_0$, then
\[
u_d(r)\rightarrow
\left\{\begin{array}{ll}
\underline C_0 ,& r=0,\\
M\delta_R(r), & r\in(0,R],\\
\end{array}\right.
\text{~and~}
v_d(r)\rightarrow \frac{MI_0(r)}{2\pi RI_1(R)},
\]
where $\underline C_0=\frac{(J_0(j_{1,1})-1)M}{2\pi RI_1(R)J_0(j_{1,1})}$; moreover, $u_d(R)=\frac{M\omega}{2\pi R}+O (1)$ for $\chi \gg 1$;

(ii) if $R_0\in(\underline R_0,\bar R_0)$, then
\[u_d(r)\rightarrow \frac{C_0 M}{1+C_0}\delta_0(r)+\frac{M}{1+C_0}\delta_R(r) \text{~and~}v_d(r)\rightarrow \frac{T_0(r;R_0)}{2\pi R(1+C_0)T_1(r;R_0)} \text{~pointwisely in $[0,R]$ as~}\chi\rightarrow \infty,\]
where $C_0=\frac{I_1(R_0)}{RT_1(R;R_0)}$; moreover, $u_d(0)=\frac{C_0 M}{1+C_0}\frac{\omega^2}{2\pi j_{0,1}J_1(j_{0,1})}+O(1)$ and $u_d(R)=\frac{M}{1+C_0}\frac{\omega}{2\pi R}+O (1)$ for $\chi \gg 1$;

(iii) if $R_0=\bar R_0$, then
\[
u_d(r)\rightarrow
\left\{\begin{array}{ll}
M\delta_0(r), & r\in[0,R),\\
\underline C_0,& r=R,
\end{array}\right.
\text{~and~}
v_d(r) \rightarrow \frac{MT_0(r;R)}{2\pi I_1(R)},
\]
where $\bar C_0=\frac{M}{\pi RI_1(R)}$; moreover, $u_d(0)=\frac{M\omega^2}{2\pi j_{0,1}J_1(j_{0,1})}+O(1)$ for $\chi\gg 1$;
\end{lemma}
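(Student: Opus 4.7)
The plan is to reduce the asymptotic analysis of the three-piece profile (\ref{mexicanhat}) to the inner- and outer-ring asymptotics already obtained in Propositions \ref{proposition21}--\ref{proposition22} and Lemma \ref{lemma23}, tracking along the way the support sizes $r_1,r_4$, the middle-region coefficient $\mathcal B$, and the mass split $m_1,m_4$ of (\ref{33}) through the explicit expressions (\ref{A1A4B}). Once $r_1,r_4\to 0^+$ is established together with the limits of $m_1,m_4,\mathcal B$, the concentration of $u_d$ at $r=0$ and $r=R$ is inherited from Section 2, and the middle-region pointwise limit of $v_d$ is just the limit of $\mathcal B\,T_0(r;R_0)$.

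For the generic case (ii) with $R_0\in(\underline R_0,\bar R_0)$ treated as fixed, the inner patch is an inner-ring problem on $B_0(R_0)$ with mass $m_1$ and the outer patch is a generalized outer-ring problem on the annulus $(R_0,R)$ with mass $m_4$. Proposition \ref{proposition21} (with $R$ replaced by $R_0$) and Lemma \ref{lemma23}(iii) immediately yield $\omega r_1\to j_{0,1}^+$, $r_4\to 0^+$, together with the peak estimates $u_d(0)\sim m_1\omega^2/(2\pi j_{0,1}J_1(j_{0,1}))$ and $u_d(R)\sim m_4\omega/(2\pi)$. To fix the split I would substitute the leading small-$r_1$ expansions $T_0(r_1;R_0)\sim -I_1(R_0)\ln r_1$, $T_1(r_1;R_0)\sim -I_1(R_0)/r_1$, $J_0(\omega r_1)\sim -J_1(j_{0,1})(\omega r_1-j_{0,1})$, together with the two-scale relation $\omega(\omega r_1-j_{0,1})\sim r_1\ln r_1$ extracted from (\ref{23}), into $\bar C_1$, and the trivial small-$r_4$ limits $T_0(R-r_4;R_0)\to T_0(R;R_0)$, $T_1(R-r_4;R_0)\to T_1(R;R_0)$ into $\bar C_2$. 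All $\omega$-dependence cancels, yielding $\bar C_1\to 1/(2\pi I_1(R_0))$ and $\bar C_2\to 1/(2\pi RT_1(R;R_0))$; hence $\bar C_2/\bar C_1\to C_0$, $m_1\to C_0M/(1+C_0)$, $m_4\to M/(1+C_0)$, and $\mathcal B\to M/[2\pi R(1+C_0)T_1(R;R_0)]$, matching the stated limits.

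The boundary cases (i) and (iii) are more delicate because $R_0$ scales with $\chi$, so the single-ring propositions do not apply verbatim. In case (i), setting $s=\omega r_1$ and $\omega R_0=j_{1,1}$, the rescaled form of (\ref{23}) reduces to $J_0(s)/J_1(s)=2s/(s^2-j_{1,1}^2)+o(1)$; a Taylor expansion at $s=j_{1,1}$ (using $J_1(j_{1,1})=0$, $J_1'(j_{1,1})=J_0(j_{1,1})$) shows both sides share identical simple poles and identical regular parts, forcing $s\to j_{1,1}^-$ and $J_0(\omega r_1)\to J_0(j_{1,1})\ne 0$. Substituting this together with $T_0(r_1;R_0)\sim 1/R_0$, $T_0(R;R_0)\sim I_0(R)/R_0$ and $T_1(R;R_0)\sim I_1(R)/R_0$ into (\ref{A1A4B})--(\ref{33}) gives $\bar C_1\to+\infty$ and $\bar C_2\to 0^+$ with $\bar C_1\bar C_2$ bounded and nonzero; hence $m_1\to 0$, $m_4\to M$, and the cancellation $\mathcal B\,T_0(r;R_0)\sim MI_0(r)/(2\pi RI_1(R))$ produces the stated Green's function, while direct substitution in $u_d(0)=\mathcal A_1(1-J_0(\omega r_1))$ evaluates to $\underline C_0$, and $u_d(R)$ follows from Proposition \ref{proposition22} with outer mass close to $M$. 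Case (iii) is handled by the mirror analysis on the outer patch, in which $\omega r_4$ tends to the first $S_1$-root in the thin annulus $(\bar R_0,R)$ and all mass migrates to the inner patch, leading to $u_d(R)\to\bar C_0$ and $v_d\to MT_0(r;R)/(2\pi I_1(R))$.

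The principal technical obstacle lies in the boundary-case matching: the leading-order rescaled equation (\ref{23}) forces the target variable $\omega r_1$ (and analogously $\omega r_4$ in case (iii)) to an endpoint of its admissible interval, so extracting the finite nonzero limits of $\bar C_1,\bar C_2$ requires keeping next-order corrections in both the Bessel expansions of $J_0,J_1$ and the modified Bessel expansions of $T_0,T_1$. In (\ref{A1A4B})--(\ref{33}) this appears as a delicate pairing of a blowing-up factor in $\bar C_1$ against a vanishing factor in $\bar C_2$, whose product must be verified to reproduce precisely the constants $\underline C_0$ and $\bar C_0$.
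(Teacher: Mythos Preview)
Your proposal is correct and, for case (ii), essentially identical to the paper's argument: both compute the mass split $m_1/m_4$ through the coefficients in (\ref{33}) after sending $r_1,r_4\to 0^+$, arriving at $C_0=I_1(R_0)/\big(R\,T_1(R;R_0)\big)$. The paper does this via the matching identity $\mathcal B_1=\mathcal B_4$ written as
\[
\frac{-m_1}{2r_1T_1(r_1;R_0)-r_1^2T_0(r_1;R_0)}=\frac{m_4}{2(R-r_4)T_1(R-r_4;R_0)+r_4(2R-r_4)T_0(R-r_4;R_0)}
\]
and then uses only the leading behaviors $T_1(r_1;R_0)\sim -I_1(R_0)/r_1$ and $T_1(R-r_4;R_0)\to T_1(R;R_0)$; you instead compute $\bar C_1\to 1/(2\pi I_1(R_0))$ and $\bar C_2\to 1/(2\pi R\,T_1(R;R_0))$ directly, which is equivalent.

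For the boundary cases (i) and (iii) your analysis works but is more elaborate than necessary. The paper's key simplification, which you do not use, is that at the endpoints the free boundary disappears \emph{exactly}: when $R_0=\underline R_0=j_{1,1}/\omega$ one has $\chi=(j_{1,1}/R_0)^2+1$, the critical value for the disk $B_0(R_0)$, so the inner-ring support fills the whole inner disk and $r_1=R_0$, i.e.\ $\omega r_1=j_{1,1}$ identically for every $\chi$; likewise when $R_0=\bar R_0$ one has $\chi=\chi_{\bar R_0,R}$ and $r_4=R-\bar R_0$ exactly. This makes $J_0(\omega r_1)=J_0(j_{1,1})$ a fixed nonzero constant and removes the need for your Taylor matching at $s=j_{1,1}$. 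The paper then reads off $m_1/m_4\to 0$ from (\ref{33}) and substitutes into $u_d(0)=\mathcal A_1\big(1-J_0(j_{1,1})\big)=\omega^2 m_1\big(1-J_0(j_{1,1})\big)/\big(-\pi j_{1,1}^2 J_0(j_{1,1})\big)$. Your route---tracking $\bar C_1\to\infty$, $\bar C_2\to 0$ with $\bar C_1\bar C_2$ finite and cancelling the $1/R_0$ factors in $\mathcal B\,T_0(r;R_0)$---recovers the same constants, but the paper's observation that the support saturates exactly at the endpoints avoids the ``principal technical obstacle'' you flag altogether.
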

\begin{proof}
\emph{(i)} and \emph{(iii)} are relatively simple and can be verified by the same fashion, and we only do for the first case.  If $R_0=\underline R_0$, we readily know that $r_1=\frac{j_{1,1}}{\omega}$; on the other hand, since $r_1\rightarrow 0^+$ as $\chi\rightarrow \infty$, we can infer from (\ref{33}) that $\frac{m_1}{m_4}\rightarrow 0^+$ hence $m_1\rightarrow 0^+$ and $m_4\rightarrow M^-$.  Therefore, one can find by straightforward calculations that $u_d(0)=\mathcal A_1(1-J_0(j_{1,1}))=\frac{\omega^2m_1(1-J_0(j_{1,1}))}{-\pi j_{1,1}^2J_0(j_{1,1})}\rightarrow \underline C_0$ as claimed.

To prove \emph{(ii)}, we first claim that for each fixed $R_0\in(\underline R_0, \bar R_0)$, the proportion of mass $m_1$ and $m_4$ approaches a positive constant $ C_0:=\frac{I_1(R_0)}{R T_1(R;R_0)}$.  Since $\mathcal B_1=\mathcal{B}_4$, one has for any $\chi>\chi_2$
\[\frac{-m_1}{2r_1 T_1(r_1;R_0)-r_1^2T_0(r_1;R_0)}=\frac{m_4}{2(R-r_4)T_1(R-r_4;R_0)+r_4(2R-r_4)T_0(R-r_4;R_0)};\]
on the other hand, since $\omega r_1 \rightarrow j_{0,1}$ and $r_4 \rightarrow s_0^{(1)}$ as $\chi $ goes to infinity, one can apply the facts that $I_1(0)=0$ and $K_1(r)-\frac{1}{r}=o(r)$ on $\frac{m_1}{m_4}=-\frac{r_1 T_1(r_1;R_0)}{(R-r_4) T_1(R-r_4;R_0)}$ to show that $\frac{m_1}{m_4}\rightarrow C_0$ as $\chi \rightarrow \infty$. Thanks to the asymptotic behavior of $u^-(r)$ and $u_4(r)$, we conclude that $u_d(r)\rightarrow m_1\delta_0(r)+m_4\delta_R(r)$ as described.
\end{proof}

\subsection{Radially Non-Monotone Solutions: The Volcano Solutions}
We next construct $(u_i(r),v_i(r))$ such that the solution first increases and then decreases in $(0,R)$, hence $v(r)$ attains a single local maximum in $(0,R)$ at some $R_0^*$.  In contrast to Mexican-hats where a family of solutions are available (as $R_0$ there varies), such $R_0^*$ is uniquely determined, and so is the solution $(u_i(r),v_i(r))$ as we shall see later on.  This leads to a population density profile that have a volcano-like shape, with maximal bacterial densities occurring on a ring away from the center and boundary of the region.  We call it the Volcano Solution since the configuration of $u$ resembles a volcano pit and it simulates the `volcano effect' \cite{BLL2007,SimoneM2011} that bacteria populations regularly overshoot the peak in chemoattractant concentration and aggregate into a stationary ring of higher density some distance away from an optimal environment.  See the right column of Figure \ref{volcanomexican} for illustration.

These solutions, contrasted with counterparts of the classical Keller-Segel models, shed the light that degenerate diffusion that can cause a volcano effect.
Let us denote $\chi^*_2=(\omega_2^*)^2+1$ with
\[\omega_2^*:=\inf_{w>\frac{j_{1,2}}{R}}\left\{\omega\in\mathbb R^+\Big \vert
\frac{J_1(\omega R)}{Y_1(\omega R)}=\frac{J_0(\omega R)-J_0(\omega (R-\xi))}{Y_0(\omega R)-Y_0(\omega (R-\xi))}, \xi\in(s_0^{(2)},s_1^{(2)}) \text{~s.t~}f_2(\xi;\omega,R)=0\right\}.\]
Our results are described as follows:
\begin{proposition} \label{proposition32}
Let $R$ and $M$ be arbitrary.  For each $\chi>\chi_2$, (\ref{ss}) has a unique solution $(u(r),v(r))$ such that $v(r)$ increases in $(0,R_0^*)$ and decreases in $(R_0^*,R)$ with a constant $R_0^*\in(0,R)$ dependent on $\chi$; moreover, if $\chi\in (\chi_2,\chi^*_2)$, $R_0^*=\bar R_0$ as given by (\ref{underlineR0}) and this solution is explicitly given by (\ref{volcano1}); if $\chi \in [\chi^*_2, \infty)$, this solution is explicitly given by (\ref{volcano2}).
\end{proposition}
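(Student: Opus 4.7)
The plan is to construct a radially non-monotone solution in which $v$ has a single interior maximum at $R_0^*$ and to show that this solution is uniquely determined in each regime of $\chi$. Since the $u$-equation gives $u(u-\chi v)_r=0$ and $u\geq 0$, on each connected component of $\{u>0\}$ we must have $u=\chi v+C$ for some constant $C$, while $u=0$ elsewhere. Because $v$ is assumed monotone increasing on $(0,R_0^*)$ and monotone decreasing on $(R_0^*,R)$, the set $\{u>0\}=\{v>-C/\chi\}$ is a single interval of the form $(R_0^*-r_2,\,R_0^*+r_3)$ containing $R_0^*$. The dichotomy in the proposition is thus reduced to whether the right endpoint $R_0^*+r_3$ coincides with $R$ (case (ii-1)) or lies strictly in the interior (case (ii-2)).

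In either case the explicit formula is obtained by solving the $v$-equation piecewise exactly as in Section \ref{section2}: on $[0,R_0^*-r_2]$ the Neumann condition at $r=0$ gives $v=C_3 I_0(r)$; on the support the $v$-equation takes the form $v''+v'/r+(\chi-1)(v-\bar v)=0$ with $\bar v=-C/\chi$, so $v-\bar v$ is a two-parameter combination of $J_0(\omega r)$ and $Y_0(\omega r)$; and on $[R_0^*+r_3,R]$ (in case (ii-2)) one has $v=C_4 T_0(r;R)$. The matching conditions are continuity of $v$ and $v'$ at each interface together with $v=\bar v$ at the endpoints of the support (i.e.\ $u=0$ there); in case (ii-1) the matching on the right is replaced by the Neumann condition $v'(R)=0$. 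These conditions translate into a coupled system of algebraic equations in $(R_0^*,r_2,r_3)$ analogous to (\ref{23}), (\ref{210}), (\ref{228}) and (\ref{231}), and the overall amplitude is then fixed by the mass constraint $\int_{B_0(R)} u \, d\textbf{x}=M$.

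The threshold $\chi_2^*$ is identified as the value of $\chi$ at which the right endpoint of the support just detaches from $r=R$. For $\chi\in(\chi_2,\chi_2^*]$ the support extends to the boundary, and the matching equation at the left endpoint becomes $f_2(\xi;\omega,R)=0$ with $\xi=R-(R_0^*-r_2)$ forced to lie in the \emph{second} interval $(s_0^{(2)},s_1^{(2)})$; this is precisely the root rejected in Remark \ref{remark21} for the pure outer ring, because there the induced $u$ would take negative values, but it is exactly the admissible root here since the interior maximum $R_0^*$ sits inside the support and the sign change in $v'$ is genuine rather than spurious. Imposing simultaneously $v'(R)=0$ and the continuity at $R_0^*-r_2$ yields the relation $\frac{J_1(\omega R)}{Y_1(\omega R)}=\frac{J_0(\omega R)-J_0(\omega(R-\xi))}{Y_0(\omega R)-Y_0(\omega(R-\xi))}$ appearing in the definition of $\chi_2^*$, and comparing this with the definition (\ref{underlineR0}) of $\bar R_0$ forces $R_0^*=\bar R_0$ throughout this regime. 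When $\chi>\chi_2^*$ this joint condition becomes unsatisfiable on the boundary-touching branch, the right endpoint retracts inward, and the three-piece configuration of case (ii-2) takes over with a unique interior $R_0^*\in(\underline R_0,\bar R_0)$.

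The main technical obstacle will be verifying the uniqueness of $R_0^*$ (and of $r_2,r_3$) in the correct intervals. The underlying algebraic equations are transcendental and couple several Bessel factors, but I expect the derivative-based monotonicity arguments in the spirit of (\ref{24}) and Lemma \ref{lemma23}\emph{(iii)}, together with the cross-oscillation properties of $J_1,Y_1$ exploited in Lemma \ref{lemma22} and Lemma \ref{lemma24}, to suffice once combined carefully. The most delicate point is the transition at $\chi=\chi_2^*$: one has to check that the formulas from cases (ii-1) and (ii-2) agree continuously there, which I expect from the fact that at $\chi_2^*$ the support endpoint $R_0^*+r_3$ converges to $R$ and the condition $v'(R)=0$ holds simultaneously by the very defining relation of $\chi_2^*$.
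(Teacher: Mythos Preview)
Your overall framework is correct and matches the paper's: the support of $u$ is a single interval containing the interior maximum $R_0^*$, and the dichotomy is whether this interval reaches $r=R$ or not. However, there is a genuine error in your derivation of $R_0^*=\bar R_0$ in case (ii-1).

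You claim that imposing $v'(R)=0$ together with the matching at the left endpoint $R_0^*-r_2$ yields the relation
\[
\frac{J_1(\omega R)}{Y_1(\omega R)}=\frac{J_0(\omega R)-J_0(\omega(R-\xi))}{Y_0(\omega R)-Y_0(\omega(R-\xi))},
\]
and that comparing this with (\ref{underlineR0}) forces $R_0^*=\bar R_0$. This is not right: the displayed relation is equivalent to $u(R)=0$, which holds only at the single value $\chi=\chi_2^*$ and is precisely what \emph{defines} that threshold. It does \emph{not} hold throughout $(\chi_2,\chi_2^*)$, and it is not what pins down $R_0^*$. The correct argument is much more direct. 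On the support, $v$ minus a constant is a combination $A J_0(\omega r)+B Y_0(\omega r)$, so any critical point $r_0$ satisfies $A J_1(\omega r_0)+B Y_1(\omega r_0)=0$. Since both $r_0=R_0^*$ (interior maximum) and $r_0=R$ (Neumann boundary) are critical points lying in the support, they impose the same ratio $A/B$, hence
\[
\frac{J_1(\omega R_0^*)}{Y_1(\omega R_0^*)}=\frac{J_1(\omega R)}{Y_1(\omega R)},
\]
which by (\ref{underlineR0}) gives $R_0^*=\bar R_0$ immediately. This is how the paper proceeds: it writes the solution on the support as $\mathcal A_2\,S_0(\bar R_0-r;\omega,\bar R_0)$ so that $v'(\bar R_0)=0$ automatically, and the Neumann condition $v'(R)=0$ then follows from the very definition of $\bar R_0$. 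The left-endpoint matching determines $r_2$ uniquely via (\ref{210}) with $R$ replaced by $\bar R_0$, and the role of the relation you quoted is only to detect when $u(R)$ first vanishes, i.e.\ to locate $\chi_2^*$.

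For case (ii-2) your plan is structurally correct but vague where the paper is most technical. Uniqueness of $R_0^*$ is not a consequence of the generic monotonicity lemmas you cite; the paper (Proposition~\ref{proposition33}) reduces it to showing that $G(R_0^*;\omega):=S_0(r_2^*;\omega,R_0^*)-S_0(-r_3^*;\omega,R_0^*)$ is strictly decreasing in $R_0^*$, and this requires a chain of auxiliary sign computations (their $y_8,y_9,y_{10}$) controlling $\partial r_2^*/\partial R_0^*$ and $\partial r_3^*/\partial R_0^*$. You should expect real work here, not just an appeal to (\ref{24}) or Lemma~\ref{lemma23}.
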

The first column of Figure \ref{branch2} illustrates the unique solution obtained in Proposition \ref{proposition32}.  We note that the branch of solution (\ref{volcano1}) extends the local branch at $\chi=\chi_2$; moreover, as $\chi$ expands from $\chi_2$ to $\chi^*_2$, the support of $u(r)$ is of the form $(R_0^*-r_2,R)$, and it attains the critical (maximum) point at $r=\bar R_0$.  The value $u(R)$ drops as $\chi$ increases and touches zero as $\chi$ reaches $\chi^*_2$; then as $\chi$ surpasses $\chi^*_2$, $u$ remains compactly supported in an interval $(R_0^*-r_2^*,R_0^*+r_3^*)$ and this interval shrinks as $\chi$ expands.

According to our discussions above, to obtain this unique solution we shall do the followings: (i) for $\chi \in (\chi_2, \chi^*_2)$, find solution of the form (\ref{outerring})  in $(0,\bar R_0)$ and of the form (\ref{uv5}) in $(\bar R_0,R)$, and then concatenate them at $r=\bar R_0$ by matching the $C^1$ and $C^2$ continuities of $v(r)$ at $r=\bar R_0$; (ii) for $\chi>\chi^*_2$, find solution of the form (\ref{interring}) in $(0,R_0^*)$ and of the form (\ref{227}) in $(R_0^*,R)$, and do the same as in (i).

For each $\chi \in (\chi_2, \chi^*_2)$, Lemma \ref{lemma23} implies that there exists $r_2$ in $(\frac{\pi}{2 \omega}, \frac{5 \pi}{4 \omega})$ such that $(u_i(r),v_i(r))$ is of the form (\ref{229}) with $(a,b)=(0,\bar R_0)$, and $u_i(r)$ is supported in $(\bar R_0,R]$.  Moreover, matching the continuities of $u_i(r)$ and $v_i(r)$ at $r=\bar R_0$ gives for each $\chi \in (\chi_2, \chi^*_2)$
\begin{equation}\label{volcano1}
 \left\{\begin{array}{ll}
   u_i(r)=&\left\{\begin{array}{ll}
0,&r\in[0,\bar R_0-r_2],\\
\mathcal A_2 \left(S_0(\bar R_0-r;\omega,\bar R_0)-S_0(r_2;\omega,\bar R_0)\right),& r\in (\bar R_0-r_2,R],\\
\end{array}
\right.\\
\\
 v_i(r)=&\left\{\begin{array}{ll}
\mathcal B_2 I_0(r),&r\in[0,\bar R_0-r_2],\\
\mathcal A_2 \left(\frac{S_0(\bar R_0-r;\omega,\bar R_0)}{\chi}-S_0(r_2;\omega,\bar R_0)\right),& r\in (\bar R_0-r_2,R],\\
\end{array}
\right.
\end{array}
\right.\\
\end{equation}
where $\mathcal A_2$ and $\mathcal B_2$, determined by the mass $M$ and the continuity at $\bar R_0$, are explicitly given by
\begin{equation}\label{volA2}
\mathcal A_2=\frac{-M \omega}{\pi\Big(2(\bar R_0-r_2)S_1(r_2;\omega,\bar R_0)+\omega [R^2-(\bar R_0-r_2)^2]S_0(r_2;\omega,\bar R_0)\Big)}
\end{equation}
and
\begin{equation}\label{volB2}
\mathcal B_2=\frac{M\omega^2}{\pi\chi\Big(2(\bar R_0-r_2)T_1(\bar R_0-r_2;\bar R_0)+ [R^2-(\bar R_0-r_2)^2]T_0(\bar R_0-r_2;\bar R_0)\Big)}.
\end{equation}

Next we consider the case $\chi>\chi^*_2$.  First note that $u_i(r)$ given by (\ref{volcano1}) is no longer a solution since it becomes negative somewhere in $(0,R)$.  To see this, we claim that $u_i(R)<0$ if and only if $\chi>\chi^*_2$.  Indeed, let us denote $r_{\nu}:=R-\bar R_0+r_2$, then $S_0(r_2;\omega,\bar R_0)=\frac{Y_1(\omega \bar R_0)}{Y_1(\omega R)} S_0(r_{\nu};\omega,R)$, hence we see that $r_{\nu}$ is also the second positive root of (\ref{210}) since $\bar R_0$ satisfies (\ref{underlineR0}).  Thanks to (\ref{volcano1}) we find $u_i(R) =- \mathcal A_2 \frac{Y_1(\omega \bar R_0)}{Y_1(\omega R)} \left(S_0(r_\nu;\omega,R)+\frac{2}{\pi \omega R} \right)$, with $Y_1(\omega \bar R_0) Y_1(\omega R)<0$ and $\mathcal A_2<0$.  On the other hand, we find that $\frac{\partial r_{\nu}}{\partial \omega}<0$, and $S_0(r;\omega,R)+\frac{2}{\pi \omega R}=0$ for $\chi=\chi^*_2$, therefore $u_i(R)<0$ if and only if $\chi>\chi^*_2$ as claimed.

The discussions above indicate that (\ref{volcano1}) only holds for $\chi \in (\chi_2, \chi^*_2]$, but not for $\chi>\chi_2^*$.  Moreover, as $\chi$ surpasses $\chi_2^*$, $u$ is supported in $(R_0^*-r_2^*,R_0^*+r_3^*)$ for some $r_2^*$ and $r_3^*$ to be determined and the solution now takes the following form
\begin{equation}\label{volcano2}
\left\{\begin{array}{ll}
u_i^*(r)=&\left\{\begin{array}{ll}
0,&r\in[0,R_0^*-r_2^*],\\
\mathcal A_2^* \left(S_0(R_0^*-r;\omega,R_0^*)-S_0(r_2^*;\omega,R_0^*)\right),& r\in (R_0^*-r_2^*,R_0^*+r_3^*),\\
0,&r\in[R_0^*+r_3^*,R],\\
\end{array}
\right.\\
\\
v_i^*(r)=&\left\{\begin{array}{ll}
\mathcal B_2^* I_0(r),&r\in[0,R_0^*-r_2^*],\\
\mathcal A_2^* \left(\frac{S_0(R_0^*-r;\omega,R_0^*)}{\chi}-S_0(r_2^*;\omega,R_0^*)\right),& r\in (R_0^*-r_2^*,R_0^*+r_3^*),\\
\mathcal B_3^* T_0(r;R),&r\in[R_0^*+r_3^*,R],\\
\end{array}
\right.
\end{array}
\right.
\end{equation}
where $r_2^*$ and $r_3^*$ denote the support of $u_i^*(r)$ in $(0,R_0^*)$ and $(R_0^*,R)$, and $r_2^*$ satisfies (\ref{210}) with $R$ replaced by $R_0$ and $r_3^*$ satisfies (\ref{228}) with $(a,b)$ replaced by $(R_0^*,R)$, respectively, and
\[\mathcal B_2^*:=\frac{(1-\chi)\mathcal A_2^*}{\chi} \frac{S_0(r_2^*;\omega,R_0^*)}{I_0(R_0^*-r_2^*)}, \mathcal B_3^*:=\frac{(1-\chi)\mathcal A_2^*}{\chi} \frac{S_0(r_2^*;\omega,R_0^*)}{T_0(R_0^*+r_3^*;R)} \text{~and~} \mathcal A_2^*:=\frac{\bar C_3\bar C_4M}{\bar C_3+\bar C_4},\]
where
\[\bar C_3:=-\frac{\omega}{\pi\left(2(R_0^*-r_2^*)S_1(r_2^*;\omega,R_0^*)+\omega r_2^* (2R_0^*-r_2^*)S_0(r_2^*;\omega,R_0^*)\right)}\]
and
\[\bar C_4:=\frac{\omega}{\pi \left(2(R_0^*+r_3^*)S_1(-r_3^*;\omega,R_0^*)-\omega r_3^* (2R_0^*+r_3^*)S_0(-r_3^*;\omega,R_0^*)\right)};\]
moreover, we find from the conservation of mass that the left (half) aggregate weights $m_2$ and the right (half) aggregate weights $m_3$ with
\begin{equation}\label{m2m3}
m_2=\frac{\bar C_4 M}{\bar C_3+\bar C_4} \text{~and~} m_3=\frac{\bar C_3 M}{\bar C_3+\bar C_4}.
\end{equation}

In strong contrast to (\ref{mexicanhat}) where $R_0\in[\underline R_0,\bar R_0]$ can be chosen arbitrarily, $R_0^*$ in (\ref{volcano2}) is uniquely determined and we have the following result.
\begin{proposition}\label{proposition33}
Assume that $\chi>\chi^*_2$, and the solution of (\ref{ss}) takes the form (\ref{volcano2}) with some $R_0^*$.  Then such $R_0^*$ is unique hence the solution of this form is unique.
\end{proposition}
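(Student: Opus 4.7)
The plan is to reduce the construction of a Volcano profile of the form (\ref{volcano2}) with parameter $R_0$ to a single transcendental equation in $R_0$, and then show that equation admits at most one admissible root.

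First I would fix an admissible $R_0\in(0,R)$, i.e., one for which $\chi$ is large enough that both (\ref{210}) with $R$ replaced by $R_0$ and (\ref{228}) with $(a,b)=(R_0,R)$ admit their unique roots $r_2^*(R_0)$ and $r_3^*(R_0)$ guaranteed by Lemmas \ref{lemma22} and \ref{lemma23}. These algebraic equations already encode the matching of $v'$ (and in fact $v''$) at the two interfaces $r=R_0-r_2^*$ and $r=R_0+r_3^*$; continuity of $v$ at the same interfaces then determines $\mathcal B_2^*,\mathcal B_3^*$ in terms of $\mathcal A_2^*$, while mass conservation fixes $\mathcal A_2^*$. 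By the very form of (\ref{volcano2}) the condition $u(R_0-r_2^*)=0$ is automatic, being built into the identity $\bar C=(\chi-1)\mathcal A_2^* S_0(r_2^*;\omega,R_0)$ between constants. The only remaining consistency requirement is $u(R_0+r_3^*)=0$ at the right edge of support, which collapses (after cancelling $\mathcal A_2^*\neq 0$) to
\[
G(R_0):=S_0(-r_3^*(R_0);\omega,R_0)-S_0(r_2^*(R_0);\omega,R_0)=0.
\]
Hence uniqueness of $R_0^*$ is equivalent to showing $G$ has at most one zero on the admissible parameter set $\mathcal R(\chi)\subset(0,R)$.

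To show $G$ has at most one zero, I would prove that $G$ is strictly monotone in $R_0$ throughout $\mathcal R(\chi)$. This calls for implicit differentiation of (\ref{210}) and (\ref{228}) in $R_0$, yielding expressions for $\partial r_2^*/\partial R_0$ and $\partial r_3^*/\partial R_0$ in the spirit of part \emph{(iii)} of Lemma \ref{lemma23}, combined with the dependence of $S_0(\cdot;\omega,R_0)$ on its third argument obtained by differentiating $Y_1(\omega R_0)J_0(\omega(R_0-r))-J_1(\omega R_0)Y_0(\omega(R_0-r))$ with respect to $R_0$ and invoking Lommel's identity (\ref{211}). The resulting formula for $G'(R_0)$ should then be simplified using the orthogonality-type identity (\ref{219}) for the cylinder functions $S_0,S_1,\mathcal V_0,\mathcal V_1$.

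The hard part will be controlling the sign of $G'(R_0)$. The derivative decomposes into products of Bessel-type quantities whose signs depend on which zero-intervals of $J_1,Y_1$ each of the three arguments $\omega(R_0-r_2^*),\omega R_0,\omega(R_0+r_3^*)$ sits in. To close the sign analysis I plan to use: (i) the interval confinements $r_2^*\in(s_0^{(1)},s_1^{(1)})$ and the analogous bound for $r_3^*$ from Lemmas \ref{lemma22}--\ref{lemma23}, which pin down the signs of $S_0(\pm r_j^*;\omega,R_0)$ and $S_1(\pm r_j^*;\omega,R_0)$; (ii) the sharper estimates $r_2^*\in(\pi/(2\omega),5\pi/(4\omega))$ and its counterpart for $r_3^*$ via the Schafheitlin-type bounds recalled in Section \ref{section2}; and (iii) the Lommel identities (\ref{211}) and (\ref{219}). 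Should direct monotonicity of $G$ prove too entangled, a fallback is a contradiction argument: two distinct admissible $R_0^*$'s would yield two Volcano critical points of the free energy $\mathcal E$ of Section \ref{section6} with identical topology, which can be excluded by comparison with the Mexican-hat branch in Theorem \ref{theorem13} and the Lyapunov structure of (\ref{01}).
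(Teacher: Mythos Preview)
Your reduction to a scalar equation $G(R_0)=0$ and the plan to differentiate implicitly in $R_0$ is exactly the route the paper takes; the paper's $G$ differs from yours only by an overall sign. A few points worth flagging. First, the paper does not prove $G$ is strictly monotone outright; instead it computes $\partial G/\partial R_0^*$ and obtains an identity of the form $\partial G/\partial R_0^*=-cG+(\text{two terms})$ with the two extra terms strictly negative, which is enough for uniqueness of the zero. Second, the genuinely delicate step---and the one your outline glosses over---is establishing $\partial r_2^*/\partial R_0^*\le 0$ (and the analogue for $r_3^*$). The paper handles this not by the Schafheitlin bounds you mention but by introducing an auxiliary quantity $y_8(R_0^*;\omega):=S_1^2(r_2^*;\omega,R_0^*)-\tfrac{4}{\pi^2(\omega^2+1)R_0^*(R_0^*-r_2^*)}$ and running a contradiction argument through two further derived functions $y_9,y_{10}$; this, together with the sign result $y_6<0$ from Section~\ref{section2} (giving $\mathcal V_0/S_1<1$), is what forces the two bracketed terms to be negative. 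Finally, your fallback via the free energy is not a viable path: two Volcano profiles would both be critical points of $\mathcal E$ with the same topology, and nothing in Theorem~\ref{theorem13} or the Lyapunov structure excludes that---the energy hierarchy compares different solution types, not distinct solutions within the Volcano family.
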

\begin{proof}
For the solution of the form (\ref{volcano2}), the continuity of $v$ at $r=R_0^*$ requires
\[G(R_0^*;\omega):=S_0(r_2^*;\omega,R_0^*)-S_0(-r_3^*;\omega,R_0^*)=0.\]
We are left to verify that such $R_0^*\in(0,R)$ exists and is indeed unique.  According to Lemma \ref{lemma23}, both $r_2^*$ and $r_3^*$ exist in $(0,R_0^*)$ and $(R_0^*,R)$ respectively for each $R_0^*\in(\underline R_0,\bar R_0 )$, with $\underline R_0 =\frac{j_{1,1}}{\omega}$ and $\bar R_0$ given by (\ref{underlineR0}).

We first study the limits of $G$ as follows: as $R_0^* \rightarrow \underline R_0^+ $, the facts $\omega R_0^* \rightarrow j_{1,1}$ and $r_2^*\rightarrow \underline R_0 $ imply that $J_1(\omega R_0^*)\rightarrow 0$ and $J_0(\omega (R_0^*-r_2^*))\rightarrow 1$, therefore we have
\[G\left(\big(\underline R_0  \big)^+;\omega\right)=Y_1(j_{1,1})(1-J_0(\omega(\underline R_0 +r_3^*)))>0.\]
Indeed one can show that $G\left(\big(\underline R_0 \big)^+;\omega\right)>0.2$ using the estimates of Bessel functions in \cite{Landau}. On the other hand, as $R_0^*\rightarrow (\bar R_0)^-$, $r_3^*\rightarrow R-\bar R_0$ and the definition of $\chi_{\bar R_0,R}$ implies that $\frac{J_1(\omega R_0^*)}{Y_1(\omega R_0^*)} \rightarrow \frac{J_1(\omega R)}{Y_1(\omega R)}$, therefore
\[G\left(\big(\bar R_0  \big)^-;\omega \right)=\frac{Y_1(\omega \bar R_0 )}{Y_1(\omega R)} \left(S_0(r_\nu;\omega,R)+\frac{2}{\pi \omega R} \right)<0,\]
where we have applied the fact that $Y_1(\omega \bar R_0 ) Y_1(\omega R)<0$.  Therefore, for each $\chi>\chi^*_2$ there exists at least one $R_0^*\in(\underline R_0,\bar R_0)$ such that $G(R_0^*;\omega)=0$.

Next we prove the uniqueness of $R_0^*$.  According to our discussions above, it is sufficient to show that $\frac{\partial G(R_0^*;\omega)}{\partial R_0^*}+\frac{1}{R}  G(R_0^*;\omega)<0$.  Straightforward calculations give
\begin{align}
  \frac{\partial G(R_0^*;\omega)}{\partial R_0^*}  =&\frac{\partial S_0(r_2^*;\omega,R_0^*)}{\partial R_0^*}+\frac{\partial S_0(r_2^*;\omega,R_0^*)}{\partial r_2^*}  \frac{\partial r_2^*}{\partial R_0^*}-\frac{\partial S_0(-r_3^*;\omega,R_0^*)}{\partial R_0^*}-\frac{\partial S_0(-r_3^*;\omega,R_0^*)}{\partial r_3^*}  \frac{\partial r_3^*}{\partial R_0^*} \notag\\
    =&-\frac{1}{R}  G(R_0^*;\omega)+\omega S_1(r_2^*;\omega,R_0^*)  \left(\frac{\partial r_2^*}{\partial R_0^*}-1+\frac{\mathcal V_0(r_2^*;\omega, R_0^*)}{S_1(r_2^*;\omega,R_0^*)} \right)\notag \\
   &+ \omega S_1(-r_3^*;\omega,R_0^*)  \left(\frac{\partial r_3^*}{\partial R_0^*}+1-\frac{\mathcal V_0(-r_3^*;\omega, R_0^*)}{S_1(-r_3^*;\omega,R_0^*)} \right).\notag
\end{align}
Recall from Section 2.2 that $y_6(r_2^*;\omega,R_0^*)<0$, that is, $\frac{\mathcal V_0(r_2^*;\omega, R_0^*)}{S_1(r_2^*;\omega,R_0^*)}<1$.  Now we claim that $\frac{\partial r_2^*}{\partial R_0^*}\leq 0$.  To see this, some tedious but straightforward calculations give
\[\frac{\partial r_2^*}{\partial R_0^*} =-\frac{\partial_{R_0^*}f_2(r_2^*;\omega,R_0^*)}{\partial_rf_2(r_2^*;\omega,R_0^*)}= -\frac{\partial_{R_0^*}f_2(r_2^*;\omega,R_0^*)}{\omega^2+1}=1-\frac{4}{\pi^2(\omega^2+1)R_0^*(R_0^*-r_2^*) S_1^2(r_2^*;\omega,R_0^*)},\]
thanks to the fact that $r_2^*$ is a root of (\ref{210}) with $R$ replaced by $R_0^*$.  Then $S_1^2(r_2^*;\omega,R_0^*)\leq \frac{4}{\pi^2(\omega^2+1)R_0^*(R_0^*-r_2^*)}$ implies the claim and proves the uniqueness, and we are left to verify this inequality.  Denote
\[y_8(R_0^*;\omega):=S_1^2(r_2^*;\omega,R_0^*)-\frac{4}{\pi^2(\omega^2+1)R_0^*(R_0^*-r_2^*)}.\]
First of all, one sees that $y_8((\underline R_0)^+;\omega)\rightarrow -\infty$.  We next prove $y_8(R_0^*;\omega)\leq 0$ for each $R_0^* >\underline R_0$ to verify the claim.  We argue by contradiction: if not and there exists some $R_1>\underline R_0$ such that $y_8(R_1;\omega)=0$ and $y'_8(R_1;\omega)\geq0$.  Then we find that $y'_8(R_1;\omega)=2 S^2_1(r_2^*;\omega,R_1) y_9(r_2^*;\omega,R_1)$ with
\[y_9(r;\omega,R_1):=\frac{\omega\big(\mathcal V_1(r;\omega,R_1)+S_0(r;\omega,R_1)\big)}{S_1(r;\omega,R_1)}-\frac{2R_1-r}{2R_1(R_1-r)},\]
and further calculations give $y'_9(r;\omega,R_1)=\frac{y_{10}(r;\omega,R_1)}{(R_1-r)S^2_1(r;\omega,R_1)}$, where
\begin{align*}
  y_{10}(r;\omega,R_1):=& \omega^2(R_1-r)\Big(S^2_1(r;\omega,R_1)+S^2_0(r;\omega,R_1)\Big)\\
   &-\omega S_0(r;\omega,R_1)S_1(r;\omega,R_1)-\frac{S^2_1(r;\omega,R_1)}{2(R_1-r)}-\frac{4}{\pi^2R_1};
\end{align*}
moreover $y_{10}(0;\omega,R_1)=0$ and $y'_{10}(r;\omega,R_1)=-\frac{3S^2_1(r;\omega,R_1)}{2(R_1-r)^2}<0$, therefore $y_{10}(r;\omega,R_1)<0$ hence $y_9(r;\omega,R_1)<y_9(0;\omega,R_1)<0$ in $(0,s_1^{(1)})$, which further implies that $y'_8(R_1;\omega)<0$, a contradiction.

One can use the same arguments as above to show that $\mathcal V_0(-r_3^*;\omega,R_0^*)>S_1(-r_3^*;\omega,R_0^*)$ and $\frac{\partial r_3^*}{\partial R_0^*}\geq 0$ for each $\chi>\chi_2^*$, then applying the facts $S_1(r_2^*;\omega,R_0^*)>0$ and $S_1(-r_3^*;\omega,R_0^*)<0$ gives us
\[\omega S_1(r_2^*;\omega,R_0^*)  \left(\frac{\partial r_2^*}{\partial R_0^*}-1+\frac{\mathcal V_0(r_2^*;\omega, R_0^*)}{S_1(r_2^*;\omega,R_0^*)} \right)+\omega S_1(-r_3^*;\omega,R_0^*)  \left(\frac{\partial r_3^*}{\partial R_0^*}+1-\frac{\mathcal V_0(-r_3^*;\omega, R_0^*)}{S_1(-r_3^*;\omega,R_0^*)} \right)<0,\]
which indicates that the derivative of the root of $G(R_0^*;\omega)$ is strictly negative and eventually gives rise to the uniqueness of $R_0^*$ as claimed.
\end{proof}

\subsubsection{Asymptotic behaviors of the volcano solution}
To study the asymptotics of the unique solution (\ref{volcano2}) as the chemotaxis rate $\chi$ goes to infinity, we first claim that in this limit $R_0^*$ approaches a constant $\hat R_0$ in $(0,R)$ that satisfies
\begin{equation}\label{314}
  \frac{I_0(\hat R_0)}{I_1(\hat R _0)}=-\frac{T_0(\hat R _0;R)}{T_1(\hat R _0;R)}.
\end{equation}
Note that both $\frac{I_0(r)}{I_1(r)}$ and $\frac{T_0(r;R)}{T_1(r;R)}$ are monotone decreasing in $(0,R)$, hence this $\hat R _0$ is uniquely determined.  Then we will show that that $u_i(r)\rightarrow M\delta_{\hat R _0}(r)$; moreover, our results indicate that the mass of $u_i(r)$ on the left hand side and the right hand side of $R_0^*$ goes to $\frac{M}{2}$.

To prove the claim, we first observe that
\begin{equation}\label{315}
  \frac{I_0(R_0^*-r_2) S_1(r_2;\omega,R_0^*)}{I_1(R_0^*-r_2)}=\frac{T_0(R_0^*+r_3;R) S_1(-r_3;\omega,R_0^*)}{T_1(R_0^*+r_3;R)}.
\end{equation}
Then in light of (\ref{75}), one can apply the facts that both $r_2,r_3 \rightarrow \frac{\pi}{2 \omega}$ as $\chi\rightarrow \infty$ to obtain
 \begin{equation}\label{317}
   S_1(r_2;\omega,R_0^*)\rightarrow -S_1(-r_3;\omega,R_0^*).
 \end{equation}
Now that $r_2,\;r_3 \rightarrow 0^+$, we conclude that $u_i(r)\rightarrow M\delta_{\hat R _0}(r)$, (\ref{315}) converges to (\ref{314}) and its root approaches $\hat R _0$ as claimed.  Furthermore, it is straightforward to see from (\ref{m2m3}) and (\ref{317}) that both $m_2$ and $m_3$ approach $\frac{M}{2}$.  The results above are summarized into the following lemma.
\begin{figure}[h!]\vspace{-5mm}
  \centering
\includegraphics[width=1\textwidth]{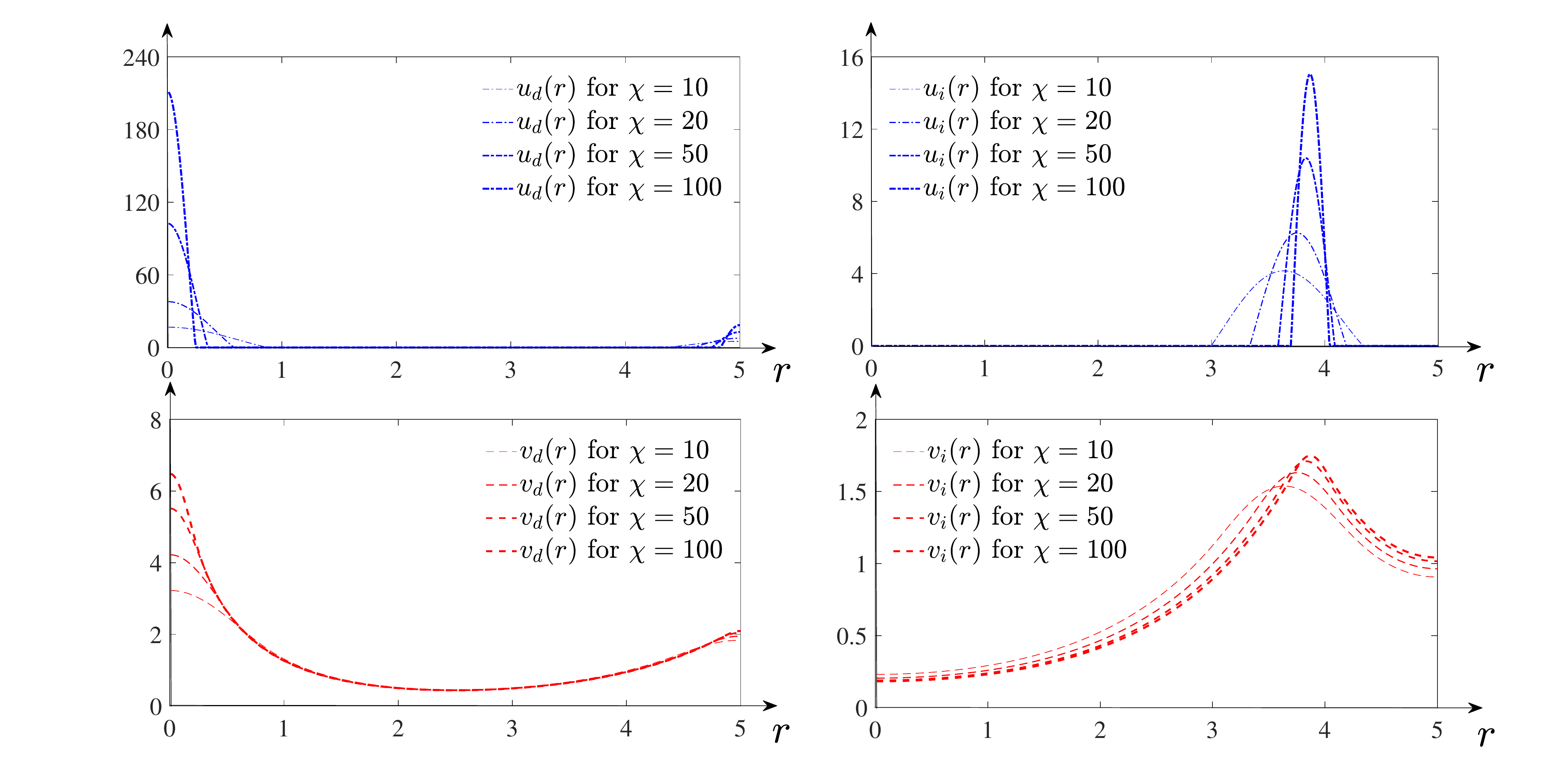}\vspace{-5mm}
\caption{Illustration of the asymptotic behaviors obtained in Lemma \ref{lemma32} and Lemma \ref{lemma33} with $R=5$, $M=25\pi$.  For each $\chi$ given, a Mexican-hat has a much larger spiky structure than the Volcano.  \textbf{Left Column}: the Mexican-hat solution $(u_d,v_d)$ in (\ref{mexicanhat}) with $R_0=2.5\in(\underline R_0,\bar R_0)$ for $\chi=10,20,50$ and 100.  It is observed that $u_d(r)\rightarrow 17.2263\delta_0(r)+61.3135\delta_R(r)$ as $\chi\rightarrow \infty$;  \textbf{Right Column}: the Volcano solution $(u_i,v_i)$ in (\ref{volcano2}) for $\chi=5$, 10, 50 and 100.  We find that the spike locates at $R^*_0$ whereas $R^*_0\nearrow \hat R_\infty\approx3.8696$ as $\chi\rightarrow \infty$ and $u_i(r)\rightarrow 25\pi \delta_{\hat R_\infty}(r)$ as $\chi\rightarrow\infty$.}\label{mexicanhatasymptotic}
\end{figure}
\begin{lemma}\label{lemma33}
Let $\chi>\chi_2^*$ and $(u_i,v_i)$ be the solution given by (\ref{volcano2}).  Then
\[u_i(r)\rightarrow  M\delta_{\hat R _0}(r) \text{~and~}v_i(r)\rightarrow
\left\{\begin{array}{ll}
\frac{M}{4\pi \hat R_0 I_1(\hat R_0)}I_0(r),& r\in [0,\hat R_0],\\
-\frac{M}{4\pi \hat R_0 T_1(\hat R_0;R)}T_0(r;R),& r\in(\hat R_0,R],
\end{array}
\right. \text{~pointwisely in $[0,R]$ as~} \chi\rightarrow\infty,\]
where $\hat R_0\in(0,R)$ is the unique root of (\ref{314}).
\end{lemma}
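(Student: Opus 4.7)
The plan is to read off the three limit statements from the explicit formula (\ref{volcano2}) by controlling, as $\chi \to \infty$, the support sizes $r_2^*, r_3^*$, the location $R_0^*$ of the volcano peak, the coefficients $\mathcal A_2^*, \mathcal B_2^*, \mathcal B_3^*$, and the mass split $(m_2,m_3)$. The backbone of the argument is the observation that $r_2^*, r_3^* \to 0^+$ (Lemma \ref{lemma23}(iii)), so the two pieces of $u_i^*$ concentrate on shrinking intervals around $R_0^*$; what remains is to show $R_0^* \to \hat R_0$ and to identify the limiting coefficients.

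First I would establish the identity (\ref{317}), namely $S_1(r_2^*;\omega,R_0^*) + S_1(-r_3^*;\omega,R_0^*) \to 0$. The refined asymptotics for the first positive roots of $S_1(\pm r;\omega, R_0^*)$ developed in Section \ref{section2} (those of the type $s_1^{(1)} \in (\frac{3\pi}{4\omega}, \frac{5\pi}{4\omega})$, with the leading root of $r_2^*, r_3^*$ approaching $\frac{\pi}{2\omega}$) plus the appendix expansion (\ref{75}) for $S_1(\pi/(2\omega);\omega,\cdot)$ provide this cancellation directly. With this in hand, the matching condition (\ref{315}) that encodes continuity of $v_i'$ at $r = R_0^*$ collapses, in the limit $\chi\to\infty$, to $\frac{I_0(R_0^*)}{I_1(R_0^*)} = -\frac{T_0(R_0^*;R)}{T_1(R_0^*;R)}$, which is precisely the defining equation (\ref{314}) for $\hat R_0$.

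Next I would verify that $\hat R_0 \in (0,R)$ is the unique root and that $R_0^* \to \hat R_0$ (not merely sub-sequentially). Uniqueness is clean: $r \mapsto I_0(r)/I_1(r)$ is strictly decreasing on $(0,\infty)$ (standard ratio inequality for modified Bessel functions), and a direct differentiation using $T_1 = \partial_r T_0$ together with the sign pattern $T_0>0, T_1<0$ shows $r \mapsto -T_0(r;R)/T_1(r;R)$ is strictly decreasing on $(0,R)$; their graphs therefore cross at most once, and a boundary-value check at $r\to 0^+$ and $r\to R^-$ produces one intersection. Convergence (not merely subsequential) is obtained by combining this uniqueness with the monotonicity in $R_0^*$ of the function $G(R_0^*;\omega)$ defined in the proof of Proposition \ref{proposition33}, whose sign changes exactly once; the key is to propagate the sign bounds $G((\underline R_0)^+;\omega)>0$ and $G((\bar R_0)^-;\omega)<0$ uniformly in $\chi$.

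Finally I would read off the limits. From (\ref{m2m3}) and (\ref{317}), a short manipulation of $\bar C_3/\bar C_4$ yields $m_2, m_3 \to M/2$; since each half-aggregate concentrates on a shrinking interval around $R_0^* \to \hat R_0$, this gives $u_i \to M\,\delta_{\hat R_0}$ distributionally, as claimed. On the two outer regions where $v_i = \mathcal B_2^* I_0$ and $v_i = \mathcal B_3^* T_0(\cdot;R)$, the constants $\mathcal B_2^*, \mathcal B_3^*$ are pinned down by passing to the limit in their defining expressions, or equivalently by imposing on the limiting $v$ the Neumann Green-function problem $-\Delta v + v = M\delta_{\hat R_0}$ on $B_0(R)$: continuity at $\hat R_0$ reproduces (\ref{314}), and the jump in $v'$ produced by a Dirac source of mass $M$ fixes the constants $\frac{M}{4\pi \hat R_0 I_1(\hat R_0)}$ and $-\frac{M}{4\pi \hat R_0 T_1(\hat R_0;R)}$. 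The main obstacle I anticipate is not any single estimate but the uniform-in-$\chi$ control needed in the second step, i.e.\ excluding that $R_0^*$ drifts to the endpoints $\underline R_0\to 0^+$ or $\bar R_0\to R^-$; this is where the monotonicity of $G$ from Proposition \ref{proposition33} must be used quantitatively rather than merely qualitatively.
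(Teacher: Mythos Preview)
Your proposal follows essentially the same route as the paper: use Lemma \ref{lemma23}(iii) together with the appendix asymptotics (\ref{75}) to obtain $r_2^*, r_3^* \to \pi/(2\omega)$ and hence (\ref{317}), pass to the limit in the matching relation (\ref{315}) to recover (\ref{314}), and read off $m_2,m_3\to M/2$ from (\ref{m2m3}) and (\ref{317}); the Green-function interpretation you give for the limiting $v_i$ is a nice repackaging of the same constants. One small slip: $r\mapsto -T_0(r;R)/T_1(r;R)$ is strictly \emph{increasing} on $(0,R)$ (it runs from $0^+$ at $r=0$ to $+\infty$ at $r=R$), not decreasing as you wrote, so uniqueness of $\hat R_0$ comes from $I_0/I_1$ decreasing against $-T_0/T_1$ increasing --- exactly the monotonicity the paper records.
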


\section{Higher-Order Modes and Airy Patterns}\label{section4}

We next demonstrate that (\ref{ss}) supports higher-order modes ring patterns for $\chi$ large.  Our results show that the indices of its modes can be tuned by adjusting the size of the chemotaxis rate.  The approach provides a path to better understand (\ref{01}) which can have simultaneously large mode areas and large separations between the each mode.

We continue to study the solutions that extend the local (vertical) bifurcation branch at $\chi=\chi_k$ which give rise to nontrivial patterns.  Suppose that $\chi>\chi_k$, $k\geq3$.  For each integer $k_0\in[2,k]$, let us define
\begin{equation}\label{underlineR0k}
\bar R_0^{(k_0)}:=\sup_{\tilde R_0<\frac{j_{1,k-k_0+2}}{\omega}}\Bigg\{\tilde R_0\in(0,R)\Big| \frac{J_1(\omega \tilde R_0)}{Y_1(\omega \tilde R_0)}= \frac{J_1(\omega R)}{Y_1(\omega R)}, \omega R\in(j_{1,k},j_{1,k+1}],k\geq 3\Bigg\}.
\end{equation}
Note that $\bar R_0$ in (\ref{underlineR0}) is $\bar R_0^{(2)}$ in (\ref{underlineR0k}).  Then we find $(u,v)$ such that $v'$ changes sign in $(0,R)$ for $(k_0-1)$ times in the following lemma.  See Figure \ref{branch3} for their bifurcations.
\begin{figure}[h!]
\vspace{-3mm}
  \centering
  \includegraphics[width=1\textwidth]{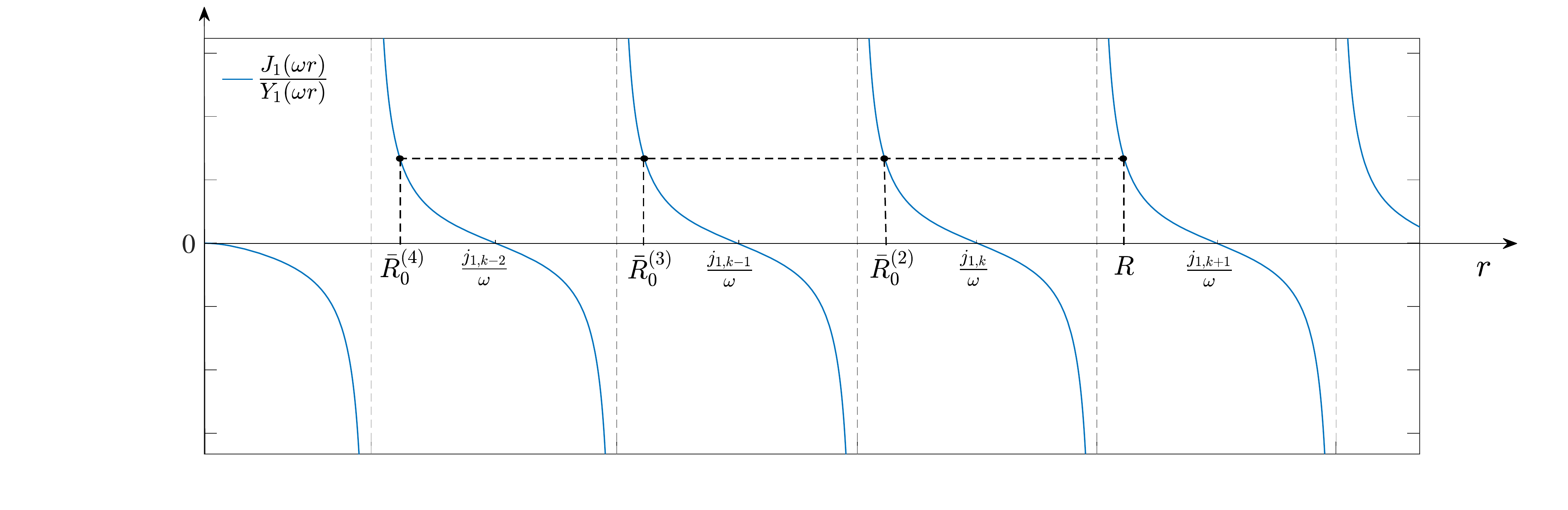}\vspace{-5mm}
  \caption{Function $\frac{J_1(r)}{Y_1(r)}$ and an illustration of $\bar R_0^{(k_0)}$ given by (\ref{underlineR0k}).}\label{functionYJ}
\end{figure}
\begin{lemma}\label{lemma41}
Suppose that $\chi>\chi_k$ and let $\bar R_0^{(k_0)}$ be given by (\ref{underlineR0k}).  Then for each $k_0\in[2,k]$, (\ref{ss}) has a one-parameter family of solutions $(u,v)$ such that $v'$ changes sign $(k_0-1)$ times; moreover, for any $R_0\in[\underline R_0,\bar R_0^{(k_0)}]$, the solution has the following explicit formula
\begin{equation}\label{mexicanhat-new}
\left\{\begin{array}{ll}
u^{(k_0)}_d(r)=&\left\{\begin{array}{ll}
\mathcal A_1\Big(J_0(\omega r)-J_0(\omega r_1)\Big),&r\in[0,r_1],\\
0,& r\in (r_1,\bar R_0^{(k_0-1)}-r_4),\\
\mathcal A_4 \Big(S_0(\bar R_0^{(k_0-1)}-r;\omega,\bar R_0^{(k_0-1)})-S_0(r_4;\omega,\bar R_0^{(k_0-1)})\Big),&r\in[\bar R_0^{(k_0-1)}-r_4,R],\\
\end{array}
\right. \\
\\
v^{(k_0)}_d(r)=&\left\{\begin{array}{ll}
\mathcal A_1 \Big(\frac{J_0(\omega r)}{\chi}-J_0(\omega r_1)\Big),&r\in[0,r_1],\\
\mathcal B T_0(r;R_0),& r\in (r_1,\bar R_0^{(k_0-1)}-r_4),\\
\mathcal A_4\Big(\frac{1}{\chi}S_0(\bar R_0^{(k_0-1)}-r;\omega,\bar R_0^{(k_0-1)})-S_0(r_4;\omega,\bar R_0^{(k_0-1)})\Big),&r\in[\bar R_0^{(k_0-1)}-r_4,R],\\
\end{array}
\right. \\
\end{array}
\right.
\end{equation}
where the coefficients $\mathcal A_1$, $\mathcal A_1$ and $\mathcal B$, determined by the continuity of $v(r)$ and conservation of total mass, are given by (\ref{A1A4B})
with
\[\bar C_1:=\frac{-\omega^2J_0(\omega r_1)}{\pi\chi r_1^2J_2(\omega r_1)T_0(r_1;R_0)}, \bar C_2:=\frac{\omega^2}{\pi\chi \Big(2(\bar R_0^{(k_0)}\!-r_4)T_1(\bar R_0^{(k_0)}-r_4;R_0)+ [R^2-(\bar R_0^{(k_0)}\!-r_4)^2]T_0(\bar R_0^{(k_0)}\!-r_4;R_0)\Big)};\]
\begin{figure}[h!]
\vspace{-5mm}
\centering
  \includegraphics[width=1\textwidth]{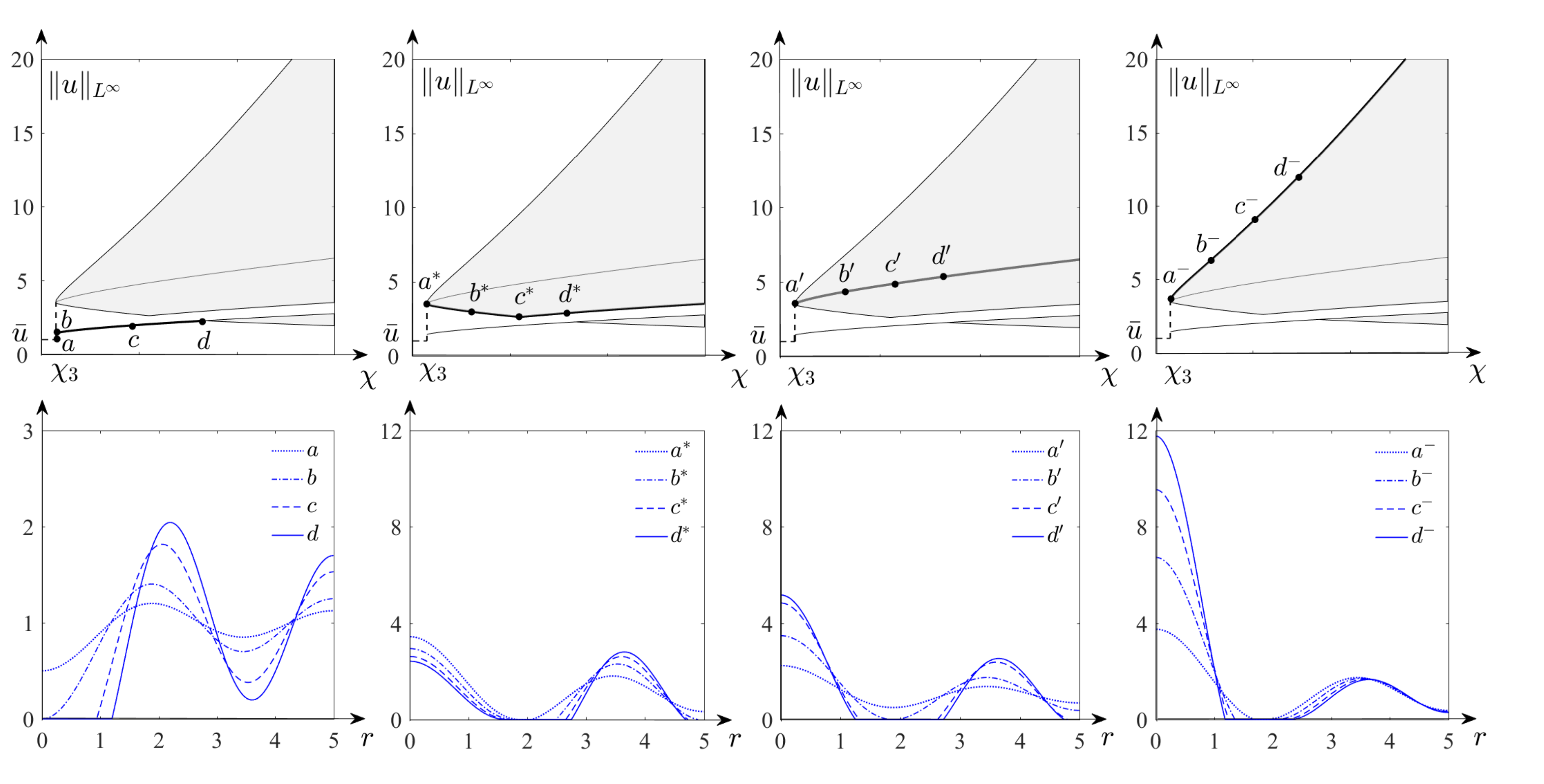}\vspace{-5mm}
  \caption{Bifurcation branches out of $\chi_3$ and their global continuums.  Similar as in Figure \ref{branch2}, this gives to a few classes of non-monotone solutions of (\ref{ss}) from (\ref{mexicanhat-new}) and (\ref{volcano1-new}) for $\chi$ large.  The spatial structure of the stationary solutions becomes complex and rich as $\chi$ increases.}\label{branch3}
\end{figure}

moreover, there also exist solutions of the following form
\begin{equation}\label{volcano1-new}
 \left\{\begin{array}{ll}
u^{(k_0)}_i(r)=&\left\{\begin{array}{ll}
0,&r\in[0,\bar R_0^{(k_0)}-r_2^*],\\
\mathcal A^*_2 \left(S_0(\bar R_0^{(k_0)}-r;\omega,\bar R_0^{(k_0)})-S_0(r_2^*;\omega,\bar R_0^{(k_0)})\right),& r\in (\bar R_0^{(k_0)}-r_2^*,R],\\
\end{array}
\right.\\
\\
v^{(k_0)}_i(r)=&\left\{\begin{array}{ll}
\mathcal B^*_2 I_0(r),&r\in[0,\bar R_0^{(k_0)}-r_2^*],\\
\mathcal A^*_2 \left(\frac{S_0(\bar R_0^{(k_0)}-r;\omega,\bar R_0^{(k_0)})}{\chi}-S_0(r_2^*;\omega,\bar R_0^{(k_0)})\right),& r\in (\bar R_0^{(k_0)}-r_2^*,R],\\
\end{array}
\right.
\end{array}
\right.\\
\end{equation}
where $\mathcal A^*_2$ and $\mathcal B^*_2$, determined by the mass $M$ and the continuity at $\bar R_0^{(k_0)}$, are given by (\ref{volA2}) and (\ref{volB2}) with $\bar R_0$ replaced by $\bar R_0^{(k_0)}$.
\end{lemma}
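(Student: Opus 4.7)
The plan is to follow the exact concatenation template of Section 3, replacing the reference radii $R, \bar R_0$ by $\bar R_0^{(k_0-1)}$ and $\bar R_0^{(k_0)}$ (with the convention $\bar R_0^{(1)}:=R$). Concretely, for (\ref{mexicanhat-new}) I would glue the inner ring (\ref{interring}) on $[0,r_1]$, the $u\equiv 0$ block carrying $v=\mathcal B T_0(r;R_0)$ on $(r_1,\bar R_0^{(k_0-1)}-r_4)$, and the auxiliary outer building block (\ref{229}) with $(a,b)=(R_0,\bar R_0^{(k_0-1)})$ on $[\bar R_0^{(k_0-1)}-r_4,R]$. The Volcano family (\ref{volcano1-new}) is built analogously by a two-piece concatenation at the single interface $\bar R_0^{(k_0)}-r_2^*$. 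For $k_0=2$ both constructions reduce verbatim to Proposition \ref{proposition31} and Proposition \ref{proposition32}.

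First I would verify solvability of the support-size equations. Existence and uniqueness of $r_1\in(j_{0,1}/\omega,j_{1,1}/\omega]$ satisfying (\ref{23}) follow from Lemma \ref{lemma21}, since $\omega R_0\geq \omega\underline R_0 = j_{1,1}$. For $r_4$ I would apply Lemma \ref{lemma23}(ii) with $(a,b)=(R_0,\bar R_0^{(k_0-1)})$; the threshold condition $\chi>\chi_{R_0,\bar R_0^{(k_0-1)}}$ follows from the definition (\ref{underlineR0k}) combined with Lemma \ref{lemma24}, which together give $\omega_{R_0,\bar R_0^{(k_0-1)}} \leq \omega_{\bar R_0^{(k_0)},\bar R_0^{(k_0-1)}} \leq \omega$ for any admissible $R_0\leq \bar R_0^{(k_0)}$. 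The same argument yields the support size $r_2^*$ in the Volcano case.

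Second, I would pin down the coefficients. By design, the defining equations for $r_1$ and $r_4$ make the $C^1$ and $C^2$ matching of $v$ automatic at both interfaces. The remaining unknowns $\mathcal A_1,\mathcal A_4,\mathcal B$ are then determined by the two continuity-of-$v$ conditions together with the mass constraint $2\pi\int_0^R u\,r\,dr=M$; inverting this non-degenerate $3\times 3$ linear system gives (\ref{A1A4B}) verbatim after the textual substitution $R\mapsto \bar R_0^{(k_0-1)}$ in $\bar C_2$. The Volcano coefficients (\ref{volA2})--(\ref{volB2}) are obtained identically with $R\mapsto \bar R_0^{(k_0)}$.

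The main obstacle is to verify that $v'$ changes sign exactly $(k_0-1)$ times. On the inner piece $v'\propto -J_1(\omega r)$ has constant sign since $\omega r\in(0,j_{1,1}]$; on the middle piece $v'\propto T_1(r;R_0)=K_1(R_0)I_1(r)-I_1(R_0)K_1(r)$ vanishes only at $r=R_0$, contributing exactly one interior sign change provided $R_0$ lies strictly inside $(r_1,\bar R_0^{(k_0-1)}-r_4)$; on the outer piece $v'$ is proportional to the cylinder function $S_1(\bar R_0^{(k_0-1)}-r;\omega,\bar R_0^{(k_0-1)})$, whose argument sweeps the interval $(\bar R_0^{(k_0-1)}-R,r_4)$. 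Because (\ref{underlineR0k}) forces $\omega\bar R_0^{(k_0-1)}>j_{1,k-k_0+2}$ while $\omega R>j_{1,k}$, the interlacing/oscillation theory for cylinder functions invoked in the proof of Lemma \ref{lemma22} produces exactly $(k_0-2)$ further zeros of $S_1$ in this range, each with a genuine sign change. Assembling these contributions yields the claimed total of $(k_0-1)$ sign changes, and letting $R_0$ range over $[\underline R_0,\bar R_0^{(k_0)}]$ traces out the one-parameter family. Converting this heuristic count into a rigorous argument — in particular ensuring that $R_0$ does fall in the middle interval for all admissible choices, that each candidate zero of $S_1$ produces a sign change rather than a tangency, and that no spurious sign changes arise from sign adjustments in $\mathcal A_4$ — constitutes the main technical burden and is what the full proof will have to work through.
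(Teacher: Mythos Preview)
Your proposal is essentially the intended argument. The paper itself does not supply a proof of Lemma~\ref{lemma41}; it is stated as a direct extension of the Section~\ref{section3} construction, and your plan to recycle Lemmas~\ref{lemma21}, \ref{lemma23}, \ref{lemma24} and the coefficient computations (\ref{A1A4B}), (\ref{volA2})--(\ref{volB2}) under the textual substitution $R\mapsto \bar R_0^{(k_0-1)}$ (resp.\ $\bar R_0^{(k_0)}$) is exactly that extension.

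One point deserves sharpening. You describe the outer block as ``(\ref{229}) with $(a,b)=(R_0,\bar R_0^{(k_0-1)})$ on $[\bar R_0^{(k_0-1)}-r_4,R]$,'' but (\ref{229}) is only defined on $(a,b)$. What actually happens is that the \emph{same formula} $\mathcal A_4\big(S_0(\bar R_0^{(k_0-1)}-r;\omega,\bar R_0^{(k_0-1)})-S_0(r_4;\omega,\bar R_0^{(k_0-1)})\big)$ is extended beyond $\bar R_0^{(k_0-1)}$ all the way to $R$. The crucial observation---which you allude to via ``interlacing/oscillation theory'' but should make explicit---is that the defining relation (\ref{underlineR0k}) forces
\[
Y_1(\omega \bar R_0^{(k_0-1)})\,J_1(\omega r)-J_1(\omega \bar R_0^{(k_0-1)})\,Y_1(\omega r)=0
\quad\text{exactly at}\quad r=\bar R_0^{(k_0-1)},\,\bar R_0^{(k_0-2)},\,\ldots,\,\bar R_0^{(2)},\,R,
\]
since all of these share the same value of $J_1/Y_1$. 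This simultaneously (i) guarantees the Neumann condition $v_r(R)=0$ for free, and (ii) produces precisely the $(k_0-2)$ interior sign changes on the outer piece, each simple because $J_1/Y_1$ is strictly monotone between consecutive poles. With this in hand the count $1+(k_0-2)=k_0-1$ is rigorous, and the edge cases you flag ($R_0=\underline R_0$ collapsing the middle interval on the left, $R_0=\bar R_0^{(k_0)}$ giving equality $\omega=\omega_{\bar R_0^{(k_0)},\bar R_0^{(k_0-1)}}$ in your threshold chain) are the expected degenerate endpoints of the family, handled just as in Lemma~\ref{lemma32}(i),(iii).
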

We would like to point out that, one can continue to construct more solutions following the approaches above by sending $\chi$ larger and larger.  This will give rises to spatial patterns with more and higher modes as we described above even for a fixed $k_0$.  Indeed, similar as for the Volcano solutions, there exists some $\chi_{k_0}^*>\chi_k$ such that (\ref{volcano1-new}) will take a new formula such that $u_i$ touches zero at the end point $r=R$ as $\chi$ surpass $\chi_{k_0}^*$, hence it stays compactly supported of the same form as (\ref{volcano2}).  This results in another class of solutions for (\ref{ss}), which can be explicitly obtained but are skipped here for brevity.

Figure \ref{Airy} illustrates the airy patterns developed in (\ref{ss}).  For each configuration, the circular aperture has a bright central region called the airy disk, and a series of concentric rings around called the Airy pattern.  Such disk and rings phenomenon is well adopted to describe the appearance of a bright star seen through a telescope under high magnification.
\begin{figure}[h!]
\vspace{-4mm}
\centering
  \includegraphics[width=1\textwidth]{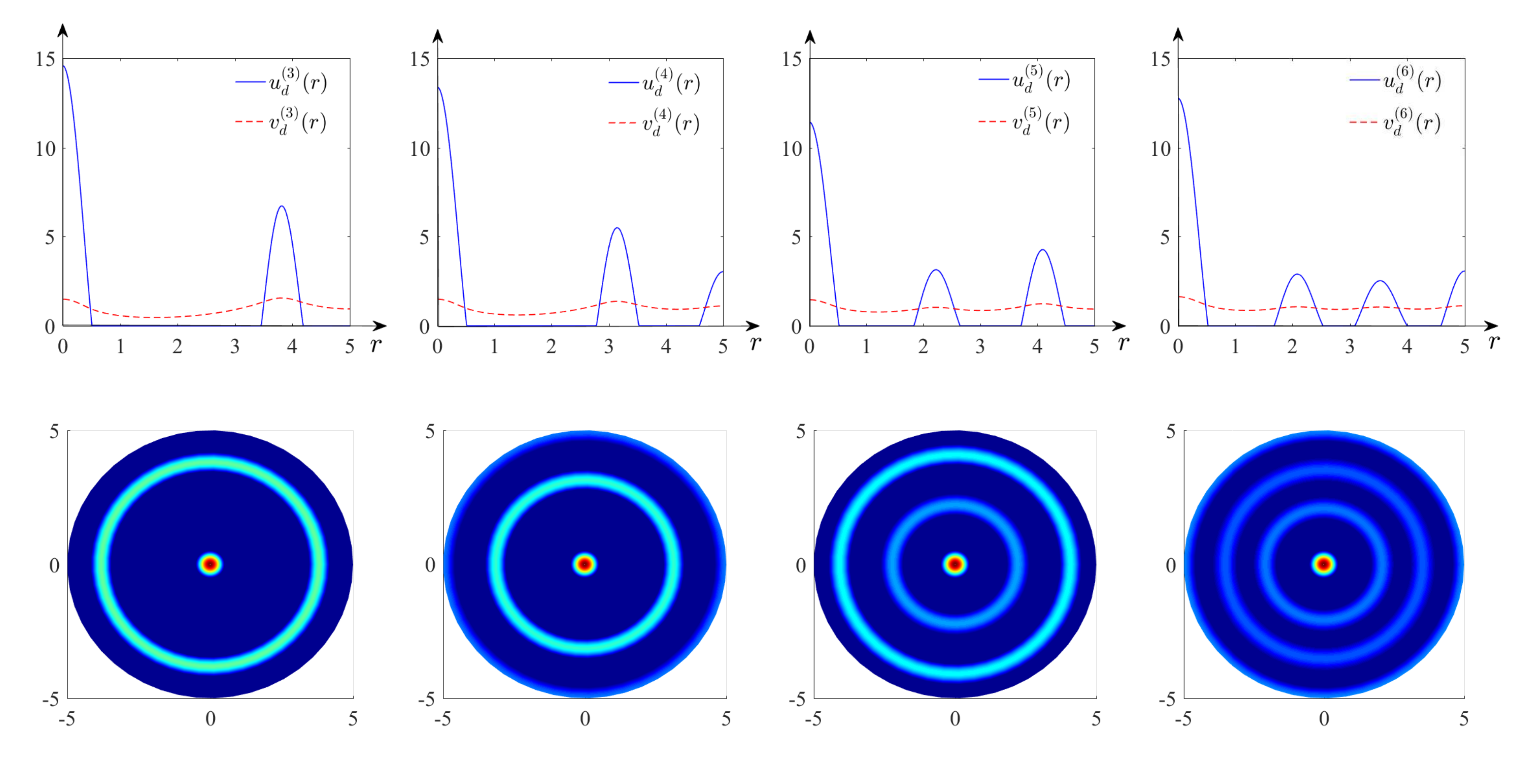}\vspace{-5mm}
  \caption{Airy patterns of (\ref{ss}) over $B_0(5)$ with $\chi=26$ and $k_0=3,4,5$ and $6$ from the left to the right.  Lemma \ref{lemma41} implies that for each $k_0\leq7$ there exists non-monotone patterns such that $v'$ changes sign $(k_0-1)$ times.  \textbf{Top}: plots of $(u_d^{(k_0)}(r),v_d^{(k_0)}(r))$; \textbf{Bottom}: each configuration $u_d^{(k_0)}$ contains a bright central region and a series of concentric rings around.}\label{Airy}
\end{figure}

There are several observations we made when analyzing the solutions above.  While rigourous proofs are lacking, we record them here for future reference and the reader's interest.  Our analysis suggests the following findings: suppose that the closure of support of $u$ consists of a finite number of disjoint components, i.e., $\overline{\text{Supp}(u)}=\cup_{i=1}^N\text{Spt}_i$, then the solution (\ref{ss}) has a freedom of $N-1$ in the sense that it can be uniquely determined by $N-1$ parameters.  With that being said, one readily has that $(u,v)$ is unique if the support of $u$ is connected.  This is well supported whenever $\chi$ is slightly large than $\chi_k$.

\section{Hierarchy of Free Energies and Non-radial Solutions}\label{section6}
Now we proceed to study the free energy (\ref{freeenergy}) and its dissipation (\ref{dissipation}).  This free energy allows for a gradient flow structure of (\ref{01}) in a product space, see \cite{CC,CLM,BL,BCKKLL}. The hybrid gradient flow structure introduced in \cite{BL,BCKKLL} treats the evolution of the cell density in probability measures while the evolution of the chemoattractant is done in the $L^2$-setting. The gradient flow structure used in probability densities follows the blueprint of the general gradient flow equations treated in \cite{JKO98,AGS,Vil_1,CMcCV03}. Moreover, solutions were proved to be unique among the class of bounded densities \cite{CLM}.  We would like to mention that (\ref{freeenergy}) is a Lyapunov functional since steady states $(u_s, v_s)$ are characterized by zero dissipation $\mathcal I(u_s, v_s)=0$.  Therefore, we readily obtain that for a steady state $(u_s, v_s)$, the quantity $u_s -\chi v_s$ must be constant in each connected component of the support of the cell density $u_s$.  Note that the $v$--equation of (\ref{ss}) readily gives us
\[\int_{B_0(R)} \big(|\nabla v_s|^2+v_s^2\big)d \textbf{x}=\int_{B_0(R)} u_sv_sd \textbf{x}.\]
Therefore
\begin{equation}\label{63}
\mathcal E(u_s, v_s)=\frac{1}{\chi}\int_{B_0(R)} u_s (u_s-\chi v_s)d \textbf{x}.
\end{equation}
Moreover, since $u_s-\chi v_s=\lambda _i$ for some constants $\lambda _i$ on each of the (possibly countably many) connected components of the support of $u_s$, denoted by $\text{sppt}_i$, we have from (\ref{63}) that
\begin{equation}\label{64}
\mathcal E(u_s,v_s)=\frac{1}{\chi}\sum_{i}\int_{\text{sppt}_i} \lambda _i u_sd \textbf{x}.
\end{equation}
We now study the energy of the steady states constructed above, and in light of the explicit formulas, we shall give a hierarchy of the energies.  Among other things, we show that the constant solution has the largest energy among all solutions and the single interior bump has the least energy in the radial class.

\subsection{Hierarchy of Free Energies}
First of all, we find that the constant solution $(\bar u,\bar v)$ defined in $(0,R)$ has energy
\[\mathcal E(\bar u, \bar v)=\frac{(1-\chi)\bar u}{\chi}M=-\frac{\omega^2M^2}{\chi \pi R^2};\]
moreover, the one-parameter family of bifurcation solutions (\ref{bifurcation}) have the same energy as the constant solution since $u^{(k)}_\varepsilon(r)-\chi_k v^{(k)}_\varepsilon(r)=(1-\chi_k)\bar u$ independent of $\varepsilon$
\[\mathcal E(u^{(k)}_\varepsilon(r),v^{(k)}_\varepsilon(r))=-\frac{\omega^2M^2}{\chi_k\pi R^2},k\in\mathbb N^+.\]

\begin{lemma}\label{lemma61}
For each $\chi>\chi_1$, we always have that $\mathcal E(u^-,v^-),\mathcal E(u^+,v^+)<\mathcal E(\bar u,\bar v)$.  In general, for any annulus $(a,b)$ and each $\chi>\chi_{a,b}$, $\mathcal E(\mathbb U_3,\mathbb V_3),\mathcal E(\mathbb U_4,\mathbb V_4)<\mathcal E(\bar u,\bar v)$, where $(\mathbb U_i,\mathbb V_i)$ are given by (\ref{227}) and (\ref{229}).
\end{lemma}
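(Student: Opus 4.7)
My plan is to reduce each energy comparison to a single pointwise inequality in the support size, and then close that inequality by exhibiting a monotone auxiliary function whose sign is controlled at the boundary.

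First I would invoke formula (\ref{64}): for any steady state $(u_s,v_s)$, one has $\chi\mathcal{E}(u_s,v_s)=\sum_i \lambda_i m_i$, where $\lambda_i$ is the constant value of $u_s-\chi v_s$ on the $i$-th component of $\overline{\text{Supp}(u_s)}$ and $m_i$ is the mass of $u_s$ on that component. For $(\bar u,\bar v)$ this gives $\chi\mathcal{E}(\bar u,\bar v)=(1-\chi)\bar u M$, while for $(u^-,v^-)$ the support is a single disk $B_0(r_1)$ of total mass $M$ and the constant value is $\bar C^-=\mathcal A_1(\chi-1)J_0(\omega r_1)$. Dividing the desired inequality by $(\chi-1)>0$ rewrites it as $-\mathcal A_1 J_0(\omega r_1)>\bar u$. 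Substituting $\mathcal A_1=M/(\pi r_1^2 J_2(\omega r_1))$, using the Bessel identity $J_2(x)=2J_1(x)/x-J_0(x)$, and applying the algebraic constraint (\ref{23}) in the form $\omega(-J_0(\omega r_1))(-T_1(r_1;R))=J_1(\omega r_1)T_0(r_1;R)$ to eliminate $\omega(-J_0)$, this collapses to
\[
g(r_1):=(R^2-r_1^2)\,T_0(r_1;R)+2r_1\,T_1(r_1;R)>0.
\]

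The crux is then to show $g>0$ on $(0,R)$. Using the Bessel recurrences $T_0'=T_1$ and $T_1'=T_0-T_1/r$, direct differentiation telescopes to $g'(r)=(R^2-r^2)T_1(r;R)$. Since $T_1<0$ on $(0,R)$ and $T_1(R;R)=K_1(R)I_1(R)-I_1(R)K_1(R)=0$, $g$ is strictly decreasing with $g(R)=0$, hence $g>0$ on $(0,R)$ and $g(r_1)>0$ in particular. I view this collapse to a monotone auxiliary as the main obstacle: once $g$ is identified, the rest is bookkeeping.

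For the outer ring the same pipeline---now using (\ref{215}), the algebraic equation $f_2(r_2;\omega,R)=0$, and the telescoping $R^2-r_2(2R-r_2)=(R-r_2)^2$---reduces the problem to $(R-r_2)I_0(R-r_2)>2I_1(R-r_2)$. Setting $s=R-r_2>0$ and collecting the $(s/2)^{2k+1}$ coefficients in $sI_0(s)$ and $2I_1(s)$ yields the positive-term expansion
\[
sI_0(s)-2I_1(s)=\sum_{k\ge 1}\frac{2k}{k!\,(k+1)!}\left(\frac{s}{2}\right)^{2k+1}>0,\qquad s>0,
\]
which closes the outer-ring case.

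Finally, the annulus analogues $(\mathbb U_3,\mathbb V_3)$ and $(\mathbb U_4,\mathbb V_4)$---compared to the constant state on $(a,b)$---are handled by the same three-step template: reduce via the constant $\bar C$ on the support, eliminate the $\omega$-ratio via the relevant algebraic equation $f_3=0$ or $f_4=0$, and close via a monotone auxiliary built from the compound Bessels $T_0,T_1,S_0,S_1$ on $(a,b)$. The derivative identities $T_0'=T_1$, $T_1'=T_0-T_1/r$, together with $\partial_r S_0=\omega S_1$ and its $(a,b)$ analogue, render these collapses mechanical; the boundary values $T_1(b;b)=0$, $T_1(a;a)=0$ (from the definition of $T_1$) and $S_1(0;\omega,\cdot)=0$ play the role of $T_1(R;R)=0$ above.
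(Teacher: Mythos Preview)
Your proposal is correct and follows essentially the same route as the paper: reduce via $\chi\mathcal E=\bar C M$, use the algebraic constraint $f=0$ (resp.\ $f_2,f_3,f_4=0$) to pass to the $T$-functions, and close with the monotone auxiliary $g(r)=(R^2-r^2)T_0(r;R)+2rT_1(r;R)$ (and its annulus analogues) whose derivative telescopes to $(R^2-r^2)T_1$. The only cosmetic difference is in the outer-ring step: you verify $sI_0(s)>2I_1(s)$ by a positive-term power series, whereas the paper invokes the one-line recurrence $2I_1(s)-sI_0(s)=-sI_2(s)<0$; both are valid, and your annulus sketch is exactly how the paper handles $(\mathbb U_3,\mathbb V_3)$ and $(\mathbb U_4,\mathbb V_4)$ as well.
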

\begin{proof}
We first prove that $\mathcal E(u^-, v^-)<\mathcal E(\bar u, \bar v)$ for any $\chi>\chi_1$.  Note that the inner ring spike $(u^-,v^-)$ given by (\ref{interring}) has energy
\[\mathcal E(u^-, v^-)=\frac{\omega^2M^2}{\chi\pi} \cdot\frac{\omega J_0(\omega r_1)}{2r_1J_1(\omega r_1)-\omega r_1^2J_0(\omega r_1)}.\]
To show the inequality above, it suffices to prove $2r_1J_1(\omega r_1)+\omega (R^2-r_1^2)J_0(\omega r_1)<0$.  Because $r_1$ is the root of (\ref{23}), this inequality is equivalent to $2r_1 T_1(r_1;R)+(R^2-r_1^2)T_0(r_1;R)>0$.  By straightforward calculations that $\partial_r\Big(2rT_1(r;R)+(R^2-r^2)T_0(r;R)\Big)=(R^2-r^2)T_1(r;R)<0$ for $r<R$, hence $(2r_1T_1(r_1;R)+(R^2-r_1^2)T_0(r_1;R))>2RT_1(R;R)=0$ and $\mathcal E(u^-,v^-)-\mathcal E(\bar{u},\bar{v}) <0$ as expected.

The outer ring solution $(u^+,v^+)$ given by (\ref{outerring}) has energy
\[\mathcal E(u^+, v^+)=-\frac{\omega^2M^2}{\chi\pi} \cdot\frac{\omega S_0(r_2;\omega,R)}{2(R-r_2)S_1(r_2;\omega,R)+\omega r_2 (2R-r_2)S_0(r_2;\omega,R)}.\]
We claim that $\mathcal{E}(u^+,v^+)<\mathcal{E}(\bar{u},\bar{v})$.  To see this we only show that $2S_1(r_2;\omega,R)-\omega(R-r_2)S_0(r_2;\omega,R)<0$, which is equivalent to $2I_1(R-r_2)-(R-r_2)I_0(R-r_2)<0$ thanks to (\ref{210}).  However, this is an immediate consequence of the recurrence relations of the modified Bessel functions (e.g.,\cite{Abramowitz})
\[2I_1(R-r_2)-(R-r_2)I_0(R-r_2)=-(R-r_2)I_2(R-r_2)<0,\]
and this readily implies $\mathcal E(u^+,v^+)-\mathcal E(\bar{u},\bar{v})<0$.

Next we study the energies of the monotone solutions $(\mathbb U_3,\mathbb V_3)$ in (\ref{227}) and $(\mathbb U_4,\mathbb V_4)$ in (\ref{229}) in an annulus $(a,b)$.  To begin with, we know that energy of the constant solution is
\[\mathcal E(\bar u_{ab}, \bar v_{ab})=\frac{(1-\chi)\bar u_{ab}}{\chi}M=-\frac{\omega^2 M^2}{\chi \pi (b^2-a^2)},\]
where $(\bar u_{ab},\bar v_{ab})=\frac{M}{\pi(b^2-a^2)}$.  Thanks to (\ref{227}) we find
\[\mathcal E(\mathbb U_3, \mathbb V_3)=\frac{\omega^2M^2}{\chi\pi} \cdot\frac{\omega S_0(-r_3;\omega,a)}{2(a+r_3)S_1(-r_3;\omega,a)-\omega r_3 (2a+r_3)S_0(-r_3;\omega,a)}.\]
We claim that $\mathcal E(\mathbb U_3, \mathbb V_3)<\mathcal E(\bar u_{ab}, \bar v_{ab})$.  To this end, we only need to show that $2(a+r_3)T_1(a+r_3;b)+\big(b^2-(a+r_3)^2 \big)T_0(a+r_3;b)>0$ in light of (\ref{228}).  Denote
\[y_{11}(r;a,b):=2(a+r)T_1(a+r;b)+\left(b^2-(a+r)^2 \right)T_0(a+r;b),\]
then straightforward calculations give that $\partial_ry_{11}(r;a,b)=\left(b^2-(a+r)^2 \right)T_1(a+r;b)<0$, therefore $y_{11}(r_3;a,b)>y_{11}(b-a;a,b)=0$ as expected, hence $\mathcal E(\mathbb U_3,\mathbb V_3)-\mathcal E(\bar{u}_{ab},\bar{v}_{ab})<0$.

To study the monotone increasing solutions $(\mathbb U_4,\mathbb V_4)$, we first find
\[\mathcal E(\mathbb U_4, \mathbb V_4)=-\frac{\omega^2M^2}{\chi\pi} \cdot\frac{\omega S_0(r_4;\omega,b)}{2(b-r_4)S_1(r_4;\omega,b)+\omega r_4 (2b-r_4)S_0(r_4;\omega,b)}.\]
Denote
\[y_{12}(r;a,b):=2(b-r)T_1(b-r;a)-\left((b-r)^2-a^2 \right)T_0(b-r;a),\]
then a routine computation gives that $\partial_ry_{12}(r;a,b)=\left((b-r)^2-a^2 \right)T_1(b-r;a)>0$ for $r\in(0,b-a)$, therefore $y_{12}(r_4;a,b)<y_{12}(b-a;a,b)=0$ and this implies through (\ref{231}) that
 \[2(b-r_4)S_1(r_4;\omega,b)-\omega\left((b-r_4)^2-a^2 \right)S_0(r_4;\omega,b)<0.\]
Therefore we have $\mathcal E(\mathbb U_4,\mathbb V_4)-\mathcal E(\bar{u}_{ab},\bar{v}_{ab})<0$.  This completes the proof.
\end{proof}

\begin{figure}[h!]
\hspace{-2mm}\begin{minipage}{0.5\columnwidth}
\includegraphics[width=\columnwidth,height=6cm]{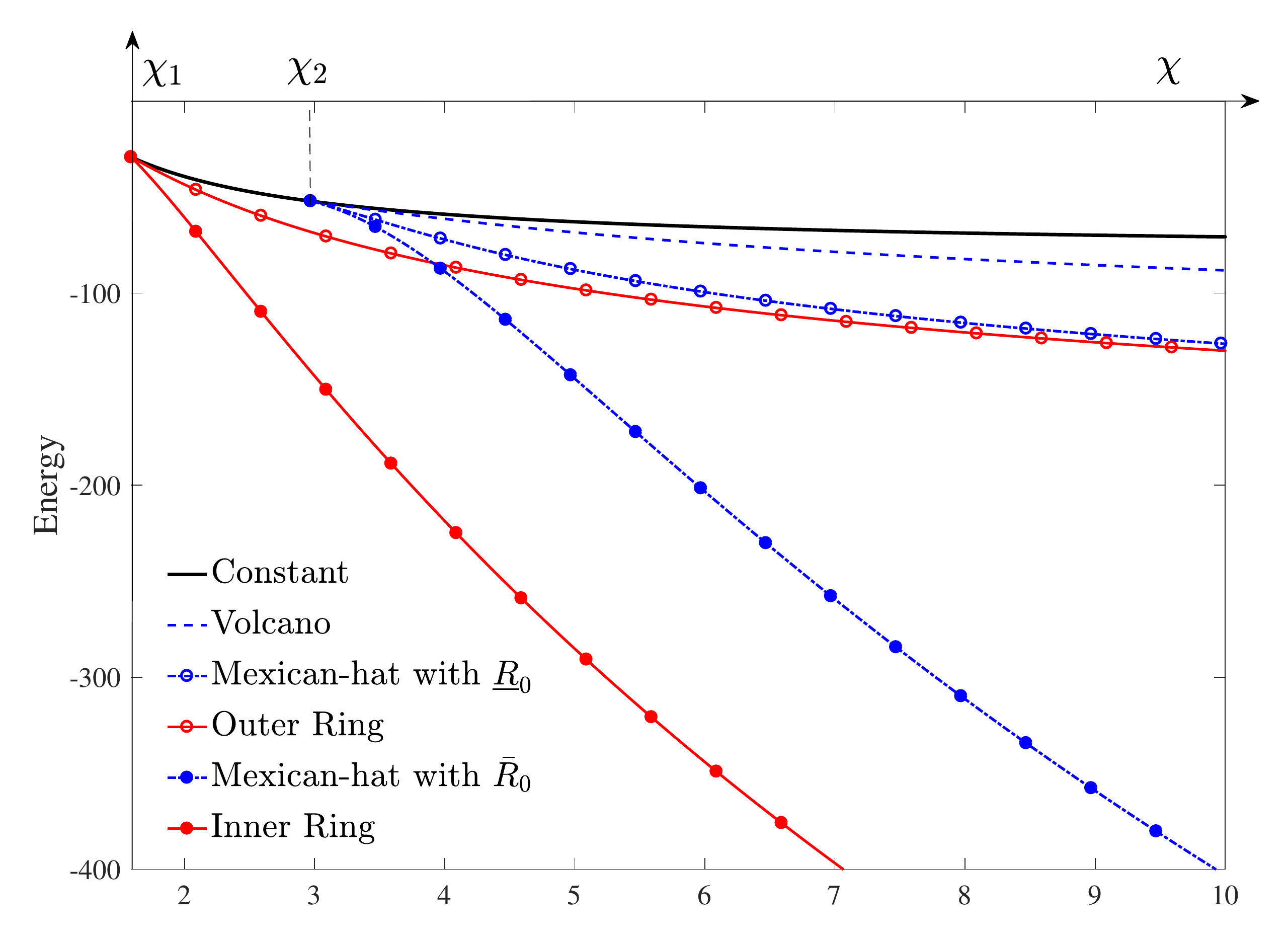}
\vspace{-10mm}\caption*{(1) Energies of the steady states for $\chi$ small.}
\includegraphics[width=\columnwidth,height=6cm]{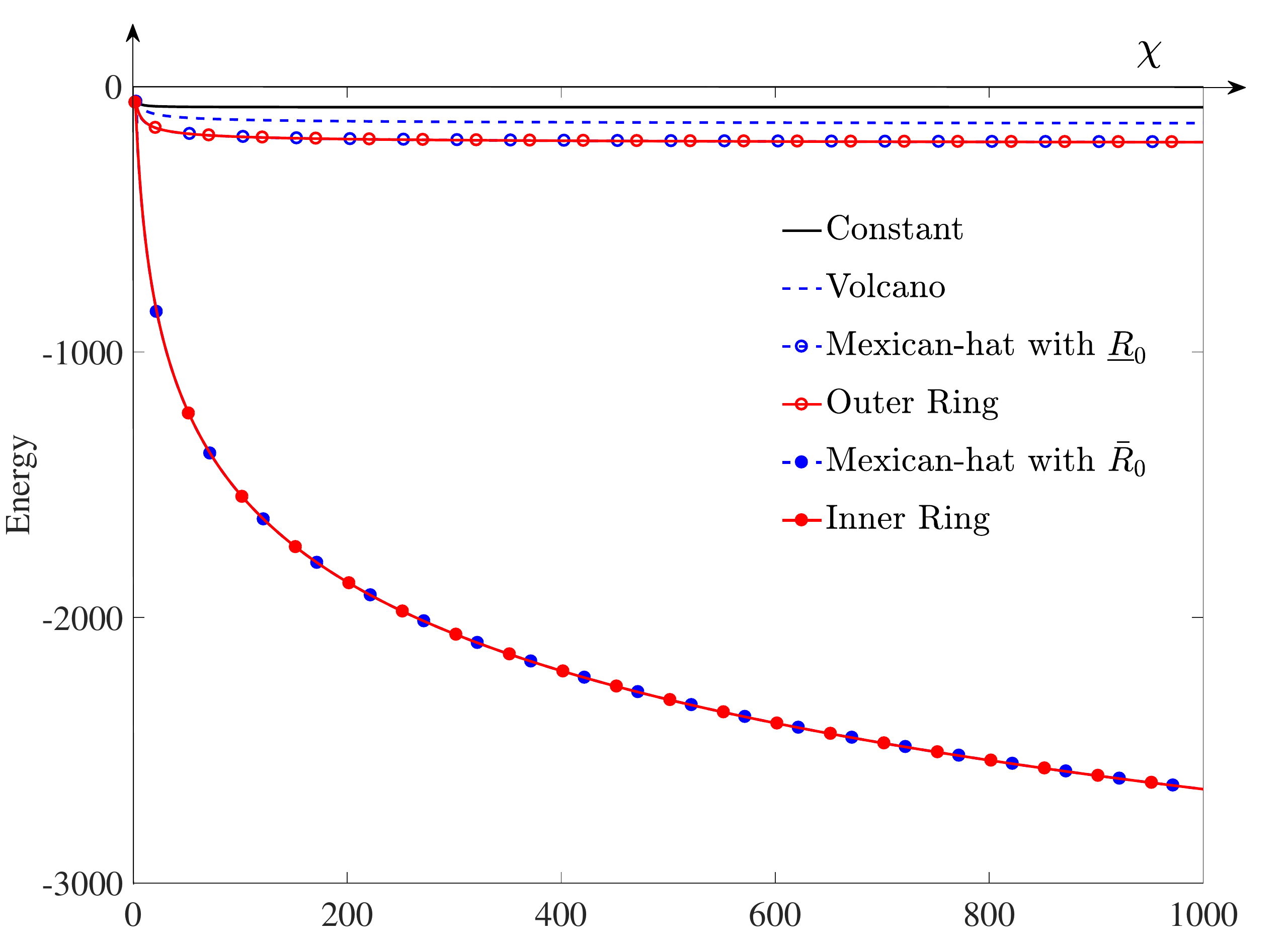}
\vspace{-7.5mm}\caption*{(2) Energies of the steady states for $\chi$ large.}
\end{minipage}
\begin{minipage}{0.5\columnwidth}
\includegraphics[width=\columnwidth,height=12cm]{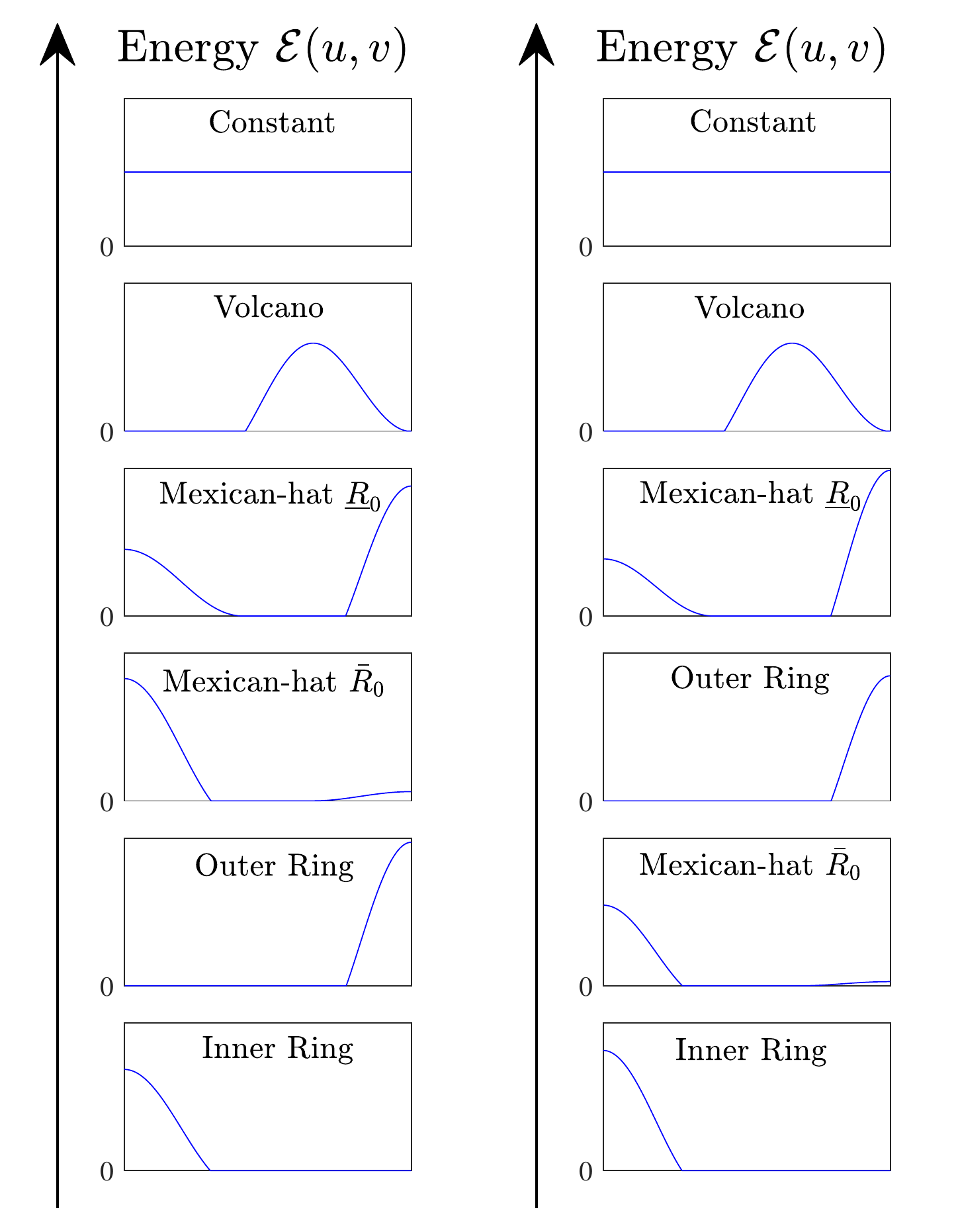}
\vspace{-7.5mm}\caption*{ (a) $\chi<3.98$ \quad \quad  \quad  \quad  \quad  \quad (b) $\chi>3.98$}\vspace{-1mm}
\end{minipage}
\vspace{-1mm}\caption{The hierarchy of energy (\ref{freeenergy}) for the constant, monotone ring, and non-monotone ring solutions on the $\chi$-axis.  Our results indicate that the inner ring solution (\ref{interring}) has the least energy and the constant solution has the largest energy among all radial solutions.  \textbf{Left Column}: (1) the local curves of the energy for these solutions for $\chi$ small.  When $\chi=\chi_1$, the constant solution has the same energy as the bifurcation solutions (\ref{bifurcation}) for any $\varepsilon$; for each $\chi>\chi_1$, the energy of the inner ring is always smaller than that of the outer ring.  Indeed, the inner ring is the global minimizer of energy (\ref{freeenergy}) in the radial class.  The Mexican-hat and Volcano solution emerge as $\chi$ surpasses $\chi_2$, and both have an energy smaller than the constant. (2): the global curves of these energies for $\chi$ large.  They suggest that in the large limit of chemotaxis rate, the Mexican-hat solutions with a fixed $R_0=\underline R_0$ and $R_0=\bar R_0$ admit energies which converge to those of the outer ring and inner ring solutions, respectively.  However, we do not illustrate this (with full details) here for the sake of succinctness.  \textbf{Right Column:} the hierarchy of energy listed on the left column and an illustration of the diagram.  The inner ring has the least energy and the constant has the largest energy, while exists a critical value $3.98$ that outer ring has a smaller energy than the Mexican-hat with $R_0=\bar R_0$ if $\chi<3.98$ and a larger energy if $\chi>3.98$.}\label{energy}
\end{figure}

Now we are ready to prove the Theorem \ref{theorem13}.
\begin{proof} [Proof\nopunct] \emph{of Theorem} \ref{theorem13}.   Part \emph{(i)} has been proved in Lemma \ref{lemma61} and we now prove \emph{(ii)} and \emph{(iii)}.  To show \emph{(ii)}, we find that
\[\mathcal E(u_i,v_i)=- \frac{M^2\omega ^3 S_0(r_2;\omega,R^*_0)/(\pi \chi)}{2(R^*_0-r_2)S_1(r_2;\omega,R^*_0)-2(R^*_0+r_3)S_1(-r_3;\omega,R^*_0)+\omega(r_3^2-r_2^2+2R_0^*(r_2+r_3)) S_0(r_2;\omega,R^*_0)}\]
hence
\begin{equation}\label{inequality}
\mathcal E(u_i,v_i)-\mathcal E(\bar u,\bar v)=-\frac{M^2\omega ^2}{\pi \chi}\left(\frac{1}{h_1(r_2;\omega,R_0^*)+h_2(r_2;\omega,R_0^*)}-\frac{1}{R^2}\right),
\end{equation}
where the functions $h_i$ are given by $h_1(r;\omega,R_0^*):=2(R_0^*-r)\frac{I_1(R_0^*-r)}{I_0(R_0^*-r)}+r(2R_0^*-r)$ and $h_2(r;\omega,R_0^*):=-2(R_0^*+r)\frac{T_1(R_0^*+r;R)}{T_0(R_0^*+r;R)}+r(2R_0^*+r)$.  One can find that $\partial_r h_1=\frac{2(R_0^*-r)I^2_1(R_0^*-r)}{I^2_0(R_0^*-r)}>0$ and $\partial_r h_2=\frac{2(R_0^*+r)T^2_1(R_0^*+r;R)}{T^2_0(R_0^*+r;R)}>0$, therefore we have
\[h_1(r_2;\omega,R_0^*)<h_1(R^*_0;\omega,R_0^*)=(R_0^*)^2 \text{~and~}h_2(R-R_0^*;\omega,R_0^*)<h_1(R^*_0;\omega,R_0^*)=R^2-(R_0^*)^2.\]
This readily implies from (\ref{inequality}) that $\mathcal E(u_i,v_i)<\mathcal E(\bar u,\bar v)$ as expected.  The fact that $\mathcal E(u_d,v_d)<\mathcal E(\bar u,\bar v)$ can be verified by the same calculations, and we skip the details here.

To find the asymptotic energies for $\chi\gg 1$, we calculate
\begin{align}
\mathcal E(u^-,v^-)=&\frac{M^2\omega ^2}{\pi \chi} \frac{\omega J_0(\omega r_1)}{2r_1 J_1(\omega r_1)-\omega r_1^2J_0(\omega r_1)} \notag\\
                   =& \frac{M^2\omega ^2}{\pi \chi} \frac{T_0(r_1;R)}{2r_1 T_1(r_1;R)- r_1^2T_0(r_1;R)}  \quad \quad \text{since~$r_1$~is a root of (\ref{23})} \notag\\
                   =& \frac{M^2\omega ^2}{\pi \chi} \frac{-I_1(R)\ln\frac{r_1}{2}}{-2I_1(R)+r_1^2I_1(R)\ln \frac{r_1}{2}}+O(1) \quad \quad \text{since~$r_1\rightarrow \frac{j_{0,1}}{\omega}$ as $\chi\rightarrow \infty$} \notag\\
=&-\frac{M^2\ln \omega}{2\pi}+O(1), \notag
\end{align}
which is as expected; on the other hand, we compute for the Mexican-hat with $R_0=\bar R_0$
\begin{align*}
\mathcal E(u_d,v_d)|_{R_0=\bar R_0}=&\frac{m_1^2\omega ^2}{\pi \chi} \frac{\omega J_0(\omega r_1)}{2r_1 J_1(\omega r_1)-\omega r_1^2J_0(\omega r_1)}-\frac{m_4^2\omega^2}{\pi\chi(R^2-\bar R_0^2)} \notag\\
= &\frac{M^2\omega ^2}{\pi \chi} \frac{\omega J_0(\omega r_1)}{2r_1 J_1(\omega r_1)-\omega r_1^2J_0(\omega r_1)}+o(1) \quad\quad \text{since $m_1\rightarrow M$ and $m_4\rightarrow 0$ as $\chi\rightarrow \infty$}\notag\\
= &\mathcal E(u^-,v^-)+o(1)
\end{align*}
also as expected; moreover, the asymptotic expansions for $\mathcal E(u^+,v^+)$ and $E(u_d,v_d)$ with $R_0=\underline R_0$ can be verified by the same calculations and we skip the details here.

We are left to verify \emph{(iii)}.  According to Section \ref{section2} and our discussions above, in each annulus $(a,b)$ we have $\mathcal E(\mathbb U_3,\mathbb V_3),\mathcal E(\mathbb U_4,\mathbb V_4)< \mathcal E(\bar u_{ab},\bar v_{ab})$, where $(\mathbb U_i,\mathbb V_i)$ and $(\bar u_{ab},\bar v_{ab})$ are given by Section \ref{section2}.  Let $(u_\text{cpt},v_\text{cpt})$ be an arbitrary solution of (\ref{ss}) with $u$ being compactly supported such that $v$ has $k$ critical points $\{R_i\}_{i=1}^k$ such that
\[[0,R)=\cup_{i=1}^k (R_{i-1},R_i) \text{~with~} 0=R_0<R_1<...<R_{k-1}<R_k=R.\]
Note that $k$ must be finite since $\min{(R_i-R_{i-1})}>\frac{j_{0,1}}{\omega}$.  On each interval $(R_{i-1},R_i)$, the solution takes the form $(\mathbb U_3,\mathbb V_3)$ or $(\mathbb U_4,\mathbb V_4)$.  Then according to (\ref{64}) we find that
\begin{align*}
\mathcal E(u_\text{cpt},v_\text{cpt})=&\frac{1}{\chi}\int_{B_0(R)} u(u-\chi v)d\textbf{x}   \\
=&\frac{1}{\chi}\sum_{i=1}^k \int_{B_0(R_i)\backslash B_0(R_{i-1})}u(u-\chi v)d\textbf{x}   \\
<&\sum_{i=1}^k\mathcal E(\bar u_{R_{i-1}R_i},\bar v_{R_{i-1}R_i})=\frac{1}{\chi}\sum_{i=1}^k\frac{m_i^2(1-\chi)}{\pi(R_i^2-R_{i-1}^2)}\\
=&-\frac{\omega^2}{\pi\chi}\sum_{i=1}^k\frac{m_i^2}{R_i^2-R_{i-1}^2},
\end{align*}
where $m_i$ denotes the cellular mass in the $i$-th interval (annulus) with $m_i>0$ for each $i$ and $\sum_{i=1}^k m_i=M$.  Therefore, to prove (iii), it is sufficient to prove that for any partition $\{R_i\}_{i=1}^k$
\[F(k):=\sum_{i=1}^k \frac{m_i^2}{R_i^2-R_{i-1}^2}> \frac{M^2}{R^2}.\]
First of all, this inequality holds for $k=2$.  Suppose that this is true for $k\geq2$, then for $k+1$ we have
\[F(k+1)=\sum_{i=1}^{k+1}\frac{m_i^2}{R_i^2-R_{i-1}^2}> \frac{(M-m_{k+1})^2}{R_k^2}+\frac{m^2_{k+1}}{R^2-R^2_{k}}=\frac{\big((M-m_{k+1})R^2-MR^2_{k}\big)^2}{R^2R^2_{k}(R^2-R^2_{k})}+\frac{M^2}{R^2}>\frac{M^2}{R^2},\]
which implies that $F(k+1)>\frac{M^2}{R^2}$ as expected.  Therefore $F(k)>\frac{M^2}{R^2}$ for each $k\in\mathbb N^+$, and this finishes the proof of Theorem \ref{theorem13}.
\end{proof}

\subsection{Non-radial Stationary Solutions: Numerical Evidence}
Our studies of (\ref{01}) have been restricted to its stationary problem (\ref{ss}) in the radial setting so far.  A natural question that arises is whether (\ref{01}) has non-radial steady states.  Though the rigorous analysis of this problem is out of the scope of this paper, we conduct some numerical experiments to give a confirmative answer to this question.  Our numerical simulations suggest that the spatial-temporal dynamics of (\ref{01}) are quite intricate and phase transition is ubiquitous for large $\chi$ whence its stationary problems admit many spiky solutions.
\begin{figure}[h!]
    \centering
    \begin{subfigure}[b]{0.210\textwidth}
        \includegraphics[width=\textwidth]{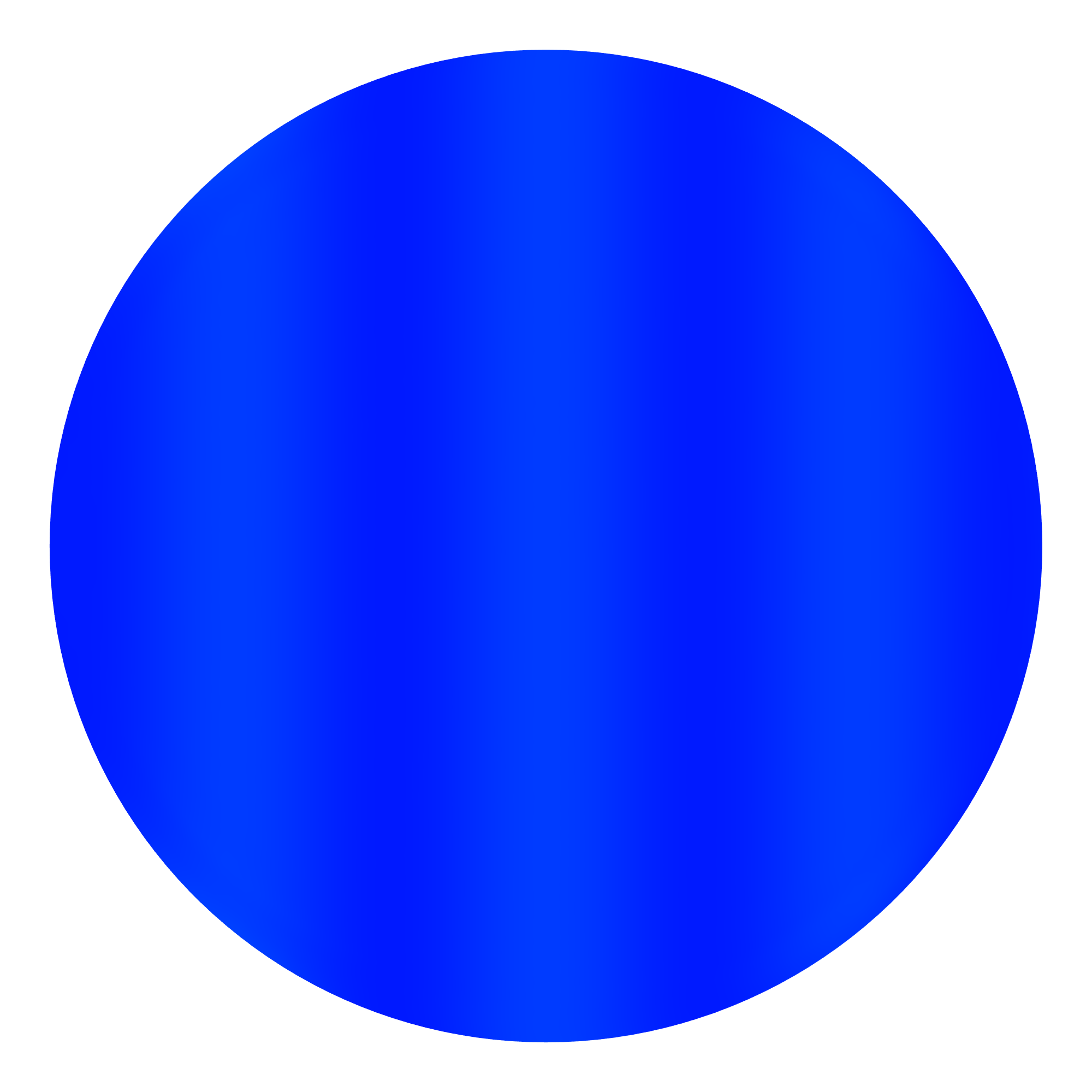}
        \caption*{$t=0$}
    \end{subfigure}\hspace{0.5mm}
    \begin{subfigure}[b]{0.210\textwidth}
        \includegraphics[width=\textwidth]{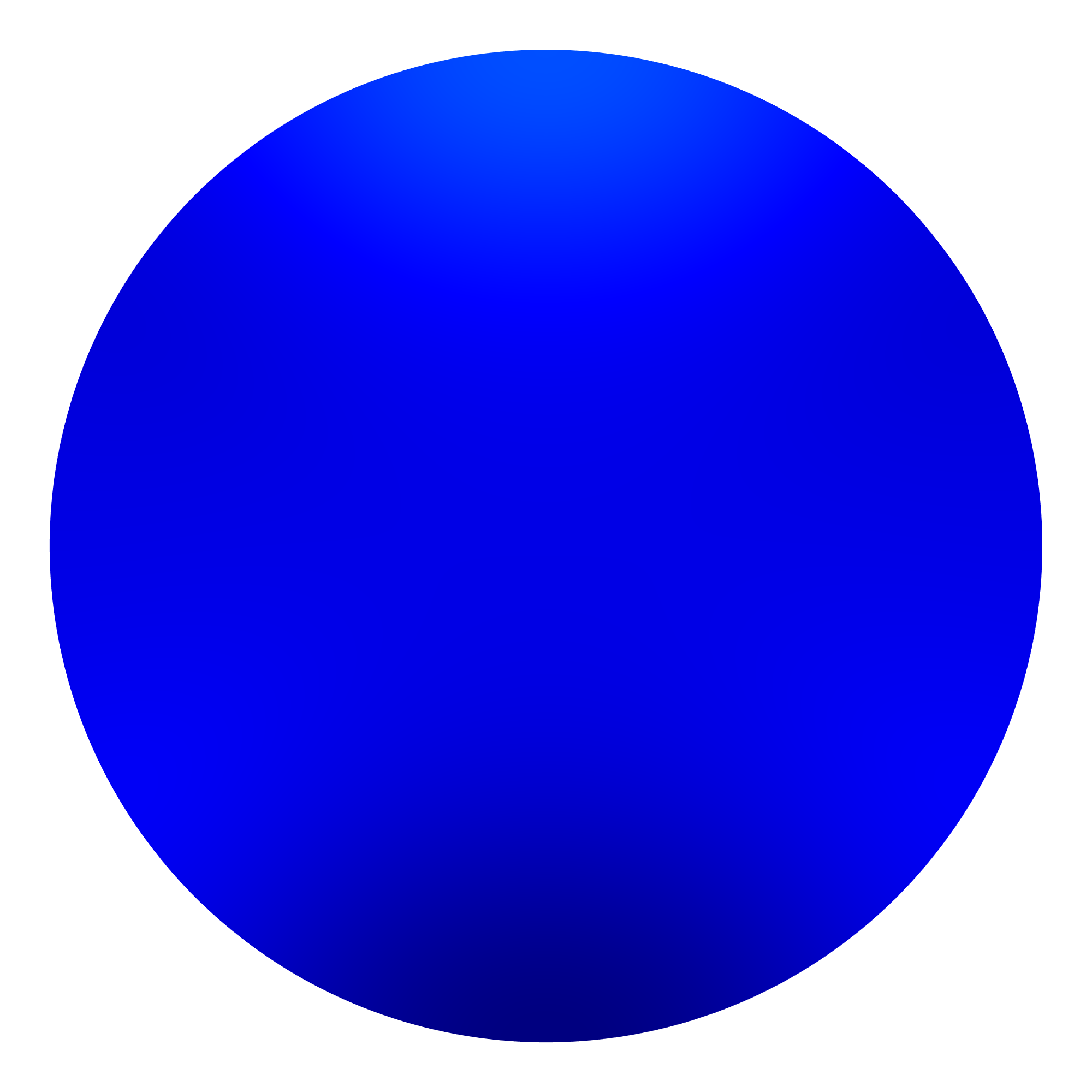}
        \caption*{$t=2.0$}
    \end{subfigure}\hspace{0.5mm}
    \begin{subfigure}[b]{0.210\textwidth}
        \includegraphics[width=\textwidth]{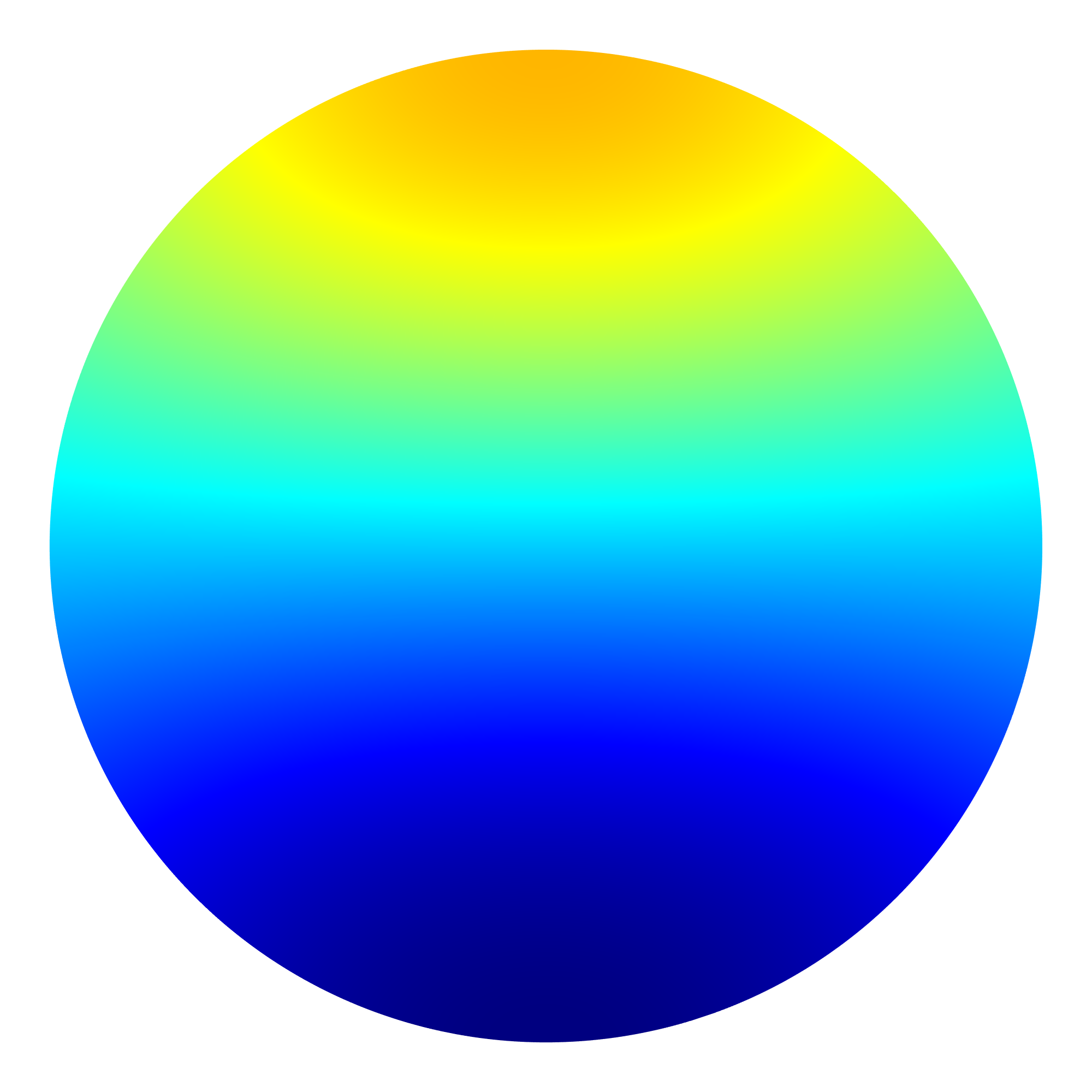}
        \caption*{$t=17$}
    \end{subfigure}\hspace{0.5mm}
    \begin{subfigure}[b]{0.210\textwidth}
        \includegraphics[width=\textwidth]{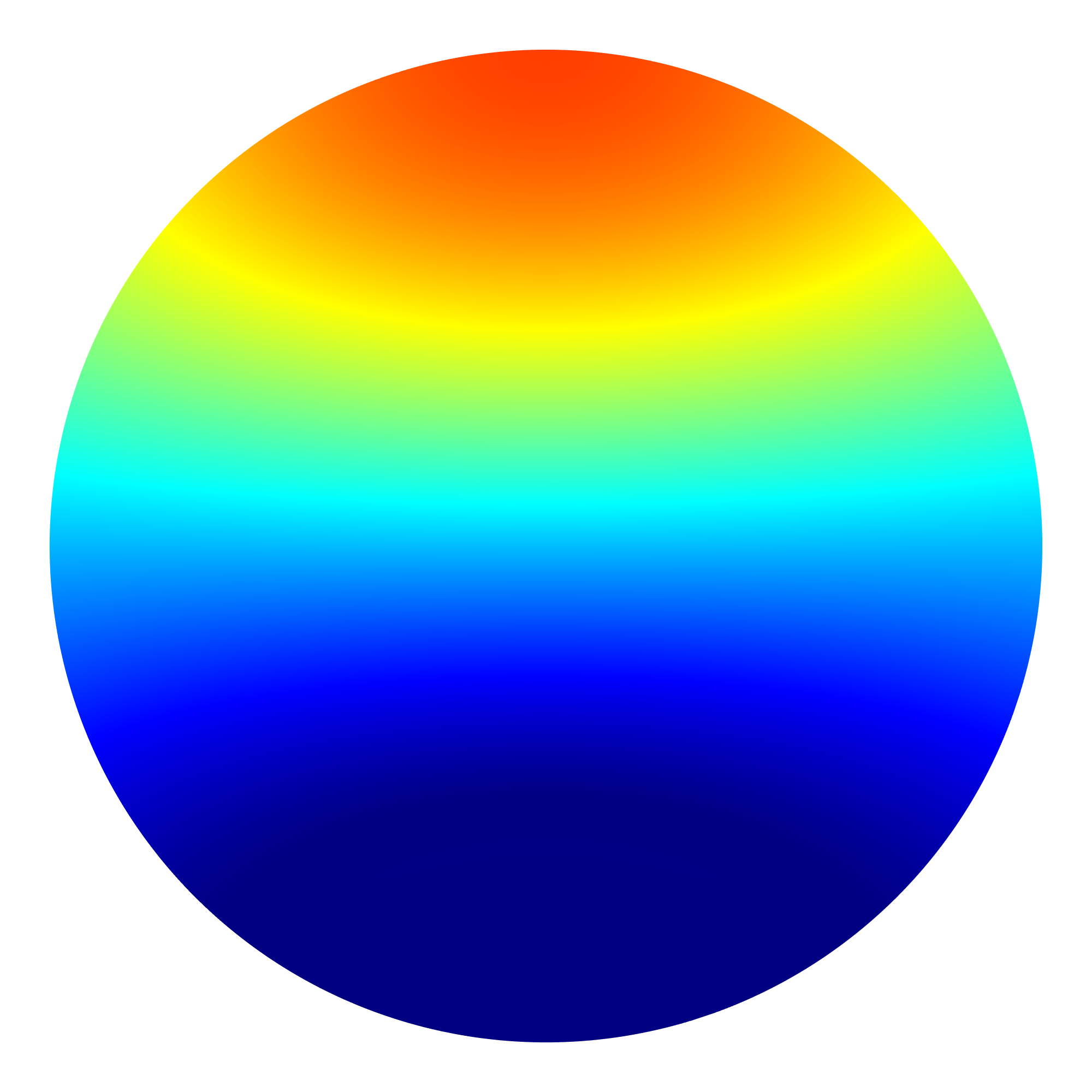}
        \caption*{$t=200$}
    \end{subfigure}\hspace{0.5mm}
    \caption{Formation of nontrivial stationary pattern in $u(\textbf{x})$ within (\ref{01}) over the unit disk out of small perturbation initial data $(u_0,v_0)=(1,1)+0.01(\cos 10 x,\sin 5y)$.  Here we choose $\chi=5>4.49$, the latter being the critical value after which the constant solution becomes unstable.  The small-amplitude initial data develop into a spatial pattern concentrated in the north pole in a large time.}\label{BS1}
\end{figure}

In the three experiments, we fix the domain to be the unit disk $B_0(1)$ and initial data $(u_0,v_0)=(1,1)+0.01(\cos 10 x,\sin 5x)$ be small perturbations of the constant pair.  Our focus is to investigate the effects of chemotaxis rate $\chi$ on the formation of nontrivial patterns, in particular those non-radially symmetric.  According to \cite{CCWWZ}, the constant solution $(\bar u,\bar v)$ is globally stable in generally if and only $\chi<\chi_1$, where $\chi_1\approx 4.39$ since $3.39$ is the (approximated) principal Neumann eigen-value.
\begin{figure}[h!]
    \centering
    \begin{subfigure}[b]{0.210\textwidth}
        \includegraphics[width=\textwidth]{ua000.png}
        \caption*{$t=0$}
    \end{subfigure}\hspace{0.5mm}
    \begin{subfigure}[b]{0.210\textwidth}
        \includegraphics[width=\textwidth]{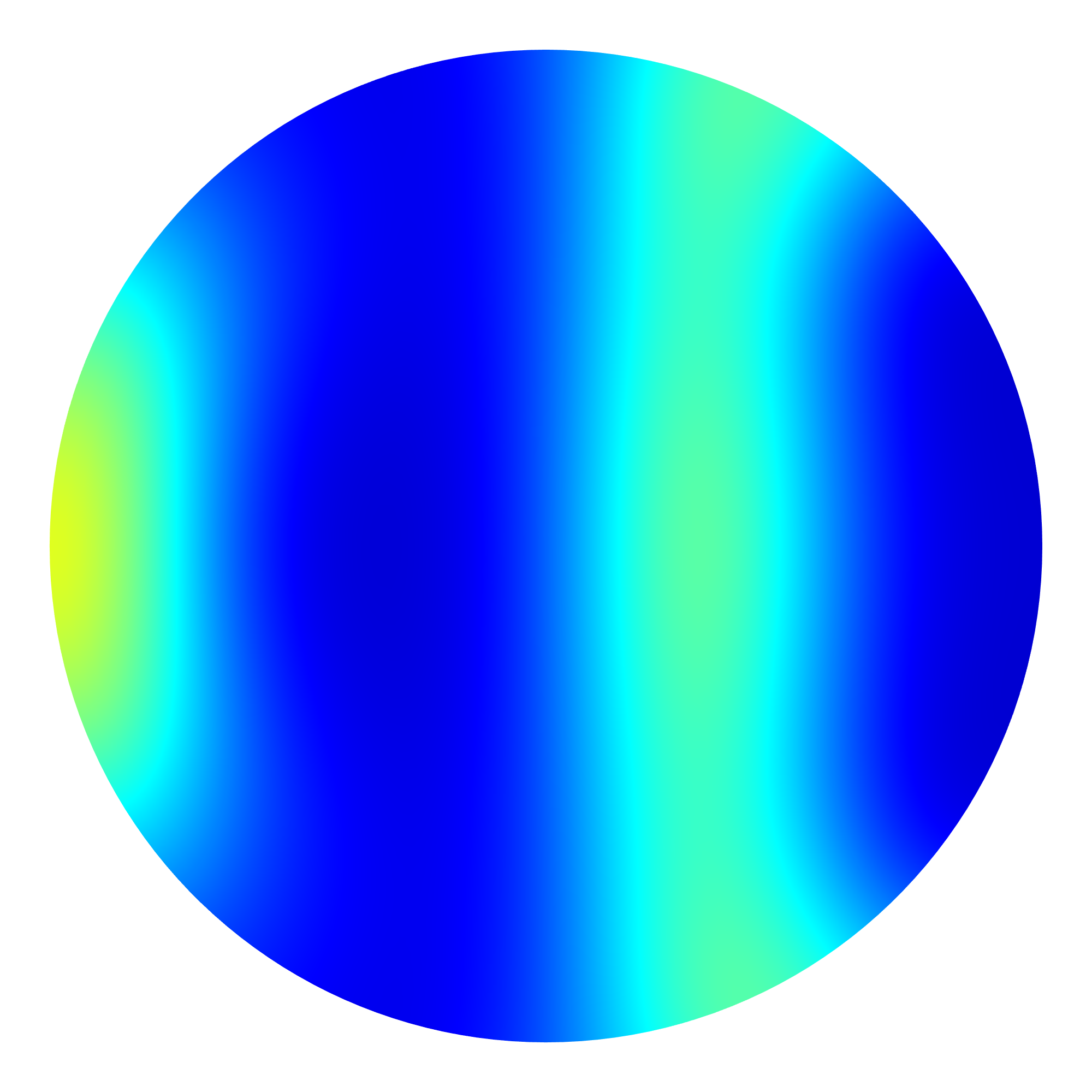}
        \caption*{$t=1.0$}
    \end{subfigure}\hspace{0.5mm}
    \begin{subfigure}[b]{0.210\textwidth}
        \includegraphics[width=\textwidth]{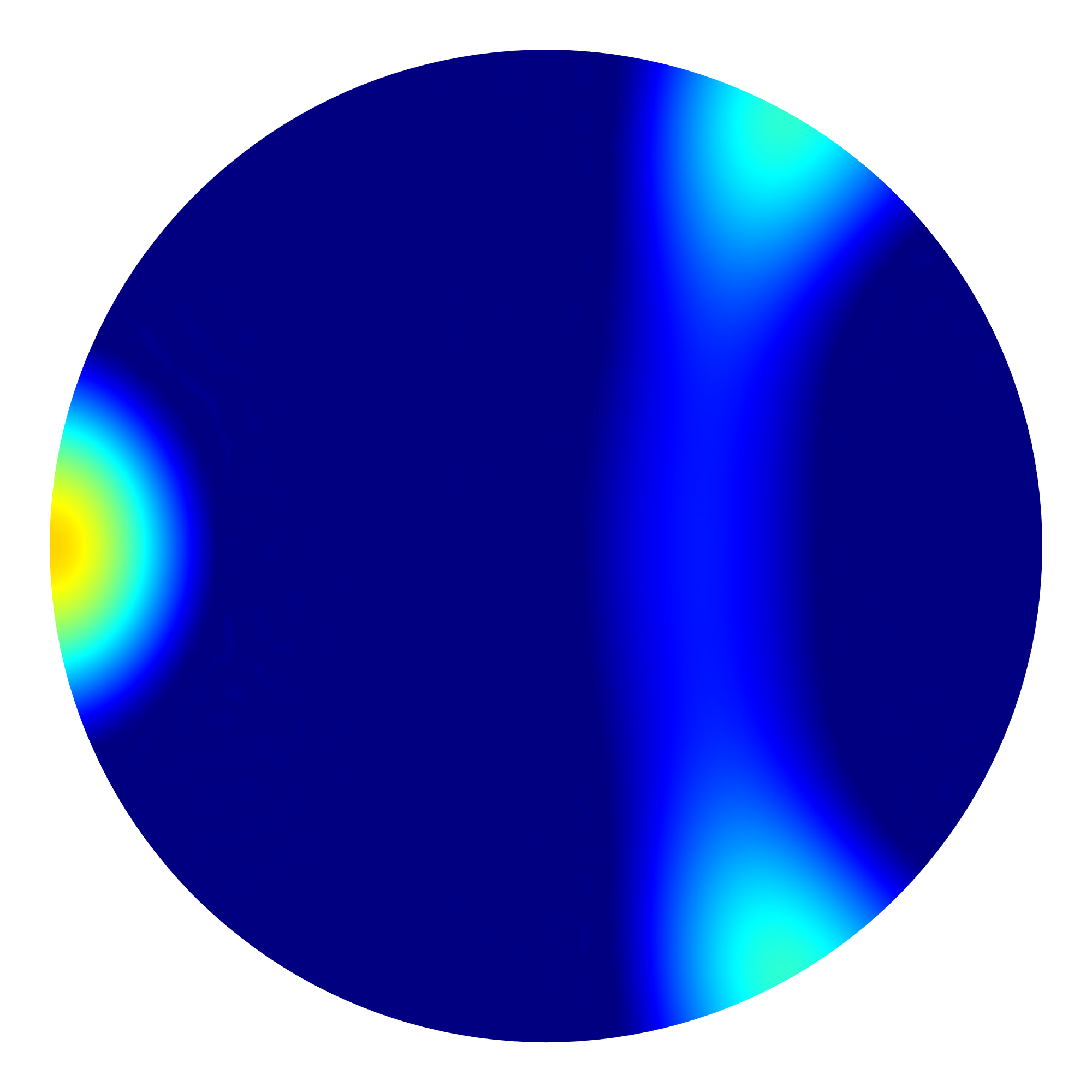}
        \caption*{$t=2.4$}
    \end{subfigure}\hspace{0.5mm}
        \begin{subfigure}[b]{0.210\textwidth}
        \includegraphics[width=\textwidth]{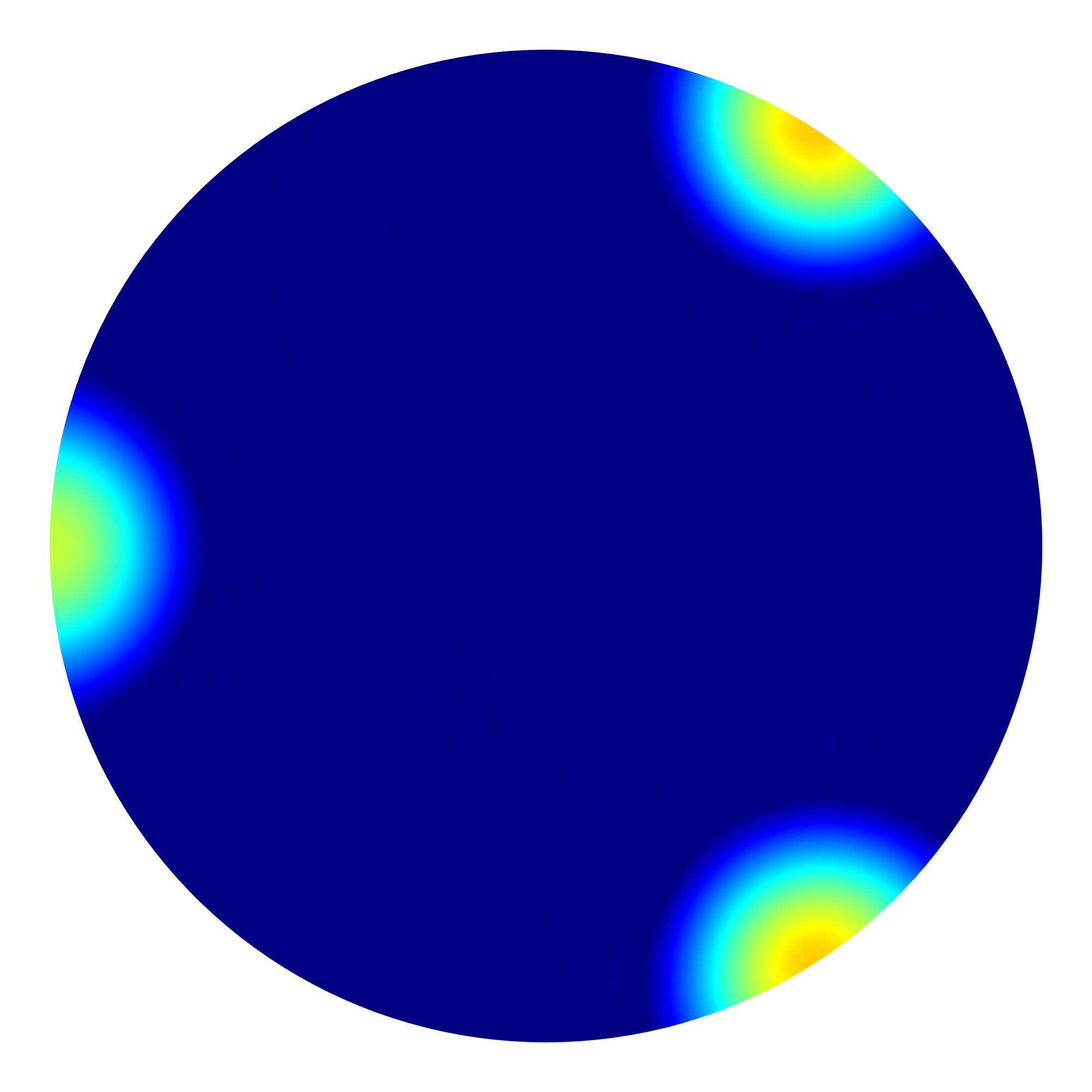}
        \caption*{$t=4.0$}
    \end{subfigure}\hspace{0.5mm}
        \begin{subfigure}[b]{0.210\textwidth}
        \includegraphics[width=\textwidth]{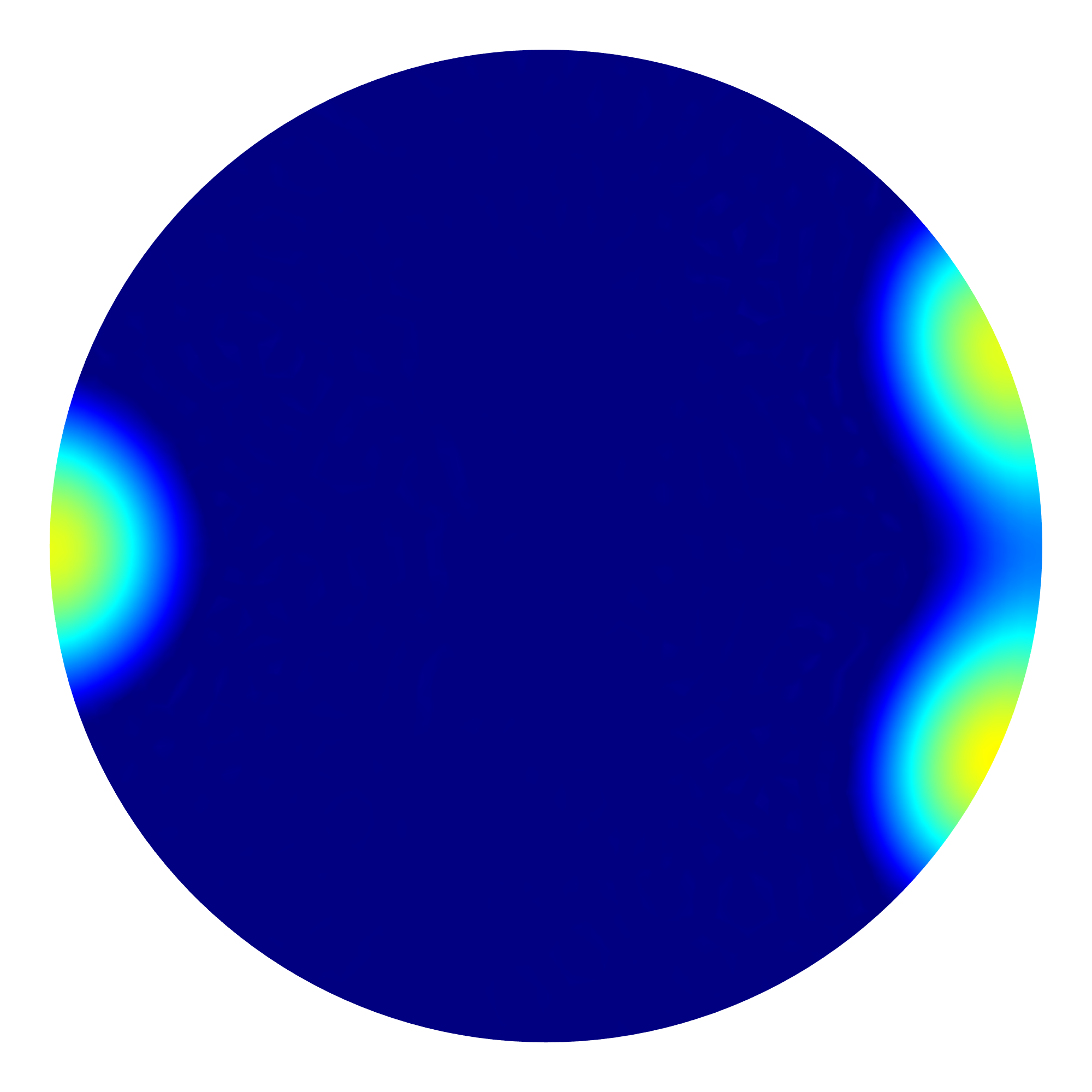}
        \caption*{$t=17.0$}
    \end{subfigure}\hspace{0.5mm}
    \begin{subfigure}[b]{0.210\textwidth}
        \includegraphics[width=\textwidth]{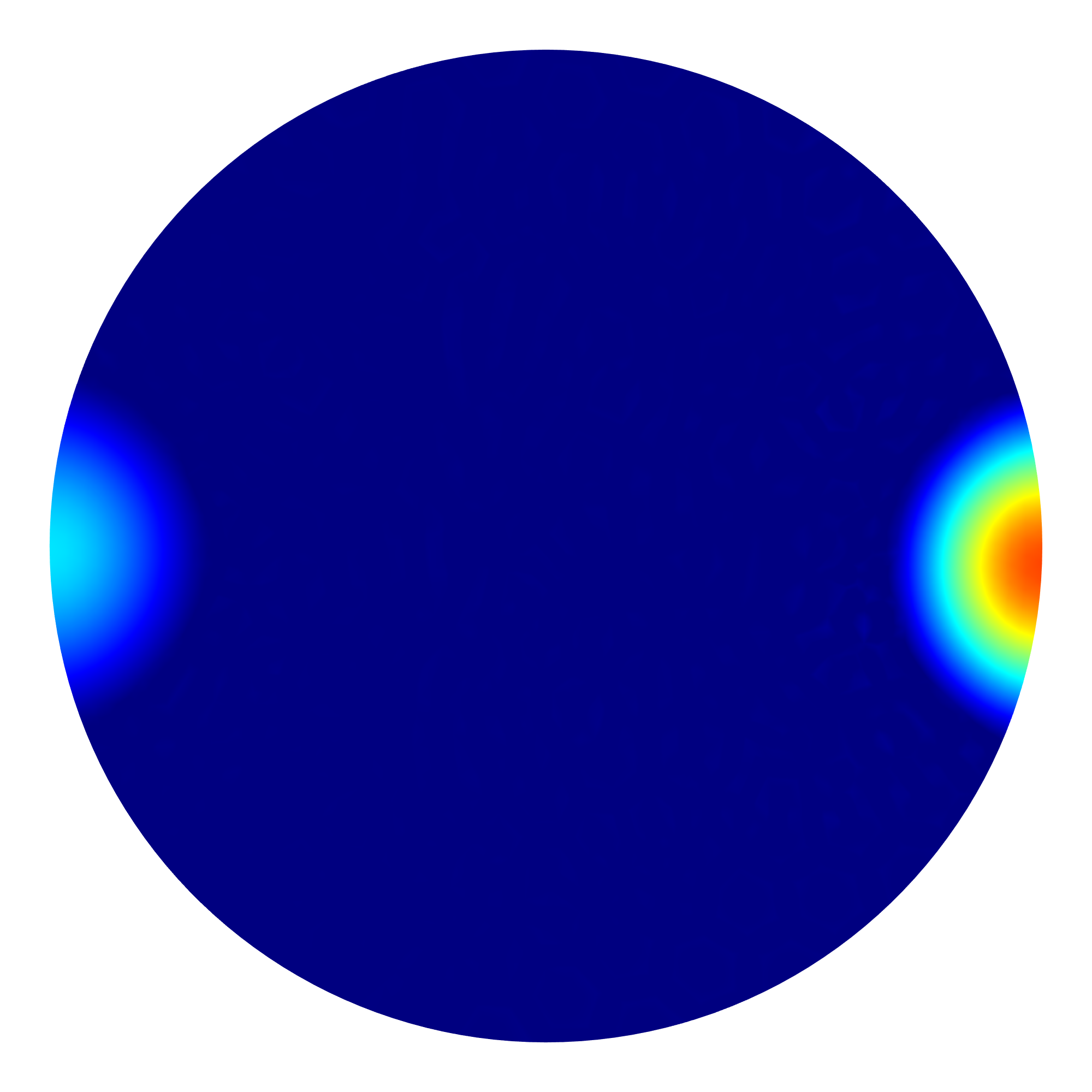}
        \caption*{$t=18.0$}
    \end{subfigure}\hspace{0.5mm}
    \begin{subfigure}[b]{0.210\textwidth}
        \includegraphics[width=\textwidth]{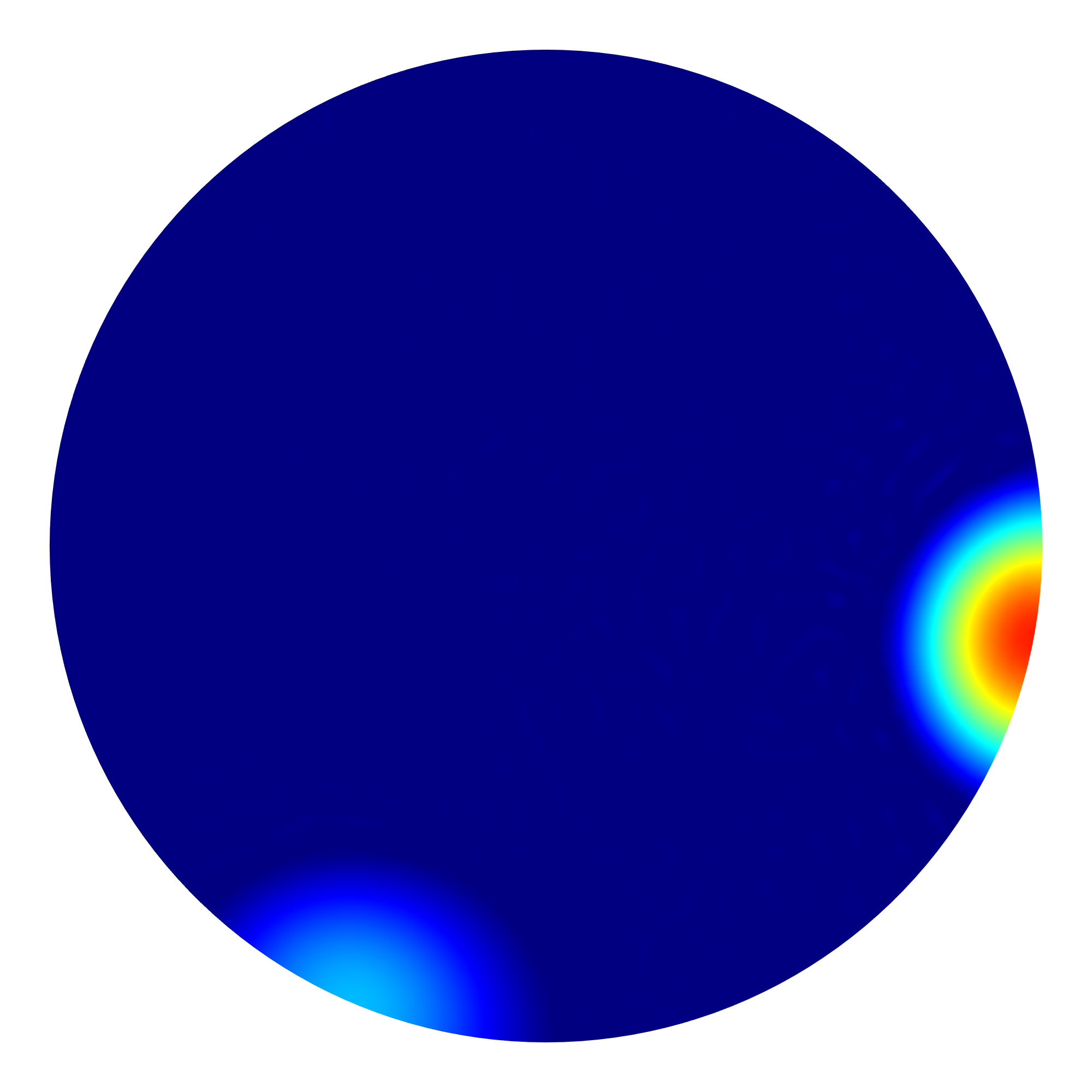}
        \caption*{$t=61.0$}
    \end{subfigure}\hspace{0.5mm}
        \begin{subfigure}[b]{0.210\textwidth}
        \includegraphics[width=\textwidth]{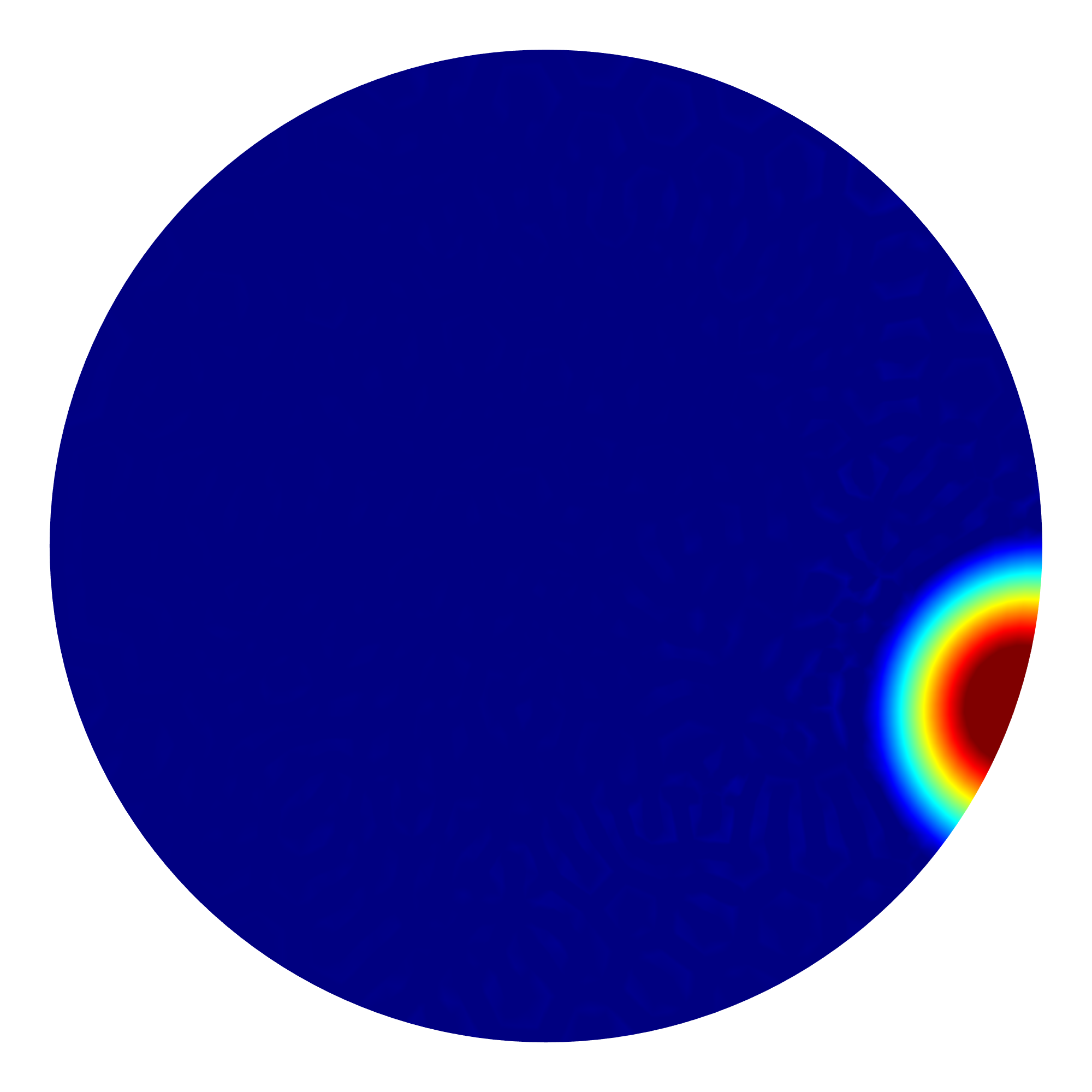}
        \caption*{$t=100$}
    \end{subfigure}\hspace{0.5mm}
    \caption{Formation of nontrivial stationary pattern in $u(\textbf{x})$ within (\ref{01}) under the same settings of Figure \ref{BS1} except that $\chi=60$.   We observe the development of a nontrivial pattern at $t\approx1.0$, and then it develops into three spikes on the boundaries, which attract each other and eventually form a stable single boundary spike in a large time.}\label{BS2}
\end{figure}
In Figure \ref{BS1}, we study the spatial-temporal dynamics of (\ref{01}) by choosing the chemotaxis rate $\chi=5$, which is slightly larger than $\chi_1=4.39$.  Then in a large time, (\ref{01}) develops a spatially nontrivial steady state which is not radial, and concentrates on the north pole in the final time.  Indeed, the only radial solution is the constant in this case according to our discussions above.

We next consider the same problem in Figure \ref{BS2} with $\chi=60$.  One observes the emergence of spatial patterns, their developments into aggregates, and the formation of three boundary spikes, all in a relatively short time.  Then these boundary spikes keep their profiles for a quite long time before two of them attract each other and merge into a new but larger aggregate at $t\approx18$, resulting in two aggregates on the boundary, the small one in the west and the large one in the east.  Then again, these two spikes endure a very long time before a phase transition happens with the small spike moving towards the large one along the boundary in a long time.  They eventually merge and form a stable aggregate located at the southeast on the boundary.

\begin{figure}[h!]\vspace{-5mm}    \centering
    \begin{subfigure}[b]{0.210\textwidth}
        \includegraphics[width=\textwidth]{ua000.png}
        \caption*{$t=0$}
    \end{subfigure}\hspace{0.5mm}
    \begin{subfigure}[b]{0.210\textwidth}
        \includegraphics[width=\textwidth]{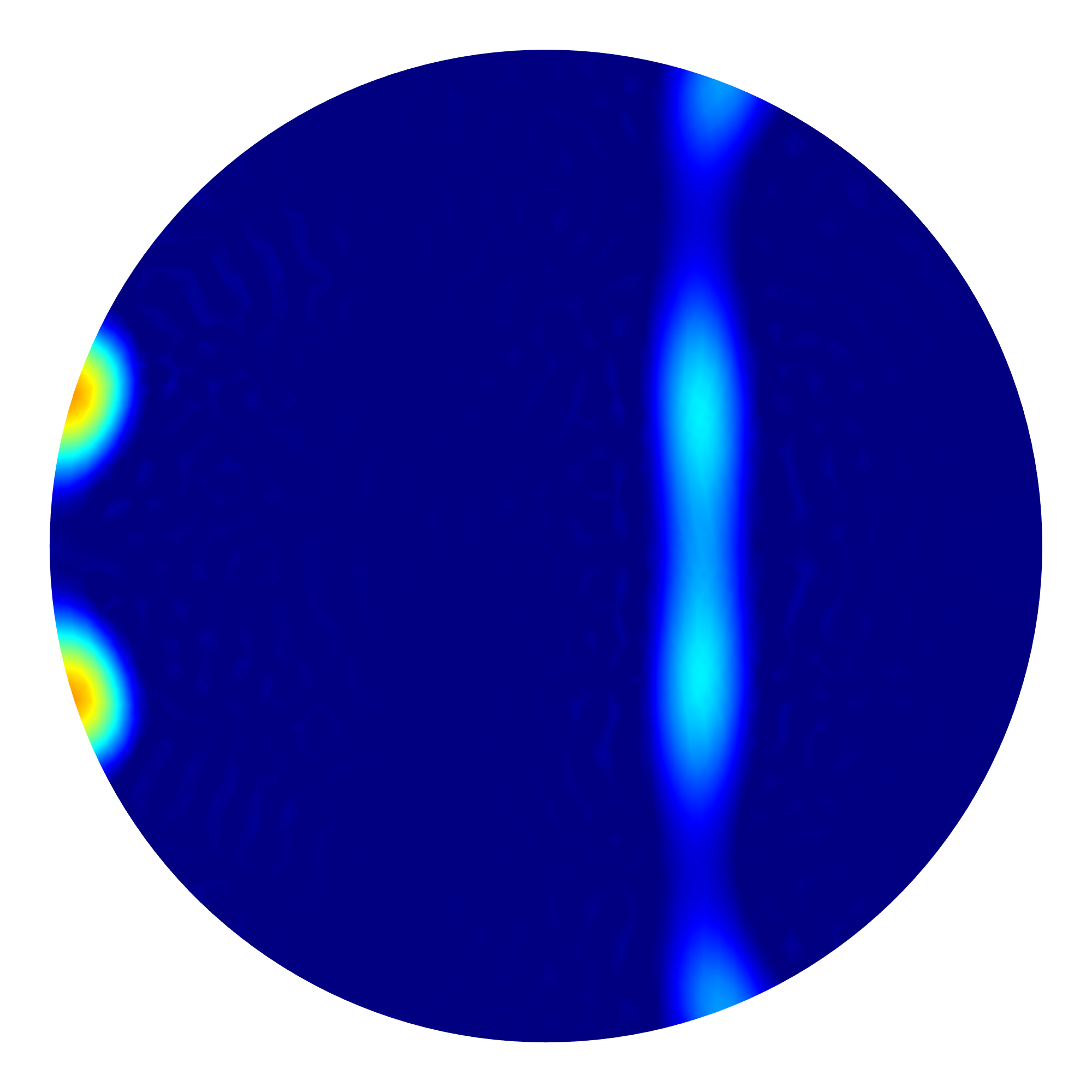}
        \caption*{$t=0.5$}
    \end{subfigure}\hspace{0.5mm}
    \begin{subfigure}[b]{0.210\textwidth}
        \includegraphics[width=\textwidth]{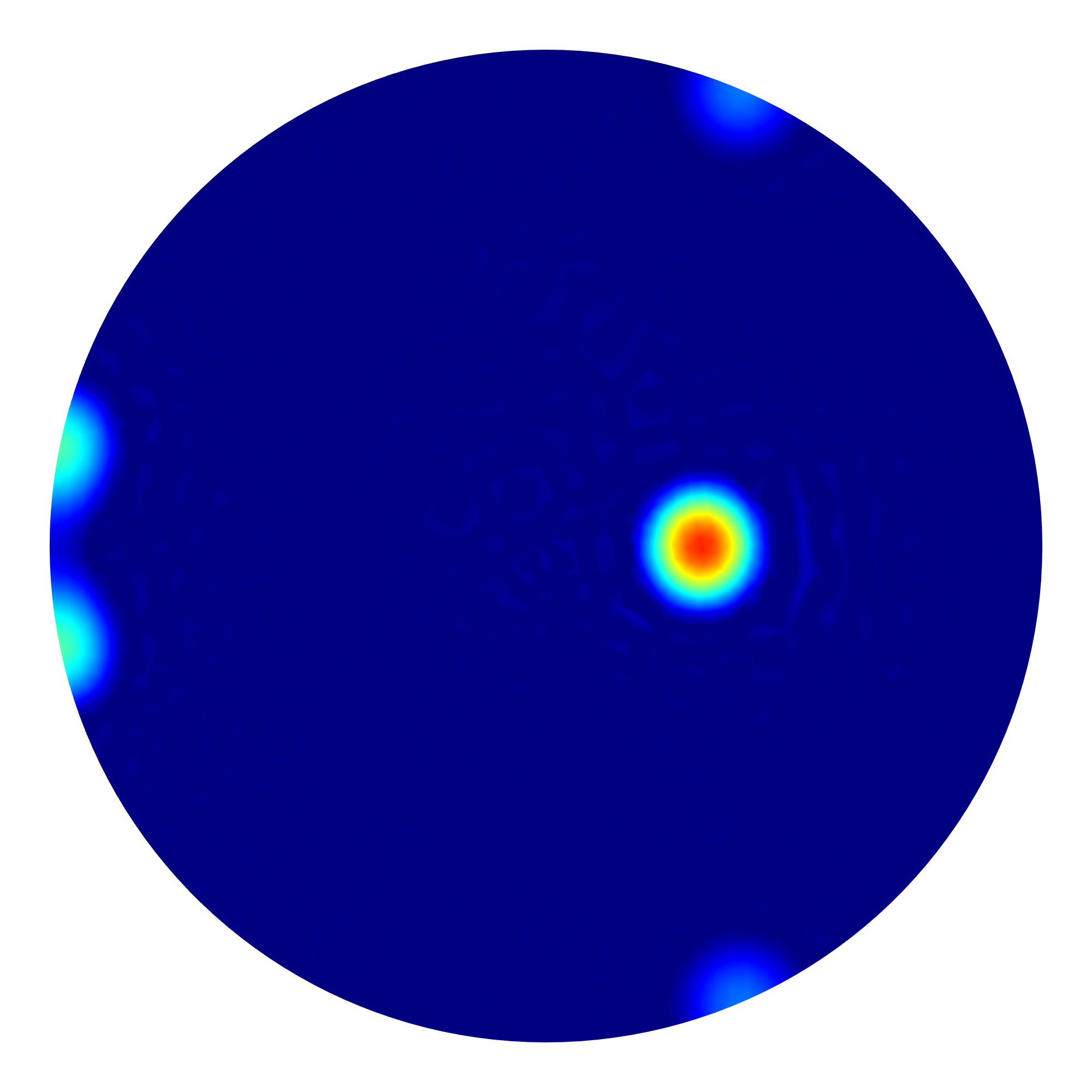}
        \caption*{$t=1.0$}
    \end{subfigure}\hspace{0.5mm}
        \begin{subfigure}[b]{0.210\textwidth}
        \includegraphics[width=\textwidth]{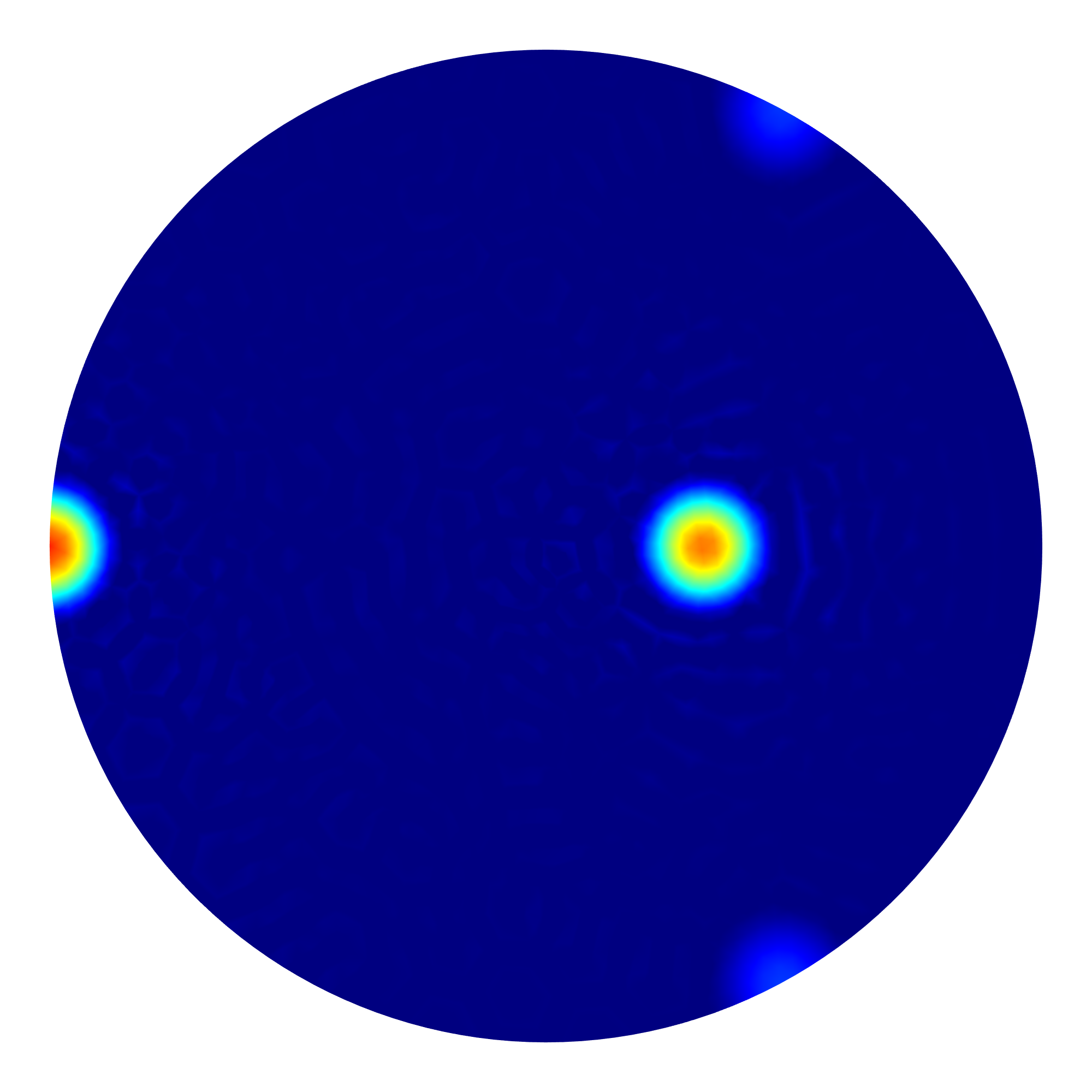}
        \caption*{$t=2.0$}
    \end{subfigure}\hspace{0.5mm}
        \begin{subfigure}[b]{0.210\textwidth}
        \includegraphics[width=\textwidth]{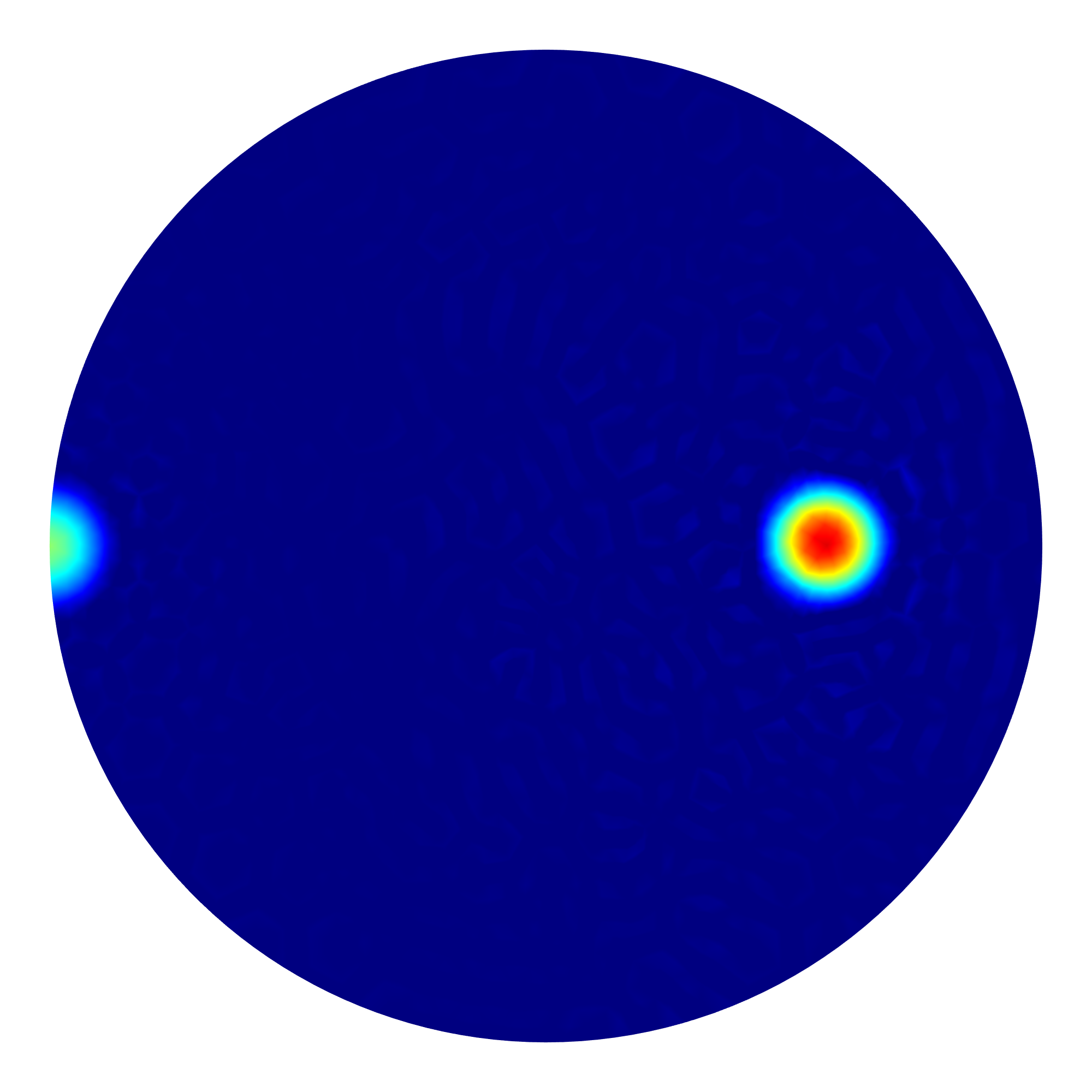}
        \caption*{$t=7.0$}
    \end{subfigure}\hspace{0.5mm}
    \begin{subfigure}[b]{0.210\textwidth}
        \includegraphics[width=\textwidth]{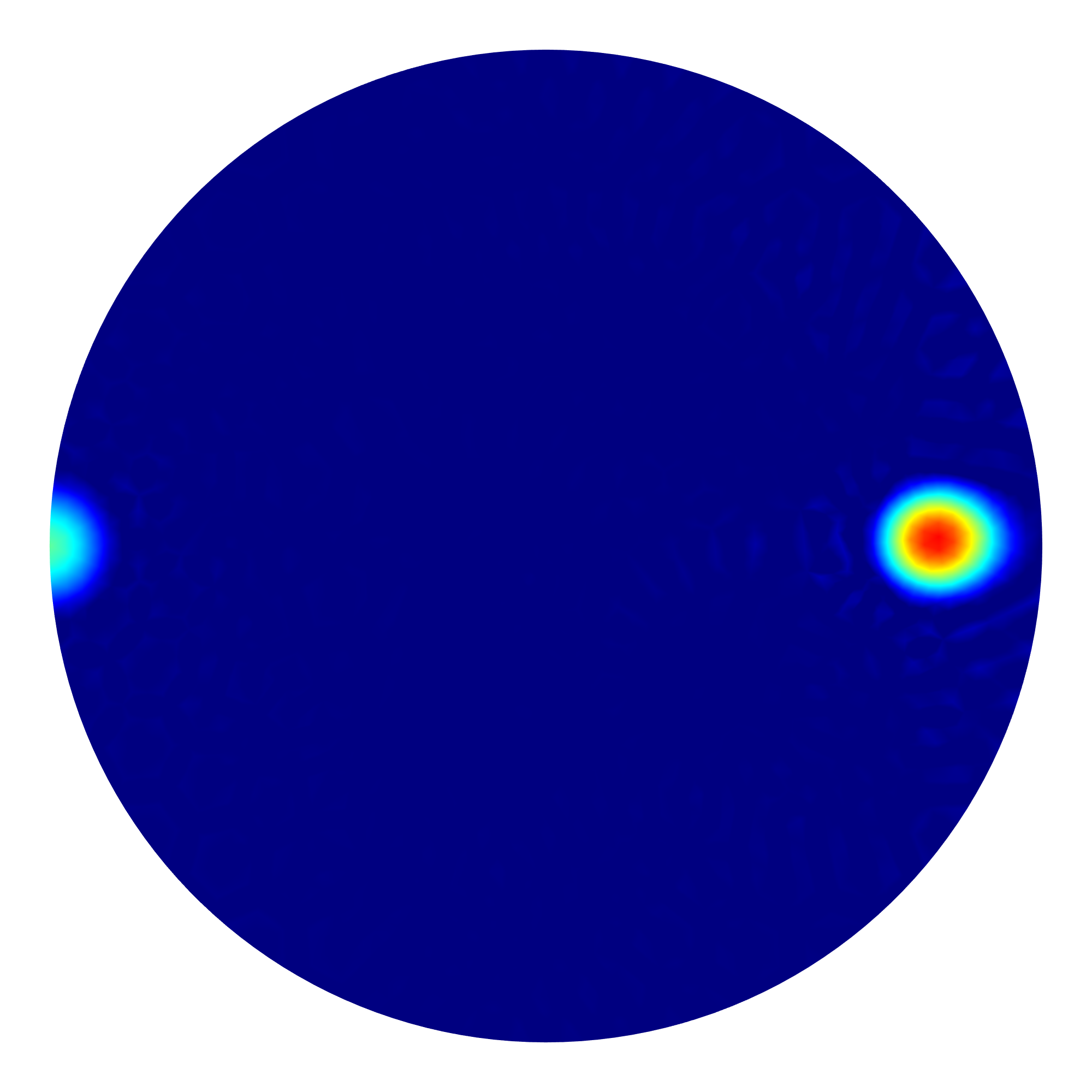}
        \caption*{$t=8.5$}
    \end{subfigure}\hspace{0.5mm}
    \begin{subfigure}[b]{0.210\textwidth}
        \includegraphics[width=\textwidth]{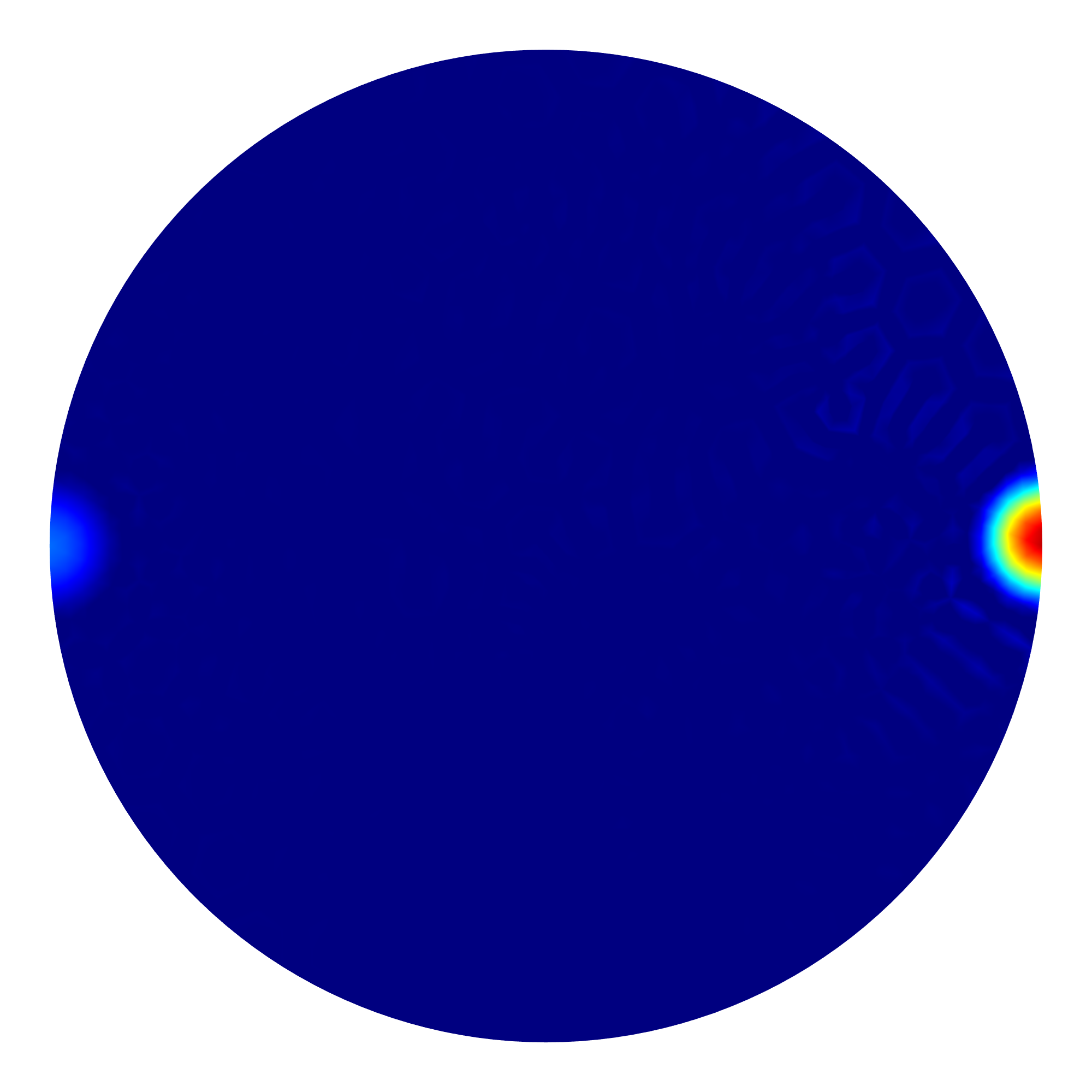}
        \caption*{$t=9.0$}
    \end{subfigure}\hspace{0.5mm}
        \begin{subfigure}[b]{0.210\textwidth}
        \includegraphics[width=\textwidth]{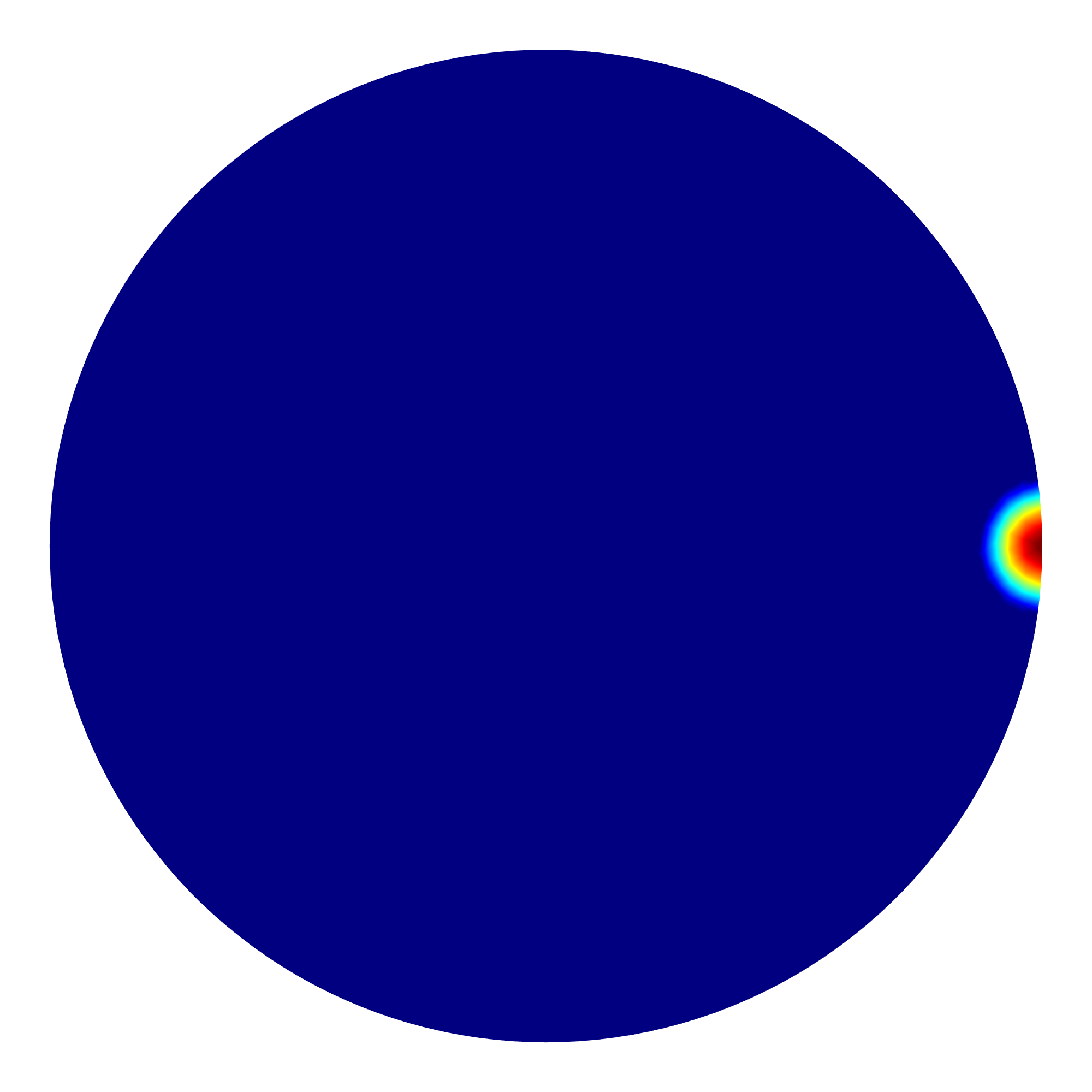}
        \caption*{$t=100$}
    \end{subfigure}\hspace{0.5mm}
    \caption{Formation of non-radial stationary pattern in $u(\textbf{x})$ within (\ref{01}) under the same settings as in Figure \ref{BS1} except that $\chi=300$.  Similar as above, the time-dependent system endures several phase transition processes, and eventually develops into the single boundary spike which is (the most) stable.}\label{BS4}
\end{figure}

Finally, we conduct another numerical experiment of (\ref{01}) in Figure \ref{BS4} under the same conditions as above except that $\chi=300$.  Now the chemotaxis rate is stronger than previously, spatial patterns emerge and develop into one interior spike and four boundary spikes in an even shorter period.  Then one of the adjacent boundary spikes attract each other and quickly merge into a single one on the boundary at time $t\approx 2$.  Then this spatial pattern with one boundary spike in the west, one interior spike, and two boundary spikes (symmetric about the $x_1$-axis) slowly evolve; moreover, as the time involves, the interior spike and the two boundary spikes move towards the east end and they endure a phase transition at time $t\approx 7$ when the two boundary spikes disappear.  Then the interior spike keeping moving eastwards and it touches the boundary at $t\approx 9$, resulting in a large spike at the east end and small spike at the west end.  Then these two spikes stay stable for a very long time, and such mesa-stability is not destroyed until an even longer time when the small one starts moving towards the large one along the boundary.  The phase transition occurs in a relatively short period and eventually, the spatial pattern develops into a single stable spike on the boundary.  Our numerical studies suggest that a single boundary spike is the most stable hence has the smallest energy among all solutions.  As we mentioned earlier, a rigorous study of such non-radial solutions is out of the scope of this paper.

\section{Quadratic Diffusion Models in the Whole Space $\mathbb R^2$}\label{section5}
Finally, let us consider the counter-part of (\ref{ss}) in $\mathbb R^2$, the solution $(\mathbb U,\mathbb V)$ of which satisfies
\begin{equation}\label{sswholespace}
\left\{\begin{array}{ll}
0=(r \mathbb U(\mathbb U-\chi \mathbb V)_r)_r,&r\in (0,\infty),\\
0= \mathbb V_{rr}+\frac{1}{r}\mathbb V_r-\mathbb V+\mathbb U, &r\in (0,\infty),\\
\mathbb U\in C^0([0,\infty)), \mathbb V\in C^2([0,\infty)), \mathbb U(r)\geq 0, \mathbb V(r)>0,&r\in [0,\infty),\\
\int_{\mathbb R^2} \mathbb U(r)d\textbf{x}=M; \lim_{r\rightarrow \infty}\mathbb U(r)=\mathbb V (r)=0.
\end{array}
\right.
\end{equation}
Problem (\ref{sswholespace}) is a natural extension of (\ref{ss}) with $R=\infty$, however not all the results about (\ref{ss}) can be applied to the whole space by formally taking the limit.  For instance, (\ref{ss}) suggests $(0,0)$ is the constant solution of (\ref{sswholespace}), which apparently does not satisfy the conservation of mass; moreover, our coming results indicate that (\ref{sswholespace}) has  relatively simple spatial-temporal dynamics compared to (\ref{ss}).

Setting $R=\infty$ in (\ref{bifvalue}), Theorem \ref{theorem11} tends to imply the solvability of (\ref{sswholespace}) for each $\chi>1$.  We will verify this in this section.  Before proceeding further, it seems necessary to point out that transition of any solution of (\ref{sswholespace}) leads to another solution, therefore for the simplicity of (uniqueness) statement we set
\begin{equation}\label{centermax}
\max_{\mathbb R^N} \mathbb U(\textbf{x})=\mathbb U(\textbf{0}),
\end{equation}
then we will show that (\ref{sswholespace})-(\ref{centermax}) has a unique solution for each $\chi>1$.

Note that (\ref{sswholespace}) is the stationary problem of the following parabolic system in the radial class
\begin{equation}\label{quadraticwholespace}
\left\{\begin{array}{ll}
u_t=\nabla\cdot(u \nabla u-\chi u \nabla v),&\textbf{x} \in \mathbb R^N,t>0,\\
0=\Delta v -v+u, & \textbf{x} \in \mathbb R^N,t>0,\\
u(\textbf{x},0),v(\textbf{x},0)\geq 0, \not\equiv 0,& \textbf{x} \in \mathbb R^N,\\
\int_{\mathbb R^2}u(\textbf{x},0)d\textbf{x}=M,&
\end{array}
\right.
\end{equation}
the second equation of which can be solved in terms of the fundamental solution of $(\Delta -I)$ in $\mathbb R^N$ hence results in
\begin{equation}\label{aggdiff}
\left\{\begin{array}{ll}
u_t=\nabla \cdot\Big(u\nabla u^{m-1}+\nabla(W*u\big)\Big),&\textbf{x}\in\mathbb R^N,t\geq0,\\
u(\textbf{x},0)\geq,\not\equiv 0,&\textbf{x}\in\mathbb R^N,\\
\int_{\mathbb R^2}u(\textbf{x},0)d\textbf{x}=M,&
\end{array}
\right.
\end{equation}
with $m=2$ and $W$ being the Bessel potential leading to (\ref{quadraticwholespace}).

System (\ref{aggdiff}) has a delicate variational structure in the sense that it can be recognized as a gradient flow for probability measures of the following free energy
\[\mathcal E_{\text{f}}(u):=\frac{1}{m}\int_{\mathbb R^N} u^m(\textbf{x}) d \textbf{x}+\frac{1}{2}\int_{\mathbb R^N}u(W*u)(\textbf{x})d\textbf{x};\]
this free energy is also a Lyapunov functional and its minimizers are natural candidates for the stationary solutions of (\ref{aggdiff}) with its variations leading to the PDE.  Then the properties of stationary states can be studied by analyzing the local or global minimizers of the free energy to start with.

To obtain the global minimizers, one can start by proving that the free energy functional is bounded from below and the minimizing sequence is compact.  However, the main challenge arises is that the mass can be uncontrolled for the minimizing sequence due to the translational invariance of the energy functional.  Many works have been done toward the existence  \cite{BDF,CCVSIAM2015,CCH,CHMVCVPDE2018,CSIndiana2018} and uniqueness \cite{BDF,KYSIAM2012,CSIndiana2018} of the global minimizers of (\ref{aggdiff}) with attractive potential such as Newtonian potential or Riesz potential.  In general, this global minimizer is a stationary solution of (\ref{quadraticwholespace}), and then one can apply the decreasing rearrangement argument to show its radial symmetry and uniqueness up to a translation \cite{CCVSIAM2015,CHMVCVPDE2018,CSIndiana2018}.  However, it seems to us that most studies of the stationary solutions are concerned about the global minimizers of the free energy.

In a recent work \cite{CHVYInvent}, J. A. Carrillo \emph{et al.} proved among others that if $m>1$ and the potential $W$ is no more singular than Newtonian, any stationary solution of (\ref{aggdiff}) is radially symmetric and decreasing; moreover, if $W$ is more singular than Newtonian, every bounded stationary solution of (\ref{aggdiff}) must be radially decreasing.  These results apply to a very large class of aggregation-diffusion equations without restricting to their global minimizers.  However, it seems necessary to mention that no uniqueness of the compactly-supported of $u$ was known in \cite{CHVYInvent}, except for problems with certain drift.  Very recently, M. Delgadino, X. Yan and Y. Yao proved in \cite{DYY} the uniqueness of $m\geq 2$ with a general potential $W\in C^\infty(\mathbb R^N\backslash\{\textbf{0}\})$ which is no more singular than Riesz potential for some $k>-N$; however, for $1<m<2$ they demonstrated that there exist certain attractive potentials such that (\ref{aggdiff}) has infinitely many radially decreasing stationary solutions.  In another recent work for $1<m<2$ but with the Riesz potential, V. Calvez, J. A. Carrillo and F. Hoffmann \cite{CCH2} proved the uniqueness for (\ref{aggdiff}).  Readers can find reviews of aggregation-diffusion (\ref{aggdiff}) in \cite{CCH,CCY,DYY}.

We would like to point out that, though the potential $W$ in \cite{CHVYInvent,DYY} was set to Newtonian (or alike), the arguments there apply to (\ref{aggdiff}) with the Bessel potential which has the same singularity as Newtonian but with a better decay at infinity.  Therefore one concludes from \cite{CHVYInvent,DYY} that any stationary solution of (\ref{quadraticwholespace}) must be radially decreasing hence reduces to (\ref{sswholespace}).

In this section, we study the explicit and unique solution for (\ref{sswholespace})-(\ref{centermax}), and show that its solution is unique, and compactly supported as naturally expected.  In simple words, our results indicate that, with $W$ being the Bessel potential, (\ref{aggdiff}) is equivalent as (\ref{sswholespace}) with its stationary solution to be uniquely and explicitly given.

\subsection{Existence and Uniqueness of Stationary Solution in $\mathbb R^2$}

The discussions above imply that $\mathbb U$ in (\ref{sswholespace}) must be radial and decreasing within its support.  We claim that it must be compactly supported in $\mathbb R^2$ whenever $\chi>1$.  Indeed, if $\mathbb U>0$ in $\mathbb R^2$, we can find that $\mathbb V(r)=C_1J_0(\omega r)-\frac{C_2}{\chi}$ in $[0,\infty)$ for constants $C_i$.  Then the decaying condition $\mathbb V(\infty)=0$ readily implies that $\mathbb V(r)=C_1J_0(\omega r)$.  However, this implies that $\mathbb V$ changes sign in $(0,\infty)$, which is impossible.  Therefore, we must have that $\mathbb U$ is compactly supported.

Now we proceed to find explicit formula of the unique solution of (\ref{sswholespace}).  With that being said, we look for $(\mathbb U,\mathbb V)$ such that $\mathbb U-\chi \mathbb V=\bar C$ for $r\in[0,r^*)$ and $\mathbb U\equiv 0$ for $r\in [r^*,\infty)$.  Rewriting (\ref{22}) with $(0,R)$ replaced by $(0,\infty)$ gives us
\begin{equation}\label{52}
\left\{\begin{array}{ll}
\mathbb V_{rr}+\frac{1}{r}\mathbb V_r+(\chi-1)\mathbb V+\bar{C} =0,& r\in[0,r^*),\\
\mathbb V_{rr}+\frac{1}{r}\mathbb V_r-\mathbb V=0,& r\in[r^*,\infty),\\
\mathbb V_r(0)=0,\lim_{r\rightarrow \infty}\mathbb V(r)=0.
\end{array}
\right.
\end{equation}
Solving (\ref{52}) gives that
\begin{align}\label{wveq}
\mathbb V(r)=\left\{\begin{array}{ll}\
C_1J_0(\omega r)-\frac{\bar C}{\chi-1}, &r\in[0,r^*),\\
C_2K_0(r),&r\in[r^*,\infty),
\end{array}
\right.
\end{align}
with $r^*$ to be determined.   Before proceeding further, we want to point out that if $\chi\leq 1$, (\ref{sswholespace}) does not have any solution.  The case $\chi=1$ readily holds since $0$ is not an eigen-value of Neumann $-\Delta_r$ (otherwise $v$ equals some non-negative constant, which is impossible).  When $\chi<1$, let us denote $\tilde \omega:=\sqrt{1-\chi}$, then the explicit solution $\mathbb V(r)=C_1I_0(\tilde\omega r)-\frac{\bar C}{\chi-1}$ for $r\in[0,r^*)$ and $\mathbb V(r)=C_2K_0(r)$ for $r\in[r^*,\infty)$.  In this case, the continuities of $\mathbb V'$ and $\mathbb V''$ at $r^*$ enforce the following identity
   \begin{equation}\label{salgebraic55}
\frac{\tilde\omega I_0(\tilde\omega r^*)}{I_1(\tilde\omega r^*)}=-\frac{K_0(r^*)}{K_1(r^*)};
\end{equation}
however, one finds that $\frac{\tilde\omega I_0(\tilde\omega r)}{I_1(\tilde\omega r)}>-\frac{K_0(r)}{K_1(r)}$ for any $r>0$, therefore (\ref{salgebraic55}) is impossible hence (\ref{sswholespace}) does not have any solutions if $\chi\leq1$.  Indeed, one can apply the results above to conclude that it has no nonconstant solution at all.

Now for each $\chi>1$, the continuity of $\mathbb V'(r)$ and $\mathbb V''(r)$ for (\ref{wveq}) at $r=r^*$ implies
\[
\left\{\begin{array}{ll}
-C_1\omega J_1(\omega r^*)=-C_2K_1(r),\\
-C_1\omega^2\Big(J_0(\omega r^*)-\frac{J_1(\omega r^*)}{\omega r^*}\Big)=C_2\Big(\frac{K_1(r^*)}{r^*}+K_0(r^*)\Big)\bigg),
\end{array}
\right.
\]
which entails that $r^*$ is a root of the limiting algebraic equation of (\ref{23})
\begin{equation}\label{walgebraic}
f(r^*;\omega,\infty)=\frac{\omega J_0(\omega r^*)}{J_1(\omega r^*)}+\frac{K_0(r^*)}{K_1(r^*)}=0.
\end{equation}
As an analog of Lemma \ref{lemma21}, for each $\chi>1$, $f(r^*;\omega,\infty)$ in (\ref{walgebraic}) admits a unique root $r^*$ in $(\frac{j_{0,1}}{\omega},\frac{j_{1,1}}{\omega})$. With this unique root in hand, we collect from (\ref{wveq}) that
\begin{equation}\label{58}
\mathbb U(r)=\left\{\begin{array}{ll}
\!\!\mathcal A\_\big(J_0(\omega r)-J_0(\omega r^*)\big),\!\!&\!\!r\in[0,r^*),\\
\!\!0,\!\!&\!\!r\in[r^*,\infty),
\end{array}
\right.
\mathbb V(r)=\left\{\begin{array}{ll}
\mathcal A\_\big(\frac{J_0(\omega r)}{\chi}-J_0(\omega r^*)\big),&r\in[0,r^*),\\
\mathcal B\_ K_0(r),&r\in[r^*,\infty),
\end{array}
\right.
\end{equation}
where the coefficients $\mathcal A\_$ and $\mathcal B\_$ are explicitly given by
\[\mathcal A\_=\frac{M \omega}{\pi(2r^*J_1(\omega r^*)-\omega (r^*)^2J_0(\omega r^*))} \text{~and~} \mathcal B\_=-\frac{M\omega^2J_0(\omega r^*)}{\pi \chi(r^*)^2J_2(\omega r^*)K_0(r^*)}.\]
One sees that all the arguments and calculations for the interior spike in $B_0(R)$ holds an arbitrary $R$.  In summary, we have the following results:
\begin{theorem}\label{theorem51}
If $\chi\leq 1$, (\ref{sswholespace}) has no solution.  For each $\chi>1$, (\ref{sswholespace})-(\ref{centermax}) has one and a unique one solution $(\mathbb U(r),\mathbb V(r))$; moreover, this solution is explicitly given by (\ref{58}) such that $\mathbb U(r)$ is supported in the disk $B_0(r^*)$ for some $r^*\in(\frac{j_{0,1}}{\sqrt{\chi-1}},\frac{j_{1,1}}{\sqrt{\chi-1}})$ determined by (\ref{walgebraic}); furthermore, the following asymptotics hold in the large limit of $\chi$:

(i) $r^*$, the size of support of $\mathbb U$, is monotone decreasing in $\chi$ and $r^*=O(\frac{1}{\sqrt{\chi}})$ for $\chi\gg 1$;

(ii) $\Vert\mathbb U\Vert_{L^\infty(\mathbb R^2)}$ is monotone increasing in $\chi$ and $\Vert\mathbb U\Vert_{L^\infty(\mathbb R^2)}=O(\chi)$ for $\chi\gg 1$;

(iii) $\mathbb U(r)\rightarrow M\delta_0(r)$ and $\mathbb V(r)\rightarrow \frac{M}{2\pi} K_0(r)$ pointwisely in $\mathbb R^2$ as $\chi\rightarrow \infty$.
\end{theorem}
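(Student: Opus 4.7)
The overall strategy is to parallel the construction used for the inner ring solution in Section \ref{section2}, with the Neumann boundary condition at $r=R$ replaced by the decay condition $\mathbb V(r)\to 0$ as $r\to\infty$. This forces the exterior Bessel function $T_0(r;R)$ in (\ref{ir}) to be replaced by $K_0(r)$, which in turn replaces the algebraic equation (\ref{23}) by (\ref{walgebraic}). The plan splits into four independent pieces: (a) the non-existence claim for $\chi\le 1$, (b) the existence and explicit formula (\ref{58}) for $\chi>1$, (c) the full uniqueness statement (not merely uniqueness of the root $r^*$), and (d) the three asymptotic statements.

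For (a), I will treat $\chi=1$ and $\chi<1$ separately. When $\chi=1$, the $v$-equation on the support of $\mathbb U$ would force $\mathbb V$ to be a constant, contradicting $\mathbb V\to 0$ unless $\mathbb U\equiv 0$. When $\chi<1$, I solve the ODE system explicitly, match $\mathbb V',\mathbb V''$ at $r=r^*$, and reduce solvability to the algebraic identity (\ref{salgebraic55}); I will then note that the monotonicity/sign of the modified Bessel ratios yields $\tilde\omega I_0(\tilde\omega r)/I_1(\tilde\omega r)>0>-K_0(r)/K_1(r)$, so (\ref{salgebraic55}) fails for every $r>0$. For (b), I construct the candidate (\ref{58}) by solving (\ref{52}) piecewise, matching $\mathbb V'$ and $\mathbb V''$ at $r^*$, and fixing $\mathcal A_{\_}$ by mass conservation. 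Solvability reduces to a unique root of $f(r;\omega,\infty)$ in (\ref{walgebraic}). The existence/uniqueness of this root in $(j_{0,1}/\omega,j_{1,1}/\omega)$ follows by mimicking Lemma \ref{lemma21}: $f(j_{0,1}/\omega;\omega,\infty)=K_0(j_{0,1}/\omega)/K_1(j_{0,1}/\omega)>0$, $f((j_{1,1}/\omega)^-;\omega,\infty)=-\infty$, and the identity
\begin{equation*}
f_r(r;\omega,\infty)=-\omega^2-1+f(r;\omega,\infty)\Bigl(-\frac{\omega J_0(\omega r)}{J_1(\omega r)}+\frac{K_0(r)}{K_1(r)}+\frac{1}{r}\Bigr),
\end{equation*}
combined with the standard two-adjacent-zeros contradiction argument, rules out multiple roots.

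For (c) — the point I expect to be the most delicate — uniqueness of the full solution (not just of $r^*$ once radial monotonicity is assumed) requires invoking the symmetry result of Carrillo, Hoffmann, Mainini and Volzone \cite{CHVYInvent}: any bounded stationary solution of the equivalent aggregation-diffusion equation (\ref{aggdiff}) with Bessel potential is radially decreasing about some point, which under the normalization (\ref{centermax}) is the origin. I will then re-run the compact-support dichotomy from the paragraph preceding (\ref{52}): if $\mathbb U>0$ on all of $\mathbb R^2$, then $\mathbb U-\chi\mathbb V\equiv\bar C$ globally and the decay condition forces $\mathbb V=C_1 J_0(\omega r)$, which changes sign — impossible. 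Hence the support of $\mathbb U$ is a disk $B_0(r^*)$ and $\mathbb V$ must satisfy (\ref{52}), so $(\mathbb U,\mathbb V)$ is necessarily of the form (\ref{58}) with $r^*$ the unique root of (\ref{walgebraic}) obtained in (b).

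For (d), the three asymptotics mirror Proposition \ref{proposition21} term-by-term. Implicit differentiation of $f(r^*;\omega,\infty)=0$ gives
\begin{equation*}
\frac{\partial r^*}{\partial\omega}=\frac{J_0(\omega r^*)}{(\omega^2+1)J_1(\omega r^*)}+\frac{-\omega r^*(J_0^2+J_1^2)(\omega r^*)+J_0(\omega r^*)J_1(\omega r^*)}{(\omega^2+1)J_1^2(\omega r^*)},
\end{equation*}
and the same monotonicity argument on the numerator used for $r_1$ yields $\partial r^*/\partial\omega<0$. Trapping $r^*\in(j_{0,1}/\omega,j_{1,1}/\omega)$ gives $r^*=O(1/\sqrt{\chi})$. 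To pin down the boundary $\omega r^*\to j_{0,1}^+$ one observes that $K_0(r^*)/K_1(r^*)\to 0^+$ as $r^*\to 0^+$, so (\ref{walgebraic}) forces $J_0(\omega r^*)\to 0^+$; this gives both the sharp rate $\|\mathbb U\|_{L^\infty}=M\chi/(2\pi j_{0,1}J_1(j_{0,1}))+O(1)$ (repeating verbatim the computation above Proposition \ref{proposition21}) and the $\delta$-convergence $\mathbb U(r)\to M\delta_0(r)$ by mass conservation. Finally, the constant $\mathcal B_{\_}$ is determined by forcing $2\pi\int_0^\infty\mathbb V(r)r\,dr$-compatibility with the mass $M$ via integration of the $\mathbb V$-equation over $\mathbb R^2$, yielding $\mathcal B_{\_}\to M/(2\pi)$ and hence $\mathbb V\to(M/(2\pi))K_0(r)$ pointwise. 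The monotonicity of $\|\mathbb U\|_{L^\infty}$ in $\chi$ is verified by the same $y_2$-computation carried out for the inner ring in Section \ref{section2}, with no modification needed since it depends only on $z=\omega r^*\in(j_{0,1},j_{1,1})$.
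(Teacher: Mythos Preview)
Your proposal is correct and follows essentially the same route as the paper: the paper likewise reduces the whole-space problem to the $R\to\infty$ limit of the inner-ring construction, invokes \cite{CHVYInvent,DYY} for radial monotonicity to get uniqueness, proves the unique root of (\ref{walgebraic}) exactly as an analog of Lemma~\ref{lemma21}, and then simply states that ``all the arguments and calculations for the interior spike in $B_0(R)$ hold for arbitrary $R$'' to cover the asymptotics (i)--(iii).

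Two small corrections. First, your $\chi=1$ argument is not quite right: on the support of $\mathbb U$ the equation becomes $\Delta_r\mathbb V=-\bar C$, so $\mathbb V(r)=C_0-\bar Cr^2/4$ is quadratic, not constant; matching $\mathbb V',\mathbb V''$ with $C_2K_0$ at $r^*$ then forces $r^*K_0(r^*)/K_1(r^*)=-2$, which is the impossibility (and if $\bar C=0$ one gets $\mathbb V\equiv0$, contradicting mass). Second, the reference \cite{CHVYInvent} is Carrillo--Hittmeir--Volzone--Yao, not Carrillo--Hoffmann--Mainini--Volzone.
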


Theorem \ref{theorem51} states that in the radial setting, the solution of (\ref{sswholespace})-(\ref{centermax}) is uniquely and explicitly given by (\ref{58}).  Then according to \cite{CHVYInvent,DYY}, these statements also hold for (\ref{quadraticwholespace}) and we have the followings:
\begin{corollary}
For each $\chi>1$, (\ref{quadraticwholespace}) has one and a unique one stationary solution $(u,v)$; moreover, this solution has the explicit formula $(\mathbb U(r),\mathbb V(r))$ in Theorem \ref{theorem51}, hence it is radially symmetric, with $u$ being compactly supported and monotone decreasing within its support.
\end{corollary}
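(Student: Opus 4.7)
The plan is to bridge between the PDE system (\ref{quadraticwholespace}) and the aggregation-diffusion equation (\ref{aggdiff}), and then cite the symmetry/uniqueness results already quoted in the excerpt to reduce the problem to the radial case handled by Theorem \ref{theorem51}. First I would observe that the elliptic second equation of (\ref{quadraticwholespace}) can be inverted explicitly: $v = W * u$ where $W$ is the Bessel potential, i.e.\ the fundamental solution of $-\Delta + I$ in $\mathbb R^2$. Substituting into the first equation and using $u\nabla u = \tfrac12 \nabla u^2$ casts (\ref{quadraticwholespace}) in the form (\ref{aggdiff}) with $m=2$ and this particular $W$. Any stationary solution $u_s$ of (\ref{quadraticwholespace}) therefore corresponds to a stationary solution of (\ref{aggdiff}).

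Next I would note that the Bessel potential $W$ in $\mathbb R^2$ has the same logarithmic singularity at the origin as the Newtonian potential and is smooth and exponentially decaying away from it. Hence $W$ fits the structural assumptions on the interaction kernel used in \cite{CHVYInvent}, which assert that for $m>1$ every stationary solution of (\ref{aggdiff}) with a potential no more singular than Newtonian must be radially symmetric and non-increasing about some point, and in \cite{DYY}, which for $m\ge 2$ gives uniqueness of the compactly supported radially decreasing stationary state (up to translation) within the same class of kernels. The combined conclusion is that any stationary $(u,v)$ of (\ref{quadraticwholespace}) is, after a translation, radially symmetric with $u$ monotone non-increasing in $r$.

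Translating the maximum of $u$ to the origin so that the normalization (\ref{centermax}) is satisfied, such a stationary $(u,v)$ solves (\ref{sswholespace})--(\ref{centermax}). By Theorem \ref{theorem51}, the latter problem admits the unique solution $(\mathbb U,\mathbb V)$ given explicitly by (\ref{58}) for every $\chi>1$. Therefore $(u,v)\equiv (\mathbb U,\mathbb V)$ up to a translation, which yields both existence and uniqueness of the stationary solution of (\ref{quadraticwholespace}) and the claimed explicit form; the compactness of the support and monotonicity within the support are inherited directly from Theorem \ref{theorem51}.

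The main obstacle I would expect is verifying that the Bessel potential genuinely falls under the hypotheses of \cite{CHVYInvent} and \cite{DYY} (both of which are stated for Newtonian/Riesz-type kernels): one needs to check that the continuous symmetrization and mass-comparison arguments used there depend only on the singular part of $W$ near the origin (plus sufficient decay at infinity), not on its precise global form, and that the better decay of the Bessel potential relative to Newtonian does not break any convexity or moment estimate used in the uniqueness proof. Once this (essentially routine but technical) compatibility is verified, the remainder of the corollary follows immediately from Theorem \ref{theorem51} as sketched above.
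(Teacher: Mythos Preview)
Your proposal is correct and follows essentially the same route as the paper: the paper also reduces (\ref{quadraticwholespace}) to the aggregation-diffusion form (\ref{aggdiff}) with the Bessel potential, invokes \cite{CHVYInvent,DYY} to conclude that every stationary solution is radially decreasing (noting, as you do, that the Bessel potential has the Newtonian singularity at the origin with better decay at infinity), and then appeals to Theorem \ref{theorem51} for the explicit unique radial solution. Your identification of the main technical point---checking that the Bessel potential fits the structural hypotheses of \cite{CHVYInvent,DYY}---is exactly what the paper acknowledges but does not carry out in detail.
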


\subsection{A Cousin Problem with Logarithmic Potential in $\mathbb R^2$}
Finally, we consider the following cousin problem of the steady states of (\ref{quadraticwholespace})
\begin{equation}\label{logpotential}
\left\{\begin{array}{ll}
0=\nabla\cdot(\mathcal U \nabla \mathcal U-\chi \mathcal U \nabla \mathcal V),&\textbf{x} \in \mathbb R^2,\\
0=\Delta \mathcal V+\mathcal U, & \textbf{x} \in \mathbb R^2,\\
(\mathcal U,\mathcal V)\in C^0(\mathbb R^2)\times C^2(\mathbb R^2),\int_{\mathbb R^2}\mathcal U(\textbf{x})d\textbf{x}=M.&
\end{array}
\right.
\end{equation}
It is well known that any solution of (\ref{logpotential}) must be radial and compactly supported such that
\[(\mathcal U(\textbf{x}),\mathcal V(\textbf{x}))=(\mathcal U(r),\mathcal V(r)) \text{~and~}\mathcal U\equiv 0 \text{~for $r$ large}.\]
We will explore the explicit formula for this unique solution in this subsection.  It is necessary to point out that for any constant $C$, $(\mathcal U,\mathcal V+C)$ is a solution of (\ref{logpotential}) whenever $(\mathcal U,\mathcal V)$ is one, therefore the uniqueness here is understood as that of $(\mathcal U,\mathcal V_r)$ as we shall see later.

Similar as above, we find that the support of $\mathcal U$ must be the form of a disk $B_0(r^*)$, therefore $\mathcal U-\chi \mathcal V=\bar C$ a constant in $B_0(r^*)$ and $\mathcal U\equiv 0$ in $\mathbb R^2\backslash B_0(r^*)$.  Therefore the $\mathcal V$-equation implies that $\Delta_r(\mathcal V+\frac{\bar C}{\chi})+\chi(\mathcal V+\frac{\bar C}{\chi})=0$ in $(0,r^*)$ and $\Delta_r \mathcal V=0$ in $(r^*,\infty)$.
\[\mathcal U(r)=\left\{\begin{array}{ll}
\chi \mathcal V+\bar C,&r\in[0,r^*),\\
0, & r\in(r^*,\infty),
\end{array}
\right.
\quad
\mathcal V(r)=\left\{\begin{array}{ll}
C_1J_0(\sqrt{\chi} r)-\frac{\bar C}{\chi},&r\in[0,r^*),\\
C_2\ln r+C_3, & r\in(r^*,\infty).
\end{array}
\right.
\]
We claim that $r^*=\frac{j_{0,1}}{\sqrt{\chi}}$.  Before showing this, we first show that $C_2\neq 0$.  If not, then $\mathcal V\equiv C_3$ in $(r^*,\infty)$ and the $C^1$-continuity of $\mathcal V(r)$ at $r^*$ implies that $r^*=\frac{j_{1,1}}{\sqrt{\chi}}$, which leads a contradiction to the $C^2$ continuity of $\mathcal V$ at $r^*$.

Next we enforce the $C^1$ and $C^2$ continuity of $\mathcal V(r)$ at $r^*$ and find that $J_0(wr^*)=0$ hence $r^*=\frac{j_{0,1}}{\sqrt{\chi}}$ as claimed.  Moreover, the integral constraint implies that $C_2=-\frac{M}{2\pi}$.  In summary, we collect the following explicit formula of the unique solution $(\mathcal U,\mathcal V)$ to (\ref{logpotential}) as
\begin{equation}\label{511}
\mathcal U(r)=\left\{\begin{array}{ll}
\frac{M\chi}{2\pi j_{0,1}} \frac{J_0(\sqrt{\chi} r)}{J_1(j_{0,1})},&r\in[0,\frac{j_{0,1}}{\sqrt{\chi}}),\\
0, & r\in(\frac{j_{0,1}}{\sqrt{\chi}},\infty),
\end{array}
\right.
\text{~with~}
\mathcal V_r(r)=\left\{\begin{array}{ll}
-\frac{M\sqrt{\chi}}{2\pi j_{0,1}}\frac{J_1(\sqrt{\chi} r)}{J_1(j_{0,1})},&r\in[0,\frac{j_{0,1}}{\sqrt{\chi}}),\\
-\frac{M}{2\pi} \frac{1}{r}, & r\in(\frac{j_{0,1}}{\sqrt{\chi}},\infty).
\end{array}
\right.
\end{equation}
In contrast to (\ref{sswholespace}) where $\mathcal V$ describes concentration of the stimulating chemical, $\mathcal V$ can only be uniquely determined by its derivative in the sense of (\ref{511}).  On the other hand, $\mathcal V$ becomes negative for $r$ sufficiently large, hence it should be understood as an attractive potential rather than a physical chemical concentration in (\ref{logpotential}).

\section{Appendix}\label{section7}
Here we establish the finer estimate $r_2\in(\frac{\pi}{2\omega},\frac{5\pi}{4\omega})$ by the asymptotic expansions of Bessel functions and the uniform bounds of the ratios of modified Bessel functions $J_n(x)$ and $Y_n(x)$, $n\in\mathbb N$.  The results in Chap 7.3 of \cite{Watson} state that for any $x>0$, the finite sum within the asymptotic expansions of $J_n(x)$ and/or $Y_n(x)$ converges to each function, with the corresponding remainders being of the same sign of the finite sum with the last term ignored.  To be precise, one has
\begin{align}
  & J_n(x)=\sqrt{\frac{2}{\pi x}}\left(P(x,n) \cos\big(x-\frac{n\pi}{2}-\frac{\pi}{4}\big)- Q(x,n)\sin \big(x-\frac{n\pi}{2}-\frac{\pi}{4}\big) \right), \label{71} \\
  & Y_n(x)=\sqrt{\frac{2}{\pi x}}\left(P(x,n) \sin\big(x-\frac{n\pi}{2}-\frac{\pi}{4}\big) +Q(x,n) \cos \big(x-\frac{n\pi}{2}-\frac{\pi}{4}\big) \right),  \label{72}
\end{align}
where $P(x,n)$ and $Q(x,n)$ can be presented by
$$P(x,n)=\sum_{m=0}^{s-1}\frac{(-1)^m\cdot(\frac{1}{2}-n)_{2m}\cdot(\frac{1}{2}+n)_{2m}}{(2m)!(2x)^{2m}}+\epsilon_1\frac{(-1)^s\cdot(\frac{1}{2}-n)_{2s}\cdot(\frac{1}{2}+n)_{2s}}{(2s)!(2x)^{2s}},$$
for some $|\epsilon_1|<1$ if $2s\geq n-\frac{1}{2}$, and
$$Q(x,n)=\sum_{m=0}^{s-1}\frac{(-1)^m\cdot(\frac{1}{2}-n)_{2m+1}\cdot(\frac{1}{2}+n)_{2m+1}}{(2m+1)!(2x)^{2m+1}}+\epsilon_2\frac{(-1)^s\cdot(\frac{1}{2}-n)_{2s+1}\cdot(\frac{1}{2}+n)_{2s+1}}{(2s+1)!(2x)^{2s+1}},$$
for some $|\epsilon_2|<1$ if $2s\geq n-\frac{3}{2}$.  Thanks to these approximations, there exist some constants $\theta_1$, $\theta_2$, $\theta_3$ and $\theta_4$ in $(0,1)$ such that
\begin{align}
   & P(x,0)=1-\frac{9\theta_1}{128x^2},\quad \quad\quad P(x,1)=1+\frac{15\theta_3}{128x^2}, \label{73}\\
   & Q(x,0)=-\frac{1}{8x}+\frac{75\theta_2}{1024x^3},\quad Q(x,1)=\frac{3}{8x}-\frac{105\theta_4}{1024x^3}.\notag
\end{align}
We also want to note that the positivity of each $\theta_i$ is promised by the facts that the remainders after two terms of $P(x,n)$ and $Q(x,n)$ are of the same sign as the sum of the first two term.  Using (\ref{71}) and (\ref{72}), we have
\begin{align}
  & S_0\left(\frac{\pi}{2\omega};\omega,R\right)=\frac{2}{\pi \omega\sqrt{R\left(R-\frac{\pi}{2\omega}\right)}}  \left(Q(\omega R,1)\cdot P(\omega R-\frac{\pi}{2},0) -P(\omega R,1)\cdot Q(\omega R-\frac{\pi}{2},0) \right), \notag\\
  &  S_1\left(\frac{\pi}{2\omega};\omega,R\right)=\frac{2}{\pi \omega\sqrt{R\left(R-\frac{\pi}{2\omega}\right)}}  \left(P(\omega R,1)\cdot P(\omega R-\frac{\pi}{2},1)+Q(\omega R,1)\cdot Q(\omega R-\frac{\pi}{2},1) \right). \notag
\end{align}
Substituting (\ref{73}) into the above expressions, using the boundedness of $\theta_i$ and the fact $\omega R>j_{1,1}$, we obtain from straightforward calculations
$$\frac{\omega  S_0\left(\frac{\pi}{2\omega};\omega,R\right)}{S_1\left(\frac{\pi}{2\omega};\omega,R\right)}<\omega\frac{\frac{1}{2(\omega R-\frac{\pi}{2})}+\frac{15\theta_3}{1024\omega^2R^2(\omega R-\frac{\pi}{2})}}{1+\frac{15\theta_3}{128\omega^2R^2}}<\frac{1}{2(R-\frac{\pi}{2\omega})}.$$
Let us recall the following inequality from \cite{Amos}
$$\frac{I_0(R-\frac{\pi}{2\omega})}{I_1(R-\frac{\pi}{2\omega})}\geq \frac{1+\sqrt{4(R-\frac{\pi}{2\omega})^2+1}}{2(R-\frac{\pi}{2\omega})},$$
therefore $f_2(\frac{\pi}{2\omega};\omega,R)<0$ implies that $r_2>\frac{\pi}{2\omega}$ as asserted.  By the same token, one can show that $s_0^{(1)}\in(\frac{\pi}{4 \omega},\frac{\pi}{2\omega})$ and $s_1^{(1)}\in(\frac{\pi}{\omega},\frac{5\pi}{4\omega})$ by the monotonicity of $S_0(r;\omega,R)$ and $S_1(r;\omega,R)$, and then obtain their asymptotic expansions in terms of $P(x,n)$ and $Q(x,n)$.

Next, we claim that $r_2 \rightarrow (\frac{\pi}{2\omega})^+ $ and $s_0^{(1)} \rightarrow (\frac{\pi}{2\omega})^-$ in the limit of $\chi\rightarrow \infty$.  To see this, we first find the following asymptotic behavior of Bessel functions in this limit as
\begin{align}
  &S_0(r;\omega,R)=-\frac{2}{\pi \omega \sqrt{R(R-r)}} \cos (\omega r)+O((\omega R)^{-2}), \label{74} \\
  & S_1(r;\omega,R)=\frac{2}{\pi \omega \sqrt{R(R-r)}} \sin (\omega r)+O((\omega R)^{-2}), \label{75}
\end{align}
where the big $O$ is uniform in $r$.  These asymptotic expansions indicate that $s_0^{(1)}\rightarrow (\frac{\pi}{2\omega})^-$ and $s_1^{(1)}\rightarrow (\frac{\pi}{\omega})^+$.  Recall that $r_2$ is the first positive root of (\ref{210}), the left hand side of which approaching $-\frac{\omega}{\tan(\omega r)}$, the right hand side approaching the positive bound constant $\frac{I_0(R)}{I_1(R)}$ in large $\chi$, then an immediate consequence of these facts is that $r_2 \rightarrow (\frac{\pi}{2\omega})^+$ as $\chi\rightarrow \infty$, which is expected.
\section*{Acknowledgements}
We would like to thank Jose A. Carrillo for helpful comments and encouragement, and bringing \cite{CCH2,CCY} to our attention; and thank Yao Yao for helpful discussions about \cite{CHVYInvent} and bringing \cite{DYY} to our attention.  QW is partially supported by NSF-China (11501460) and the Fundamental Research Funds for the Central Universities (JBK1805001).


\bibliographystyle{abbrv}

\end{document}